\documentclass[a4paper,12pt]{article}

\usepackage{amsthm,mathtools,amsfonts}

\newtheorem{theorem}{Theorem}[section]
\newtheorem{lemma}[theorem]{Lemma}

\newtheorem{definition}[theorem]{Definition}
\newtheorem{corollary}[theorem]{Corollary}
\newtheorem{remark}[theorem]{Remark}
\newtheorem{assumption}[theorem]{Assumption}

\usepackage{listings}
\usepackage{appendix}
\usepackage{graphicx}
\usepackage{wrapfig}
\usepackage{pdfpages}
\usepackage{psfrag}
\usepackage{rotating}
\usepackage{subfig}
\usepackage{mathptmx}
\usepackage{epstopdf} 
\usepackage{mathpazo} 



\usepackage{hyperref}       
      
\begin{document}

\title{A Compressive Sampling Approach To Adaptive Multi-Resolution Approximation of Differential Equations With Random Inputs}
\author{Behrooz Azarkhalili$^*$}
\date{}

\maketitle

\let\oldthefootnote\thefootnote
\renewcommand{\thefootnote}{\fnsymbol{footnote}}
\footnotetext[1]{ Department of Mathematical Science, Sharif University of Technology, Tehran, Iran. \\Email: \url{b_azarkhalili@alum.sharif.edu}}
\let\thefootnote\oldthefootnote


\begin{abstract}
In this paper, a novel method to adaptively approximate the solution to stochastic differential equations, which is based on compressive sampling and sparse recovery, is introduced. The proposed method consider the problem of sparse recovery with respect to multi-wavelet basis (MWB) from a small number of random samples to approximate the solution to problems. To illustrate the robustness of developed method, three benchmark problems are studied and main statistical features of solutions such as the variance and the mean of solutions obtained by proposed method are compared with the ones obtained from Monte Carlo simulations.

\end{abstract}

{\bf Keywords:} Uncertainty Quantification, Polynomial Chaos, Multi-Wavelet Basis, Sparse Recovery, Bounded Orthonormal Systems, Adaptive Partitioning.

\section{Introduction}

Recently, spectral representation methods based on the polynomial chaos (PC) decomposition [1-2] provide a robust, efficient, and practical alternative approach instead of Monte Carlo methods to be utilized in uncertainty quantification. Today, spectral representations have been employed to the many engineering and science fields. For instance, spectral methods can be utilized in fluid mechanics [3-7], thermal engineering [8], random oscillations [9], and porous media [10-11].

\subsection{Multi-Resolution Analysis (MRA)}
 
Although spectral PC methods are efficient methods to solve interest problems, they have limited accuracy when the solutions lack sufficient regularity and smoothness [12]. One of the main limitations occurs when the solutions have sharp gradient or discontinuity with respect to random input data [13]; specifically, the accumulated errors and the Gibbs-type phenomena cause spectral PC method become inefficient. To overcome such restrictions, a few recent work focus on multi-resolution analysis (MRA) and multi-wavelet basis (MWB) method [13-17].In this approach, one can increase the accuracy of constructed MRA scheme by increasing both the polynomial order ($p$-refinement) and resolution level ($h$-refinement) [13-17]. In several previous studies the concepts of generalized polynomial chaos (GPC) combined with employing piecewise continuous polynomial functions to decompose the random data and the solution into Haar wavelets [13-17]. MRA let us to localize decomposition and achieve highly accurate results than general PC methods. Although MWB allows us to decompose random variables into spectral PC expansion the same as other orthonormal basis spectral methods, several fundamental differences exist between multi-wavelet expansion and other PC expansions, three main of which are explained as follows [17]:

\begin{enumerate}

\item
In spectral PC methods, orthonormal polynomials selected specially so that when the problem satisfies adequate smoothness and regularity conditions, we can expect an "infinite-order" mean-squared convergence. It means that the mean squared error of the approximation approaches zero when the number of terms in the spectral representation approaches infinity; however, this type of convergence rate cannot be achieved in multi-wavelet expansion since MWB is localized.

\item
Comparing with global basis spectral representations, Multi-wavelet representation leads to localized decompositions which provides the possibility of a highly accurate behavior; however, with slower rate of convergence. One can expect that in situations where the responses of the problem show a localized sharp gradient or a discontinuous variation, the wavelet decomposition become more efficient than a spectral expansion, whose convergence can be destroyed due to the Gibbs-type phenomena and accumulated error.

\item 
The most distinctive feature of MWB deals with the products of piecewise constant functions. The product of two polynomials having degree at most $n$ does not belong to space of polynomials having degree at most $n$. In contrast, One can show that multi-wavelet truncated basis constructs a space which is close under multiplication of any two (so any number) functions.  Therefore, for problems showing sharp gradients or discontinuity, MWB expansion decreases the possibility of accumulating error [13-17]. Hence, Studies show MRA can be a practical approach to the problems lacking sufficient regularity or smoothness [13-19].

\end{enumerate}
In previous studies, $intrusive$ methods such as stochastic Galerkin method, were exploited [13-19] to discretize and perform block-partitioning adaptively. Intrusive methods are based on the solution to the system of governing equations for the spectral coefficients in the PC representation of the solution.

Several previous schemes to adaptively solve stochastic differential equations (SDEs) was constructed based on stochastic Galerkin method [14-19]. In general, stochastic Galerkin method, as an intrusive one, has high computational cost since it involves high dimensional integration to evaluate the coefficients in the approximated solution.

In this paper, we aim to develop the $non$-$intrusive$ $solution$-$adaptive$ uncertainty propagation scheme to extract any local structures in the solution in order to decrease computational cost. Since non-intrusive methods are based on the direct random sampling of the problem solutions, they only require a deterministic solver. In fact, we can do sampling procedure employing any legacy code for the deterministic problem as a black box. In addition, this feature of non-intrusive approach allow us to do sampling process in parallel with deterministic simulations. Since the computational cost of non-intrusive approach can be scaled by the number of deterministic model simulations  performed to construct the approximation, the computational cost of non-intrusive approach can be large when the underlying deterministic scheme has a large computational cost. To overcome this problem, which is called curse of dimensionality and means that the number of model simulations increases exponentially with the number of independent random variables in the problem, decreasing the computational cost of non-intrusive approaches is one of the main subjects of recent researches in uncertainty quantification.

\subsection{Compressive Sensing via $\ell_1$-minimization} 
 
In the present paper, we focus on a special case in which the quantity of interest at stochastic level is $sparse$; it can be accurately expanded to only few terms in terms of spectral MWB representation. It will be shown that when the problem in MWB representation is sparse, the new approach considerably decreases the computational cost.

In the present work, $compressive$ $sampling$ and $sparse$ $recovery$ methodologies are exploited to develop theorems for the sparse recovery of $bounded$ $orthonormal$ $systems$ and then utilizing them, a sparse spectral MWB representation of solution to sparse SDEs is constructed.

Compressive sensing (CS) is an efficient method  to recover sparse signals which is based on two main definitions:  $Recoverability$ and $Stability$. Recoverability describes the fact that a sparse signal of length $m$ can be recovered by far fewer than $m$ measurements via $\ell_1$-minimization or other recovery schemes. Specifically, recoverability answer to the questions of which types of measurement matrices and recovery methods ensure exact recovery of all $k$-sparse signals (those having exactly $k$-nonzero entries) and how many measurements are sufficient to guarantee such a recovery. In other perspective, stability specifies the accuracy and robustness of a recovery scheme to recover sparse signals in noisy measurements and inaccurate or imperfect informations [20-21].

\subsection{Our Contribution} 
This study present an innovative non-intrusive method to solve SDEs applying uniform sparse recovery (which means when sparse recovery conditions are satisfied, the new scheme can recover all sparse signals) of MWB spectral representations via  $\ell_1$-minimization. Present method, the same as all non-intrusive methods, employ deterministic solvers directly. In fact, it is based on compressive sampling via $\ell_1$-minimization as a non-intrusive method to overcome complexity of implementation and decrease computational cost. We also employ the MWB representation the same as previous works [13-17] to decompose random process data to achieve more accurate results when the solution to interest problems have sharp variation or discontinuity. Our main goal in the present method is exploiting CS as a stochastic non-intrusive method to obtain the approximated  values of unknown coefficient with high accuracy in the sparse multi-wavelet PC expansion.

The following characterizes our method:

\begin{enumerate}
\item
{\bf Non-intrusive}: The present method is based on the direct random sampling of the solution to SDEs which means that sampling is done by exploiting any available codes for the deterministic problem as a black box. 

\item
{\bf Adaptive}: The proposed method identifies the importance of dimensions at the stochastic level. The dimension of a problem is the number of stochastic input parameters in it. Additionally, present approach adaptively identifies the local structure of the solution which is the most important feature of the present method.

\item
{\bf Provably Convergent}: We will obtain probabilistic bounds on the approximation error which show the stability and convergence of the method. To find the bounds, a number of theorems and lemmas in sparse recovery of bounded orthonormal systems are expressed and modified in order to develop the required theorems for MWB system.

\item
{\bf Applicable to Problems with Non-smooth Solution}: Present approach can be used in the problems having sharp gradient or  discontinuity, and thus lack sufficient regularity and smoothness. Furthermore, it can be applicable in the problems with high dimensional random inputs. However, high-dimensional problems are not considered in this paper.

\end{enumerate}

This paper is organized as follows: In section $2$, MRA is described and several definitions on spectral PC and multi-wavelet (MW) methods is reviewed to expand the solution to SDEs in terms of spectral MWB representation. In section 3, we introduce our method to solve SDEs via sparse approximation applying the extended version of available theorems and definitions of bounded orthonormal systems for MWB system; then, we derive probabilistic bounds of errors which shows stability and convergence of proposed methods. The problem of interest is formulated in section 4.1. In section 4.2, we describe the strategy for adaptive partitioning. Two SDEs and one stochastic discontinuous function with random inputs are studied in section 5 as the numerical problems lacking sufficient smoothness to analyse the convergence, accuracy and efficiency of the proposed method. In the last section the conclusions and the way to improve the present method is discussed.

\section{MWB Expansion of Stochastic Process}

In this section, we introduce some preliminary definitions in the probability context. First, we construct generalized polynomial chaos (GPC) and combine GPC with MWB to generate required global and local basis exploited to expand the solution to the problems into this frame. Next, the combination of local multi-wavelet and local Legendre basis is employed to construct local desired basis.

\subsection{Mathematical Preliminaries}

Assume a triple $(\Omega,\mathcal{F},\mathcal{P})$ is a complete probability space where $\Omega$ is the sample space, $\mathcal{F}$ is the $\sigma$-algebra of subsets (events) of $\Omega$  and $\mathcal{P}$ is the probability measure on it. For this probability space, real-valued random variable $u$ is defined as a function which maps the probability space to the real line or subset of the real line and is often denoted by $u(\zeta)$, where $\zeta$ denotes association with a probability space and is utilized to emphasize randomness in $u$. Finally, $\mathbf{P}(x)$ denotes the probability that $\mathcal{P}(\zeta\leq x)$. 

\begin{remark}
\
If $\mathbf{P}(x)$ is continuously increasing function of $x$ defined on $[a,b]$ such that $-\infty \leq a < b \leq  \infty $, $\mathbf{P}(a)=0$ and $\mathbf{P}(b)=1$, the probability density function (pdf) $\mathcal{P}$ is defined on $(a,b)$ by:

\begin{equation}
\mathcal{P}=
\begin{cases}
\displaystyle\frac{d\mathbf{P}}{dx}& x \in \mathrm{(a, b)} \\
0 &\mathrm{otherwise}
\end{cases}
\end{equation}

Based on the assumed properties, there is one-to-one mapping

\begin{equation}
\omega \in[0,1] \to x: x=\mathbf{P}^{-1}(\omega)\in [a,b]
\end{equation}

\medskip

Furthermore, if $\omega$ is a uniformly distributed random variable on $[0,1]$, it can be easily shown that $\mathbf{P}^{-1}(\omega)$ is a random variable on $(a,b)$ having the same distribution as $\zeta$.


\end{remark}

\begin{remark}

The space of all second-order random variables is denoted by $L_2(\Omega,\mathcal{F},\mathcal{P})$ and is defined as the Hilbert space constructed by all random variables equipped with the mean square norm:

\begin{equation}
\|u\|_{L_2(\Omega)}=\mathbb{E} (u^2)^{\frac{1}{2}}< \infty
\end{equation}

\end{remark}

The following of this section, GPC and MWB expansions of the second-order random variables are described. Here, we focus on a second-order random process in $L_2$ spaces and aim to express the solution to SDEs in terms of Fourier-like series which are convergent with respect to the norm associated with the corresponding inner product. First, GPC expansion and then MWB expansion is discussed.

\subsection{Generalized Polynomial Chaos (GPC)}

At first, Weiner exploited Hermite polynomials in terms of Gaussian random variables as a basis to form the original PC [2]. Next, Cameron and Martin [24] proved that such a choice of basis, which is applied to define the random process, can lead to an expansion which is convergent in mean-square norm to the original process [9]. Expanding the process in terms of Hermite polynomials, we have:  

\begin{equation}
u(\zeta)=a_0 H_0 + \sum_{i_1=1}^{\infty} a_{i_1} H_{1}(\zeta_{i_1})+ \sum_{i_1=1}^{\infty} \dotsb \sum_{i_n=1}^{i_{n-1}} a_{i_n} H_{n}(\zeta_{i_1}, \cdots, \zeta_{i_n})
\end{equation}

where $H_{n}(\zeta_{i_1}, \cdots, \zeta_{i_n})$ denote the Hermit polynomial of order $n$ in terms of the multi-dimensional independent standard Gaussian random variables $\zeta=(\zeta_{i_1}, \cdots, \zeta_{i_n})$ with zero mean and unit variance. For notation convenience, we rewrite this expansion as:

\begin{equation}
u(\zeta)=\sum_{k=0}^{\infty} u_k \Psi_k(\zeta) 
\end{equation}

where there is a one-to-one relation between $H_{n}(\zeta_{i_1}, \cdots, \zeta_{i_n})$ and $\Psi_n$.

\begin{remark}
The general expression of the Hermite polynomials is given as:

\begin{equation}
 H_{n}(\zeta_{i_1}, \cdots, \zeta_{i_n})=\frac{1}{2}\exp (\zeta \zeta^T)(-1)^n \frac{\partial^n \exp(\frac{-1}{2}\zeta \zeta^T)}{\partial\zeta_{i_1} \dotsb \partial \zeta_{i_n}}
\end{equation}

\end{remark}

In order to consider general random inputs,GPC uses the orthonormal polynomials from the Askey scheme [25] as the basis in $L_2(\Omega)$. Hence, the GPC of $u(\zeta)$ can be expressed as:

\begin{equation}
u(\zeta)=a_0 \Gamma_0 + \sum_{i_1=1}^{\infty} a_{i_1} \Gamma_{1}(\zeta_{i_1})+ \sum_{i_1=1}^{\infty} \dotsb \sum_{i_n=1}^{i_{n-1}} a_{i_n} \Gamma_{n}(\zeta_{i_1}, \cdots, \zeta_{i_n})
\end{equation}

where $\Gamma_{n}(\zeta_{i_1}, \cdots, \zeta_{i_n})$ denotes the Wiener-Askey polynomial chaos of order $n$ in terms of the uncorrelated random vector $\zeta=(\zeta_{i_1}, \cdots, \zeta_{i_n})$ Again, for notation convenience, we rewrite the aforementioned expansion as:

\begin{equation}
u(\zeta)=\sum_{k=0}^{\infty} u_k \Psi_k(\zeta)
\end{equation} 

such that $\Psi_n$ has a one-to-one relationship with an Askey polynomial $\Gamma_{n}(\zeta_{i_1}, \cdots, \zeta_{i_n})$. Now, a truncated form of the expansion after $P$ terms  can be expressed as:

\begin{equation}
u(\zeta)=\sum_{k=0}^{P} u_k \Psi_k(\zeta)
\end{equation}

\begin{remark}
The choice of orthonormal polynomials is determined by the probability distribution of random variables using Askey scheme [25].
\end{remark}

Since GPC is a global basis spectral representation, its convergence could be damaged when the solution to problems show localized sharp gradients or discontinuous variations. In contrast, MW representation leads to localized decomposition, Therefore, it can show highly accurate behavior in such types of problems. This is why we are motivated to employ MWB as an alternative basis when the solution to interest problems lack sufficient smoothness or regularity.

\subsection{Multi-Resolution Analysis}

In this section we construct MRA scheme can be applied as a powerful tool in both intrusive or non-intrusive methods. We start this section by developing PC expansions based on Haar wavelets, and extend it to MWB functions. 

\subsubsection{Multi-Resolution Decomposition of $L_2([0,1])$}

For $n_0={0,1,\cdots}$ and $k={0,1,\cdots}$, define the space $\mathcal{V}^{n_0}_k$ of piecewise polynomial continuous functions such that

\begin{equation}
\mathcal{V}^{n_0}_k \equiv \{f: f|_{(2^{-k}l,2^-{k}(l+1))}\ \in \mathbf{P}_{n_0} \ \mbox{for}  \ 0 \leq l \leq 2^k-1\ \mbox{and} \ f: f|_ {\mathbf{R}\setminus[0,1]}=0\}
\end{equation} 

where $f|_{\mathbf{S}}$ means the restriction of $f$ onto the set $\mathbf{S}$ and $\mathbf{P}_{n_0}$ denotes the space of polynomials of degree at most $n_0$. It can be shown that $\mathbf{Dim}({\mathcal{V}^{n_0}_k})=(n_0+1)2^k $ and $\mathcal{V}^{n_0}_0 \subset \mathcal{V}^{n_0}_1 \dotsb  \subset \mathcal{V}^{n_0}_k \subset \dotsb $. 

Now, according to [14-17] and [26-27], we define

\begin{equation}
\mathcal{V}^{n_0}=\overline{\bigcup_{k\geq 0}}\mathcal{V}^{n_0}_k .
\end{equation}

It is easy to show that $\mathcal{V}^{n_0}$ is dense in $L_2([0,1])$ with respect to the $ L_2 $-norm $\|f\|_2=\mathbb{E}(f^2)^\frac{1}{2}$ where 

\begin{equation}
\mathbb{E}(f g)=\int_0^1 f(\zeta) g(\zeta) d\zeta
\end{equation}

We denote the subspace $\mathcal{W}^{n_0}_k$ as the orthonormal complement of $\mathcal{V}^{n_0}_k$, i.e,

\begin{equation}
\begin{split}
&\mathcal{V}^{n_0}_k \oplus \mathcal{W}^{n_0}_k=\mathcal{V}^{n_0}_{k+1} \\
&\mathcal{W}^{n_0}_k \perp \mathcal{V}^{n_0}_k
\end{split}
\end{equation}

which yields:

\begin{equation}
\mathcal{V}^{n_0}_k \oplus_{k\geq 0} \mathcal{W}^{n_0}_{k}=L_2([0,1]) .  
\end{equation}

\subsubsection{Multi-Wavelet Basis (MWB)}
As mentioned before, GPC as well as methods based on smooth functions cannot often describe the solution behavior of problems having sharp gradient, steep variation, discontinuity or bifurcation with respect to random inputs.
One of the most important features of MWB is its power to model the behavior of such problems, which causes major difficulties GPC is utilized [13-17].

Now, we are ready to construct the orthonormal basis $\{\Psi_i\}_{i=0}^{n_0}$ of  $\mathcal{W}^{n_0}_0$. Here, $\{\Psi_i\}_{i=0}^{n_0}$ are piecewise polynomial functions with degree at most  $n_0$ satisfying the orthonormality condition, so we have [14-17]:

\begin{equation}
\mathbb{E}(\Psi_i\Psi_j)=\delta_{ij}\hspace{16pt} 0 \leq i,j \leq n_0
\end{equation}

Since $\mathcal{W}^{n_0}_{0}$ is the orthonormal complement of $\mathcal{V}^{n_0}_{0}$, the first $n_0+1$ moments of the $\Psi_i$ vanish: 

\begin{equation}
\mathbb{E}(\Psi_i \zeta^j)=0 \hspace{16pt} 0 \leq i,j \leq n_0
\end{equation}

equations (15) and (16) give the system of linear equations whose solution results unknown coefficients in $\{\Psi_i\}^{n_0}_{i=0}$. 

Now, we construct the orthonormal basis for  $\mathcal{W}^{n_0}_{k}$ by utilizing multi-wavelets $\Psi^{k}_{j,l}$, the translated and dilated version of original $\Psi_i$'s. $\Psi^{k}_{j,l}$ can be expressed as:

\begin{equation}
\Psi^{k}_{j,l}=2^{\frac{k}{2}}\Psi_j(2^k\zeta-l)\mathbb{I}([2^{-k}l,2^{-k}(l+1)]) \hspace{16pt} 0 \leq j \leq n_0 \hspace{8pt} \mbox{and} \hspace{8pt} 0 \leq l \leq 2^k-1
\end{equation}

Orthonormality of $\Psi_i$'s results:

\begin{equation}
\mathbb{E}(\Psi^{k}_{j,l}\Psi^{k\prime}_{j\prime,l\prime})=\delta_{kk\prime} \delta_{jj\prime} \delta_{ll\prime}
\end{equation}  

We also construct the basis ${\Phi_i}^{k-1}_{i=0}$ for $\mathcal{V}^{k}_0$. To do this, we employ orthonormal Legendre polynomial on $[-1,1]$ and then rescale it over $[0,1]$. Assume $L_i$ denotes the orthonormal Legendre polynomial of degree $i$ on $[-1,1]$; We define

\begin{equation}
\Phi_i(\zeta)=(\frac{1}{\sqrt{2}})L_i(2\zeta-1)
\end{equation}

so

\begin{equation}
\mathbb{E}(\phi_i\phi_j)=\delta_{ij} \hspace{16pt} 0 \leq i,j \leq n_0
\end{equation}

Now, one can construct the polynomials $\Phi^k_{i,l}$, which are the translated and dilated version the polynomials $\Phi_i$'s as:

\begin{equation}
\Phi^{k}_{j,l}=2^{\frac{k}{2}}\Phi_j(2^k\zeta-l)\mathbb{I}([2^{-k}l,2^{-k}(l+1)]) \hspace{16pt} 0 \leq i \leq n_0 \hspace{8pt} \mbox{and} \hspace{8pt} 0 \leq l \leq 2^k-1
\end{equation}

Hence, the space $\mathcal{V}^{n_0}_{k}$ whose dimension is $2^k(n_0+1)$ is spanned by $\Phi^k_{i,l}$ [14-17] and [26-27].  

\subsubsection{MW Expansion}

Consider the stochastic function $f \in L_2([0,1])$, so $f$ can be approximated by using the basis which is formerly constructed.  

assume $f^{n_0,n_1}$ is the projection of $f$ onto the space $\mathcal{V}^{n_0}_{n_1}$, so 

\begin{equation}
f^{n_0}_{n_1}=\sum _{l=0}^{2^{n_1}-1}\sum_{i=0}^{n_0}\mathbb{E}(\Phi^{n_1}_{i,l}f)\Phi^{n_1}_{i,l}(\zeta)
\end{equation}

Since it can be shown that $\mathcal{V}^{n_0}_{n_1}=\mathcal{V}^{n_0}_{0}\oplus_{1\leq k\leq n1} \mathcal{W}^{n_0}_{k}$ so $f^{n_0,n_1}$ can have another representation with respect to $\Psi^{k}_{i,l}$

\begin{equation}
f^{n_0}_{n_1}=f^{n_0}_{0}+\sum _{k=0}^{{n_1}-1}\sum_{i=0}^{2^k-1}(\sum_{i=0}^{n_0} g^{k}_{i,l} \Psi^{k}_{i,l}(\zeta))
\end{equation} where the 

\begin{equation}
g^{k}_{i,l}=\mathbb{E}((f^{n_0}_{k+1}-f^{n_0}_{k})\Psi^{k}_{i,l})
\end{equation}

Define $\delta^{n_0}_{n_1}\equiv \mathbb{E}(|f-f^{n_0}_{n_1}|^2)$ thus convergence of the expansion can be determined by the decreasing of $\delta^{n_0,n_1}$ with respect to increasing polynomial order $n_0$ ($p$-convergence) and increasing resolution level $n_1$ ($h$-convergence).  

\subsection{Multi-Wavelet Basis Construction}
 Prior to constructing local basis expansion, some vital assumptions to easily implement and regularize our approach are expressed.

\begin{assumption}
\
\begin{enumerate}

\item
Assume $\Omega=[a_1,b_1] \times \dotsb \times [a_n,b_n]$ is the space of random parameters. Since by change of variable $\xi_i=(b_i-a_i)(\zeta_i)+a_i$ we can transform the space  $\Omega_s=[0,1]^n$ to $\Omega$ with out losing generality, we assume $\Omega=\Omega_s=[0,1]^n$ with the random parameter $\zeta=(\zeta_1,\cdots, \zeta_n)$ in this paper. 

\item
In addition, we assume that parameters $\{\zeta_i\}_{i=1}^n$ have the uniform probability distribution; therefore, $\zeta$ have the uniform probability distribution on $\Omega=[0,1]^n$. 

\end{enumerate}

\end{assumption}

\subsubsection{Global Basis Construction}
To construct our desired basis, we first construct the global expansion basis for $\Omega=[0,1]^n$. Define the $\mathbf{I}_{n_0,n}:= \{\iota=(\iota_1,\cdots,\iota_n)\in {\mathbf{N}^n_0}:\ \|\iota\|_1 \leq n_0 \hspace{4pt} ,\hspace{4pt} \|\iota\|_0 \leq n\}$. For $\zeta$ we construct the global projection basis as [14-17]:

\begin{equation}
\mathcal{B}_p\equiv \{\Phi_{\iota \in {\mathbf{I}_{n_0,n}}} (\zeta_1,\cdots, \zeta_n)=  \prod_{j=1}^n \phi_{\iota_j}(\zeta_j)  \}
\end{equation}

and the detail direction basis 

\begin{equation}
\mathcal{B}^j_1\equiv \{\Psi_i(\zeta_j), 0\leq i \leq n_0  \} \hspace{12pt} : \hspace{12pt} 0\leq j \leq n_0
\end{equation}

The complete global expansion basis can be expressed as:

\begin{equation}
\bigcup^n_{j=1} \mathcal{B}^j_1(\Omega) \bigcup \mathcal{B}(\Omega)=\mathcal{B}_p(\Omega) 
\end{equation}

and the multi-dimensional process ${u}(\zeta)$ can be  approximately expressed as:

\begin{equation}
u_{n_0}(\zeta)=\sum_{\iota \in {\mathbf{I}_{n_0,n}}} u_{\iota} \Phi_{\iota}(\zeta)+ \sum_{j=1}^n \sum_{i=0}^{n_0}u_{j,i}\Psi_i(\zeta_j)
\end{equation}

In fact, the global basis $\mathcal{B}(\Omega)$ consists of the union of the rescaled Legendre polynomials basis $\mathcal{B}_p(\Omega)$ and first level detail basis  $\{\mathcal{B}^j_1(\Omega)\}_{j=1}^n.$ 
\medskip

consequently, the global expectation of $u$ can be expressed as:

\begin{equation}
\mathbb{E}(u)=u_{(0,\cdots,0)}
\end{equation}

and its global variance can be approximated as:

\begin{equation}
\sigma^2_{\Omega}(u)=({\hat{\sigma}_{\Omega}})^2+\sum_{j=1}^n (\sigma^j_{\Omega})^2
\end{equation}

where 

\begin{equation}
({\hat{\sigma}_{\Omega}})^2 \equiv\sum_{\iota \in {\mathbf{I}^0_{n_0,n}}} (u_{\iota})^2
\end{equation}

here $\mathbf{I}^0_{n_0,n}:=\mathbf{I}_{n_0,n}\setminus \{(0,\cdots,0)\}$,

and

\begin{equation}
(\sigma^j_{\Omega})^2\equiv  \sum_{j=1}^n \sum_{i=0}^{n_0}(u_{j,i})^2
\end{equation}

\begin{remark}
The local expansion for random parameter $\zeta$ on partition $\Omega_m=[a^m_1,b^m_1]\times \cdots \times  [a^m_n,b^m_n] \subset \Omega$ can be immediately derived by $\Omega_m$ instead of $\Omega$ in the above relations  respectively .
\end{remark}

\subsubsection{Local Basis Construction}
In this section we construct local basis expansion utilizing definitions and notations to localize the general basis. This local basis is employed for adaptive partitioning strategy in next sections.
 
\begin{definition}
Consider $I\in\mathbf{N}$, we say I is index level $k$ and also write $\mathcal{L}(I)=k$ if and only if $(k-1)$ is the largest power of $2$ in $I$ i.e.,

\begin{equation}
2^{k-1}\leq I < 2^k
\end{equation} 
so there exists $1\leq l \leq 2^{k-1} $ that $I=2^{k-1}+l-1$.
\end{definition}

\begin{definition}
Consider $I=2^{k-1}+l-1$ $(1\leq l \leq 2^{k-1} )$; therefore  $\mathcal{L}(I)=k$ according to the above definition. Now, we define the interval $\mathcal{I}(I)$ as:

\begin{equation}
\mathcal{I}(I)=[2^{-(k-1)}(l-1),2^{-(k-1)}l]
\end{equation}

\end{definition}

\begin{definition} 
We can immediately extend the above definitions to a multi-dimensional index. Consider multi-dimensional index $I=(I_1,\cdots,I_n)$ so that 
 
\begin{equation}
 I_j=2^{k_j-1}+l_j-1 \hspace{12pt} 1\leq l \leq 2^{k_j-1} 
\end{equation}

so we have $\mathcal{L}(I_j)=k_j$ and $\mathcal{I}(I_j)=[2^{-(k_j-1)}(l-1),2^{-(k_j-1)}l]$, now we define the index level $I$ as:

\begin{equation}
\mathcal{L}(I)=(\mathcal{L}(I_1),\cdots,\mathcal{L}(I_n))=(k_1,\cdots, k_n)
\end{equation}

and 

\begin{equation}
\mathbf{Vol^I}:=\mathcal{I}(I)=\prod_{j=1}^n \mathcal{I}(I_j)
\end{equation}

\end{definition}
Now, we can form the local expansion basis. Consider multidimensional index $I=(I_1,\cdots, I_n)$,  so we have $\mathcal{I}(I)=\prod_{j=1}^n \mathcal{I}(I_j)$ and $\mathcal{L}(I)=(k_1,\cdots, k_n)$. Define the local expansion basis for $I$ as:

\begin{equation}
\Psi_{j,I}(\zeta)=\prod_{i=1}^n \Psi^{k_i}_{j,l_i}(\zeta)
\end{equation}

and 

\begin{equation}
\Phi_{j,I}(zeta)=\prod_{i=1}^n \Phi^{k_i}_{j,l_i}(\zeta)
\end{equation}

or in general case,
 
\begin{equation}
\Phi_{\iota,I} (\zeta)=\prod_{i=1}^n \prod_{j=1}^n \phi^{k_i}_{\iota_j,l_i}(\zeta_j)  
\end{equation} 

so the local projection basis can be expressed as:

\begin{equation}
\mathcal{B}^I_{p}\equiv \{\Phi_{\iota,I} (\zeta)=\prod_{i=1}^n \prod_{j=1}^n \phi^{k_i}_{\iota_j,l_i}(\zeta_j) \hspace{4pt}: \hspace{4pt}\iota \in {\mathbf{I}_{n_0,n}} \}
\end{equation}

and the detail direction basis can be expressed as:

\begin{equation}
\mathcal{B}^{j,I}_1\equiv \{\Psi_i(\zeta_j), 0\leq i \leq n_0  \} \hspace{12pt} : \hspace{12pt} 0\leq j \leq n_0
\end{equation}

Consequently, the complete local expansion basis can be expressed as:

\begin{equation}
\mathcal{B}^I(\Omega)=\mathcal{B}^I_p(\Omega) \bigcup \bigcup^n_{j=1} \mathcal{B}^{j,I}_1(\Omega)
\end{equation}

and the local basis expansion for multi-dimensional parameter $\zeta$ can be expressed as:

\begin{equation}
u_{n_0,I}(\zeta)=\sum_{\iota \in {\mathbf{I}_{n_0,n}}} u_{\iota,I} \Phi_{\iota,I}(\zeta)+ \sum_{j=1}^n \sum_{i=0}^{n_0}u_{j,i,I}\Psi_{i,I}(\zeta_j)
\end{equation}
and its local mean and variance can be approximated as:

\begin{equation}
\left\{
	\begin{array}{ll}
	\mathbb{E}_I(u)=u_{(0,\cdots,0),I} \\
		\sigma^2_{I}(u)=({\hat{\sigma}_{I}})^2+\sum_{j=1}^n (\sigma^j_{I})^2 \hspace{72pt} 
	\end{array}
\right.
\end{equation}

where 

\begin{equation}
({\hat{\sigma}_{I}})^2 \equiv\sum_{\iota \in {\mathbf{I}^0_{n_0,n}}} (u_{\iota,I})^2
\end{equation}

here $\mathbf{I}^0_{n_0,n}=\mathbf{I}_{n_0,n}\setminus \{(0,\cdots,0)\}$

and

\begin{equation}
(\sigma^j_{I})^2\equiv  \sum_{j=1}^n \sum_{i=0}^{n_0}(u_{j,i,I})^2
\end{equation}

\begin{remark}
The global or local expansion for random vector $u(\zeta)$ exists for random process $u(\Re,\zeta)$.
Indeed, we have 

\begin{equation}
u_{n_0}(\Re,\zeta)=\sum_{\iota \in {\mathbf{I}_{n_0,n}}} u_{\iota}(\Re) \Phi_{\iota}(\zeta)+ \sum_{j=1}^n \sum_{i=0}^{n_0}u_{j,i}(\Re)\Psi_i(\zeta_j)
\end{equation}

and 

\begin{equation}
u_{n_0,I}(\Re,\zeta)=\sum_{\iota \in {\mathbf{I}_{n_0,n}}} u_{\iota,I}(\Re) \Phi_{\iota,I}(\zeta)+ \sum_{j=1}^n \sum_{i=0}^{n_0}u_{j,i,I}(\Re)\Psi_{i,I}(\zeta_j)
\end{equation}

for general and local expansion of random process $u(\Re,\zeta)$, respectively.

\end{remark}

In the next sections, we apply these notations for adaptive partitioning.


\section{Compressive Sampling via $\ell_1$ Minimization}
  A fundamental question in MRA is how to compute unknown coefficients in MWB expansion. Le Maitre  and co-workers [14-17] have evaluated those with stochastic Galerkin projection method as an intrusive method. This approach evaluates accurate (even exact) values for unknown coefficients, however, it involves high-dimensional integration and has to use high dimensional integration methods such as Smolyak sparse grid [27-29], adaptive sparse grid integration [30-32] or other available high-dimensional integration methods. All these methods have high computational cost and time to attain high accuracy. So it seems non-intrusive approach which evaluates coefficients using surrogate methods instead of high-dimensional integration can achieve same or even higher accuracy having very low computational cost and time.

One of the efficient methodology to evaluate unknown coefficients in multi-wavelet basis expansion is compressive sensing which is so called compressive sampling. This part gives a mathematical introduction to compressive sampling focusing on sparse recovery using $\ell_1$-minimization and structured random matrices. We emphasize on developing techniques in order to obtain probabilistic estimates for condition numbers of structured random matrices. Estimates of this type are important in providing conditions that ensure exact or approximate recovery of sparse vectors employing $\ell_1$-minimization [33-35].

In this section we develop the ideas and definitions related to compressive sensing via $\ell_1$-minimization. We employ restricted isometry property (RIP) for a preconditioned version of matrix whose entries are the MW basis evaluated at sample points chosen from our sampling measure to establish main results. 

At first, results are established by proving RIP for preconditioned random Legendre matrix. We then extend the results to a large class of orthonormal polynomials system. Finally, we develop theorems concern stability of proposed compressive sensing scheme regarding sampling measure; as a result, the condition that sample points $\{x_j\}$ should be drawn from the special measure (Chebyshev measure) can be neglected. 

\subsection{Sparsity}

Suppose $x\in\mathbf{R}^m$. In mathematical perspective, we say that $x$ is $s$-sparse if it has $s$ or fewer non-zero coordinates:

\begin{equation}
x \in \mathbf{R}^m, \|x\|_0=\#\{i,x_i\not= 0\}\leq s\ll m 
\end{equation}

One can show that $\|.\|_0$ is a quasi-norm where the usual $p$-norm is defined by:

\begin{equation}
\|x\|_p=(\sum_{i=1}^m |x_i|^p)^{1/p}
: (1\leq p< \infty) 
\end{equation}

and $\|.\|_{\infty}=\max{|x_i|}$. One can also show 

\begin{equation}
\|x\|_0=lim_{p \to 0}{\|x\|_p}^p
\end{equation}

It can be shown from the theories of convex optimization that if $J(x)$ is convex function the following equation has a unique well-defined solution [20, 36-40].

\begin{equation}
\mathbf{P}_J{}:\min_x {J(x)}\hspace{8pt} \mbox{Subject to}\hspace{8pt} Ax=b
\end{equation} 

When we substitute $J(x)$ with $\|x\|_p$, since $\ell_p$-norms $(1\leq p< \infty)$ are convex, if we define 

\begin{equation}
\mathbf{P}_p{}:\min_x {\|x\|_p}\hspace{8pt} \mbox{Subject to}\hspace{8pt} Ax=b \hspace{8pt} (1\leq p< \infty)
\end{equation}

$\mathbf{P}_p$ consequently has a unique solution.

\bigskip

Now, we attempt to find the sparsest solution, i.e., the solution with the fewest nonzero entries. Indeed, if we define $\mathbf{P}_0$ by substituting $J(x)=\|x\|_0$ in (53), we seek the solution to $\mathbf{P}_0$.
Although it seems there are many similarities between $\mathbf{P}_0$ and $\mathbf{P}_p$, however, they are completely different. The quasi-norm $\|.\|_0$ is a highly discontinuous function and its behavior is very complex. Hence, there is no guaranty for uniqueness and global optimality that can be achieved from convex optimization for the solution to $\mathbf{P}_p$.
There are some fundamental questions which help us to analyse $\mathbf{P}_0$. 

\begin{enumerate}
\item
When is the solution to $\mathbf{P}_0$ unique?
\item
How can one obtain the good approximation from this unique solution?
\item
What is the accuracy and efficiency of these approximated solutions?
\end{enumerate}

We address the questions respectively in this section.

\subsection{Pursuit Algorithms}

Since the direct approach to solve $\mathbf{P}_0$ is numerically intractable, we need the alternative approaches to solve it. There are two all-important algorithms to approximate solution to $\mathbf{P}_0$ or find its exact solution under specific conditions. The first applies relaxation methods and relies on an optimization problem that can be solved using convex programming and convex optimization algorithms, and the second approach employs fast greedy algorithms. Here, we just discuss "Basis Pursuit", a method that utilizes a convex optimization to solve $\mathbf{P}_0$ and leave the greedy algorithms.

\subsubsection{Basis Pursuit}

Donoho and his collaborators have shown in [39-40] that for the specific matrices $A$, $\mathbf{P}_0$ is equivalent with its $\mathbf{P}_1$ relaxation. The problem $\mathbf{P}_1$ is a linear programming (LP) problem and can be solved employing the interior-point method, simplex method, or other techniques.

\begin{definition}[\textbf{Ristricted Isometry Property (RIP)}]

We say the matrix $A$ has the restricted isometry property (s, $\delta$) if for all $s$-sparse vector $x$ we have 

\begin{equation}
(1-\delta)\|x\|_2\leq \|Ax\|_2\leq (1+\delta)\|x\|_2
\end{equation}

We define the restricted isometry constant  $\delta_s$ as the smallest value for $\delta$ so that (55) holds for all $s$-sparse matrices. When $\delta_s$ is small in (55), it means that every subset of $s$ or fewer columns of $A$ is approximately orthonormal. Indeed, we have $\delta_s=0$ when $A$ is an orthonormal matrix.
\end{definition}

Cand\`{e}s and Tao have shown in [41-42] that for measurement matrices satisfying the restricted isometry property, basis pursuit recovers all sparse signals exactly. It is expressed as following theorem. 

\begin{theorem}[\textbf{Sparse Recovery Under RIP [41]}]
Suppose the measurement matrix $A\in \mathbf{R}^{n\times m}$ satisfy the RIP(3s, 0.2). Then every s-sparse vector $x\in \mathbf{R}^m$ can be exactly recovered from its measurements $Ax=b$ as a unique solution to the linear optimization $\mathbf{P_1}$.  
\end{theorem}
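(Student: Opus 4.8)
The plan is to show that the $\mathbf{P}_1$ minimizer coincides with the true sparse signal by analysing the error vector in the null space of $A$. Let $x$ be the $s$-sparse vector to be recovered, let $x^\ast$ denote any solution of $\mathbf{P}_1$ with $b=Ax$, and set $h=x^\ast-x$. Since both $x$ and $x^\ast$ are feasible, $Ah=0$, so $h\in\ker A$; the goal is to prove $h=0$, which forces $x^\ast=x$ and simultaneously yields uniqueness. Writing $T_0=\mathrm{supp}(x)$ with $|T_0|\le s$ and using the optimality $\|x^\ast\|_1\le\|x\|_1$ together with the triangle inequality split over $T_0$ and $T_0^c$, I would first extract the cone (null-space) condition $\|h_{T_0^c}\|_1\le\|h_{T_0}\|_1$.

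Second, I would partition the complement $T_0^c$ into consecutive blocks $T_1,T_2,\dots$, each of size $s$ (the last possibly smaller), ordered so that $T_1$ collects the $s$ largest-in-magnitude entries of $h$ on $T_0^c$, $T_2$ the next $s$, and so on. The decreasing-rearrangement estimate then gives the tail bound $\sum_{j\ge2}\|h_{T_j}\|_2\le s^{-1/2}\|h_{T_0^c}\|_1$. Combining this with the cone condition and $\|h_{T_0}\|_1\le\sqrt{s}\,\|h_{T_0}\|_2\le\sqrt{s}\,\|h_{T_0\cup T_1}\|_2$ yields $\sum_{j\ge2}\|h_{T_j}\|_2\le\|h_{T_0\cup T_1}\|_2$, which is exactly the tail control needed to close the RIP estimate.

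Third comes the RIP argument. Set $T_{01}=T_0\cup T_1$, so $|T_{01}|\le 2s$. From $Ah=0$ I have $Ah_{T_{01}}=-\sum_{j\ge2}Ah_{T_j}$, hence
\begin{equation}
\|Ah_{T_{01}}\|_2^2=-\sum_{j\ge2}\langle Ah_{T_{01}},Ah_{T_j}\rangle .
\end{equation}
The lower RIP bound gives $\|Ah_{T_{01}}\|_2^2\ge(1-\delta_{2s})\|h_{T_{01}}\|_2^2$, while the near-orthogonality lemma for disjointly supported vectors (a polarization consequence of the restricted isometry property applied on index sets of size at most $3s$) gives $|\langle Ah_{T_{01}},Ah_{T_j}\rangle|\le\delta_{3s}\|h_{T_{01}}\|_2\|h_{T_j}\|_2$. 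Summing and invoking the tail bound of step two,
\begin{equation}
(1-\delta_{2s})\|h_{T_{01}}\|_2^2\le\delta_{3s}\,\|h_{T_{01}}\|_2\sum_{j\ge2}\|h_{T_j}\|_2\le\delta_{3s}\,\|h_{T_{01}}\|_2^2 .
\end{equation}
Since $\delta_{2s}\le\delta_{3s}\le0.2$, the coefficient $(1-\delta_{2s})-\delta_{3s}\ge0.8-0.2>0$ forces $\|h_{T_{01}}\|_2=0$; then $h_{T_0}=0$, and the cone condition gives $h_{T_0^c}=0$, so $h=0$ and $x^\ast=x$.

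The main obstacle I expect is the near-orthogonality lemma that converts the isometry constant into the cross-term estimate $|\langle Ah_{T_{01}},Ah_{T_j}\rangle|\le\delta_{3s}\|h_{T_{01}}\|_2\|h_{T_j}\|_2$: it rests on the parallelogram identity applied to $h_{T_{01}}\pm(\|h_{T_{01}}\|_2/\|h_{T_j}\|_2)\,h_{T_j}$ over a support of size $\le 3s$, with careful bookkeeping of the normalization. A secondary point is that the restricted isometry property in the excerpt is stated in the non-squared form $(1-\delta)\|x\|_2\le\|Ax\|_2\le(1+\delta)\|x\|_2$; I would first pass to the equivalent squared form (adjusting $\delta$ by lower-order terms) so that the polarization identity and the clean threshold $0.2$ apply, and then verify that the chain of inequalities still closes under the stated hypothesis.
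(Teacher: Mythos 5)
The paper does not prove this statement at all: it is imported verbatim from Cand\`{e}s--Tao (reference [41]) and used as a black box, so there is no in-paper argument to compare against. Your proposal reconstructs the standard proof from that literature, and it is essentially correct: the cone condition $\|h_{T_0^c}\|_1\le\|h_{T_0}\|_1$ from $\ell_1$-optimality, the decreasing-rearrangement tail bound $\sum_{j\ge2}\|h_{T_j}\|_2\le s^{-1/2}\|h_{T_0^c}\|_1\le\|h_{T_0\cup T_1}\|_2$, and the polarization/near-orthogonality estimate over supports of size at most $3s$ combine exactly as you describe to force $h=0$, which gives both exactness and uniqueness. Your closing inequality only needs $\delta_{2s}+\delta_{3s}<1$, so the hypothesis $\delta_{3s}\le 0.2$ (with $\delta_{2s}\le\delta_{3s}$ by monotonicity) leaves ample slack. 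You are also right to flag the squared-versus-non-squared form of the RIP as the one point requiring care, since the paper's Definition states the non-squared version: with $\delta=0.2$ the effective squared-form constants become $(1-\delta)^2=0.64$ for the lower bound and $2\delta+\delta^2=0.44$ for the cross terms, and $0.64-0.44>0$, so the chain still closes; it would strengthen the write-up to carry out that arithmetic explicitly rather than deferring it. The only other item left as a promise rather than a proof is the near-orthogonality lemma itself (the parallelogram-identity computation), which is standard but should be written out if this is to be a self-contained proof. What your approach buys over the paper's is precisely self-containedness: the paper's reliance on the citation leaves the reader without any mechanism connecting the RIP constant $0.2$ to exact $\ell_1$ recovery, whereas your argument exhibits it.
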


\begin{remark}
In [42] Cand\`{e}s has shown that RIP(3s, 0.2) can be weakened with RIP(2s, $\sqrt{2}-1$).   
\end{remark}

\begin{remark}
This guarantee is uniform which means for any arbitrary measurement matrix $A$ satisfying conditions of Theorem 3.2, Basis pursuit recovers all sparse signals.     
\end{remark}

So we find a way to be sure that in specific cases, we have uniqueness for $\mathbf{P}_0$. Now we should answer to the next question. Since $\mathbf{P}_0$ is combinatorial optimization and numerically impractical because  $\mathbf{P}_0$ is NP-Hard in general, so the direct approach to solve $\mathbf{P}_0$ seems impossible. Hence,We introduce methods which can approximate the solution to  $\mathbf{P}_0$ under specific conditions.

\begin{remark}
The signals and measurements are noisy in practice. It is clear that in noisy cases we cannot employ $\mathbf{P_1}$. Cand\`{e}s, Tau and Romberg in [43] developed a new version of Basis Pursuit recovering noisy sparse signals. We express their results in the following theorem [42-43, 45-46]. 
\end{remark}

\begin{theorem}[\textbf{Sparse Recovery for RIP Matrices}]
Suppose the restricted isometry constants $\delta_{2s}$ of a matrix $A\in \mathbf{R}^{n\times m}$ satisfies $\delta_{2s}\leq {\frac{3}{4+\sqrt{6}}}$. 
Let $x\in \mathbf{R}^m$ and assume noisy measurement matrix $Y=Ax+\eta$ is given with $\|\eta\|_2\leq \epsilon$. Suppose $x^*$ be the solution to 

\begin{equation}
\min_x {\|x\|_1}\hspace{8pt} \mbox{Subject to}\hspace{8pt} \|Az-b\|_2\leq \epsilon \hspace{8pt}
\end{equation}

then

\begin{equation}
\|x-x^*\|_2 \leq C_1\frac{\sigma_s(x)_1}{\sqrt{s}}+ D_1\epsilon
\end{equation}

and

\begin{equation}
\|x-x^*\|_1 \leq C_2 \sigma_s(x)_1+ D_2\sqrt{s}\epsilon
\end{equation}

Where $\sigma_s(x)_p=\inf_{\{z:\|z\|_0\leq s\}}\|x-z\|_p$
and constants $C_1, C_2, D_1, D_2 >0$ depend only on $\delta_{2s}$.
\end{theorem}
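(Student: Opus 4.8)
The plan is to follow the now-standard Cand\`es--Romberg--Tao cone argument, tracking constants carefully so that the stated threshold $\delta_{2s}\le\tfrac{3}{4+\sqrt{6}}$ is exactly what renders the final bound finite. Write $h=x^{*}-x$ for the error vector. Everything rests on two elementary feasibility facts. First, $x$ itself is feasible for the program in (56), since $\|Ax-b\|_2=\|Ax-Y\|_2=\|\eta\|_2\le\epsilon$; optimality of $x^{*}$ therefore gives $\|x^{*}\|_1\le\|x\|_1$. Second, the triangle inequality on the tube constraint yields $\|Ah\|_2\le\|Ax^{*}-b\|_2+\|Ax-b\|_2\le 2\epsilon$.

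First I would set up the bucketing. Let $T_0$ be the index set of the $s$ largest-in-magnitude entries of $x$, so that $\|x_{T_0^{c}}\|_1=\sigma_s(x)_1$, and partition $T_0^{c}$ into blocks $T_1,T_2,\dots$ of size $s$, ordered by decreasing magnitude of the entries of $h$. Expanding $\|x^{*}\|_1=\|x+h\|_1\le\|x\|_1$ over $T_0$ and $T_0^{c}$ and applying the reverse triangle inequality gives the cone condition $\|h_{T_0^{c}}\|_1\le\|h_{T_0}\|_1+2\sigma_s(x)_1$. The shelling estimate $\|h_{T_j}\|_2\le s^{-1/2}\|h_{T_{j-1}}\|_1$, valid because each coordinate on $T_j$ is dominated by the average magnitude over $T_{j-1}$, then yields the tail bound $\sum_{j\ge 2}\|h_{T_j}\|_2\le s^{-1/2}\|h_{T_0^{c}}\|_1\le\|h_{T_0}\|_2+2s^{-1/2}\sigma_s(x)_1$.

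The core is to control $\|h_{T_{01}}\|_2$ on $T_{01}:=T_0\cup T_1$ via the RIP. I would begin from the RIP lower bound $(1-\delta_{2s})\|h_{T_{01}}\|_2^{2}\le\|Ah_{T_{01}}\|_2^{2}$ and expand $\|Ah_{T_{01}}\|_2^{2}=\langle Ah_{T_{01}},Ah\rangle-\sum_{j\ge 2}\langle Ah_{T_{01}},Ah_{T_j}\rangle$. The first term is bounded by $\sqrt{1+\delta_{2s}}\,\|h_{T_{01}}\|_2\cdot 2\epsilon$ using the RIP upper bound together with the tube constraint. For the cross terms I would invoke the standard RIP consequence that $|\langle Au,Av\rangle|\le\delta_{2s}\|u\|_2\|v\|_2$ whenever $u,v$ have disjoint supports of combined size at most $2s$; applied to $h_{T_0},h_{T_1}$ against each $h_{T_j}$ and using $\|h_{T_0}\|_2+\|h_{T_1}\|_2\le\sqrt{2}\,\|h_{T_{01}}\|_2$, this contributes at most $\sqrt{2}\,\delta_{2s}\|h_{T_{01}}\|_2\sum_{j\ge 2}\|h_{T_j}\|_2$. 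Dividing by $\|h_{T_{01}}\|_2$ and substituting the tail bound leaves a scalar inequality whose leading coefficient is $1-(\sqrt{2}+1)\delta_{2s}$.

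The main obstacle, and the only place a numerical constant is forced, is solving this inequality for $\|h_{T_{01}}\|_2$: the coefficient must be strictly positive. The crude bucketing above only delivers $\delta_{2s}<\sqrt{2}-1$; reaching the sharper stated value $\tfrac{3}{4+\sqrt{6}}\approx 0.465$ requires replacing the lossy step $\|h_{T_0}\|_2+\|h_{T_1}\|_2\le\sqrt{2}\,\|h_{T_{01}}\|_2$ by a refined (shifting-type) estimate that avoids the full $\sqrt{2}$ factor, which is precisely the delicate arithmetic I expect to be hardest. Once $\|h_{T_{01}}\|_2\le C_1' s^{-1/2}\sigma_s(x)_1+D_1'\epsilon$ is established, the $\ell_2$ conclusion (57) follows from $\|h\|_2\le\|h_{T_{01}}\|_2+\sum_{j\ge 2}\|h_{T_j}\|_2$ combined with the tail estimate, and the $\ell_1$ conclusion (58) follows from the cone condition via $\|h\|_1\le 2\|h_{T_0}\|_1+2\sigma_s(x)_1\le 2\sqrt{s}\,\|h_{T_{01}}\|_2+2\sigma_s(x)_1$, with all constants depending only on $\delta_{2s}$ as claimed.
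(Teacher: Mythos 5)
The paper does not actually prove this theorem: it is imported verbatim from the compressed sensing literature, with the citation [42--43, 45--46] (Cand\`es, Cand\`es--Romberg--Tao, Foucart, Foucart--Lai) standing in for a proof. So there is no in-paper argument to compare yours against; what can be judged is whether your reconstruction matches the arguments in those sources, and it does. Your setup --- feasibility of $x$ giving $\|x^*\|_1\le\|x\|_1$, the tube bound $\|Ah\|_2\le 2\epsilon$, the cone condition $\|h_{T_0^c}\|_1\le\|h_{T_0}\|_1+2\sigma_s(x)_1$, the shelling estimate, and the RIP-plus-near-orthogonality control of $\|h_{T_{01}}\|_2$ --- is exactly the standard Cand\`es--Romberg--Tao scheme, and the derivation of both conclusions from the bound on $\|h_{T_{01}}\|_2$ is correct.

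The one genuine gap is the constant. As you yourself note, the bucketing you describe closes only under $\delta_{2s}<\sqrt{2}-1\approx 0.414$, whereas the theorem asserts the threshold $\frac{3}{4+\sqrt{6}}\approx 0.465$. That specific number is Foucart's refinement (the paper's reference [45]), obtained by a sharper treatment of the cross terms and of the splitting of $h_{T_{01}}$; you correctly name this as the missing ingredient but do not carry it out, so your argument as written proves the theorem only with the weaker hypothesis $\delta_{2s}<\sqrt{2}-1$. Everything else (the tail estimate, the passage to the $\ell_1$ bound via $\|h_{T_0}\|_1\le\sqrt{s}\,\|h_{T_{01}}\|_2$, the dependence of all constants on $\delta_{2s}$ alone) is sound. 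Given that the paper treats this as a black-box citation, supplying the standard argument plus an explicit pointer to where the sharper constant comes from is a reasonable and honest state to leave the proof in, but be aware that it does not yet establish the statement at the threshold claimed.
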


\subsubsection{Bounded Orthonormal System}
Suppose $(\mathbf{R}^d,D,\nu)$ is a complete probability space and $\{\Psi_i\}_1^n $ is an orthonormal system of real function on $D$, i.e.,

\begin{equation}
\int_D \Psi_i(x) \Psi_j(x) \ dx=\delta_{j, k} \hspace{12pt} k, j\in \{1,2,\cdots,m\}
\end{equation}

We call system $\{\Psi_i\}_{i=1}^n $ a $Bounded$  $Orthonormal$   $System$ if it is uniformly bounded, i.e, 

\begin{equation}
\sup_{1\leq i\leq m}\|\Psi_i\|_{\infty}=\sup_{1\leq i\leq m}\sup_{x\in D}|\Psi_i(x)\ \leq K 
\end{equation}

for some $K\geq 1$

\begin{theorem}[\textbf{RIP for Bounded Orthonormal Systems [33-34]}]
Assume $(\mathbf{R}^d,D,\nu)$ is a complete probability space and $\{\Psi_i\}_{i=1}^m $ is an orthonormal bounded system with uniform bound $K\geq 1$ on $D$. Suppose the matrix $\Psi \in \mathbf{R}^{n\times m}$ with entries 

\begin{equation}
\Psi_{j, k}=\Psi_k(x_j)\hspace{16pt} k\in\{1,2,\cdots,m\} \hspace{8pt} and \hspace{8pt} j\in\{1,2,\cdots,n\}
\end{equation}

is formed by \emph{i.i.d} samples $\{x_j\}$ drawn from the measure $\nu$. If 

\begin{equation}
n\geq \frac{C}{\delta^2} (K^2 s\ln^3s\ln m)
\end{equation}

Then, with Probability at least $1-m^{-\gamma \ln^3s}$, the restricted isometry constant $\delta_s$ of $\frac{1}{\sqrt{n}}\Psi$ satisfies $\delta_s\leq \delta$ where the constant $C$ and $\gamma$ are global. We can imply (62) by the simpler lower bound 

\begin{equation}
n\geq \frac{C}{\delta^2} (K^2 s\ln^4m)
\end{equation} 

\end{theorem}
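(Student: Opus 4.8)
The plan is to recast the restricted isometry constant of $A=\frac{1}{\sqrt n}\Psi$ as the supremum of a centered empirical process over sparse unit vectors, to bound its expectation by a chaining argument, and then to promote that bound to a high-probability statement through a concentration inequality. Writing $X_j=(\Psi_1(x_j),\cdots,\Psi_m(x_j))^{\top}$ for the vector built from the $j$-th sample and $T_s=\{x:\|x\|_0\leq s,\ \|x\|_2=1\}$, one has $\|Ax\|_2^2=\frac{1}{n}\sum_{j=1}^n\langle X_j,x\rangle^2$, and the orthonormality relation (59) gives $\mathbb{E}\langle X_j,x\rangle^2=\|x\|_2^2$. Hence
\begin{equation}
\delta_s=\sup_{x\in T_s}\Big|\frac{1}{n}\sum_{j=1}^n\big(\langle X_j,x\rangle^2-\mathbb{E}\langle X_j,x\rangle^2\big)\Big|=\sup_{|S|\leq s}\Big\|\Big(\tfrac{1}{n}\sum_{j=1}^n X_jX_j^{\top}-I\Big)_S\Big\|_{2\to 2},
\end{equation}
the largest deviation of an $s\times s$ principal submatrix of the empirical second-moment matrix from the identity. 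The boundedness hypothesis (60) enters through the uniform bound $|\langle X_j,x\rangle|\leq K\sqrt{s}$ valid for every $x\in T_s$, which controls the sup-norm of the increments of the process.

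The core step is to bound $\mathbb{E}\delta_s$. I would first symmetrize with i.i.d. Rademacher signs $\varepsilon_j$ to obtain
\begin{equation}
\mathbb{E}\delta_s\leq\frac{2}{n}\,\mathbb{E}\sup_{x\in T_s}\Big|\sum_{j=1}^n\varepsilon_j\langle X_j,x\rangle^2\Big|,
\end{equation}
and then, conditioning on the samples $\{X_j\}$, control the inner Rademacher process by Dudley's entropy integral (equivalently, generic chaining). This reduces matters to covering-number estimates for $T_s$ under the data-dependent pseudometric induced by $\max_{j}|\langle X_j,x-y\rangle|$ together with the $\ell_2$-geometry of the sampled columns. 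These covering numbers must be estimated at two scales: a volumetric bound giving $\log N\leq C\,s\log(\mathrm{e}m/s)$ at coarse resolution, and a finer bound using the uniform bound $K$ at small radii. Assembling the entropy integral and resolving the resulting self-referential inequality (the quantity reappears through $\|A_S\|_{2\to2}^2\leq 1+\delta_s$) yields, up to lower-order terms, $\mathbb{E}\delta_s\leq C\,K\sqrt{s\,\ln^3 s\,\ln m/n}$, so that the sample count (62) forces $\mathbb{E}\delta_s\leq\delta/2$.

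The hardest part will be this chaining estimate: because the process is quadratic in $x$, a direct linearization is lossy, and extracting precisely the factor $\ln^3 s\,\ln m$ requires splitting the entropy integral across dyadic scales and invoking a Rudelson-type bound for $\mathbb{E}\|\sum_{j}\varepsilon_j X_jX_j^{\top}\|$ restricted to sparse supports, all while keeping the dependence on $K$ linear rather than quadratic. Finally, to pass from the expectation bound to the stated probabilistic guarantee, I would apply a concentration inequality for the supremum of an empirical process (Talagrand's inequality in Bousquet's form), whose weak-variance and uniform-boundedness parameters are again governed by $K^2 s$ and by $1+\delta_s$. Choosing the deviation level proportional to $\delta$ and substituting (62) then yields a failure probability of order $m^{-\gamma\ln^3 s}$, which completes the argument; the simpler bound (63) follows since $\ln^3 s\,\ln m\leq \ln^4 m$.
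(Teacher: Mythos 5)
The paper does not actually prove this theorem: it is imported verbatim from Rauhut [33] and Rauhut--Ward [34] and used as a black box, so there is no internal proof to compare yours against. That said, your outline is a faithful reconstruction of the argument in those references. The reduction of $\delta_s$ to $\sup_{|S|\leq s}\|(\frac{1}{n}\sum_j X_jX_j^{\top}-I)_S\|_{2\to 2}$, symmetrization, the conditional Dudley/chaining bound in the data-dependent metric $\max_j|\langle X_j,x-y\rangle|$, the two-scale covering estimate, the resolution of the self-bounding inequality through $\|A_S\|^2\leq 1+\delta_s$, and the final promotion to a tail bound via Talagrand's inequality in Bousquet's form are exactly the steps of the literature proof, and they account correctly for the $K^2 s\ln^3 s\ln m$ sample complexity and the $1-m^{-\gamma\ln^3 s}$ failure probability. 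The one place where your sketch is thinner than the real argument is the fine-scale covering bound: what is needed there is Maurey's empirical-method estimate (which is where two of the three $\ln s$ factors actually originate), not merely the pointwise bound $|\langle X_j,x\rangle|\leq K\sqrt{s}$; you gesture at this in the ``hardest part'' paragraph but do not carry it out. As a proposal rather than a complete proof, it is sound and consistent with the source the paper cites.
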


Now we can develop theory concerning sparse recovery of orthonormal bounded system. First, we express the theorem which is beneficial to find $k$ in the boundedness condition (60).

\subsubsection{Uniform Bound for Orthonormal Bounded System}
We should find upper uniform bound $K$ to evaluate the minimum required samples for sparse recovery employing $\ell_1$-minimization. Since smaller $k$ results in smaller required sample for sparse recovery, we should find the smallest upper bound for $K$ in the boundedness condition (60). 
\bigskip

Following theorem from Szeg\"{o} [47] evaluate the smallest value for specific classes of orthogonal (or orthonormal) polynomials.

\begin{theorem}
Suppose $\nu(x)$ is a non-decreasing weight function on $[a,b]$, $b$ is finite, and $\{p_k\}$ is the corresponding orthonormal polynomials. Assume $P_k(x)=\nu(x)^{\frac{1}{2}} |p_k(x)|$, then $P_k$ attain its maximum value on $[a,b]$ for $x=b$
\end{theorem}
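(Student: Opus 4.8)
The plan is to show that the global maximum of $P_k(x)^2=\nu(x)\,p_k(x)^2$ on $[a,b]$ is forced to the right endpoint by two complementary facts: the successive \emph{interior} local maxima of $P_k^2$ increase as $x$ moves to the right, and on the rightmost subinterval $P_k$ is genuinely monotone increasing up to $b$. First I would recall the interlacing of zeros: if $x_1<\dots<x_k$ are the zeros of $p_k$ in $(a,b)$, then the zeros of $p_k'$ interlace them, so the largest zero of $p_k'$ lies to the left of $x_k$. Hence on $(x_k,b]$ both $p_k'$ and $p_k$ keep one sign, so $|p_k|$ is increasing there; since $\nu$ is non-decreasing, $P_k=\sqrt{\nu}\,|p_k|$ is increasing on $(x_k,b]$ and attains its maximum over that interval at $x=b$.

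For the interior maxima I would pass to the substitution $u(x)=\sqrt{\nu(x)}\,p_k(x)$, so that $P_k^2=u^2$, and put the polynomial equation into Liouville normal (Schr\"odinger) form $u''+Q\,u=0$. The critical points of $u^2$ are the points where $u'=0$, i.e.\ the local maxima of $P_k^2$, and at such a point $E:=u^2+u'^2/Q$ equals the maximum value itself. The key computation is $E'=-(Q'/Q^2)\,u'^2$, so along the oscillatory region where $Q>0$ the sign of $E'$ is governed by the sign of $Q'$. The plan is then to verify that $\nu$ being non-decreasing makes $Q$ non-increasing (through the Pearson/Sturm--Liouville relation tying $Q$ to $\nu$ and its logarithmic derivative), which yields $E'\ge 0$; since $E$ dominates $u^2$ everywhere and coincides with it at the local maxima, the successive local maxima of $P_k^2$ form an increasing sequence.

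Combining the two parts gives the theorem: every interior local maximum is dominated by the one to its right, and the last of them is in turn dominated by the monotone rise of $P_k$ to $b$ on $(x_k,b]$, so $\max_{[a,b]}P_k=P_k(b)$.

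The main obstacle I expect is the gluing at the right end, where the oscillatory region ($Q>0$) gives way to the monotone region near $b$ in which $Q$ may change sign and the envelope $E$ ceases to be the right object; one must show that the final interior maximum in the oscillatory zone is still bounded by $P_k(b)$ rather than exceeding it, which is exactly the delicate endpoint analysis carried out in Szeg\"o. A secondary difficulty is the bare hypothesis that $\nu$ is merely \emph{non-decreasing}: the Schr\"odinger/Sonin--P\'olya machinery needs enough smoothness to form $Q$ and $Q'$, so for a general non-decreasing weight I would either approximate $\nu$ from below by smooth non-decreasing weights and pass to the limit, or replace the differential argument by a direct comparison of consecutive maxima furnished by the three-term recurrence and the Christoffel--Darboux identity.
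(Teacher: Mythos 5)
The paper does not actually prove this statement: it is quoted verbatim from Szeg\H{o} [47] (it is invoked later only through Lemma 3.14 and Corollary 3.9), so there is no in-paper argument to compare yours against; I can only assess your sketch on its own terms. Your first step --- Rolle/interlacing gives that $p_k'$ has no zero to the right of the largest zero $x_k$ of $p_k$, hence $|p_k|$ is increasing on $(x_k,b]$ and, since $\nu$ is non-decreasing, so is $P_k$ --- is correct and elementary, and it correctly localizes the problem to comparing the interior local maxima with $P_k(b)$.

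The second step, however, has a gap that is fatal in the stated generality. Writing $u=\sqrt{\nu}\,p_k$ and passing to a Liouville normal form $u''+Qu=0$ presupposes that $p_k$ satisfies a second-order linear ODE. By Bochner's classification this happens only for the classical weights (Jacobi, Laguerre, Hermite up to affine changes of variable); a generic non-decreasing weight $\nu$ --- even a $C^\infty$ one --- satisfies no Pearson equation, so the potential $Q$ you need simply does not exist, and approximating $\nu$ by smooth non-decreasing weights does not create it. The Sonin--P\'olya energy $E=u^2+u'^2/Q$ and the identity $E'=-(Q'/Q^2)u'^2$ are correct where they apply, but two further links in your chain are missing even in the classical cases: (i) the implication ``$\nu$ non-decreasing $\Rightarrow Q$ non-increasing'' is asserted, not derived, and is not true in general; and (ii) the endpoint comparison you flag is not a minor technicality but the crux --- since $E\ge u^2$ wherever $Q>0$, the monotonicity of $E$ bounds the last interior maximum by $E(b)$, not by $u(b)^2=E(b)-u'(b)^2/Q(b)$, i.e.\ the inequality runs in the wrong direction at $x=b$. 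Your proposed fallback (three-term recurrence plus Christoffel--Darboux) is the right neighborhood for a weight-generic argument --- Szeg\H{o}'s own proof of this theorem is a quadrature/extremal-property argument that never uses a differential equation --- but as written it is a placeholder rather than a proof. As it stands the proposal establishes the claim at most for classical weights with additional unverified hypotheses on $Q$, not for an arbitrary non-decreasing weight.
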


\begin{remark}
Theorem 3.5 can be extended for any subinterval $[x_0,b]$ of $[a,b]$ if $w(x)$ is non-decreasing. 
\end{remark}

\begin{corollary}
Consider general probability measure $d\zeta(x)=\xi(x) dx$ 
on [-1,1] and corresponding orthonormal polynomials $\{p_k\}$. Suppose $P_k(x)=\xi(x)^{\frac{1}{2}} |p_k(x)|$, then $P_k$ attain its maximum value on $[-1,1]$ for $x=1$.
\end{corollary}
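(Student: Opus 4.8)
The plan is to obtain the statement as a direct specialization of Theorem 3.5 (Szeg\"o) to the fixed interval $[a,b]=[-1,1]$. I would set $\nu(x)=\xi(x)$, where $\xi$ is the density of the probability measure $d\zeta(x)=\xi(x)\,dx$, and let $\{p_k\}$ be exactly the orthonormal polynomials associated with this weight, so that the object produced by Theorem 3.5, namely $\nu(x)^{1/2}|p_k(x)|$, coincides verbatim with the $P_k(x)=\xi(x)^{1/2}|p_k(x)|$ of the statement. The right endpoint $b=1$ is finite, so the structural hypotheses of Theorem 3.5 are in force, and its conclusion is precisely that $P_k$ attains its maximum on $[-1,1]$ at the right endpoint $x=b=1$. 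In this reading there is no change of variables to perform, since Theorem 3.5 is already stated for an arbitrary finite-endpoint interval.

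Before invoking the theorem I would dispatch one routine point: the normalization inherent in a \emph{probability} measure is irrelevant to the location of the maximum. If $w=c\,\xi$ for a positive constant $c$, the associated orthonormal polynomials rescale as $c^{-1/2}p_k$, and hence $w(x)^{1/2}\,|c^{-1/2}p_k(x)|=\xi(x)^{1/2}|p_k(x)|=P_k(x)$; that is, $P_k$ is invariant under positive rescaling of the weight. Consequently the passage from Szeg\"o's unnormalized ``weight function'' to the normalized density $\xi$ changes neither $P_k$ nor its argmax, and Theorem 3.5 may be applied with $\xi$ in place of $\nu$ without further adjustment.

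The one genuine issue, and the step I expect to be the main obstacle, is the monotonicity requirement hidden in the word \emph{general}. Theorem 3.5 places the maximum at the right endpoint only for a non-decreasing weight; for a density that is not monotone the argmax can migrate to the left endpoint, as the reflection $x\mapsto -x$ turns a decreasing weight into an increasing one and moves the maximizer from $1$ to $-1$. I would therefore make explicit that $\xi$ is taken non-decreasing on $[-1,1]$, which is the case of interest here, where the standard choice is the constant (uniform / Legendre) weight. A constant density is weakly non-decreasing, so Theorem 3.5 applies and yields the maximum at $x=1$, with the symmetric point $x=-1$ an equal maximizer in that degenerate situation. Finally, should one wish to restrict attention to a subinterval $[x_0,1]\subset[-1,1]$, Remark 3.6 delivers the identical conclusion with $x=1$ still the maximizer, which is what I would cite to close the argument consistently with the stated hypotheses.
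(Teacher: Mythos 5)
Your proposal is correct and follows the same route the paper intends: the corollary is a direct specialization of Theorem 3.5 to $[a,b]=[-1,1]$ with $\nu=\xi$, and the paper itself supplies no further argument beyond that. Your normalization remark (that replacing $\xi$ by $c\,\xi$ rescales $p_k$ by $c^{-1/2}$ and leaves $P_k$ unchanged) is a worthwhile detail, and you are right that the word ``general'' overstates the result --- Theorem 3.5 requires the weight to be non-decreasing, and without that hypothesis a reflection argument moves the maximizer to $x=-1$, so the corollary as printed genuinely needs the monotonicity assumption you make explicit.
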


\begin{remark}
The $L_{\infty}$-norm of orthonormal Legendre polynomial $L_k(x)$ is $\|L_k\|_{\infty}=|L_k(1)|=|L_k(-1)|=(k+\frac{1}{2})^{\frac{1}{2}}$, hence

\begin{equation}
\sup_{1\leq i\leq n}\|L_i\|_{\infty}=\sup_{1\leq i\leq n}\sup_{x\in D}|L_i(x)| \leq (n+\frac{1}{2})^{\frac{1}{2}} 
\end{equation}

\end{remark}

\begin{theorem}[\textbf{Recovery of Sparse Legendre Basis [33-35]}]
Consider the Legendre matrix $\Phi_{n\times m}$ with entries $\Phi_{j , k}=L_{k-1}(x_j)$ where $L_i(x)$ is the $i^{th}$ orthonormal Legendre polynomial in $[-1,1]$ and $n$ sampling points $\{x_j\}$ are drawn independently at random from the Chebyshev measure. Furthermore, consider the diagonal pre-conditioner matrix $W$ whose entries are $W(j,j)={(\frac{\pi}{2})}^\frac{1}{2}()1-x_j^2)^\frac{1}{4}$, and assume

\begin{equation}
n\geq C s\ln^3 s\ln{m}
\end{equation} 

Then with probability at least $1-m^{-\gamma\ln^3 s}$, the following result can be established for all polynomial $f(x)=\sum_{k=0}^{m-1}{c_k L_k(x)}$.

\bigskip
Assume noisy sample values $Y=\Phi c+\eta=(f(x_1)+\eta_1,\cdots,f(x_n)+\eta_n)$ are observed and $\|W\eta\|_{\infty}\leq \epsilon$ Then the best $s$-term or $s$-sparse approximation $c^*$ of the coefficient vector $c=(c_0,c_1,\cdots,c_{m-1})$ can be recovered by the following $\ell_1$-minimization problem:

\begin{equation}
\min_c {\|c\|_1}\hspace{8pt} \mbox{Subject to}\hspace{8pt} \|W\Phi c-WY\|_2\leq \sqrt{n}\epsilon \hspace{8pt}
\end{equation}

such that

\begin{equation}
\|c-c^*\|_2 \leq C_1\frac{\sigma_s(c)_1}{\sqrt{s}}+ D_1\epsilon
\end{equation}

and
\begin{equation}
\|c-c^*\|_1 \leq C_2\sigma_s(c)_1+ D_2\sqrt{s}\epsilon
\end{equation}

The constants $C_1, C_2, D_1, D_2 >0$ depend only on $\delta_{2s}$.

\end{theorem}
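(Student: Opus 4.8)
\section*{Proof proposal}

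The plan is to reduce the assertion to the two recovery guarantees already in hand by exhibiting the \emph{preconditioned} Legendre system as a bounded orthonormal system in the sense of the system $\{\Psi_i\}$ of the preceding subsection. The obstruction to invoking the theorem on RIP for Bounded Orthonormal Systems directly is that the orthonormal Legendre polynomials are not uniformly bounded: the Remark recording $\|L_k\|_\infty=(k+\tfrac{1}{2})^{1/2}$ shows the sup-norm grows with the degree, so no degree-independent constant $K$ exists, and the naive estimate degrades precisely near $x=\pm1$ where, by the Szeg\H{o} maximum estimate and its corollary on $[-1,1]$, the polynomials peak. Preconditioning by $(1-x^2)^{1/4}$, together with sampling from the Chebyshev measure, is the device that neutralises this endpoint growth.

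First I would introduce the preconditioned functions
\begin{equation}
Q_k(x)=\left(\frac{\pi}{2}\right)^{1/2}(1-x^2)^{1/4}L_k(x),\qquad k=0,1,\dots,m-1,
\end{equation}
and verify that $\{Q_k\}$ is orthonormal with respect to the Chebyshev probability measure $d\mu(x)=\pi^{-1}(1-x^2)^{-1/2}\,dx$. This is a one-line change-of-weight computation: the factor $(1-x^2)^{1/2}$ produced by the product $Q_iQ_j$ cancels the Chebyshev weight $(1-x^2)^{-1/2}$, so the integral reduces to $\int_{-1}^1 L_iL_j\,dx=\delta_{ij}$, the constant $(\pi/2)^{1/2}$ being calibrated to make the normalisation exact. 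With this choice the sampling matrix $\frac{1}{\sqrt n}W\Phi$ has entries $\frac{1}{\sqrt n}Q_{k-1}(x_j)$, so it is exactly the structured random matrix associated with the system $\{Q_k\}$ when the points $\{x_j\}$ are drawn i.i.d.\ from $\nu=\mu$.

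The crucial analytic step, which I expect to be the main obstacle, is the degree-independent uniform bound $\|Q_k\|_\infty\le K$ with $K\ge1$ a universal constant. This does not follow from an endpoint estimate alone, because the Chebyshev measure concentrates samples near $x=\pm1$, exactly where $L_k$ is largest; the weight $(1-x^2)^{1/4}$ must cancel this blow-up \emph{uniformly in} $k$. I would obtain it from the Szeg\H{o}-type pointwise asymptotics for Legendre polynomials (the endpoint refinement of the cited maximum estimate), which yield $(1-x^2)^{1/4}|L_k(x)|\le c$ for all $x\in[-1,1]$ and all $k$, whence $K=(\pi/2)^{1/2}c$ works. Once this bound is secured, the theorem on RIP for Bounded Orthonormal Systems applies verbatim: under $n\ge C s\ln^3 s\ln m$ (with $K^2$ absorbed into $C$), the restricted isometry constant $\delta_s$ of $\frac{1}{\sqrt n}W\Phi$ satisfies $\delta_s\le\delta$ with probability at least $1-m^{-\gamma\ln^3 s}$. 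I would fix $\delta$ small enough that $\delta_{2s}\le 3/(4+\sqrt6)$, which is the hypothesis of the Sparse Recovery for RIP Matrices theorem.

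Finally I would transport the noisy recovery problem into this normalised RIP setting. Left-multiplying $Y=\Phi c+\eta$ by $W$ gives $WY=W\Phi c+W\eta$, and dividing the constraint $\|W\Phi c-WY\|_2\le\sqrt n\,\epsilon$ by $\sqrt n$ puts it in the form $\|Ac-b\|_2\le\epsilon$ with $A=\frac{1}{\sqrt n}W\Phi$ and $b=\frac{1}{\sqrt n}WY$. The hypothesis $\|W\eta\|_\infty\le\epsilon$ gives $\|\tfrac{1}{\sqrt n}W\eta\|_2\le\frac{1}{\sqrt n}\sqrt n\,\|W\eta\|_\infty=\epsilon$, so the true coefficient vector is feasible and its residual lies within tolerance $\epsilon$, matching the noise model $Y=Ax+\eta$, $\|\eta\|_2\le\epsilon$ of the Sparse Recovery for RIP Matrices theorem. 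That theorem then applies to $A$ and $b$ and returns exactly the two stated bounds, with constants $C_1,C_2,D_1,D_2$ depending only on $\delta_{2s}$. The entire argument is conditional only on the RIP event, which carries the claimed probability $1-m^{-\gamma\ln^3 s}$.
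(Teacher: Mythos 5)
Your proposal is correct and follows essentially the same route as the paper: precondition to $Q_k=(\pi/2)^{1/2}(1-x^2)^{1/4}L_k$, check orthonormality with respect to the Chebyshev measure, invoke the Szeg\H{o} bound $(1-x^2)^{1/4}|L_k(x)|<2\pi^{-1/2}$ (the paper's Lemma 3.14) to get the uniform bound $K=\sqrt{2}$, apply the RIP theorem for bounded orthonormal systems to $\tfrac{1}{\sqrt n}W\Phi$, and finish with the noisy RIP recovery theorem via $\tfrac{1}{\sqrt n}\|W\eta\|_2\le\|W\eta\|_\infty\le\epsilon$. The only difference is cosmetic: you leave the Szeg\H{o} constant unnamed where the paper cites it explicitly.
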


\begin{remark}
\

\begin{enumerate}
\item
In the absence of noise $(\epsilon=0)$ and if c is exactly $s$-sparse$(\sigma_s(x)_1=0)$ the exact recovery can be attained via

\begin{equation}
\min_c {\|c\|_1}\hspace{8pt} \mbox{Subject to}\hspace{8pt} \Phi c=Y
\end{equation}

\item
It can be shown that $ \|W\eta\|_{\infty}\leq \epsilon $ if $ \|\eta\|_{\infty}\leq \epsilon $.
\end{enumerate}
\end{remark}

\begin{proof}
To prove theorem 3.12 we employ following Lemma.

\begin{lemma}[\textbf{Szeg\"{o} [34-35, 47, ]}]
***For orthonormal Legendre polynomials $\{L_k(x)\}_{k\geq 1}$ we have
$(1-x^2)^{\frac{1}{4}} |L_n(x)|<{(\frac{2e}{\pi})}^{\frac{1}{2}}(\frac{2n+1}{2n+2})^{\frac{1}{2}}<2\pi^\frac{-1}{2} $ for all $n$ and $x\in[-1,1]$. Moreover, the constant $2\pi^{\frac{-1}{2}}$ cannot be substituted with a smaller value in the inequality.
\end{lemma}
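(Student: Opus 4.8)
This is a classical inequality of Szeg\"{o}, so the plan is to reduce it to a pointwise estimate for the ordinary Legendre polynomials and then control that estimate through an integral representation. First I would write the orthonormal polynomials as $L_n=\sqrt{(2n+1)/2}\,P_n$, where $P_n$ is the classical Legendre polynomial normalised by $P_n(1)=1$; this matches the value $\|L_n\|_\infty=|L_n(1)|=(n+\tfrac12)^{1/2}$ recorded in Remark 3.11. Substituting $x=\cos\theta$ turns $(1-x^2)^{1/4}$ into $(\sin\theta)^{1/2}$, so the assertion becomes a uniform bound of $v_n(\theta):=(\sin\theta)^{1/2}|P_n(\cos\theta)|$ by a quantity of order $n^{-1/2}$ for $\theta\in(0,\pi)$.

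For the main estimate I would invoke Laplace's integral representation $P_n(\cos\theta)=\frac1\pi\int_0^\pi(\cos\theta+i\sin\theta\cos\phi)^n\,d\phi$. Taking moduli and using $|\cos\theta+i\sin\theta\cos\phi|^2=1-\sin^2\theta\sin^2\phi$ gives
\[
|P_n(\cos\theta)|\le\frac2\pi\int_0^{\pi/2}\bigl(1-\sin^2\theta\sin^2\phi\bigr)^{n/2}\,d\phi .
\]
Applying $(1-u)^{n/2}\le e^{-nu/2}$ and evaluating the resulting integral, which equals $\tfrac\pi2\,e^{-c/2}I_0(c/2)$ with $c=\tfrac n2\sin^2\theta$ and $I_0$ the modified Bessel function, reduces the problem to maximising $s^{1/2}e^{-\tau}I_0(\tau)$, $\tau=\tfrac n4 s^2$, over $s=\sin\theta\in[0,1]$; the exponential bound $\ln(1-u)\le-u$ is the natural source of the factor $e$ in $(2e/\pi)^{1/2}$. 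A cleaner route to the optimal constant is the classical trigonometric estimate $|P_n(\cos\theta)|\le\bigl(\tfrac{2}{\pi n\sin\theta}\bigr)^{1/2}$ on $(0,\pi)$: multiplying by $(\sin\theta)^{1/2}$ and by the normalisation $\sqrt{n+\tfrac12}$ yields $(1-x^2)^{1/4}|L_n(x)|\le\bigl(\tfrac{2n+1}{\pi n}\bigr)^{1/2}$, which decreases to $(2/\pi)^{1/2}$.

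Sharpness would follow from the bulk asymptotics $P_n(\cos\theta)\sim\sqrt{\tfrac{2}{\pi n\sin\theta}}\,\cos\!\bigl((n+\tfrac12)\theta-\tfrac\pi4\bigr)$, which show that near $\theta=\tfrac\pi2$ the quantity $(1-x^2)^{1/4}|L_n|$ approaches $(2/\pi)^{1/2}$, so no smaller limiting constant is admissible. The hard part will be making the bound uniform up to the endpoints $\theta\to0,\pi$, where the exponential/Gaussian estimate degrades because the crude majorant blows up as $\sin\theta\to0$; there one must pass to the Mehler--Heine regime, in which $v_n(\theta)\approx\sqrt{\theta}\,|J_0((n+\tfrac12)\theta)|$, and verify that $\sup_{z\ge0}\sqrt{z}\,|J_0(z)|$ stays below the bulk value. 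I would finally flag that the displayed chain is numerically inconsistent as stated: for every $n\ge1$ the middle quantity $(2e/\pi)^{1/2}\bigl(\tfrac{2n+1}{2n+2}\bigr)^{1/2}$ already exceeds $2\pi^{-1/2}$, whereas the genuine sharp constant is $(2/\pi)^{1/2}=\sqrt2\,\pi^{-1/2}$, which is presumably the value intended in the final inequality.
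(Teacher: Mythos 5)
The first thing to note is that the paper does not prove this lemma at all: it is imported verbatim from Szeg\"{o} [47] and Rauhut--Ward [34--35] (hence the bracketed citation in its title) and is used inside the proof of Theorem 3.12 only through the consequence $\|Q_k\|_{\infty}=\|(\pi/2)^{1/2}(1-x^2)^{1/4}L_k\|_{\infty}\leq \sqrt{2}$. So there is no internal argument to compare yours against, and anything you write is additional material. Your outline is the standard classical route (Laplace's integral representation, the Bernstein-type bound $|P_n(\cos\theta)|\leq (2/(\pi n\sin\theta))^{1/2}$, bulk asymptotics for sharpness, Mehler--Heine near the endpoints), and the reduction $L_n=\sqrt{n+\tfrac12}\,P_n$ is the one consistent with Remark 3.11. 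As written, though, it is a plan rather than a proof: the endpoint uniformity you defer to the Mehler--Heine regime is precisely the content of Szeg\"{o}'s Theorem 7.3.3, and the cleanest complete argument is simply to quote that theorem, which holds on all of $(0,\pi)$ and immediately gives $(1-x^2)^{1/4}|L_n(x)|\leq ((2n+1)/(\pi n))^{1/2}\leq (3/\pi)^{1/2}<2\pi^{-1/2}$ with no separate endpoint analysis needed for the (non-sharp) uniform bound.

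Your observation that the displayed chain is numerically inconsistent is correct and worth recording: $(2e/\pi)^{1/2}\bigl(\tfrac{2n+1}{2n+2}\bigr)^{1/2}\geq (2e/\pi)^{1/2}(3/4)^{1/2}\approx 1.139 > 2\pi^{-1/2}\approx 1.128$ already at $n=1$, so the second inequality of the chain fails for every $n$; the factor $e$ is an artefact of the crude majorization $(1-u)^{n/2}\leq e^{-nu/2}$ and should not survive into the final statement. Be careful, however, with your conclusion that the sharp constant is $(2/\pi)^{1/2}$. That is correct for the normalization $\int_{-1}^{1}L_n^2\,dx=1$ fixed in Remark 3.11, but Rauhut--Ward normalize against the probability measure $dx/2$, i.e.\ $L_n=\sqrt{2n+1}\,P_n$, for which Lorch's refinement $|P_n(\cos\theta)|<(2/(\pi(n+\tfrac12)\sin\theta))^{1/2}$ gives exactly the uniform bound $2\pi^{-1/2}$, sharp by the same bulk asymptotics; and it is this version the paper silently relies on when it concludes $\|Q_k\|_{\infty}\leq\sqrt{2}$ and $K=\sqrt{2}$ in the proof of Theorem 3.12. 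So the genuine defect you have uncovered is a normalization mismatch among Remark 3.11, Lemma 3.14 and the $K=\sqrt{2}$ computation, together with the spurious middle term, rather than a wrong sharp constant per se.
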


Now, consider the functions $Q_k(x)=(\frac{\pi}{2})^{\frac{1}{2}}(1-x^2)^{\frac{1}{4}} (L_k(x))$ matrix $\Psi$ whose entries are $\Psi_{j,k}=Q_{k-1}(x_j) $, we can readily result that $\Psi=W\Phi$. So we have:

\begin{enumerate}
\item
Since $\int_{-1}^1 (\pi)^{-1} Q_k(x) Q_j(x) (1-x^2)^{\frac{-1}{2}} dx=0 $, the system $\{Q_k\}$ is an orthonormal system.

\item
Applying Lemma 3.14, the orthonormal system $\{Q_k\}$ is bounded since $\|Q_k\|_{\infty} \leq \sqrt{2}$.

\end{enumerate}

Hence, the system $\{ Q_k\}$ is orthonormal bounded system with uniform bound $K=\sqrt{2}$. Regarding  Theorem 3.7 when
$n\geq \frac{C}{\delta^2} (K^2 s\ln^3s\ln m)$ the matrix $\frac{1}{\sqrt{n}}\Psi=\frac{1}{\sqrt{n}}W\Phi$ has the restricted isometry property. Applying Theorem 3.6 for noisy sample measurements $\frac{1}{\sqrt{n}}WY$ where $Y=(f(x_1)+\eta_1,\cdots,f(x_n)+\eta_n)$, and considering the fact that $\frac{1}{\sqrt{n}} \|W\eta\|_2 \leq \|W\eta\|_{\infty}\leq \epsilon$, Theorem 3.12 is established.  

\end{proof}

\begin{remark}
for $n$-dimensional orthonormal Legendre polynomials one can show $K=2^{\frac{n}{2}}$ and apply Theorem 3.7 with $K=2^{\frac{n}{2}}$ to obtain the theorem the same as Theorem 3.12 for $n$-dimensional Legendre polynomials.
\end{remark}

\begin{proof}
Assume $\Phi_{\iota}=\prod_{j=1}^n \Phi_{\iota_j}$ and define $Q_{\iota}=\prod_{j=1}^n Q_{\iota_j}$, so $\{Q_{\iota}\}$  are orthonormal polynomials. In addition, according to Theorem 3.8, one can show that $\
\|Q_{\iota}\|_{\infty}=\prod_{j=1}^n \|Q_{\iota_j}\|_{\infty}$ so $K=\|Q_{\iota}\|_{\infty}=2^{\frac{n}{2}}$
\end{proof}

\begin{remark}[\textbf{Generalization of Remark 3.15}]
Remark 3.15 can be generalized for the systems of arbitrary $n$-dimensional orthonormal polynomials. 
\end{remark}

\begin{remark}
Theorem 3.12 can be extended for arbitrary weight function $\vartheta$ on $[-1,1]$ and the orthonormal polynomial $\{p_k\}$ with respect to $\vartheta.$ Indeed, we can generalize Theorem 3.12 provided we have mild continuity condition on $\vartheta$, and therefore, the sparse recovery result of Theorem 3.12 can be extended to more general cases, while in all cases, the sampling points are drawn from the Chebyshev measure.
\end{remark}

The following lemma is the main idea to generalize Theorem 3.12.

\begin{lemma}
Assume $\vartheta$ is a weight function on $[-1,1]$ and define $g_\vartheta (\alpha)=\vartheta(\cos \alpha)|\sin\alpha|.$ In addition, assume that $g_\vartheta$ satisfies \emph{Lipschitz-Dini} condition, i.e., 

\begin{equation}
|g_\vartheta(\alpha+\epsilon)-g_\vartheta(\alpha)|\leq M |\ln(\epsilon)|^{-(1+\lambda)} \hspace{8pt} \forall \alpha \in[0,2\pi) , \epsilon >0 
\end{equation}

for some constants $M , \lambda >0.$ Suppose $\{p_k ,k\in{\mathbf{N}_0\}}$ where $\mathbf{N}_0=\mathbf{N}\cup\{0\}$ be the respective orthonormal polynomial system. In this condition, there is a positive constant $K_\vartheta$ which depends only on the weight function $\vartheta$ and 

\begin{equation}
(1-x^2)^{\frac{1}{4}} \vartheta(x)^{\frac{1}{2}} |p_j(x)| \leq K_{\vartheta} \hspace{8pt} \forall j\in \mathbf{N},x\in [-1,1]
\end{equation}

\end{lemma}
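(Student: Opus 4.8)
The plan is to recognize that the quantity to be bounded is precisely the Szeg\"o-normalized orthonormal polynomial, and then to invoke Szeg\"o's classical theory of orthogonal polynomials on $[-1,1]$ via transfer to the unit circle. Under the substitution $x=\cos\theta$ one has $(1-x^2)^{1/4}=|\sin\theta|^{1/2}$, so the left-hand side of the asserted inequality becomes
\begin{equation}
(1-x^2)^{1/4}\,\vartheta(x)^{1/2}\,|p_j(x)| = \bigl(\vartheta(\cos\theta)|\sin\theta|\bigr)^{1/2}|p_j(\cos\theta)| = g_\vartheta(\theta)^{1/2}\,|p_j(\cos\theta)|.
\end{equation}
Hence it suffices to bound $g_\vartheta(\theta)^{1/2}|p_j(\cos\theta)|$ uniformly in $\theta\in[0,\pi]$ and $j\in\mathbf{N}$.

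First I would check that the Lipschitz-Dini hypothesis forces the ordinary Dini condition on the modulus of continuity $\omega$ of $g_\vartheta$. Writing $\omega(\epsilon)\le M|\ln\epsilon|^{-(1+\lambda)}$ and substituting $u=-\ln t$ gives
\begin{equation}
\int_0 \frac{\omega(t)}{t}\,dt \le M\int^{\infty} u^{-(1+\lambda)}\,du < \infty,
\end{equation}
which is finite exactly because $\lambda>0$; this integrability is what the classical asymptotic argument requires. Since $g_\vartheta$ is moreover continuous and the Szeg\"o condition $\int_{-\pi}^{\pi}\log g_\vartheta\,d\theta>-\infty$ holds, the Szeg\"o function
\begin{equation}
D(z)=\exp\left(\frac{1}{4\pi}\int_{-\pi}^{\pi}\frac{e^{i\tau}+z}{e^{i\tau}-z}\log g_\vartheta(\tau)\,d\tau\right),\qquad |z|<1,
\end{equation}
extends continuously and zero-free to $|z|=1$ with boundary values satisfying $|D(e^{i\theta})|^2=g_\vartheta(\theta)$.

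Then I would pass to the unit circle. With $z=e^{i\theta}$ the symmetric weight $g_\vartheta(\theta)\,d\theta$ on $|z|=1$ has orthonormal polynomials $\varphi_n$, and Szeg\"o's connection formula writes $p_j(\cos\theta)$ as a bounded linear combination of $\varphi_{2j}(e^{i\theta})$ and its conjugate. The decisive ingredient is Szeg\"o's uniform asymptotic, valid under the Dini-Lipschitz modulus of continuity,
\begin{equation}
\varphi_{2j}(e^{i\theta}) = \frac{e^{2ij\theta}}{\overline{D(e^{i\theta})}}\,(1+o(1))\qquad\text{uniformly in }\theta,
\end{equation}
from which $g_\vartheta(\theta)^{1/2}|\varphi_{2j}(e^{i\theta})| = |D(e^{i\theta})|\,|\varphi_{2j}(e^{i\theta})|\to 1$ uniformly, and is therefore bounded. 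Feeding this through the connection formula and absorbing the bounded trigonometric factors yields a uniform bound on $g_\vartheta(\theta)^{1/2}|p_j(\cos\theta)|$ for all large $j$; the finitely many remaining low orders are handled trivially, since $g_\vartheta$ is bounded and each $p_j$ is a fixed polynomial. Taking $K_\vartheta$ to be this bound, which depends only on $M$, $\lambda$ and $\int\log g_\vartheta$, hence only on $\vartheta$, gives the claim.

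The main obstacle is the decisive step: establishing the \emph{uniform} (pointwise) asymptotic together with the boundary regularity of $D$ up to $|z|=1$. A weaker hypothesis would still yield the mean (i.e.\ $L^2$) Szeg\"o asymptotics but not the uniform version needed here, and the behavior near the endpoints $\theta=0,\pi$ (equivalently $x=\pm1$), where $g_\vartheta$ vanishes, demands exactly the finer control supplied by the Lipschitz-Dini condition. This is the heart of Szeg\"o's argument, and it is for this reason that the hypothesis is imposed.
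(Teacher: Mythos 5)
The paper does not actually prove this lemma: it is stated as imported background (it is Szeg\"o's Theorem 12.1.4, in the form quoted by Rauhut and Ward [34]), so there is no in-paper argument to compare yours against. Your sketch reconstructs the standard Szeg\"o route: substitute $x=\cos\theta$ to identify the left-hand side with $g_\vartheta(\theta)^{1/2}|p_j(\cos\theta)|$, check that the Lipschitz--Dini hypothesis gives Dini integrability of the modulus of continuity (your change of variables $u=-\ln t$ is correct and is exactly where $\lambda>0$ is used), pass to the unit circle via the Szeg\"o function $D$ and the connection formula between $p_j$ and $\varphi_{2j}$, and conclude from uniform boundedness of $|D(e^{i\theta})|\,|\varphi_{2j}(e^{i\theta})|$. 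That is indeed how the result is proved in [47], and the reduction steps are sound.

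The weak point is the step you yourself flag as decisive. The uniform asymptotic $\varphi_n(e^{i\theta})=e^{in\theta}\overline{D(e^{i\theta})}^{-1}(1+o(1))$ is normally proved for circle weights bounded below by a positive constant, whereas $g_\vartheta(\theta)=\vartheta(\cos\theta)|\sin\theta|$ necessarily vanishes at $\theta=0,\pi$; there $D(e^{i\theta})=0$ and the displayed formula is meaningless as written. The statement one actually needs is $\overline{D(e^{i\theta})}\varphi_n(e^{i\theta})-e^{in\theta}\to 0$ uniformly up to and including the endpoints (equivalently, uniform boundedness of $g_\vartheta^{1/2}|\varphi_n|$ on all of $[0,2\pi)$), and establishing this for a weight with boundary zeros is precisely the content of Szeg\"o's Theorems 12.1.3--12.1.4 rather than a routine consequence of the bounded-below case. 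So your proposal is a correct outline that correctly locates the difficulty, but it defers rather than supplies the key estimate; as a self-contained proof it is incomplete, and as a citation-based proof it should simply invoke [47, Thm.\ 12.1.4], which is what the paper and [34] implicitly do.
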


\begin{remark}
The Jacobi Polynomials $p_n^{(\alpha , \beta)}$, which are orthonormal with respect to the weight function $\vartheta(x)=(1-x)^\alpha(1+x)^\beta $, satisfy the \emph{Lipschits-Dini} condition for $\alpha ,\beta \geq \frac{-1}{2}$. In special cases, one can attain the Legendre polynomials with $\alpha=\beta=0$ and Chebyshev polynomials with $\alpha=\beta=\frac{-1}{2}$ [34].
\end{remark}

We entitle the polynomials satisfying Lemma 3.18 \emph{Lipschits-Dini} orthonormal system. So for \emph{Lipschits-Dini} orthonormal system, , Theorem 3.15 holds which expresses RIP-approximate for \emph{Lipschits-Dini} orthonormal system.

\begin{theorem}[\textbf{RIP for \emph{Lipschits-Dini} Orthonormal Systems[34]}]
Consider a positive weight function $\vartheta$ on $[-1,1]$ obeying Lemma 3.18, and assume the \emph{Lipschits-Dini} orthonormal system $\{P_k\}$ with respect to probability measure $d\upsilon(x)=c_\vartheta \vartheta(x)dx$ on $[-1,1]$ where $ c_\vartheta =(\int_1^1 \vartheta(x)\,dx)^{-1} $. 

\bigskip
Now, consider the diagonal pre-conditioner matrix $W$ whose entries are $W(j,j)=(c_\vartheta\pi)^{\frac{1}{2}}(1-x_j^2)^{\frac{1}{4}}\vartheta(x_j)^{\frac{1}{2}}$ and assume $\Phi$ is the matrix whose entries are $\Phi(j,k)=p_{k-1}(x_j) $. Assume that $n$ sampling points $\{x_j\}$ are chosen independently at random from the Chebyshev measure, and consider the $\Psi_{n\times m}=W\Phi$. Finally, assume

\begin{equation}
n\geq \frac{C}{\delta^2} (K^2 s\ln^3s\ln m)
\end{equation}

Then, with probability exceeding $1-m^{-\gamma \ln^3s}$the restricted isometry constant of the matrix $\frac{1}{\sqrt{n}}\Psi$ satisfies $\delta_s\leq \delta$. The constant $C$ depends only on weight function $\vartheta$ and constant $\gamma$ is global. 

\end{theorem}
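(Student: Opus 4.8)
The plan is to follow verbatim the template used to prove Theorem 3.12, substituting the Legendre-specific Szeg\"{o} estimate of Lemma 3.14 by the general Lipschitz-Dini bound of Lemma 3.18. The idea is to precondition the polynomial system so that it becomes a \emph{bounded orthonormal system} with respect to the Chebyshev measure, and then invoke Theorem 3.7 directly; since the conclusion here is only the RIP statement (and not the full recovery estimates), no appeal to Theorem 3.6 is needed.

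First I would introduce the preconditioned functions
\[
Q_k(x) = (c_\vartheta \pi)^{\frac{1}{2}}(1-x^2)^{\frac{1}{4}}\vartheta(x)^{\frac{1}{2}} p_{k-1}(x),
\]
and note that the matrix with entries $Q_k(x_j)$ is exactly $\Psi = W\Phi$, because $W(j,j)$ is precisely the common factor $(c_\vartheta\pi)^{\frac{1}{2}}(1-x_j^2)^{\frac{1}{4}}\vartheta(x_j)^{\frac{1}{2}}$ while $\Phi(j,k) = p_{k-1}(x_j)$. This identification is the whole point of choosing $W$ with that specific diagonal.

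Next I would check the two hypotheses of Theorem 3.7 for $\{Q_k\}$ against the Chebyshev measure $d\mu(x) = \pi^{-1}(1-x^2)^{-\frac{1}{2}}\,dx$. For orthonormality the weights telescope: the factor $(1-x^2)^{\frac{1}{2}}$ produced by $Q_jQ_k$ cancels the $(1-x^2)^{-\frac{1}{2}}$ in the Chebyshev density, leaving $c_\vartheta\int_{-1}^1 \vartheta(x)\,p_{j-1}(x)p_{k-1}(x)\,dx = \delta_{jk}$ by the defining orthonormality of $\{p_k\}$ with respect to $d\upsilon = c_\vartheta\vartheta\,dx$. For boundedness, Lemma 3.18 yields $(1-x^2)^{\frac{1}{4}}\vartheta(x)^{\frac{1}{2}}|p_{k-1}(x)| \leq K_\vartheta$, so $\|Q_k\|_\infty \leq (c_\vartheta\pi)^{\frac{1}{2}}K_\vartheta =: K$, a uniform bound depending only on $\vartheta$.

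Having shown that $\{Q_k\}$ is a bounded orthonormal system with uniform bound $K$ whose underlying measure is exactly the Chebyshev measure from which the $\{x_j\}$ are sampled, I would apply Theorem 3.7 verbatim: provided $n \geq \frac{C}{\delta^2}(K^2 s\ln^3 s\ln m)$, the matrix $\frac{1}{\sqrt{n}}\Psi = \frac{1}{\sqrt{n}}W\Phi$ has restricted isometry constant $\delta_s \leq \delta$ with probability at least $1 - m^{-\gamma\ln^3 s}$, and the dependence of $C$ on $\vartheta$ enters solely through $K$ while $\gamma$ stays global. The genuinely hard work sits entirely inside Lemma 3.18 --- the uniform Szeg\"{o}-type estimate valid under the Lipschitz-Dini condition --- which I am here permitted to assume; granting it, the remaining steps reduce to the one-line orthonormality cancellation and a direct citation of Theorem 3.7, so I anticipate no real obstacle beyond tracking the constants.
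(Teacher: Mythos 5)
Your proposal is correct and follows essentially the same route as the paper's own proof: precondition to $Q_k=(c_\vartheta\pi)^{1/2}(1-x^2)^{1/4}\vartheta(x)^{1/2}p_{k-1}$, verify via the telescoping of weights that $\{Q_k\}$ is orthonormal with respect to the Chebyshev measure and uniformly bounded by $(c_\vartheta\pi)^{1/2}K_\vartheta$ using Lemma 3.18, and then invoke Theorem 3.7. (Your write-up is in fact slightly cleaner, since the paper's displayed orthonormality integral should evaluate to $\delta_{jk}$ rather than $0$, a slip you avoid.)
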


\begin{proof}
Define $Q_k(x)=(c_\vartheta\pi)^{\frac{1}{2}}(1-x_j^2)^{\frac{1}{4}}\vartheta(x_j)^{\frac{1}{2}}p_k(x)\  \mbox{and}\ \Psi_{j,k}=Q_{k-1}(x_j)$. According to Lemma 1.16 the system $\{Q_k(x)\}$ is uniformly bounded on $[-1,1]$ and $\|Q_k\|_{\infty}\leq (c_\vartheta\pi)^{\frac{1}{2}}K_\vartheta$. 

\bigskip
Furthermore, $\int_{-1}^1 (\pi)^{-1} Q_k(x) Q_j(x) (1-x^2)^{\frac{-1}{2}} dx$ $=\int_{-1}^1 c_\vartheta p_k(x) p_j(x) \vartheta dx=0$. Hence, the system $\{Q_k\}$ is orthonormal bounded system with corresponding matrix $\Psi$ in Theorem 3.20 which is constructed from samples $\{x_j\}$
chosen from the Chebyshev measure. Theorem 3.7 expresses that matrix $\frac{1}{\sqrt{n}}\Psi$ has the restricted isometry property as explained.

\end{proof}

\begin{corollary} 
Consider the \emph{Lipschits-Dini} orthonormal system $\{P_k\}$ associated to the measure $\vartheta$. Also, consider the matrix $\Psi=W\Phi$ as defined in theorem 3.20 and assume all conditions of Theorem 3.20 still hold.
Then with probability at least $1-m^{-\gamma\ln^3 s}$ the following result can be established for all polynomials $f(x)=\sum_{k=0}^{m-1}{c_k p_k(x)}$. 

\bigskip
Assume noisy sample values $Y=\Phi c+\eta=(f(x_1)+\eta_1,\cdots,f(x_n)+\eta_n)$ are observed and $\|W\eta\|_{\infty}\leq \epsilon$. Then the best $s$-term or $s$-sparse approximation $c^*$ of the coefficient vector $c=(c_0,c_1,\cdots,c_{m-1})$ can be recovered by following $\ell_1$-minimization problem:

\begin{equation}
\min_c {\|c\|_1}\hspace{8pt} \mbox{Subject to}\hspace{8pt} \|W\Phi c-WY\|_2\leq \sqrt{n} \epsilon \hspace{8pt}
\end{equation}

such that

\begin{equation}
\|c-c^*\|_2 \leq C_1\frac{\sigma_s(c)_1}{\sqrt{s}}+ D_1\epsilon
\end{equation}

and

\begin{equation}
\|c-c^*\|_1 \leq C_2 \sigma_s(c)_1+ D_2\sqrt{s}\epsilon
\end{equation}

The constants $C_1, C_2, D_1, D_2, \gamma>0$ are global [34].
\end{corollary}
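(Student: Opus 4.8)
The plan is to prove Corollary 3.21 in exactly the same manner as Theorem 3.12, simply replacing the explicit Szeg\"{o}-based verification of boundedness and orthonormality by a single invocation of Theorem 3.20. In other words, Corollary 3.21 is the sparse-recovery counterpart of the restricted isometry statement in Theorem 3.20, and the bridge between the two is the deterministic recovery guarantee of Theorem 3.6. So the whole argument reduces to two moves: first, extracting a restricted isometry property of sufficiently small constant from Theorem 3.20; and second, feeding the preconditioned, normalized measurement model into Theorem 3.6.

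First I would fix the restricted isometry target. Theorem 3.6 requires $\delta_{2s}\le \tfrac{3}{4+\sqrt{6}}$, so I set $\delta$ equal to this fixed numerical value and apply Theorem 3.20 with sparsity order $2s$ in place of $s$. Because $\delta$ is now a universal constant and because $\ln^3(2s)$ is comparable to $\ln^3 s$, the sample bound $n\ge \tfrac{C}{\delta^2}(K^2(2s)\ln^3(2s)\ln m)$ collapses, after absorbing the factor $2$, the fixed $\delta$, and the $\vartheta$-dependent quantity $K^2=c_\vartheta\pi K_\vartheta^2$ furnished by Lemma 3.18, into the hypothesis $n\ge Cs\ln^3 s\ln m$. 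Theorem 3.20 then guarantees, on an event $\mathcal{E}$ of probability at least $1-m^{-\gamma\ln^3 s}$, that the normalized preconditioned matrix $\tfrac{1}{\sqrt{n}}\Psi=\tfrac{1}{\sqrt{n}}W\Phi$ satisfies $\delta_{2s}\le \tfrac{3}{4+\sqrt{6}}$.

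Next I would condition on $\mathcal{E}$ and renormalize the measurement model. Writing $Y=\Phi c+\eta$ and left-multiplying by $\tfrac{1}{\sqrt{n}}W$ gives $\tfrac{1}{\sqrt{n}}WY=\big(\tfrac{1}{\sqrt{n}}W\Phi\big)c+\tfrac{1}{\sqrt{n}}W\eta$, so the effective measurement matrix is $A=\tfrac{1}{\sqrt{n}}W\Phi$ and the effective noise is $\tfrac{1}{\sqrt{n}}W\eta$. Exactly as in the proof of Theorem 3.12, the elementary estimate $\tfrac{1}{\sqrt{n}}\|W\eta\|_2\le \|W\eta\|_\infty\le\epsilon$ bounds the effective noise level by $\epsilon$ and identifies the constraint $\|W\Phi c-WY\|_2\le\sqrt{n}\epsilon$ of the stated program with $\|Ac-\tfrac{1}{\sqrt{n}}WY\|_2\le\epsilon$. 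Applying Theorem 3.6 to $A$ with this noise level then yields the two stated error bounds, and since the constants of Theorem 3.6 depend only on $\delta_{2s}$, which has been pinned to the universal value $\tfrac{3}{4+\sqrt{6}}$, the resulting $C_1,C_2,D_1,D_2$ are global.

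I do not expect a genuine obstacle here, since all the analytic work has already been carried out in Lemma 3.18 and Theorem 3.20; the only point requiring care is the bookkeeping of constants, namely ensuring that the single probabilistic event $\mathcal{E}$ supplies an RIP constant small enough for Theorem 3.6, and that the $\vartheta$-dependence is confined to $C$ (through $K_\vartheta$) while $C_1,C_2,D_1,D_2,\gamma$ remain universal.
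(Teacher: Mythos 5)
Your proposal is correct and follows essentially the same route the paper intends: the corollary is obtained by combining the RIP guarantee of Theorem 3.20 (applied at sparsity $2s$ with a fixed admissible $\delta$) with the deterministic recovery result of Theorem 3.6, using the same normalization $\tfrac{1}{\sqrt{n}}\|W\eta\|_2\leq\|W\eta\|_\infty\leq\epsilon$ that appears in the paper's proof of Theorem 3.12. Your added care about where the $\vartheta$-dependence lands (in $C$ via $K_\vartheta$, not in $C_1,C_2,D_1,D_2,\gamma$) is a point the paper leaves implicit.
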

\subsection{ Stability Regarding Sampling Measure}
One can show that the condition that sample points $\{x_j\}$ should be drawn from the Chebyshev measure can be weakened. The following theorem expresses the stability concerning sampling measure.

\begin{theorem} [\textbf{Stability about Sampling Measure [34]}]
We can suppose sampling point $\{x_j\}$ are chosen from a more general probability measure $d\zeta(x)=\xi(x) dx$ on $[-1,1]$ ($\int_{-1}^1 \xi(x)\, dx=1$) so that $\xi(x)\geq C (1-x^2)^{\frac{-1}{2}}$, instead of Chebyshev measure. Now, consider weight function and associated \emph{Lipschits-Dini} orthonormal system $\{P_k\}$. Again, all previous theorems are still confirmed. However, one can show that the smallest value for $K$ in the boundedness condition (60) can be earned by Chebyshev measure and Chebyshev polynomials. 
\end{theorem}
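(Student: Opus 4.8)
The plan is to reuse the architecture of the proof of Theorem 3.20, but to absorb the mismatch between the \emph{sampling} density $\xi$ and the \emph{orthogonality} density $c_\vartheta\vartheta$ into a redefined pre-conditioner, and then to check that the resulting system is still orthonormal with respect to the sampling measure and still uniformly bounded. Concretely, I would set
\begin{equation}
Q_k(x)=\left(\frac{c_\vartheta\,\vartheta(x)}{\xi(x)}\right)^{1/2}p_k(x),\qquad \tilde W(j,j)=\left(\frac{c_\vartheta\,\vartheta(x_j)}{\xi(x_j)}\right)^{1/2},
\end{equation}
so the preconditioned matrix $\Psi=\tilde W\Phi$ has entries $\Psi_{j,k}=Q_{k-1}(x_j)$ with the $x_j$ now drawn from $\xi$. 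Orthonormality is then an immediate substitution using that $\{p_k\}$ is orthonormal for $c_\vartheta\vartheta$:
\begin{equation}
\int_{-1}^{1} Q_k(x)\,Q_j(x)\,\xi(x)\,dx=\int_{-1}^{1} c_\vartheta\,\vartheta(x)\,p_k(x)\,p_j(x)\,dx=\delta_{jk},
\end{equation}
which is exactly the hypothesis Theorem 3.7 demands of the sampling measure.

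The decisive step, and the one I expect to be the main obstacle, is the uniform bound, and this is precisely where the lower-bound hypothesis $\xi(x)\geq C(1-x^2)^{-1/2}$ earns its place. It gives $1/\xi(x)\leq C^{-1}(1-x^2)^{1/2}$, whence
\begin{equation}
|Q_k(x)|\leq\left(\frac{c_\vartheta}{C}\right)^{1/2}(1-x^2)^{1/4}\,\vartheta(x)^{1/2}\,|p_k(x)|\leq\left(\frac{c_\vartheta}{C}\right)^{1/2}K_\vartheta=:K,
\end{equation}
the last inequality being Lemma 3.18 applied to the \emph{Lipschits-Dini} system $\{p_k\}$. The point is that a general $\xi$ may be singular at $x=\pm1$, yet the hypothesis forbids it from decaying \emph{faster} than the Chebyshev weight, so the troublesome endpoint factor is absorbed uniformly in $k$. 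With $\{Q_k\}$ now certified as a bounded orthonormal system of bound $K$, Theorem 3.7 yields RIP for $\frac{1}{\sqrt n}\Psi$ under the same sampling bound as Theorem 3.20, and the recovery statements of Theorem 3.6 and Corollary 3.21 then transfer verbatim to samples drawn from $\xi$; this is the assertion that ``all previous theorems are still confirmed.''

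For the optimality claim I would argue from the normalization $\int_{-1}^{1}\xi=1$. Integrating the constraint gives $1\geq C\int_{-1}^{1}(1-x^2)^{-1/2}\,dx=C\pi$, so $C\leq 1/\pi$, with equality if and only if $\xi(x)=\frac1\pi(1-x^2)^{-1/2}$, the Chebyshev density. Since $K=(c_\vartheta/C)^{1/2}K_\vartheta$ is decreasing in $C$, its smallest admissible value is attained exactly at the Chebyshev measure, recovering the constant $(c_\vartheta\pi)^{1/2}K_\vartheta$ from the proof of Theorem 3.20. As for the polynomial factor, the Chebyshev weight makes $(1-x^2)^{1/4}\vartheta(x)^{1/2}$ identically constant, so $K_\vartheta$ is itself minimized by the orthonormal Chebyshev polynomials; combining the two observations shows that the smallest $K$ in the boundedness condition is earned jointly by the Chebyshev measure and the Chebyshev polynomials, as claimed.
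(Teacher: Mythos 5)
Your proposal is correct and follows essentially the same route as the paper: redefine the preconditioned functions $Q_k$ so that they form a bounded orthonormal system with respect to the sampling measure $\xi(x)\,dx$, invoke Lemma 3.18 together with the lower bound $\xi(x)\geq C(1-x^2)^{-1/2}$ for the uniform bound, and then feed the system back into Theorem 3.7. The paper's own proof is a one-sentence sketch of exactly this; you supply the details it omits (the explicit orthonormality check, the absorption of the endpoint singularity, and the $C\leq 1/\pi$ argument for the optimality of the Chebyshev measure), and your normalization $Q_k=(c_\vartheta\vartheta/\xi)^{1/2}p_k$ is in fact cleaner than the paper's, which carries a spurious factor of $\pi^{1/2}$.
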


\begin{proof}
considering Lemma 3.18 the functions $Q_k(x)=(c_\vartheta\pi)^ {\frac{1}{2}}\xi(x))^{\frac{-1}{2}}\vartheta(x)^{\frac{1}{2}} p_k(x)$ form a bounded orthonormal system with respect to the probability measure $\xi(x)dx$. Hence, all former theorems are still true. 
\end{proof}

\subsection{Sparse Recovery of Local and Global MWB  Expansion via Compressive Sensing}
According to equation (28), consider the local MWB expansion 

\begin{equation}
u_I(\mathbf{x})=u_{n_0,I}(\mathbf{x})+\epsilon_{P}
\end{equation}

\begin{equation*}
u_{n_0,I}(\mathbf{x})=\sum_{\iota \in {\mathbf{I}_{n_0,n}}} u_{\iota,I} \Phi_{\iota,I}(\mathbf{x})+ \sum_{j=1}^n \sum_{i=0}^{n_0}u_{j,i,I}\Psi_{i,I}(x_j)
\end{equation*}

\begin{equation}
=\sum_{\iota \in {\mathbf{\Lambda}_{n_0,n}}} u_{\iota,I} \widehat{\Psi}_{\iota,I}(\mathbf{x})
\end{equation}

where $P$ is cardinality of $\mathbf{\Lambda}_{n_0,n}$ which can be expressed as:

\begin{equation}
 P=\mathbf{Card}(\mathbf{\Lambda}_{n_0,n})=\mathbf{Card}(\mathbf{I}_{n_0,n})+n(n_0+1)= \frac{(n+n_0)!}{n!(n_0)!}
+n(n_0+1)
\end{equation} 

We develop the methodology to evaluate the unknown  coefficients $\{u_{\iota,I}\}$ utilizing compressive sampling. We define another form of important definition to compute the unknown coefficients.

\begin{definition}[\textbf{Sparsity of Basis Expansion [48]}]
The local expansion $u_{n_0}(\mathbf{x})$, and so $u_{n_0,I}(\mathbf{x})$ for every $I$, is called (nearly) sparse if only a small fraction of coefficients $\{u_{\iota,I}\}$ are dominant and have the contribution in the statistics of $u_{n_0,I}$. 
\end{definition}

If $u_{n_0}(\mathbf{x})$, and so $u_{n_0,I}(\mathbf{x})$ for every $I$, is sparse the coefficient of  $\{u_{\iota,I}\}$ can be evaluated via compressive sampling using $n_s\ll P$ random sampling  $\{u_{n_0,I}(\mathbf{x}_i)\}_{i=1}^{n_s}$.

Assume $\{\mathbf{x}_i\}_{i=1}^{n_s}$ are i.i.d samples of the $n$-dimensional cube $[0,1]^n$ and  $\{u_{n_0,I}(\mathbf{x}_i)\}_{i=1}^{n_s}$ are the corresponding realizations of stochastic function $u_{n_0}(\mathbf{x})$.
Note that according to Theorems 3.20, the i.i.d samples  $\{\mathbf{x}_i\}_{i=1}^{n_s}$ can be chosen from an arbitrary sampling measure on $[0,1]^n$, and independent from probability distribution of $\mathbf{x}$. Consider the matrix ${\mathbf{\Psi}}^I\in \mathbf{R}^{n\times P}$, whose entries are:

\begin{equation}
{\mathbf{\Psi}}^I_{i,j}=\widehat{\Psi}_{j,I}(\mathbf{x}_i) \hspace{8pt} 1\leq i \leq n \hspace{4pt} , \hspace{4pt} 1\leq j \leq P
\end{equation}

and 

\begin{equation*}
\mathbf{U}_I=({u_I}(\mathbf{x}_1),\cdots,{u}_I(\mathbf{x}_n))^T
\end{equation*} 

\begin{equation}
\mathbf{u}_{n_0,I}=({u}_{1,I},\cdots,{u}_{P,I})^T
\end{equation}

\begin{equation}
\mathbf{\mathcal{E}}_P=(\epsilon_p(\mathbf{x}_1),\cdots,\epsilon_p(\mathbf{x}_n))^T
\end{equation}

Now, we can express equation(76) as:

\begin{equation}
\mathbf{U}_I={\mathbf{\Psi}}^I\mathbf{u}_{n_0,I}+\mathbf{\mathcal{E}}_P
\end{equation}

According to Definition (3.23), if $\{\widehat{\Psi}_{\iota,I}\}$ is an orthonormal bounded system, Theorem 3.12 and Theorem 3.20 express that we can recover  the coefficient vector $\mathbf{u}_{n_0}$ from equation(82). We prove it in the next section.

\subsubsection{Uniform Bound for Multi-Wavelet Basis (MWB)}

According to equation (28) the multi-dimensional process ${u}(\zeta)$, can be approximately expressed as:

\begin{equation}
u_{n_0}(\zeta)=\sum_{\iota \in {\mathbf{I}_{n_0,n}}} u_{\iota} \Phi_{\iota}(\zeta)+ \sum_{j=1}^n \sum_{i=0}^{n_0}u_{j,i}\Psi_i(\zeta_j)
\end{equation}

where $\{\Phi_{\iota}\}_{i
\in{\iota}}$ are rescaled Legendre polynomials and $\{\Psi_i\}_{i=0}^{n_0}$ are first level detail of MWB. Based on our construction, all $\{\Phi_{\iota}\}_{i
\in{\iota}}$ and $\{\Psi_i\}_{i=0}^{n_0}$ except $\Psi_0$ are continuous ($\Psi_0$ is discontinuous at $ \zeta=\frac{1}{2} $); hence, employing Theorem 3.8, we conclude that if

\begin{equation}
K_{\Phi}=\sup_{i\in \iota}\|\Phi_i\|_{\infty}=\sup_{i\in\iota}\sup_{\zeta\in [0,1]}|\Phi_i(\zeta)| 
\end{equation}

and

\begin{equation}
K_{\Psi}=\sup_{0\leq i\leq n_0}\|\Psi_i\|_{\infty}=\sup_{1\leq i\leq n}\sup_{\zeta\in [0,1]}|\Psi_i(\zeta)|
\end{equation}

we will have $K=\max\{K_{\phi},K_{\Psi}\}$ . Based on Remark 3.15 we know $K_{\Phi} < 2^{\frac{n}{2}}$ and it can be shown (by Theorem 3.8) that 

\begin{equation}
K_{\Psi} =\max\{ \sup_{1\leq i\leq n}\{|\Psi_i(0)|,|\Psi_i(1)|\}, \Psi_0(\frac{1}{2})\}
\end{equation}
 
Proceeding a few computational steps, we result that
 
 \begin{equation}
 K_{\Psi}=\Psi_0(\frac{1}{2})=n
 \end{equation}

and consequently, the upper bound for MWB is $K=\max\{K_{\phi},K_{\Psi}\}<2^{\frac{n}{2}}$. Therefore, the appropriate upper bound for MWB should be $K=2^{\frac{n}{2}}$ in Theorem 3.20 and Corollary 3.21.

\section{Adaptive Sparse Approximation of Solution to SDEs}

\subsection{Problem Formulation}

Assume $(\Omega,\mathcal{F},\mathcal{P})$ is a complete probability space where $\Omega$ is the sample space, $\mathcal{F}\subset 2^{\Omega}$ is the $\sigma$-algebra of subsets (events) of $\Omega$,  and $\mathcal{P}$ is the probability measure on it.

\bigskip
Define $\mathcal{D}$ as a subset of $\mathbf{R}^d$ with boundary $\partial\mathcal{D}$. Consider $\mathcal{O} $ and $\mathcal{H} $ as operators on $\mathcal{D}$ and  $\partial{\mathcal{D}}$ respectively and $\mathcal{O}$ may depend on $\mathbf{x}\in\Omega$. We seek the solution to following general problem: find $u:[0,T]\times\overline{\mathcal{D}}\times\Omega\to \mathbf{R}$ so that the following equation $\mathcal{P}$-almost sure holds in $\Omega$:
\begin{equation}
\left\{
	\begin{array}{ll}
		\mathcal{O}(t,\mathbf{y},\mathbf{x};u)=f(t,\mathbf{y},\mathbf{x}) \hspace{8pt} (t,\mathbf{y})\in [o,T]\times \mathcal{D} \\
		\mathcal{H}(t,\mathbf{y},\mathbf{x};u)=g(t,\mathbf{y},\mathbf{x}) \hspace{8pt} (t,\mathbf{y})\in [o,T]\times \partial\mathcal{D}
	\end{array}
\right.
\end{equation}

\begin{assumption}
In order to apply the compressive sampling method and achieve the sparse recovery of approximated solution $u_{n_0}$ of $u$, $u_{n_0}$ should be sparse based on Definition (3.23).
\end{assumption}

Define $u_I(\mathbf{x})=\left. u(t,\mathbf{y},\mathbf{x})\right|_{(\mathbf{y}=\mathbf{y}_0 , \mathbf{t}=\mathbf{t}_0,I\subset\Omega)}$ and suppose $\{u_I(\mathbf{x}_i )\}_{i=1}^{n_s}$ be the solutions of equation (110) (when $t=t_0 \ \mbox{and} \ \mathbf{y}=\mathbf{y}_0$) for sampling points $\{\mathbf{x}_i\}_{i=1}^{n_s}$. We can solve equation (82) to extract the coefficients $\{u_{j,n_0}\}_{j=1}^P$ and approximated solution $u_{n_0,I}(\mathbf{x})$ of $u_I(\mathbf{x})$.

\bigskip
Now, we develop the adaptive strategy to obtain more $h$-refinement since $p$-refinement can be  easily achieved by increasing the order of polynomial in MWB expansion. To obtain more $h$-refinement, we should increase the multi-resolution level.

\subsection{Adaptive Partitioning of Random Parameter Space}

Adaptive methods help to reduce the CPU and time cost by improving the quality of the representation locally where needed. In this section, we introduce a local refinement method based on local MWB expansion.(equations (28) and (44)) and determination of a local resolution level.

\subsubsection{Adaptive Strategy}

Suppose that the current partition $\Omega$ involves $N_{sp}$ sub-partitions. On each sub-partitions $\Omega_i$ the random process is expressed with the local basis $\mathcal{B}(\Omega_i)$ where the coefficient computed through compressive sampling methods as in the previous sections. To decide whether the given sub-partition $\Omega_i$ needs more refinement and specify which stochastic directions need this refinement, we use the following criteria [14-17]:

\begin{equation}
\frac{(\sigma^j_{\Omega_i})^2}{(\sigma^2_{\Omega_i})}\geq \epsilon (\mathbf{Vol}^i)
\end{equation}

where $\epsilon (\mathbf{Vol}^i)$ is the threshold function defined as:

\begin{equation}
\epsilon (\mathbf{Vol}^i)=\frac{C}{\sqrt{\mathbf{Vol}^i}}
\end{equation}

\begin{remark}
***The above criteria compares the energy of the one dimensional details along the $j^{th}$ dimension with the local variance of the solution.
\end{remark}

When the quality is satisfied, the sub-partition $\Omega_i$  should be refined along $j^{th}$ dimension. So, a new partition of $\Omega$ is formed by splitting $\Omega_i$ to smaller parts. For example, when the criteria is satisfied only in a single dimension $j$, $\Omega_i=[a^i_1,b^i_1] \times \cdots \times [a^i_n, b^i_n]$ should be refined by producing two new sub-partition $\Omega_{i_1}$ and $\Omega_{i_2}$ denoted by:

\begin{equation}
\left\{
	\begin{array}{ll}
		\Omega_{i_1}=[a^{i_1}_1,b^{i_1}_1] \times \cdots \times [a^{i_1}_n, b^{i_1}_n]=[a^{i}_1,b^{i}_1] \times \cdots \times [a^{i}_j,\frac{a^{i}_j+b^{i}_j}{2}] \times \cdots \times [a^{i}_n, b^{i}_n]\\
		\Omega_{i_2}=[a^{i_2}_1,b^{i_2}_1] \times \cdots \times [a^{i_2}_n, b^{i_2}_n]=[a^{i}_1,b^{i}_1] \times \cdots \times [\frac{a^{i}_j+b^{i}_j}{2},b^{i}_j] \times \cdots \times [a^{i}_n, b^{i}_n]
		\end{array}
\right.
\end{equation}

Now local expansion of the process should be computed on new sub-partitions before we continue to analyse and determine whether more refinement is needed. This procedure should be repeated until convergence is met.

\begin{remark}
It is important that we should do computations just for newly created sub-partitions during refinement process since the expansion is local.
\end{remark}

\section{Numerical Tests}

In this section, we examine the developed method by a number of benchmark problems to evaluate first two statistical moments. The first problem is a simple function having line singularity, which is our benchmark problem to test the accuracy and convergence rate of the proposed method in the presence of sharp discontinuity. The second test problem is a two dimensional surface absorption problem, which is a stochastic ODE, and the third problem is one dimensional diffusion problem, which is categorized as a stochastic PDE. Here, dimension means the number of random inputs. The solution to first problem exhibits discontinuity, and the solution to two last problems shows sharp variations with respect to its random inputs, and thus can be suitable benchmark problems for our developed method to show its capabilities in the presence of sharp gradients or discontinuity.

To have a better understanding concerning the accuracy and convergence rate of the adaptive strategy, for all problems, we present means and variances evaluated by equation (45), mean squared error for Monte Carlo realizations and approximated values of function, and finally the number of sub-partitions for decreasing values of  $C$ in following tables. Additionally, we plot the absolute value of differences and relative errors of means and variances evaluated by equation(45) and Monte Carlo simulations, probability density function come from Monte Carlo simulations, and finally, distribution of sampling points on the partitions for different values of $C$ for each fixed $n_s$ and $n_0$.

\begin{remark}
Since our convergence is $\ell_2$-norm convergence, the best criteria to show the accuracy and robustness of the proposed method is "Mean Squared Error" (MSE), which is evaluated by  the following formula:

\begin{equation}
MSE=\frac{1}{n}\sum_{i=1}^n (Y_i-\hat{Y}_i)^2
\end{equation}

where $\{\hat{Y_i}\}_{i=1}^n$ is a vector of n predictions obtained by the proposed method and $\{Y_i\}_{i=1}^n$ is the vector of true values obtained by Monte Carlo simulation.

\medskip

In fact, as MSE convergence to zero when $C$ decreases, both mean and variance obtained by the proposed method converge to their true values. 

\end{remark}

Finally, we discuss CPU cost as an important factor in computational cost. Since we utilize non-intrusive method, the computational cost can be scaled by the number of deterministic solver simulations. Hence, the total computational cost for $N_{sb}$ sub-partitions and $n_s$ can be scaled by:

\begin{equation}
\mbox{TCC}\approx n_s.N_{sb}.SCC
\end{equation}

where TCC describes the total computational cost and SCC expresses the computational cost of deterministic solver for each simulation. To reduce the TCC, one should improve available deterministic solver by efficiently implementing the code to solve problems. Here, sparse recovery has no important impact on TCC as it involves solving sparse linear systems. Since we extract approximated solution and unknown coefficients in MWB expansion for refined sub-partitions  only, the method can highly decrease the required memory. 
\pagebreak

\subsection{A Simple Function with Line Singularity}

We first consider the performance of the proposed method for the following simple function with line singularity. In this problem, the solution exhibits sharp discontinuity in the parameter space. We are interested in verifying the convergence of both the mean and variance to their true values utilizing the proposed method for this problem. The test function is: 

\begin{equation}
F(\zeta_1, \zeta_2)=
\left\{
	\begin{array}{ll}
		\sin(\zeta_1)\sin(\zeta_2) \hspace{8pt} (\zeta_1,\zeta_2)\in [0.5,1]\times [0.5,1] \\
		0 \hspace{72pt} otherwise
	\end{array}
\right.
\end{equation}

where $\zeta_1$ and $\zeta_2$ have the uniform distribution on [0,1].
\medskip

\begin{figure}[h!]
 \includegraphics[width=4in]{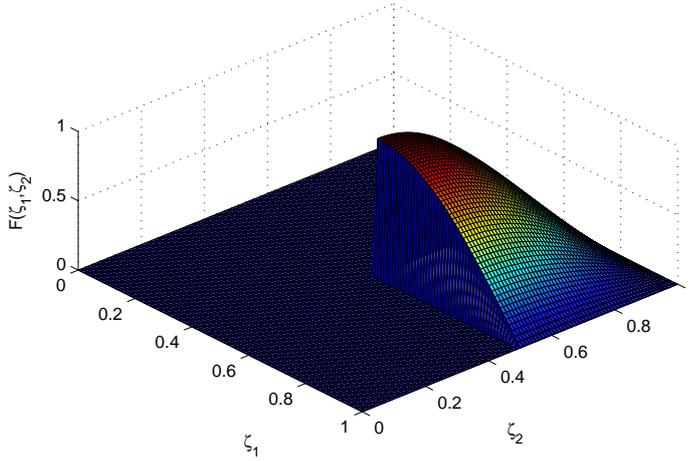}
\caption{Surface response of F ($\zeta_1$, $\zeta_2$)}
\end{figure}

Table (1) describes the quantities evaluated by the proposed method. As they illustrate, decreasing the values of $C$, the approximated values of both mean and variance converge to the true values, but in a lower rate compared to $MSE$. In fact, one should notice the fact that we obtain a sharp decrease in MSE values showing that the algorithm captures the discontinuity  accurately along the element boundaries.

\begin{table}[h!]\footnotesize
\caption{Evaluated values of $N_{sb}$, $\mathbb{E}(F)$, $\sigma^2(F)$, and MSE for different values of  $n_0$, $n_s$, and $C$.}

\medskip
\centering  
\subfloat[][Evaluated values of $N_{sb}$, $\mathbb{E}(F)$, $\sigma^2(F)$, and MSE for \\
 $n_0=2$, $n_s=8$.]{\begin{tabular}{c c c c c}  
\hline           

$C$ & $N_{sb}$ & $\mathbb{E}(\rho)$& $\sigma^2(\rho)$ & MSE \\ [0.5ex] 
\hline                  
$5\times 10^{-2}$ & 5 & 0.101544559 & 0.05274064 & 0.16377104 $\times 10^{-3}$ \\
$1\times 10^{-2}$ & 7 & 0.10043727 & 0.05199891 & 0.037920051$\times 10^{-3}$ \\
$5\times 10^{-3}$ & 15 & 0.10059612 & 0.05160273 & 0.02198535$\times 10^{-3}$ \\
$1\times 10^{-3}$ & 26 & 0.10227871 & 0.05274955 & 0.00389531$\times 10^{-3}$ \\
$5\times 10^{-4}$ & 52 & 0.10220550 & 0.05256251 & 0.00042613$\times 10^{-3}$ \\ [1ex]      

\hline 

\end{tabular}
}

\bigskip

\subfloat[][Evaluated values of $N_{sb}$, $\mathbb{E}(F)$, $\sigma^2(F)$, and MSE for \\
 $n_0=3$, $n_s=12$.]{\begin{tabular}{c c c c c} 
 \hline           
 
 $C$ & $N_{sb}$ & $\mathbb{E}(\rho)$& $\sigma^2(\rho)$ & MSE \\ [0.5ex] 
 \hline                  
 $5\times 10 ^{-2}$ & 4 & 0.10054470 & 0.05170200 &   0.93335605 $\times 10^{-3}$ \\ 
 $1\times 10^{-2}$ & 7 & 0.10090933 & 0.05177892 & 0.03124397 $\times 10^{-3}$ \\
 $5\times 10^{-3}$ & 8 & 0.10155312 & 0.05183906 & 0.00728255 $\times 10^{-3}$ \\
 $1\times 10^{-3}$ & 14 & 0.10120334 & 0.05329833 & 0.00105085 $\times 10^{-3}$ \\
 $5\times 10^{-4}$ & 21 & 0.101763329 & 0.05244912 & 0.00053608 $\times 10^{-3}$  \\ [1ex]      
 
 \hline 
 
\end{tabular}}
 
\end{table}

Figures (2-6) show the statistics and the partitions of random parameter space which is partitioned by adaptive strategy for different values of $C$, thus the problem has two stochastic dimensions $\zeta=(\zeta_1,\zeta_2)$ with $\zeta_1$, $\zeta_2$ both uniformly distributed on [0,1]. As the figures suggest, in order to have a suitable approximation of solutions, the number of samples should be increased when the polynomial order is increased.

\begin{figure}
\subfloat[][$C=5\times 10^{-2}$]{
\includegraphics[width=0.5\textwidth]{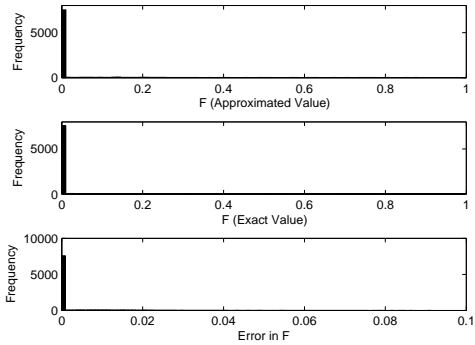}
\label{fig:subfig1}}
\qquad 
\subfloat[][$C=1\times 10^{-2}$]{
\includegraphics[width=0.5\textwidth]{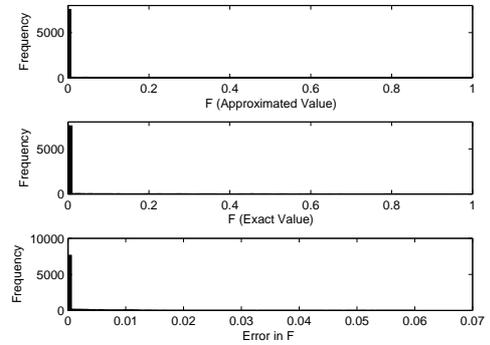}
\label{fig:subfig2}}
\\

\subfloat[][$C=5\times 10^{-3}$]{
\includegraphics[width=0.5\textwidth]{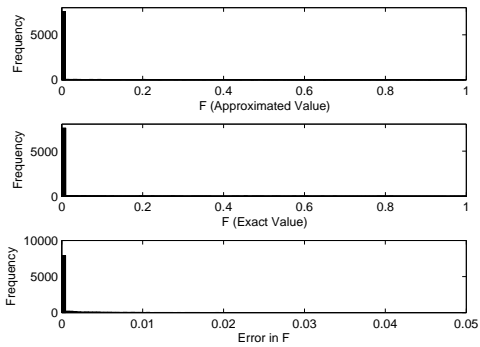}
\label{fig:subfig3}}
\qquad
\subfloat[][$C=1\times 10^{-3}$]{
\includegraphics[width=0.5\textwidth]{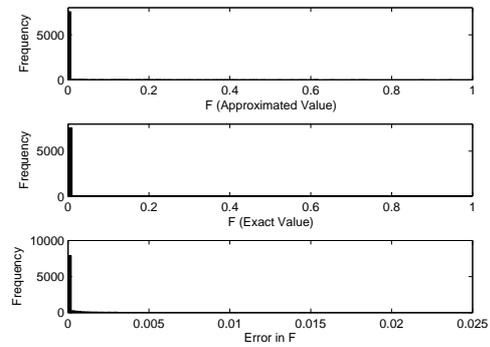}
\label{fig:subfig4}}
\\

\subfloat[][$C=5\times 10^{-4}$]{
\includegraphics[width=0.5\textwidth]{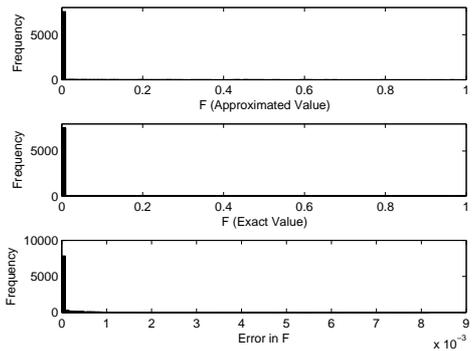}
\label{fig:subfig4}}

\tiny
\caption{Distribution of function exact values evaluated by Monte Carlo simulation, function approximated evaluated by proposed method, and absolute value of error between them for $n=2, n_0=2, \mbox{ and } n_s=8.$}
\label{fig:globfig}
\end{figure}

\begin{figure}
\subfloat[][$C=5\times 10^{-2}$]{
\includegraphics[width=0.45\textwidth]{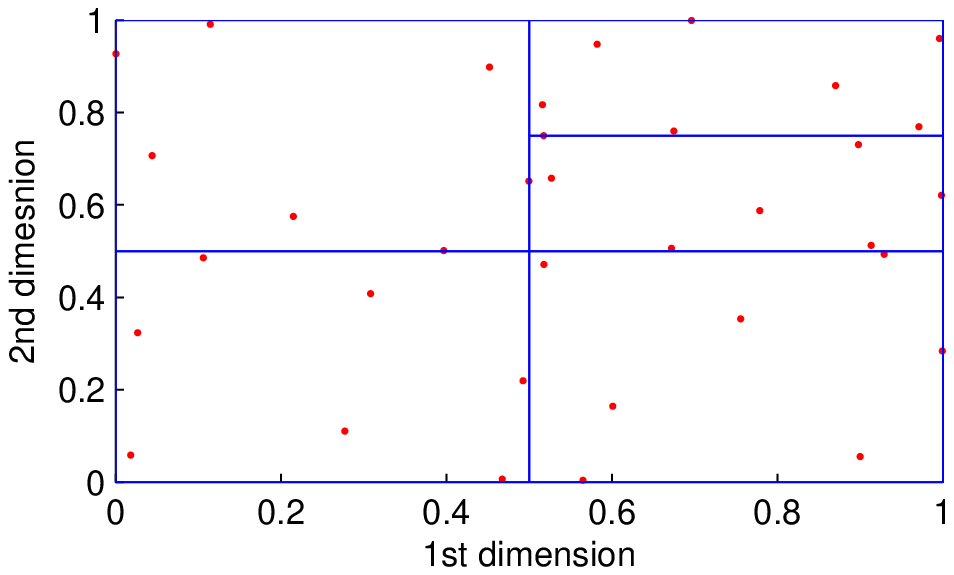}
\label{fig:subfig1}}
\qquad 
\subfloat[][$C=1\times 10^{-2}$]{
\includegraphics[width=0.45\textwidth]{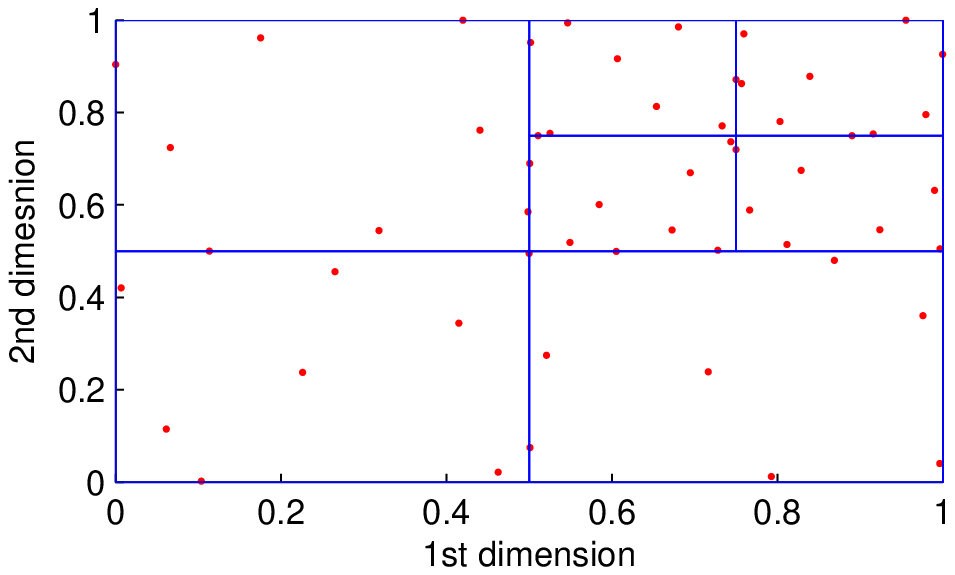}
\label{fig:subfig2}}
\\

\subfloat[][$C=5\times 10^{-3}$]{
\includegraphics[width=0.45\textwidth]{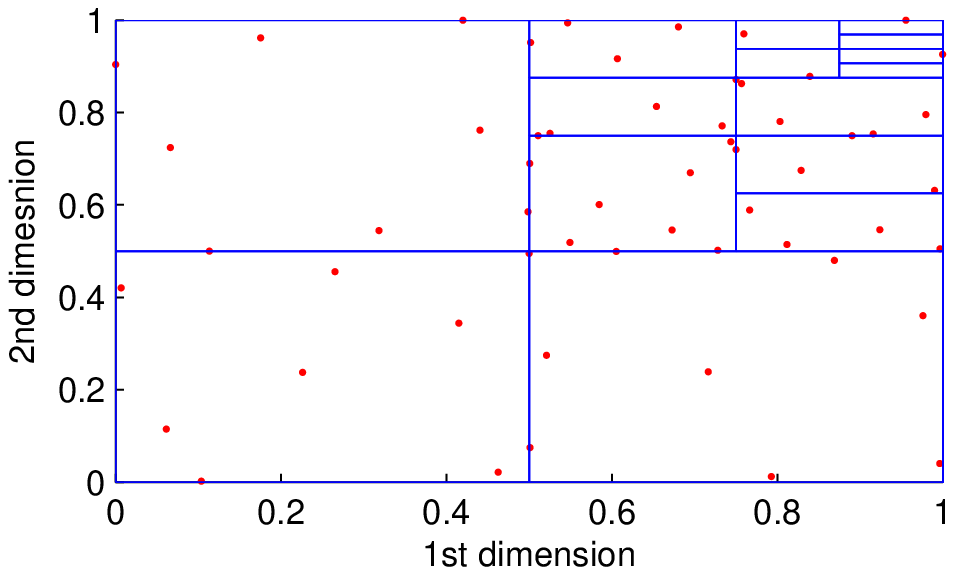}
\label{fig:subfig3}}
\qquad
\subfloat[][$C=1\times 10^{-3}$]{
\includegraphics[width=0.45\textwidth]{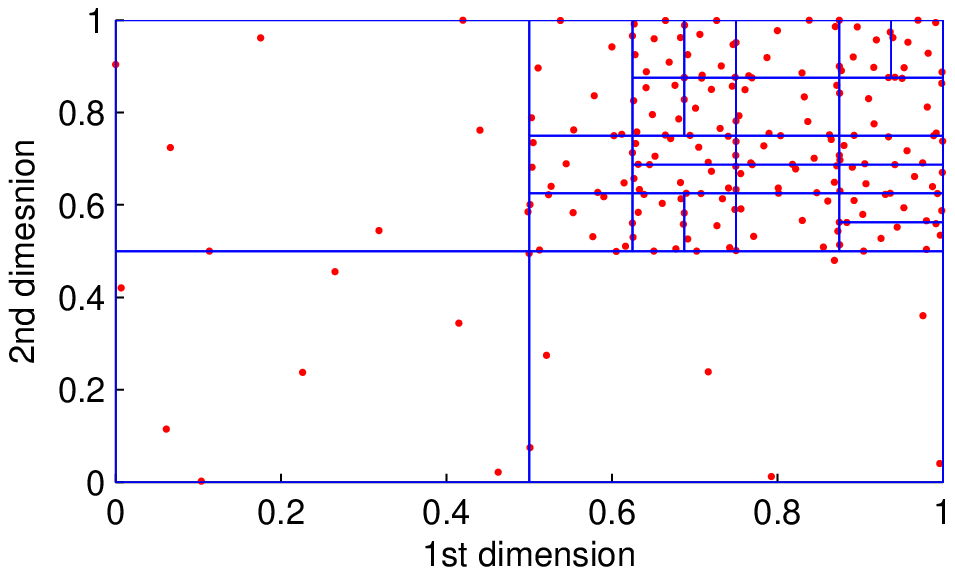}
\label{fig:subfig4}}
\\

\subfloat[][$C=5\times 10^{-4}$]{
\includegraphics[width=0.45\textwidth]{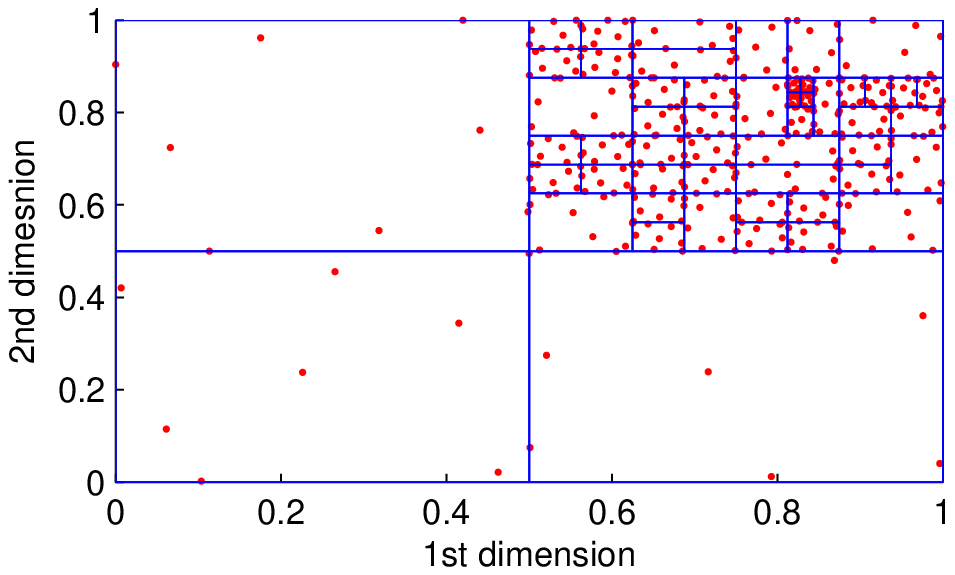}
\label{fig:subfig4}}

\caption{Partition of the random parameter space with sample points utilized for $n=2, n_0=2, \mbox{ and } n_s=8.$}
\label{fig:globfig}
\end{figure}

\begin{figure}
\subfloat[][$C=5\times 10^{-2}$]{
\includegraphics[width=0.5\textwidth]{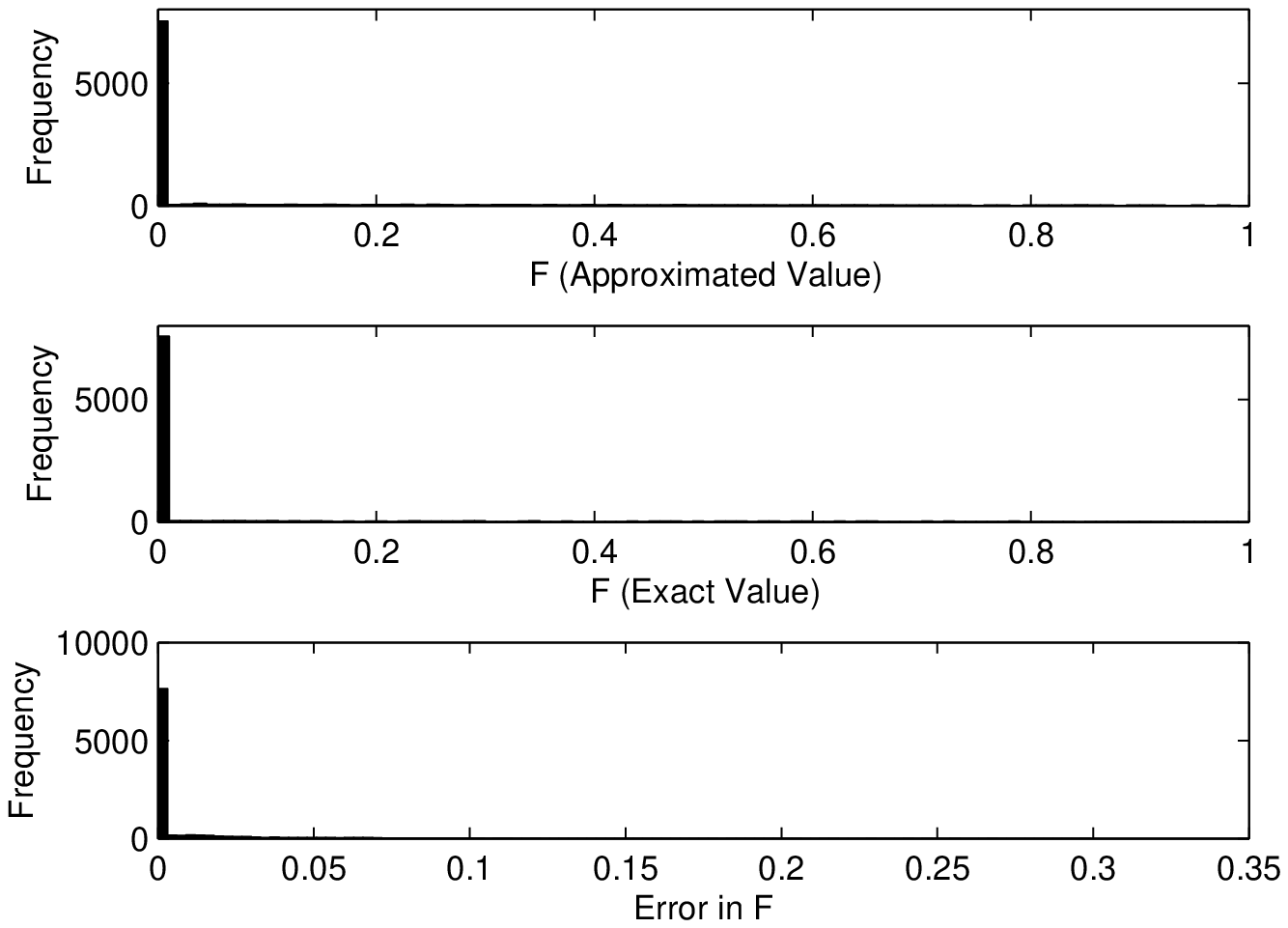}
\label{fig:subfig1}}
\qquad 
\subfloat[][$C=1\times 10^{-2}$]{
\includegraphics[width=0.5\textwidth]{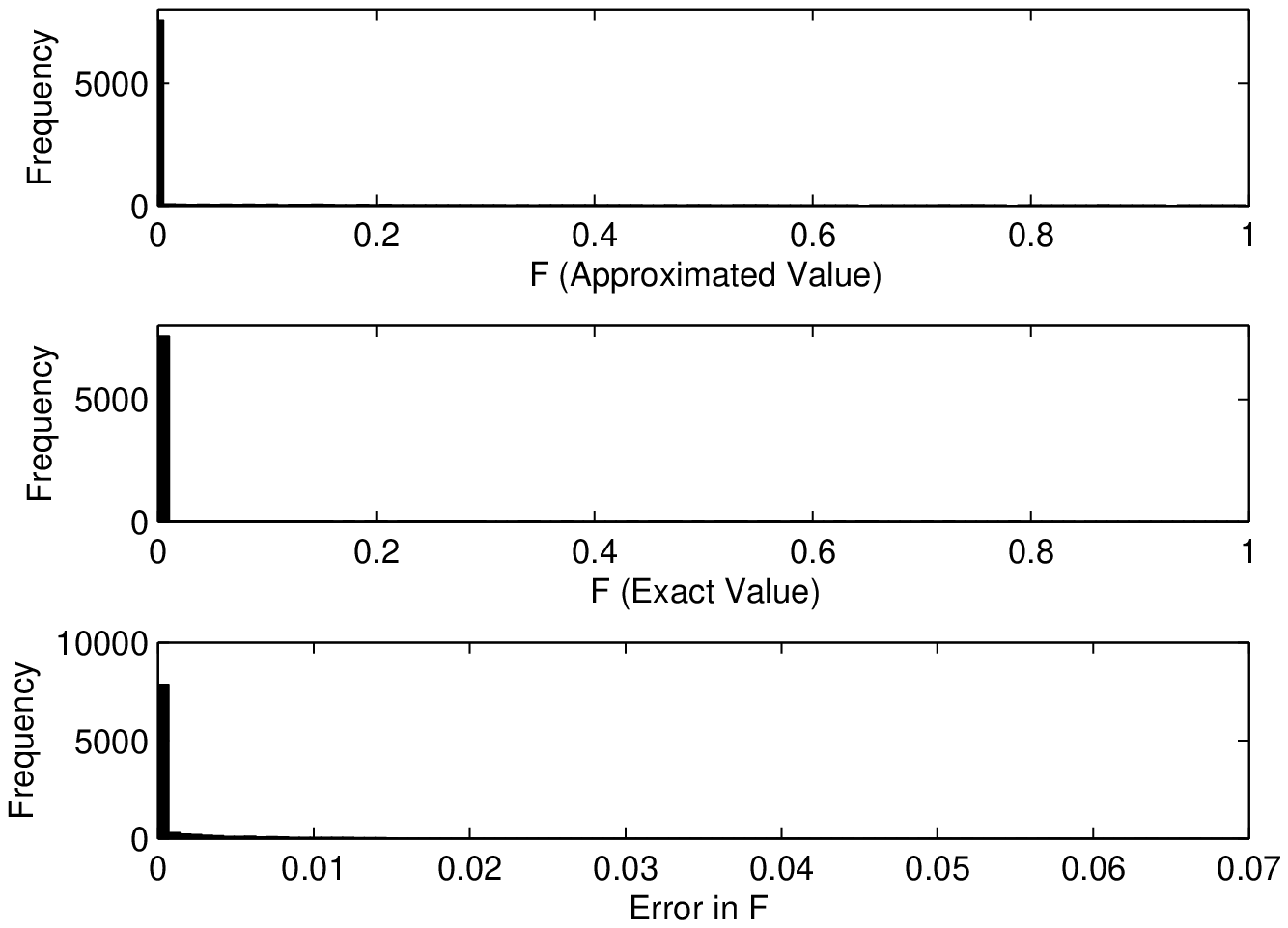}
\label{fig:subfig2}}
\\

\subfloat[][$C=5\times 10^{-3}$]{
\includegraphics[width=0.5\textwidth]{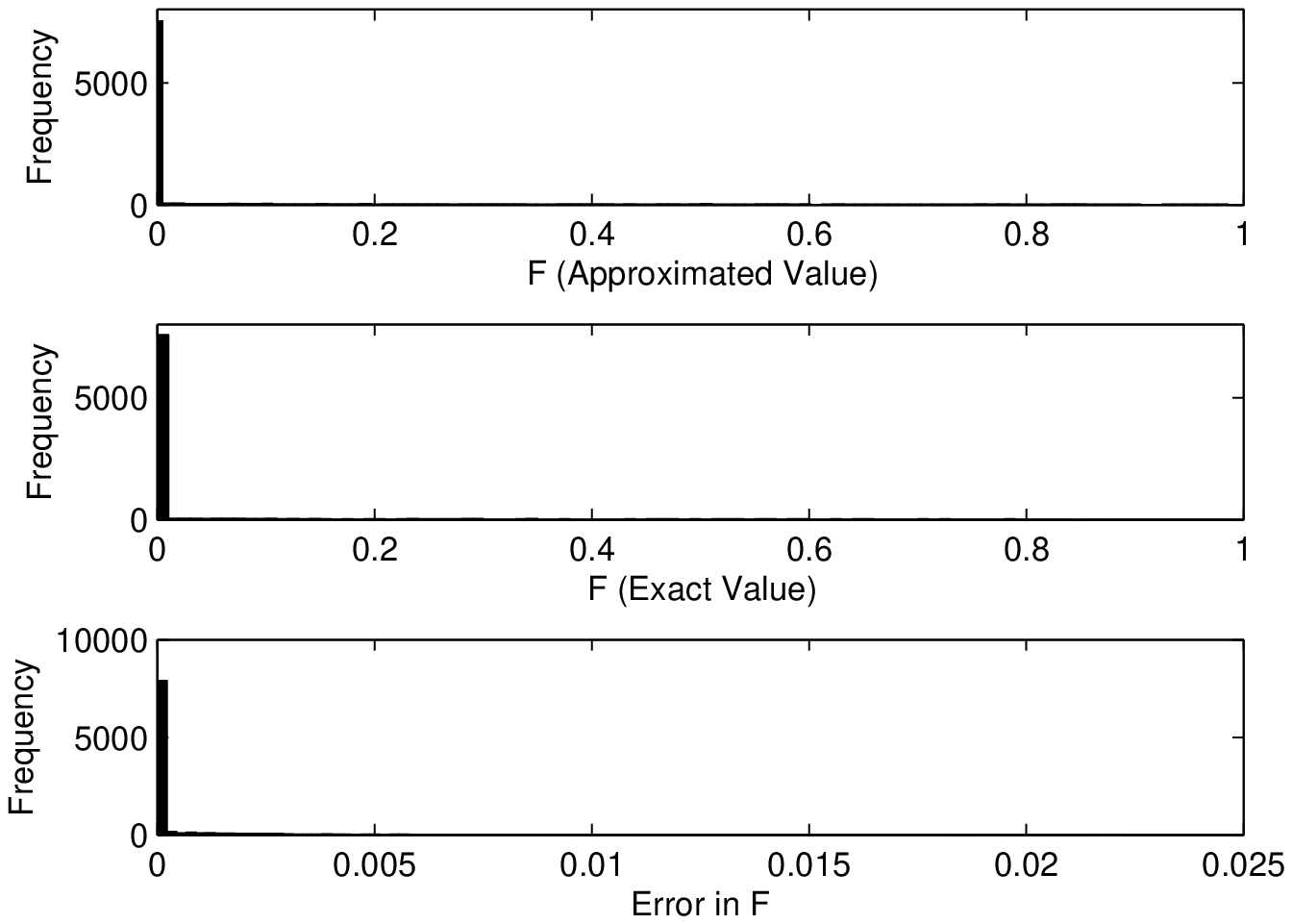}
\label{fig:subfig3}}
\qquad
\subfloat[][$C=1\times 10^{-3}$]{
\includegraphics[width=0.5\textwidth]{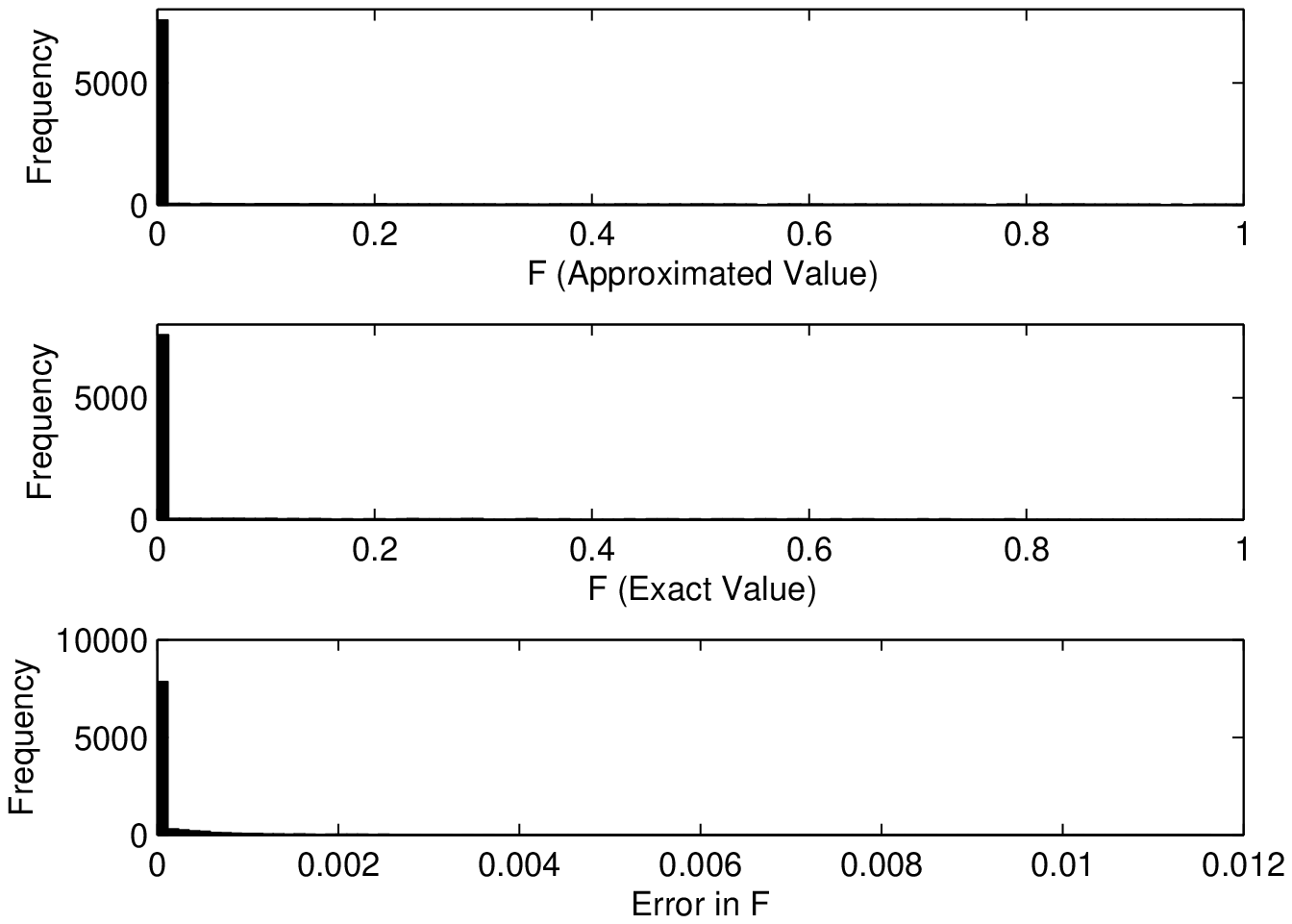}
\label{fig:subfig4}}
\\

\subfloat[][$C=5\times 10^{-4}$]{
\includegraphics[width=0.5\textwidth]{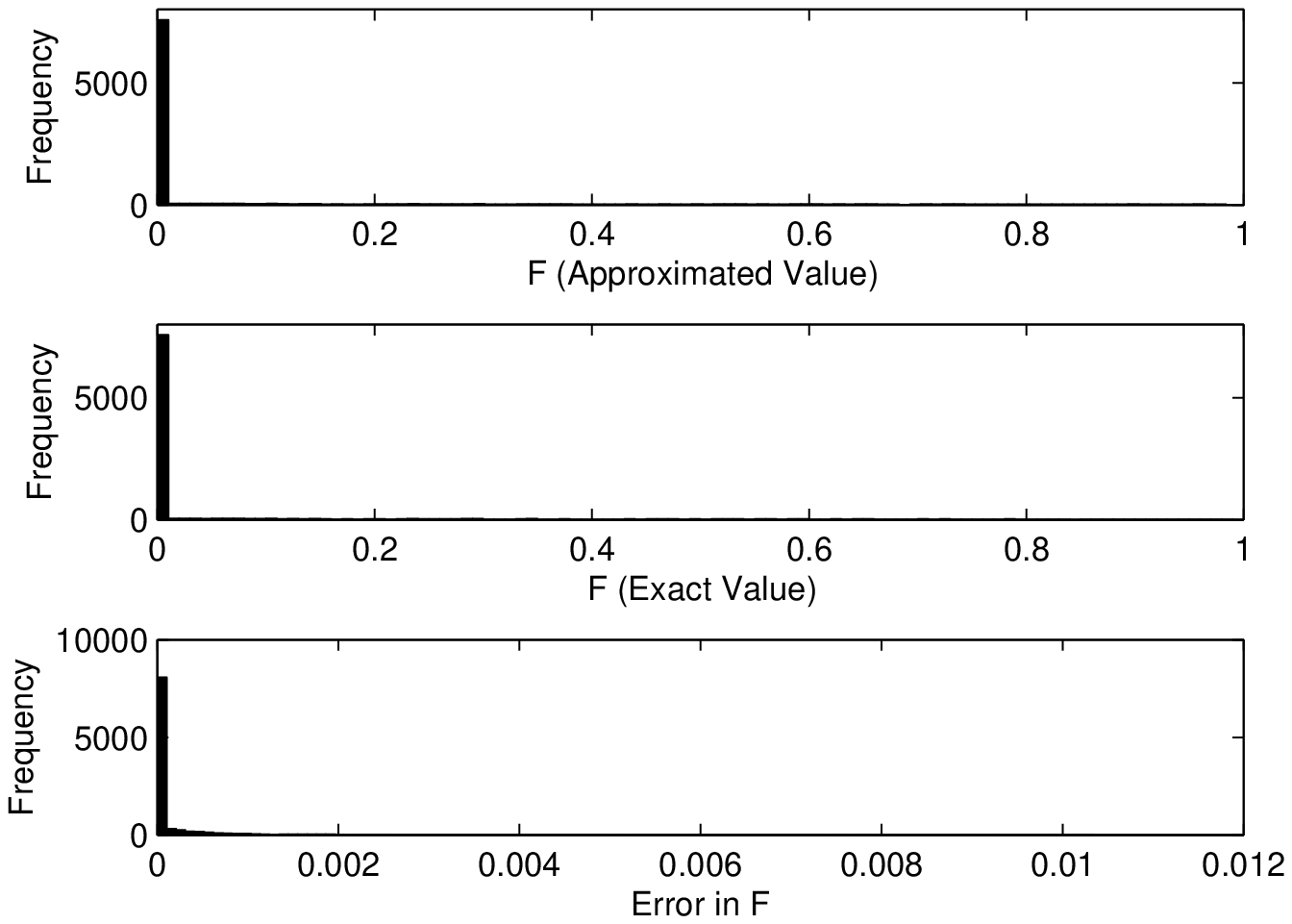}
\label{fig:subfig4}}

\caption{Distribution of function exact values evaluated by Monte Carlo simulation, function approximated evaluated by proposed method, and absolute value of error between them for $n=2, n_0=3, \mbox{ and } n_s=12.$}
\label{fig:globfig}
\end{figure}

\begin{figure}
\subfloat[][$C=5\times 10^{-2}$]{
\includegraphics[width=0.39\textwidth]{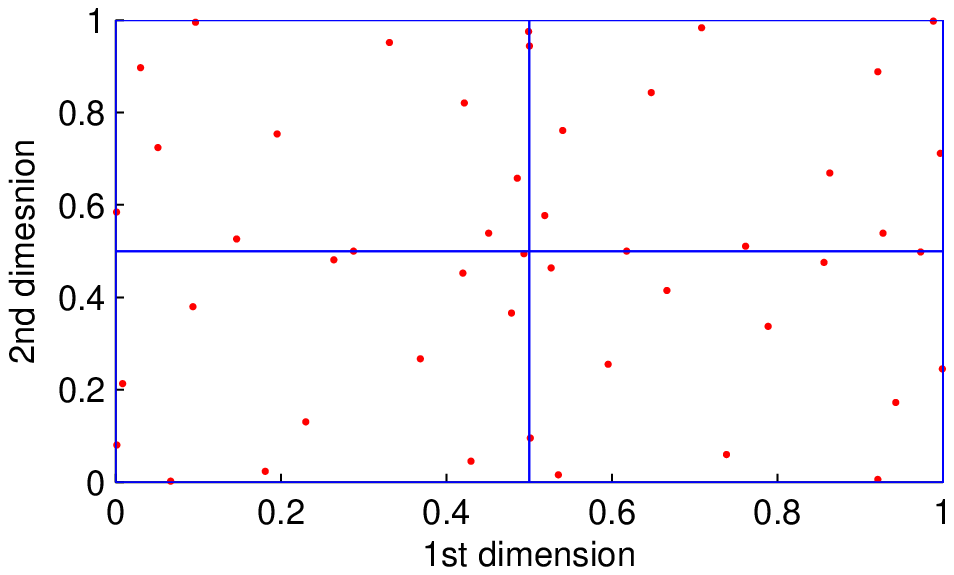}
\label{fig:subfig1}}
\qquad 
\subfloat[][$C=1\times 10^{-2}$]{
\includegraphics[width=0.39\textwidth]{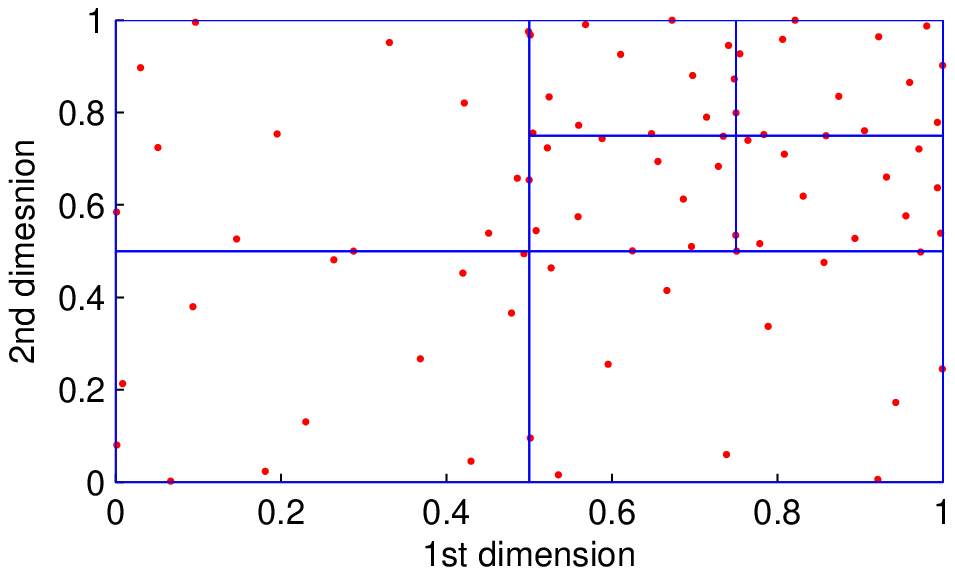}
\label{fig:subfig2}}
\\

\subfloat[][$C=5\times 10^{-3}$]{
\includegraphics[width=0.39\textwidth]{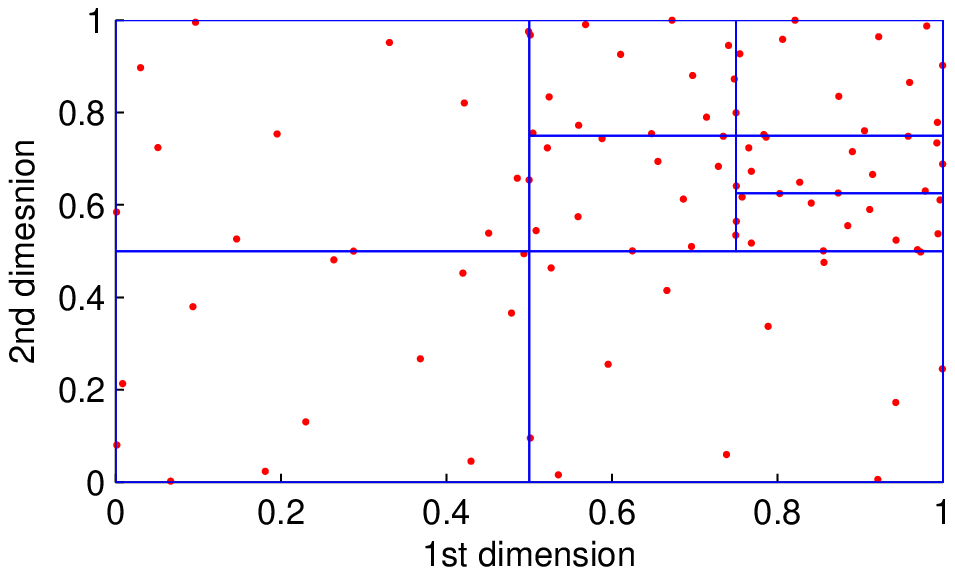}
\label{fig:subfig3}}
\qquad
\subfloat[][$C=1\times 10^{-3}$]{
\includegraphics[width=0.39\textwidth]{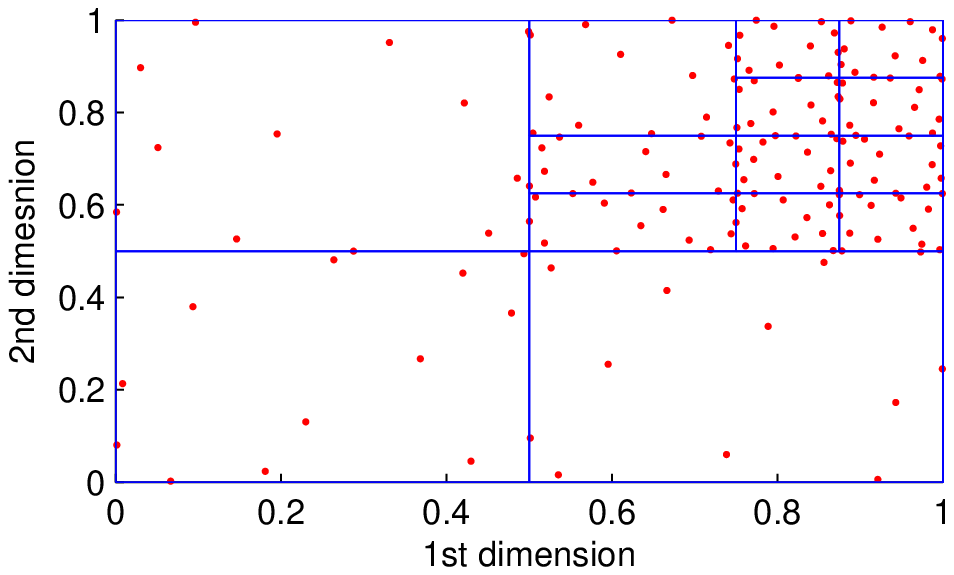}
\label{fig:subfig4}}
\\

\subfloat[][$C=5\times 10^{-4}$]{
\includegraphics[width=0.39\textwidth]{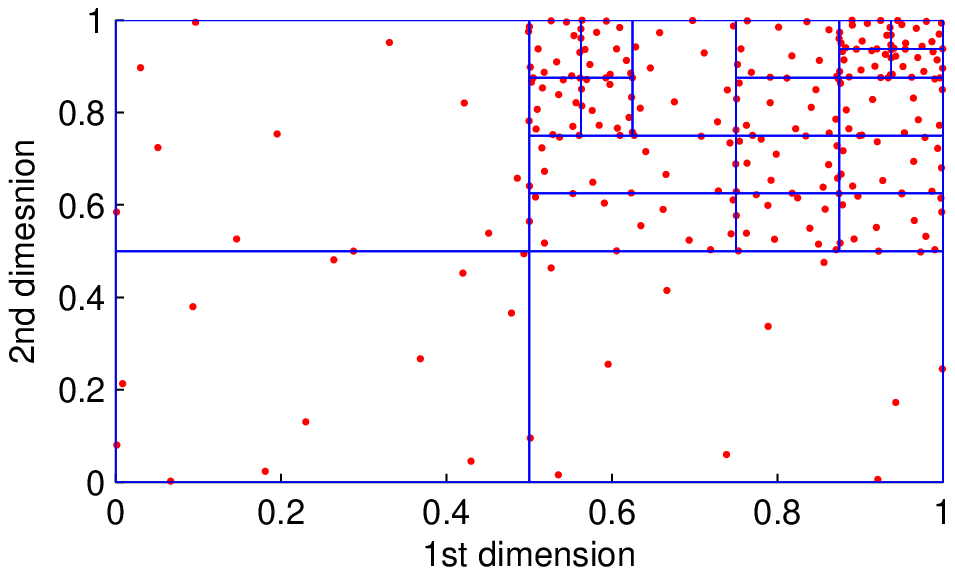}
\label{fig:subfig4}}

\caption{Partition of the random parameter space with sample points utilized for $n=2, n_0=3, \mbox{ and } n_s=12.$}
\label{fig:globfig}
\end{figure}

\begin{figure}
\subfloat[][Relative erros of variance]{
\includegraphics[width=0.45\textwidth]{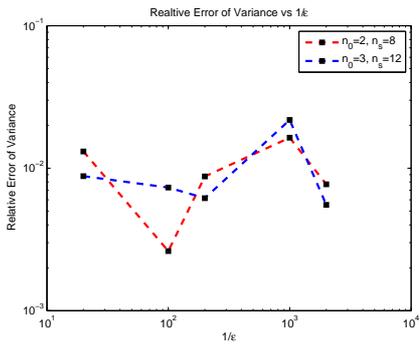}
\label{fig:subfig1}}
\qquad 
\subfloat[][Relative erros of mean]{
\includegraphics[width=0.45\textwidth]{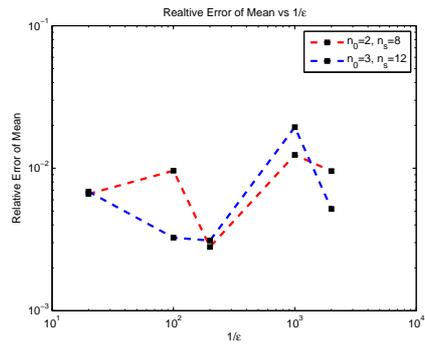}
\label{fig:subfig2}}
\\

\subfloat[][Error values (MSE)]{
\includegraphics[width=0.45\textwidth]{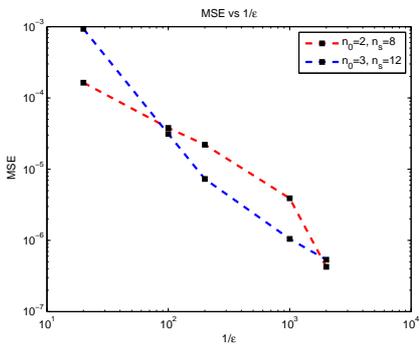}
\label{fig:subfig3}}

\caption{Relative error values of both mean and variance evaluated by Monte Carlo simulation and proposed method, and error (MSE) for all cases.}
\label{fig:globfig}
\end{figure}

\newpage
\subsection{Surface Absorption}  
We examine the proposed method on the following ODE considered by Makeev and his co-authors [49] to analyse the stability domain and dynamical behavior of kinetics of $\mathbf{NO}$ reduction by $\mathbf{H_2}$ on $\mathbf{Pt}$ and $\mathbf{Rh}$ surfaces:

\begin{equation}
\left\{
	\begin{array}{ll}
\frac{d\rho}{dt}= \alpha(1-\rho)-\gamma\rho+\beta\rho(1-\rho)^2 \\
\rho(t=0)=\rho_0
\end{array}
\right.
\end{equation}

Equation (95) expresses the time-progress of the absorbed coverage $\rho\in[0,1]$ of $\mathbf{NO}$ with respect to given input parameters. In this problem, $\alpha$ and $\gamma$ are the surface absorption and  desorption rates, respectively. Also $\beta$ and $\rho_0$ are respectively the recombination (or reaction) rate and initial covering  taken as our bifurcation parameters. This problem has one or two fixed point depending on the value of recombination rate $\beta$ whereas shows the smooth dependency with respect to the other parameters. We focus on the statistics of the solution $\rho$ at $t=1$ with considering the uncertainty in the initial covering $\rho_0$ and recombination rate $\beta$. Similar to the previous work [14], $\rho_0$ and $\beta$ are chosen as uncertain parameters such that $\rho_0$ is uniformly distributed on $[0,1]$ and $\beta$ is uniformly distributed on $[0,20]$. The values of $\gamma$ and $\alpha$ are set to $.01$ and $1$ respectively. We use ODE-45 function in Matlab to solve this ODE with time step $t=0.001$.

\begin{figure}
\includegraphics[width=4in]{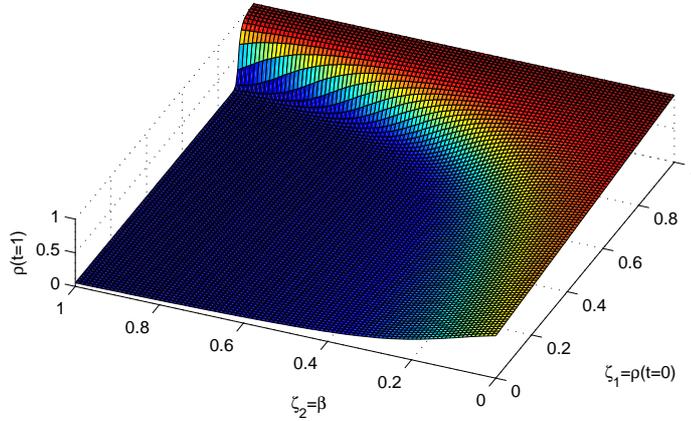}
\label{fig:subfig1}
\caption{Surface response of $\rho$ (t=1))}
\end{figure}

Table (2) explains our desired quantities evaluated by our method. One can see the convergence of mean and variance evaluated via present method to true values by comparing with mean and variance values evaluated by Monte Carlo simulations with 1000000 realizations which are .   

\begin{table}[]\footnotesize
\caption{Evaluated values of $N_{sb}$, $\mathbb{E}(\rho)$, $\sigma^2(\rho)$, and MSE for different values of  $n_0$, $n_s$, and $C$.}

\medskip
\centering  
\subfloat[][Evaluated values of $N_{sb}$, $\mathbb{E}(\rho)$, $\sigma^2(\rho)$, and MSE for \\
 $n_0=3$, $n_s=13$.]{\begin{tabular}{c c c c c}  
\hline           

$C$ & $N_{sb}$ & $\mathbb{E}(\rho)$& $\sigma^2(\rho)$ & MSE \\ [0.5ex] 
\hline                  
$5\times 10 ^{-2}$ & 4 & 0.37318715 & 0.10799163 & 0.01237100 \\ 
$1\times 10^{-2}$ & 18 & 0.35071422 & 0.11005503 & 0.00040281 \\
$5\times 10^{-3}$ & 24 & 0.35043779 & 0.10961185 & 0.00014461 \\
$1\times 10^{-3}$ & 69 & 0.35027543 & 0.11026816 & 0.00001921 \\
$5\times 10^{-4}$ & 102 & 0.35032569 & 0.11027245 & 0.00000123 \\
$1\times 10^{-4}$ & 192 &  0.35037020 & 0.11027245 & 0.00000028 \\ [1ex]      

\hline 

\end{tabular}
}

\bigskip

\subfloat[][Evaluated values of $N_{sb}$, $\mathbb{E}(\rho)$, $\sigma^2(\rho)$, and MSE for \\
 $n_0=3$, $n_s=14$.]{\begin{tabular}{c c c c c} 
 \hline           
 
 $C$ & $N_{sb}$ & $\mathbb{E}(\rho)$& $\sigma^2(\rho)$ & MSE \\ [0.5ex] 
 \hline                  
 $5\times 10 ^{-2}$ & 7 & 0.35071422 & 0.11005503 & 0.00369282 \\ 
 $1\times 10^{-2}$ & 16 & 0.35142326 & 0.10755043 & 0.00119106 \\
 $5\times 10^{-3}$ & 29 & 0.35042216 & 0.11010832 & 0.00005763 \\
 $1\times 10^{-3}$ & 69 & 0.35032293 & 0.11026043 & 0.00000546 \\
 $5\times 10^{-4}$ & 97 & 0.35033357 & 0.11025450 & 0.00000264 \\
 $1\times 10^{-4}$ & 210 &  0.35038897 & 0.11032818 & 0.00000007 \\ [1ex]      
 
 \hline 
 
 \end{tabular}}
 
 \bigskip

 \subfloat[][Evaluated values of $N_{sb}$, $\mathbb{E}(\rho)$, $\sigma^2(\rho)$, and MSE for \\
  $n_0=4$, $n_s=16$.]{\begin{tabular}{c c c c c} 
  \hline             
  
  $C$ & $N_{sb}$ & $\mathbb{E}(\rho)$& $\sigma^2(\rho)$ & MSE \\ [0.5ex] 
  \hline                  
  $5\times 10 ^{-2}$ & 6 &  0.34423843 & 0.10530110 & 0.00163459 \\ 
  $1\times 10^{-2}$ & 9 & 0.34920926 & 0.10735340 & 0.00065797 \\
  $5\times 10^{-3}$ & 15 & 0.34886888 & 0.10933321 & 0.00012769 \\
  $1\times 10^{-3}$ & 48 & 0.35005009 & 0.10933321 & 0.00001088 \\
  $5\times 10^{-4}$ & 68 & 0.35025497 & 0.10933321 & 0.00000334 \\
  $1\times 10^{-4}$ & 154 &  0.35043888 & 0.10933321 & 0.00000024 \\ [1ex]      
  
  \hline 
  
  \end{tabular}}
 
 \bigskip
 
 \subfloat[][Evaluated values of $N_{sb}$, $\mathbb{E}(\rho)$, $\sigma^2(\rho)$, and MSE for \\
   $n_0=4$, $n_s=20$.]{\begin{tabular}{c c c c c} 
   \hline             
   
   $C$ & $N_{sb}$ & $\mathbb{E}(\rho)$& $\sigma^2(\rho)$ & MSE \\ [0.5ex] 
   \hline                  
   $5\times 10 ^{-2}$ & 5 & 0.34912794 & 0.11001077 & 0.00533151 \\ 
   $1\times 10^{-2}$ & 15 & 0.34944424 & 0.10984353 & 0.00037115 \\
   $5\times 10^{-3}$ & 24 & 0.34999732 & 0.10983560 & 0.00007672 \\
   $1\times 10^{-3}$ & 54 & 0.35039355 & 0.11030063 & 0.00000438 \\
   $5\times 10^{-4}$ & 68 & 0.35036849 &  0.11031724 & 0.00000136 \\
   $1\times 10^{-4}$ & 146 &  0.35039224 &  0.11032860 & 0.00000007 \\ [1ex]      
   
   \hline 
   
   \end{tabular}}

\end{table}

Figure (8-12) show the statistics, the partition of random parameter space, which is partitioned by adaptive strategy. Here, the problem has two stochastic dimensions, $\zeta=(\zeta_1,\zeta_2)$ with $\zeta_1$ uniformly distributed on $ [0,1] $ and $\zeta_2$ uniformly distributed on $ [0,20] $.

\begin{figure}
\subfloat[][$C=5\times 10^{-2}$]{
\includegraphics[width=0.5\textwidth]{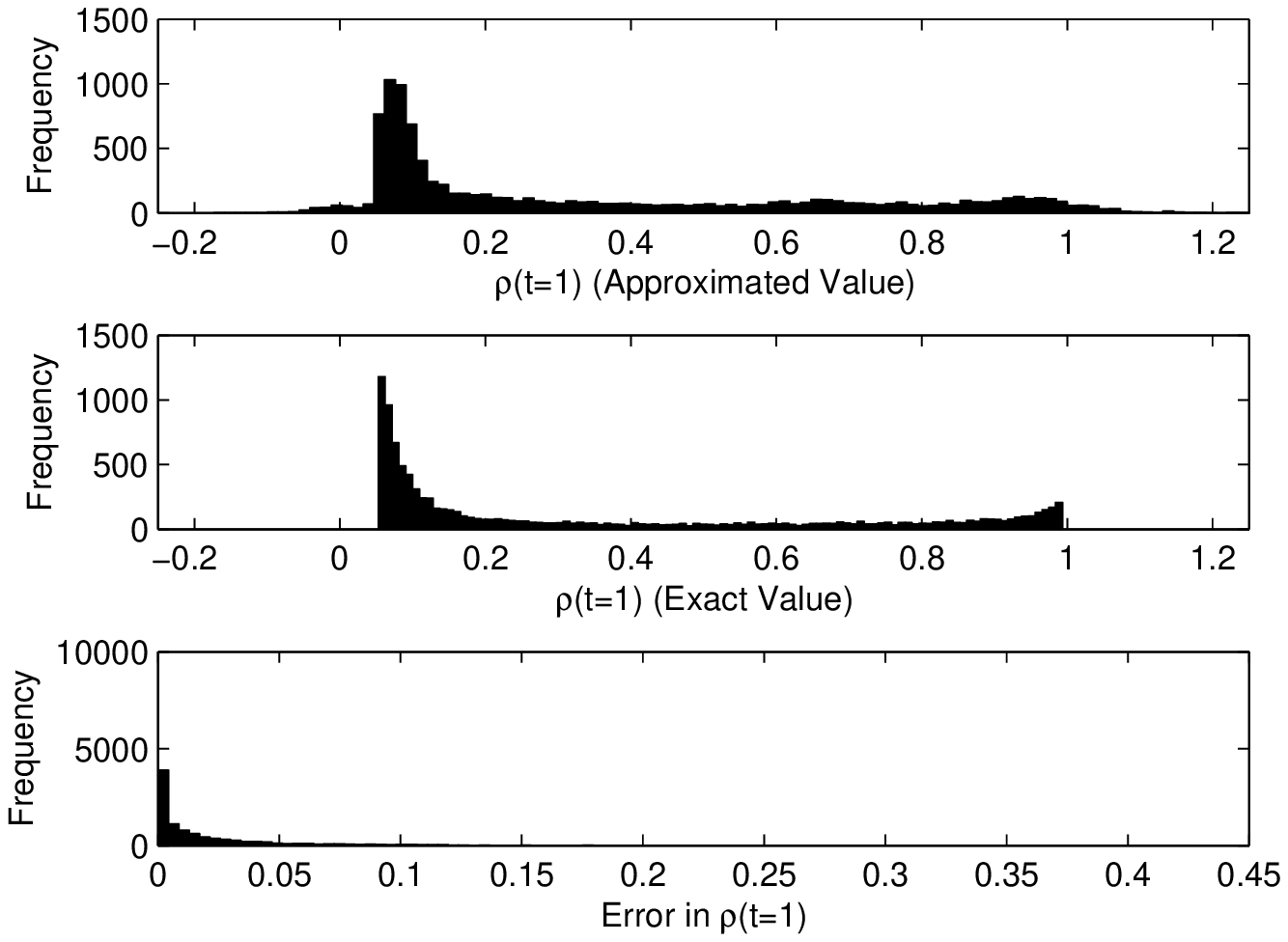}
\label{fig:subfig1}}
\qquad 
\subfloat[][$C=1\times 10^{-2}$]{
\includegraphics[width=0.5\textwidth]{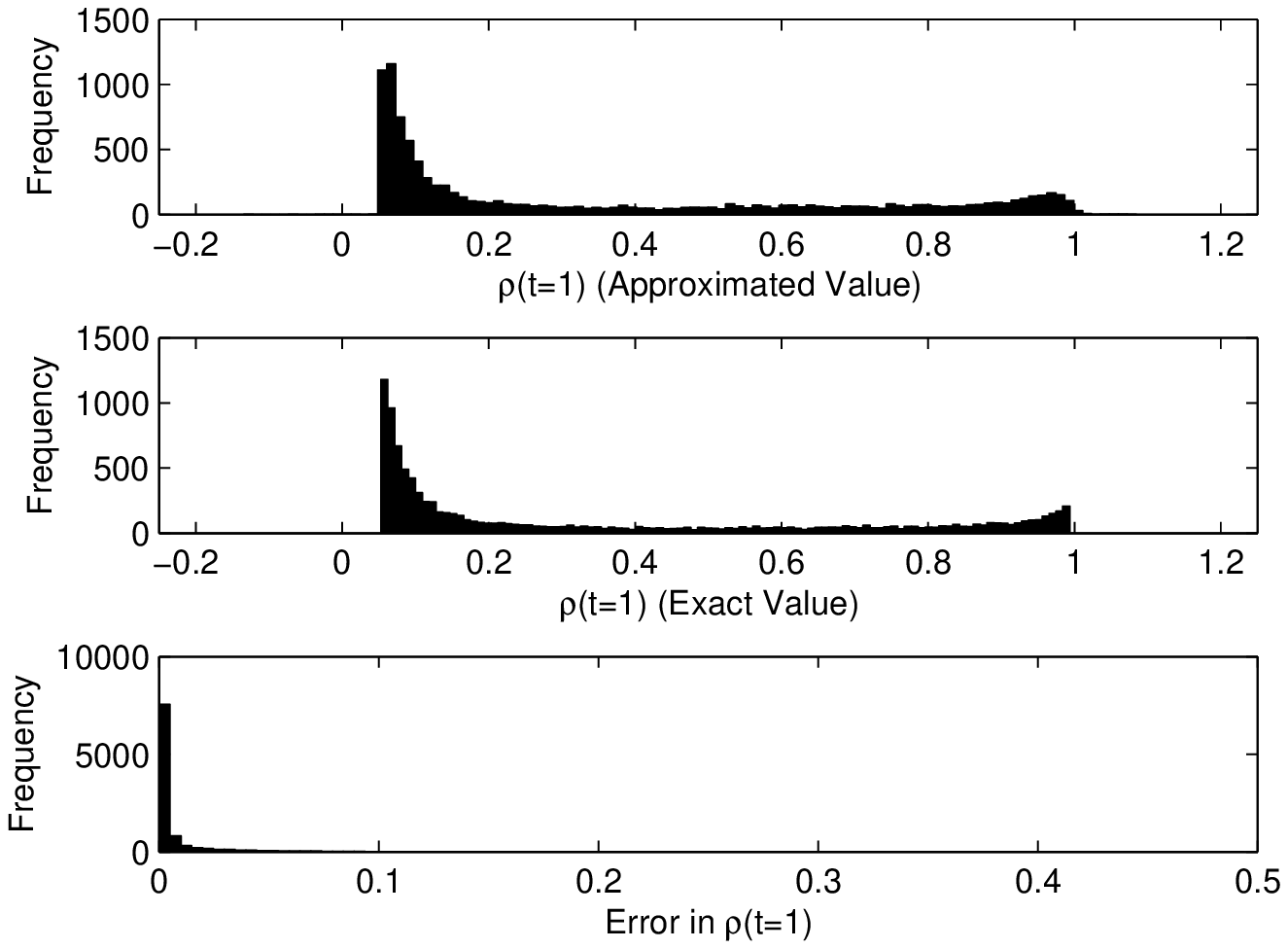}
\label{fig:subfig2}}
\\

\subfloat[][$C=5\times 10^{-3}$]{
\includegraphics[width=0.5\textwidth]{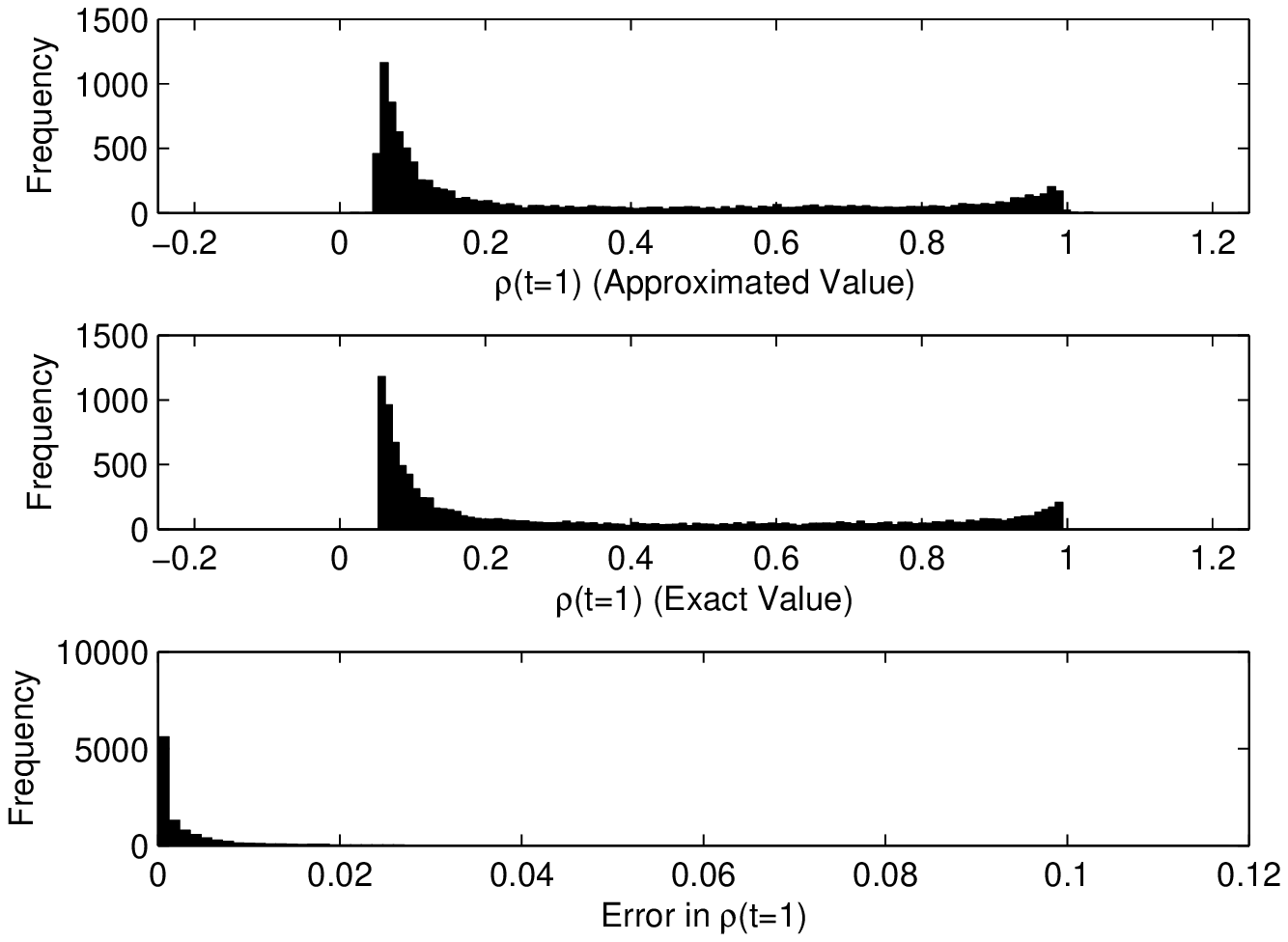}
\label{fig:subfig3}}
\qquad
\subfloat[][$C=1\times 10^{-3}$]{
\includegraphics[width=0.5\textwidth]{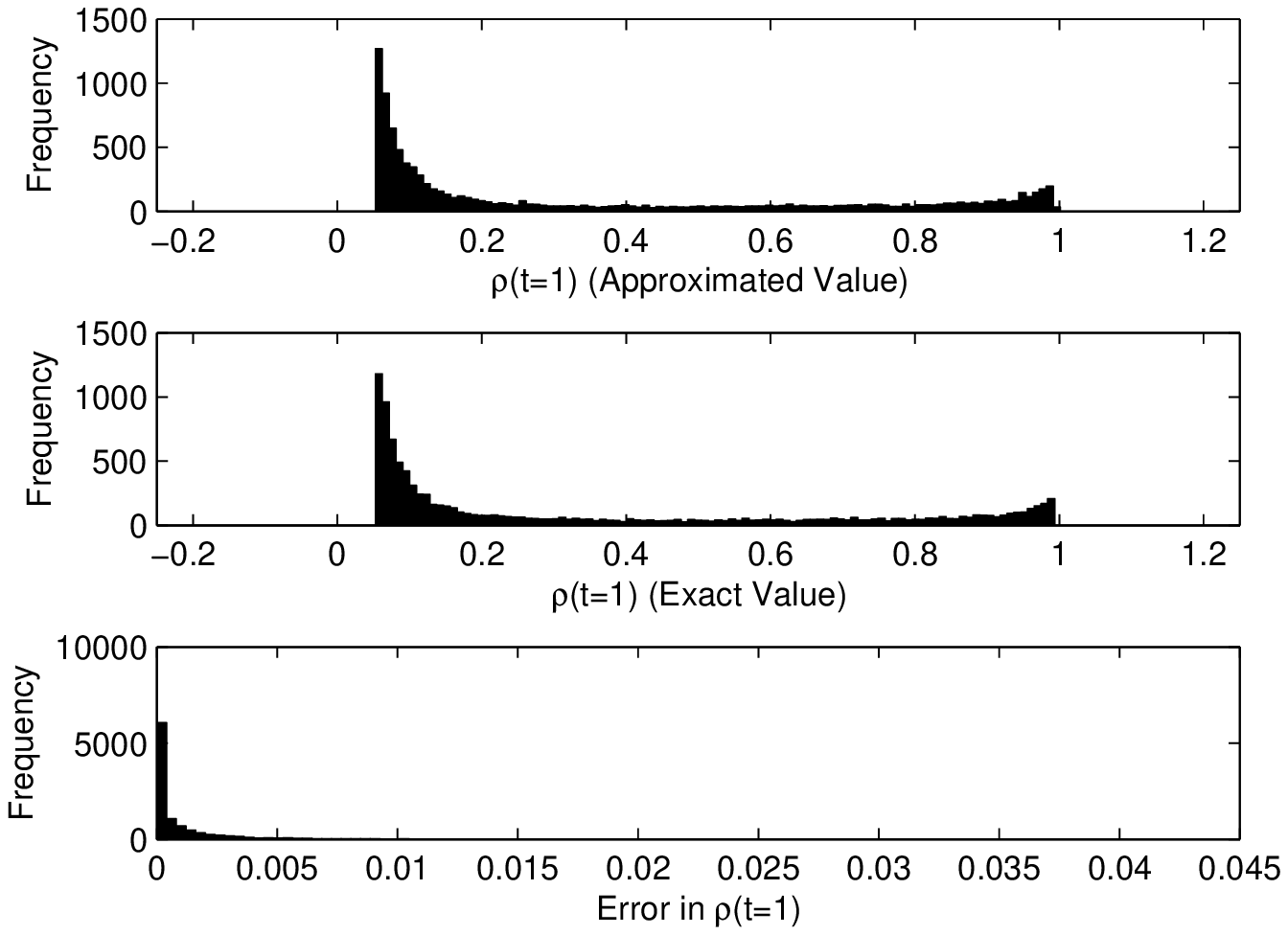}
\label{fig:subfig4}}
\\

\subfloat[][$C=5\times 10^{-4}$]{
\includegraphics[width=0.5\textwidth]{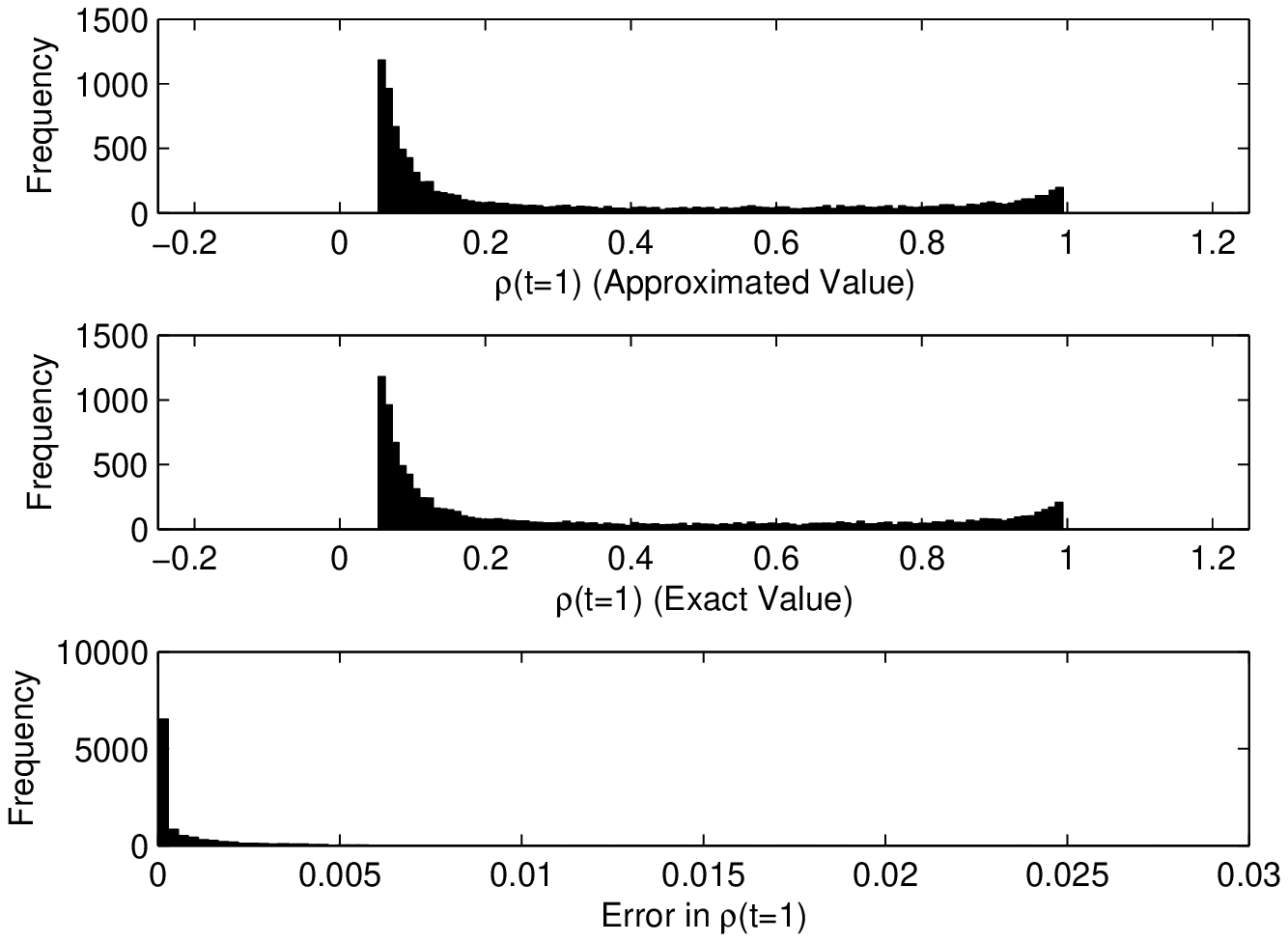}
\label{fig:subfig4}}

\caption{Distribution of function exact values evaluated by Monte Carlo simulation, function approximated evaluated by proposed method, and absolute value of error between them for $n=2, n_0=3, \mbox{ and } n_s=14.$}
\label{fig:globfig}
\end{figure}

\begin{figure}
\subfloat[][$C=5\times 10^{-2}$]{
\includegraphics[width=0.39\textwidth]{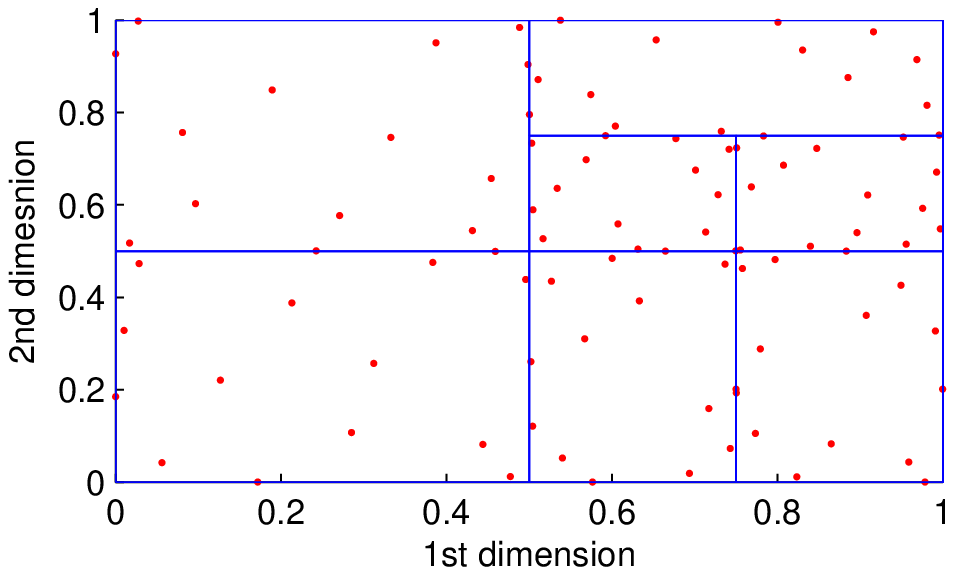}
\label{fig:subfig1}}
\qquad 
\subfloat[][$C=1\times 10^{-2}$]{
\includegraphics[width=0.39\textwidth]{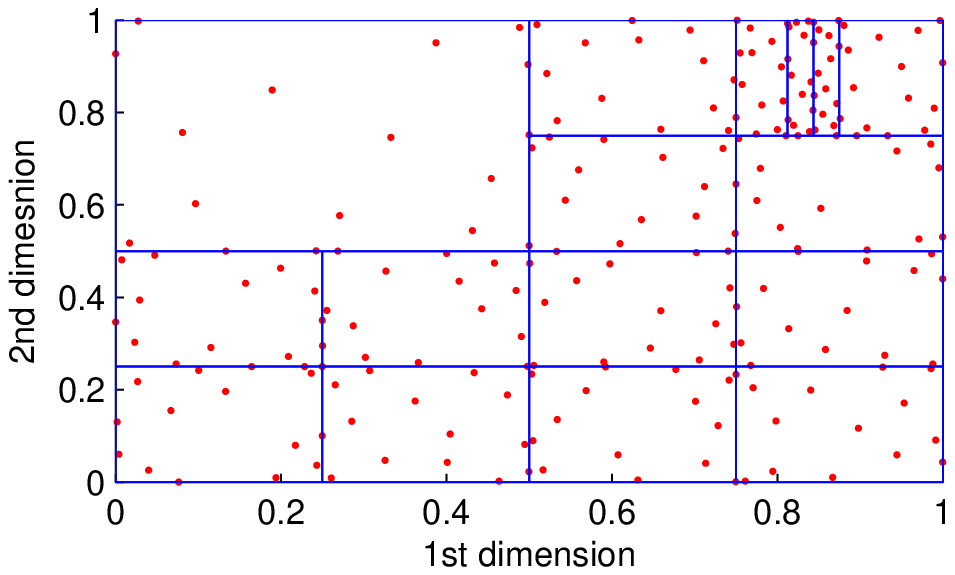}
\label{fig:subfig2}}
\\

\subfloat[][$C=5\times 10^{-3}$]{
\includegraphics[width=0.39\textwidth]{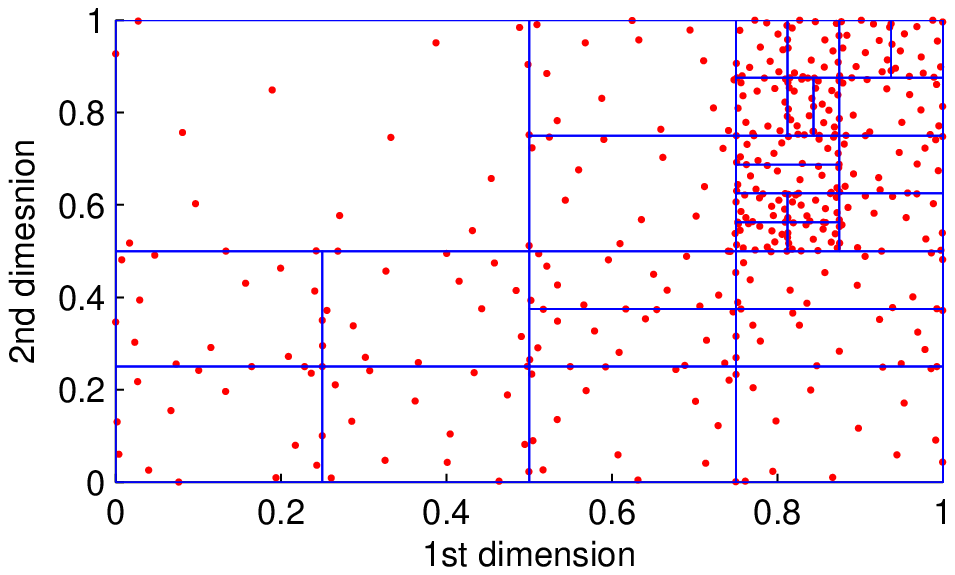}
\label{fig:subfig3}}
\qquad
\subfloat[][$C=1\times 10^{-3}$]{
\includegraphics[width=0.39\textwidth]{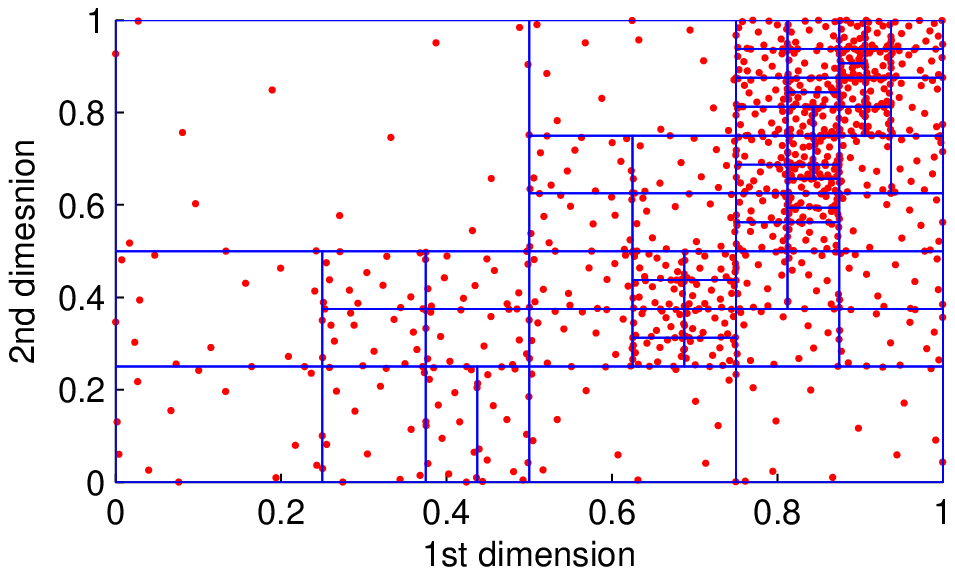}
\label{fig:subfig4}}
\\

\subfloat[][$C=5\times 10^{-4}$]{
\includegraphics[width=0.39\textwidth]{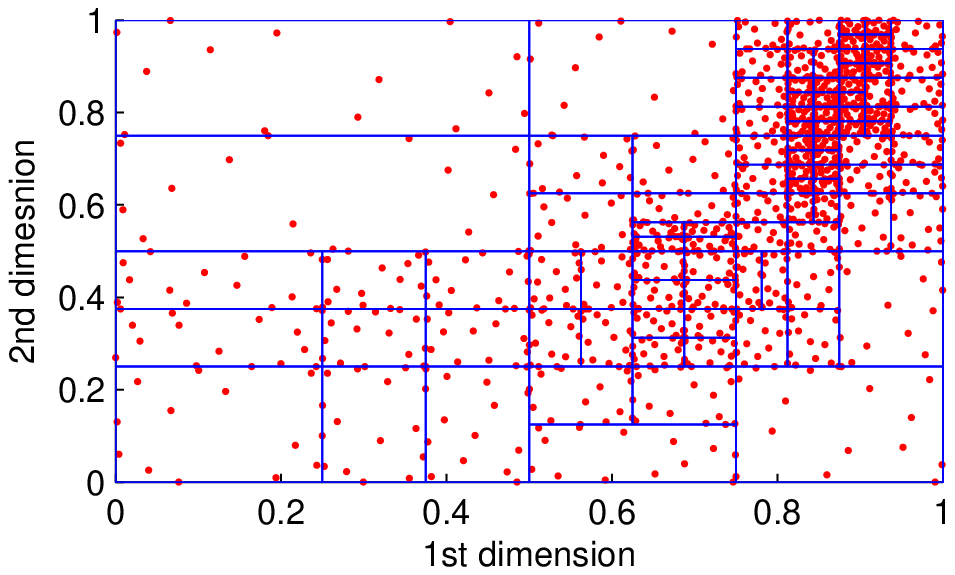}
\label{fig:subfig4}}

\caption{Partition of the random parameter space with sample points utilized for $n=2, n_0=3, \mbox{ and } n_s=14.$}
\label{fig:globfig}
\end{figure}

\begin{figure}
\subfloat[][$C=5\times 10^{-2}$]{
\includegraphics[width=0.5\textwidth]{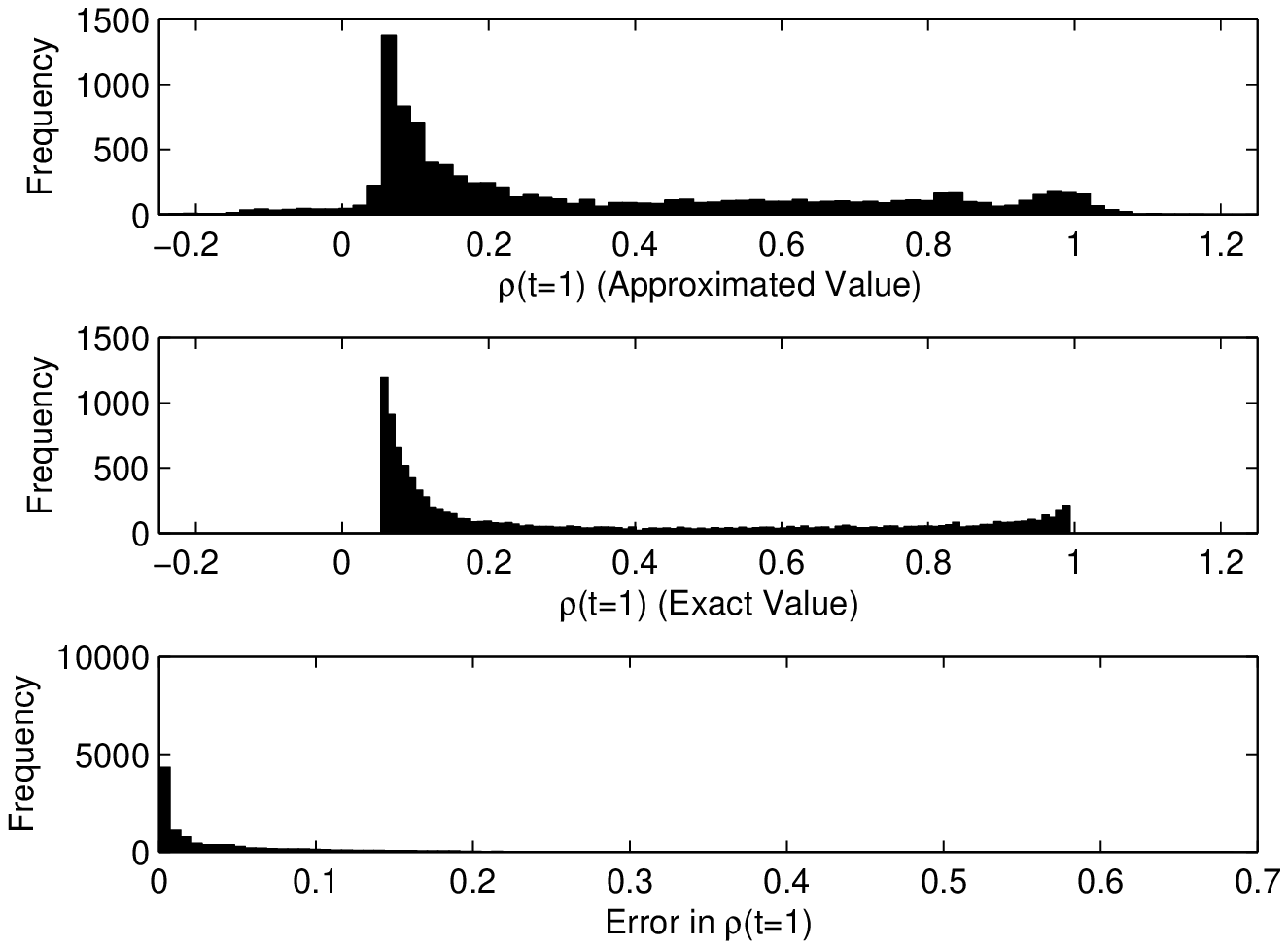}
\label{fig:subfig1}}
\qquad 
\subfloat[][$C=1\times 10^{-2}$]{
\includegraphics[width=0.5\textwidth]{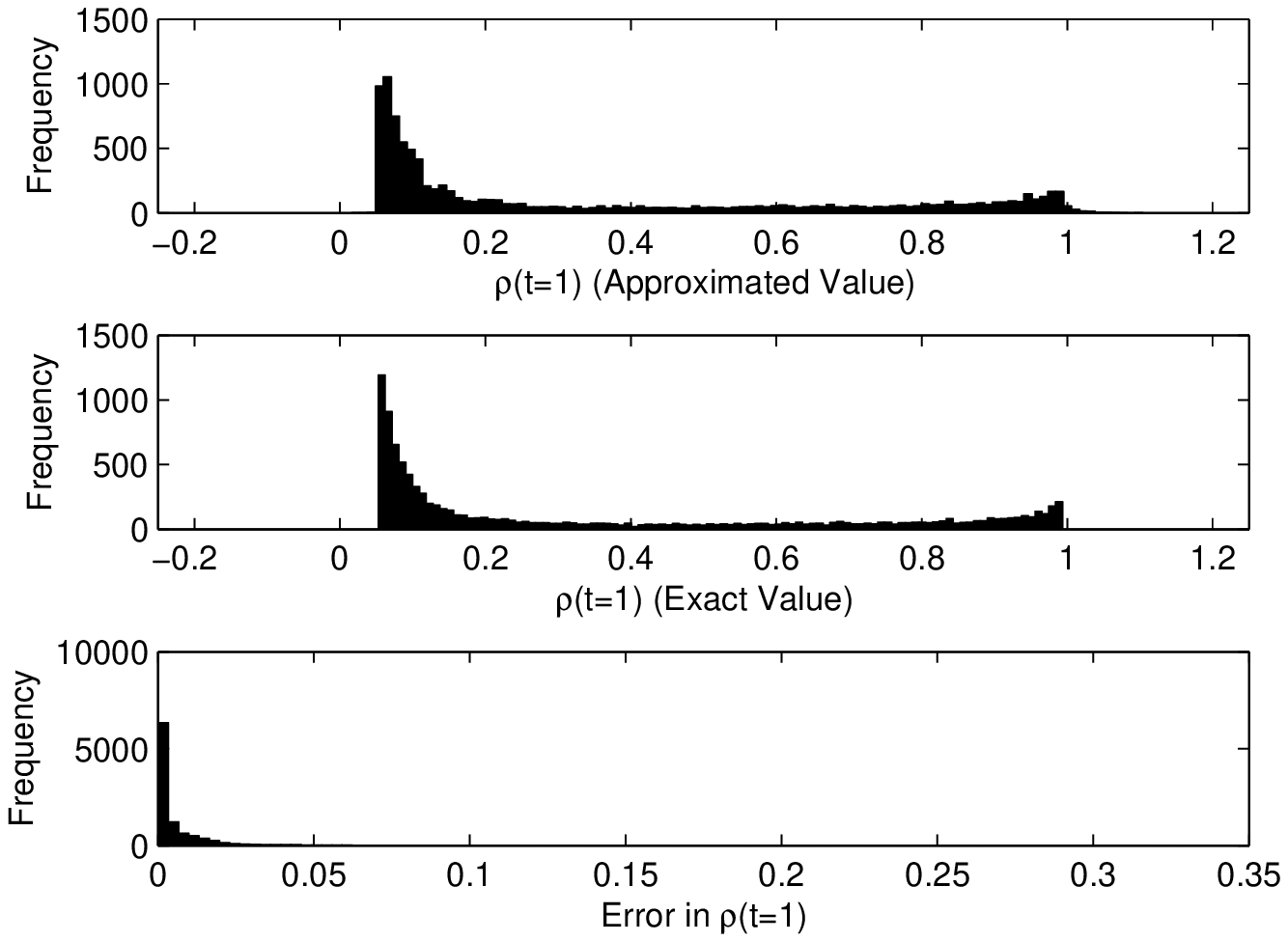}
\label{fig:subfig2}}
\\

\subfloat[][$C=5\times 10^{-3}$]{
\includegraphics[width=0.5\textwidth]{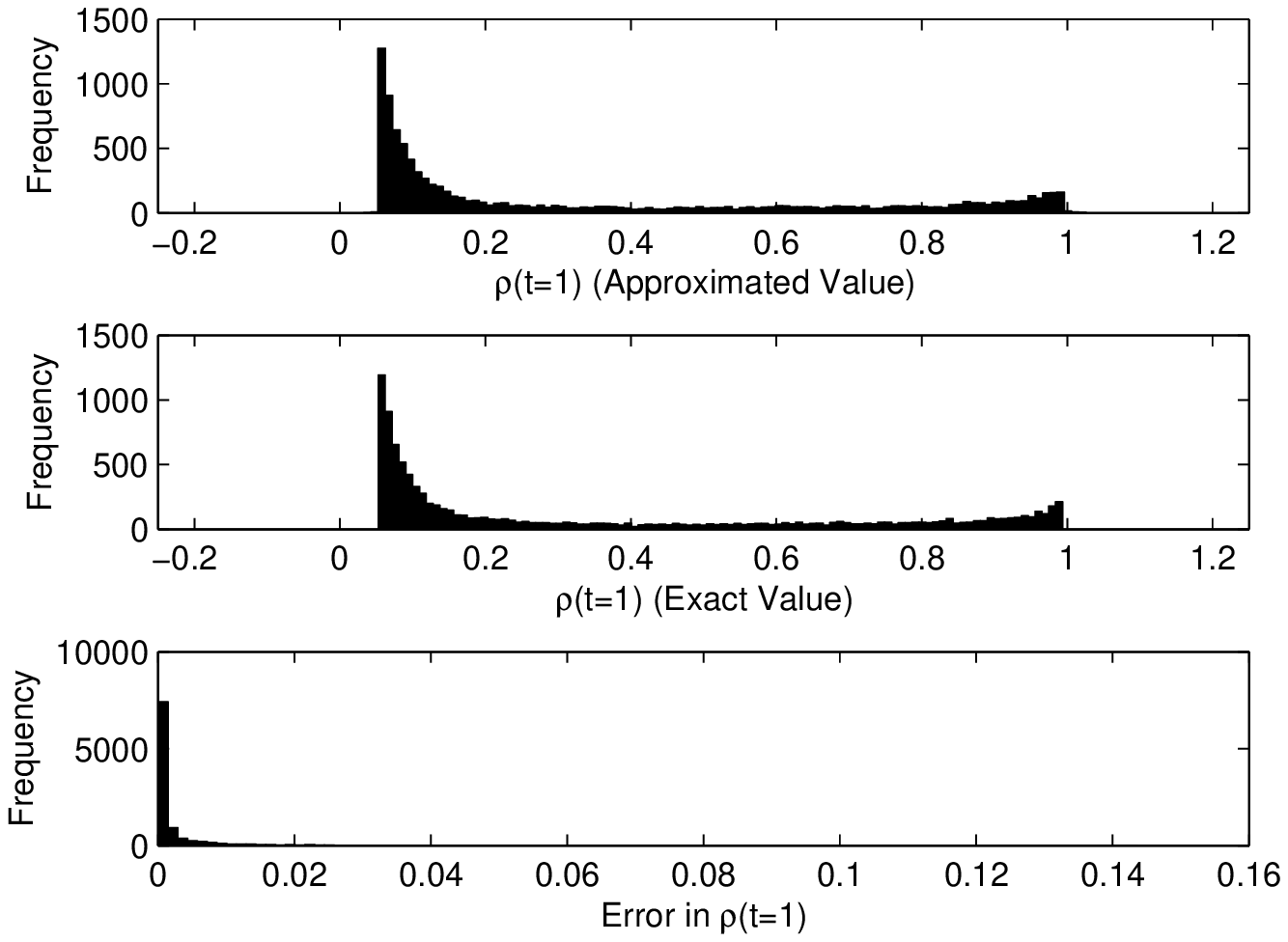}
\label{fig:subfig3}}
\qquad
\subfloat[][$C=1\times 10^{-3}$]{
\includegraphics[width=0.5\textwidth]{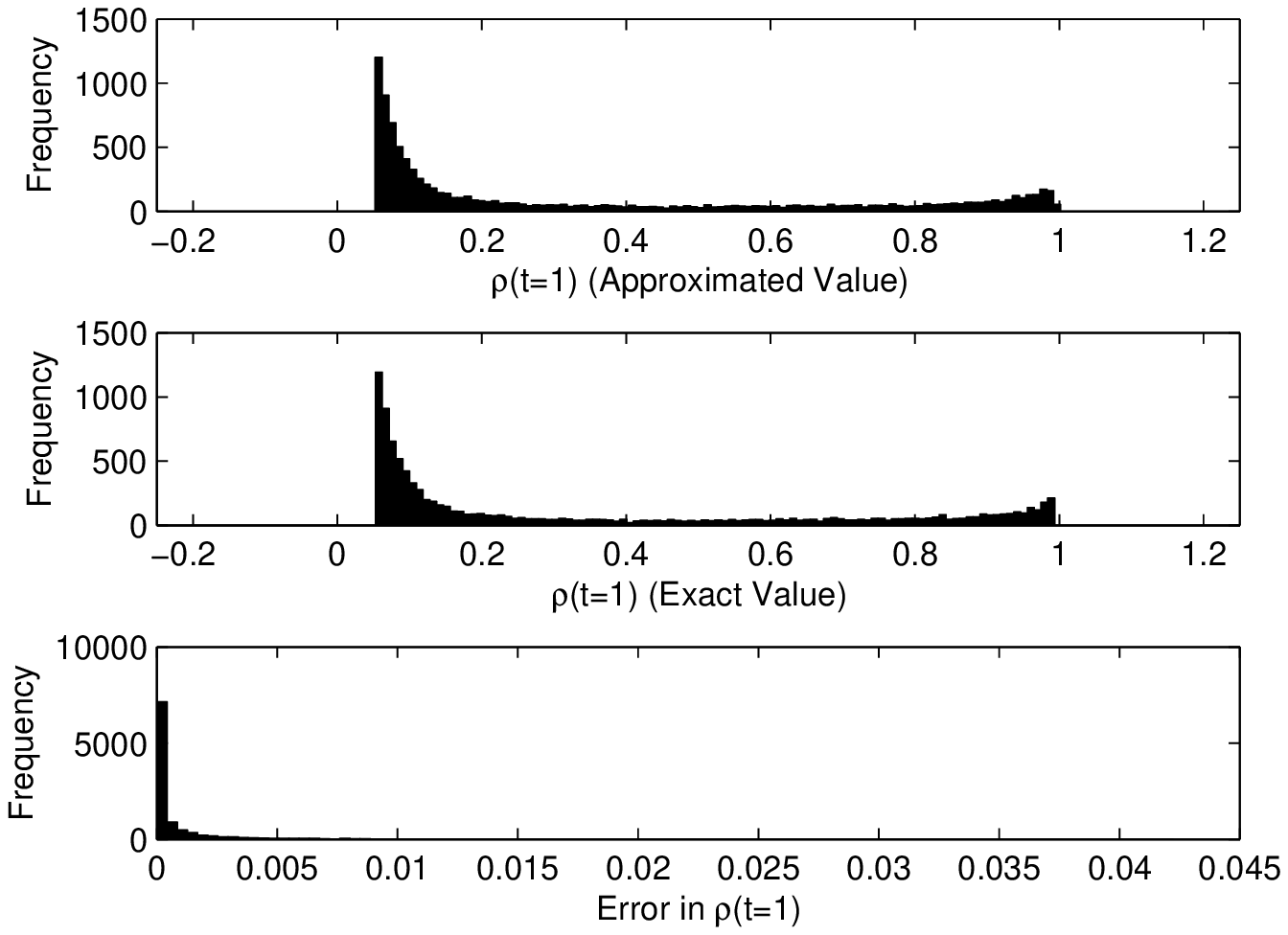}
\label{fig:subfig4}}
\\

\subfloat[][$C=5\times 10^{-4}$]{
\includegraphics[width=0.5\textwidth]{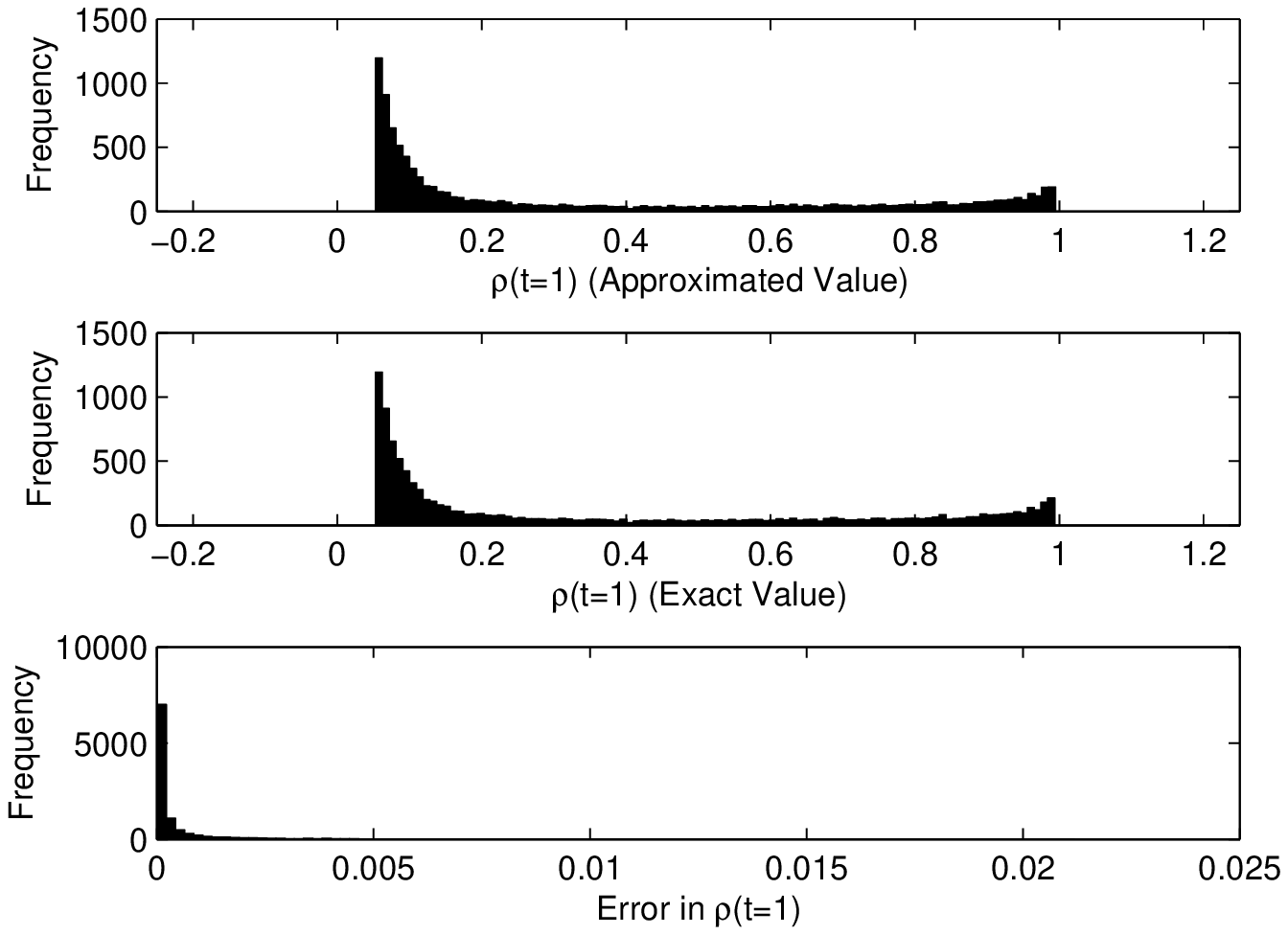}
\label{fig:subfig4}}

\caption{Distribution of function exact values evaluated by Monte Carlo simulation, function approximated evaluated by proposed method, and absolute value of error between them for $n=2, n_0=4, \mbox{ and } n_s=20.$}
\label{fig:globfig}
\end{figure}

\begin{figure}
\subfloat[][$C=5\times 10^{-2}$]{
\includegraphics[width=0.39\textwidth]{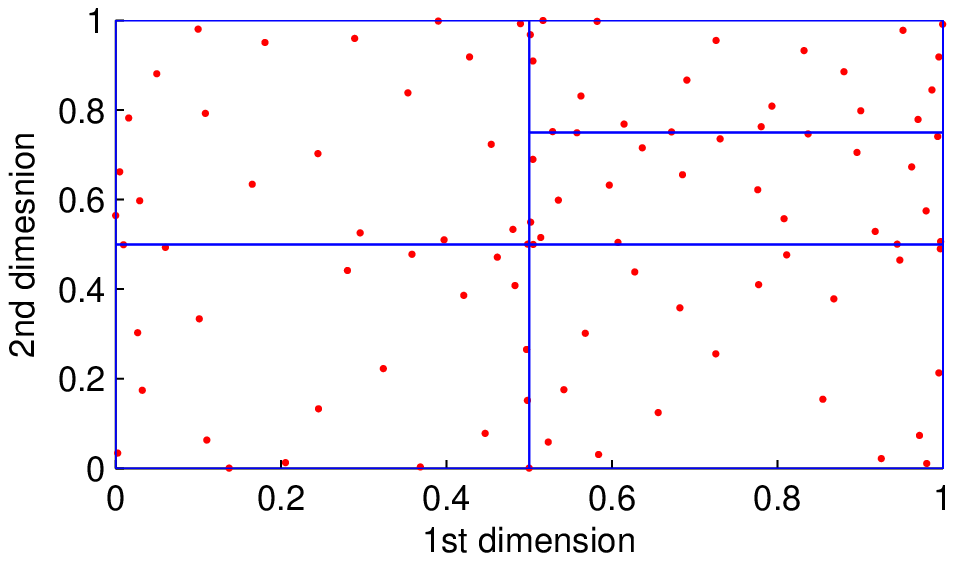}
\label{fig:subfig1}}
\qquad 
\subfloat[][$C=1\times 10^{-2}$]{
\includegraphics[width=0.39\textwidth]{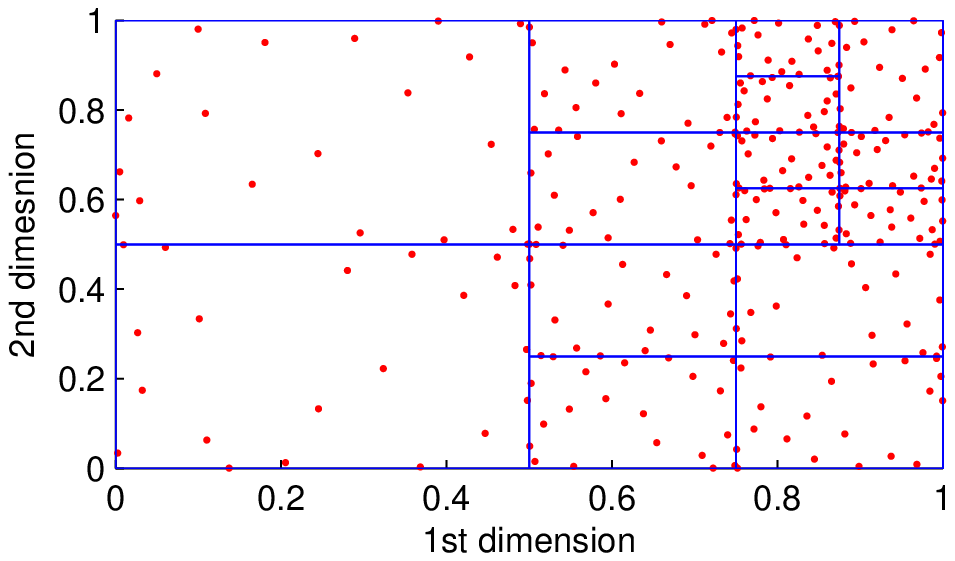}
\label{fig:subfig2}}
\\

\subfloat[][$C=5\times 10^{-3}$]{
\includegraphics[width=0.39\textwidth]{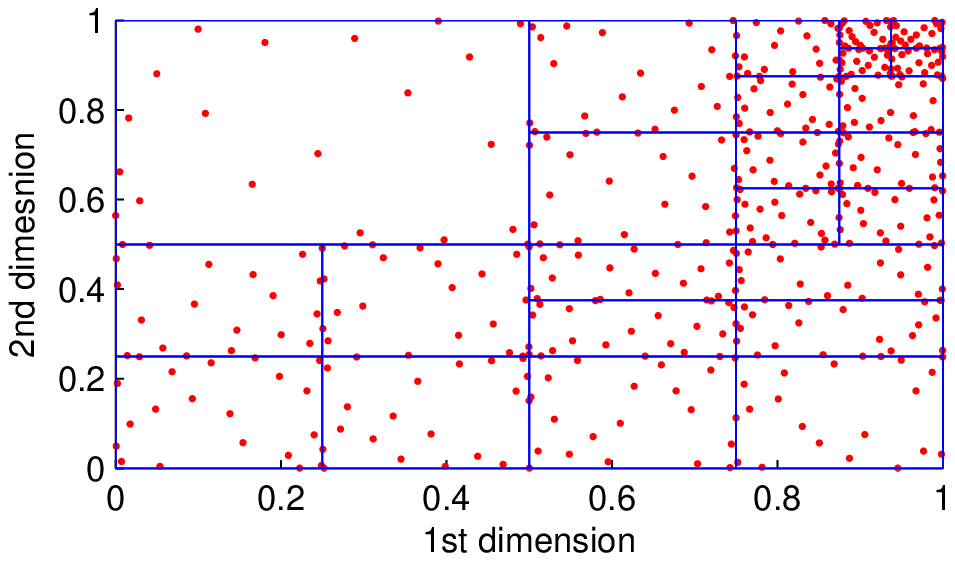}
\label{fig:subfig3}}
\qquad
\subfloat[][$C=1\times 10^{-3}$]{
\includegraphics[width=0.39\textwidth]{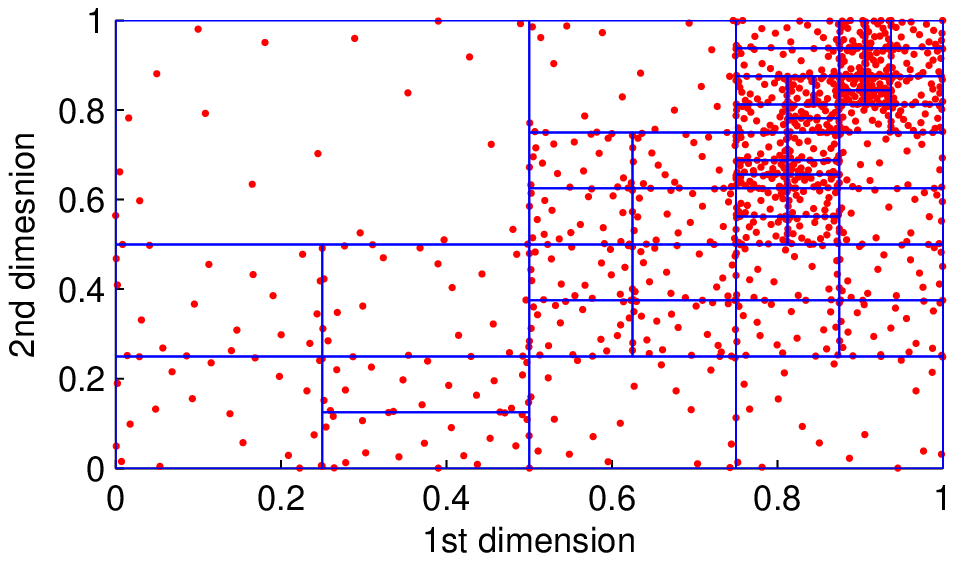}
\label{fig:subfig4}}
\\

\subfloat[][$C=5\times 10^{-4}$]{
\includegraphics[width=0.39\textwidth]{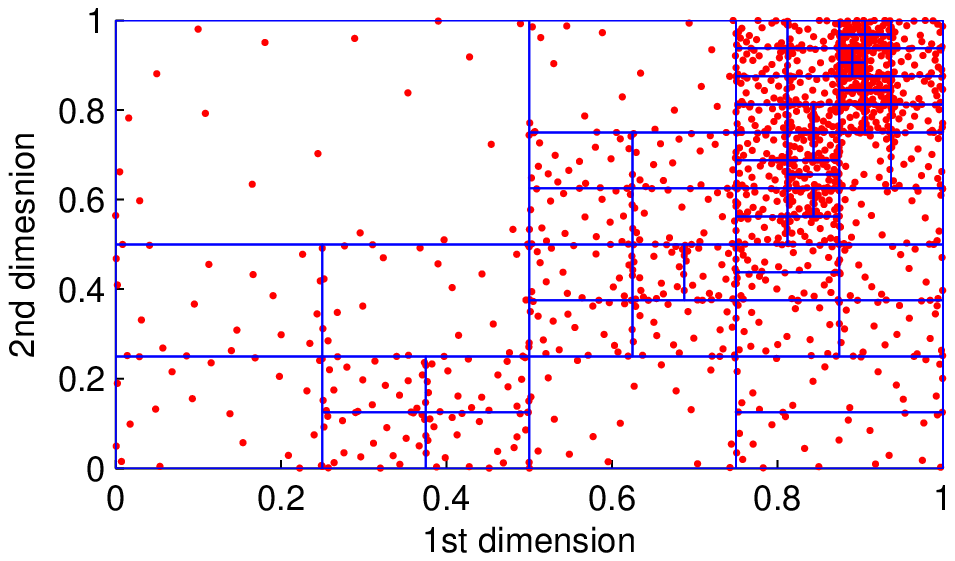}
\label{fig:subfig4}}

\caption{Partition of the random parameter space with sample points utilized for $n=2, n_0=4, \mbox{ and } n_s=20.$}
\label{fig:globfig}
\end{figure}

\begin{figure}
\subfloat[][Relative erros of variance]{
\includegraphics[width=0.45\textwidth]{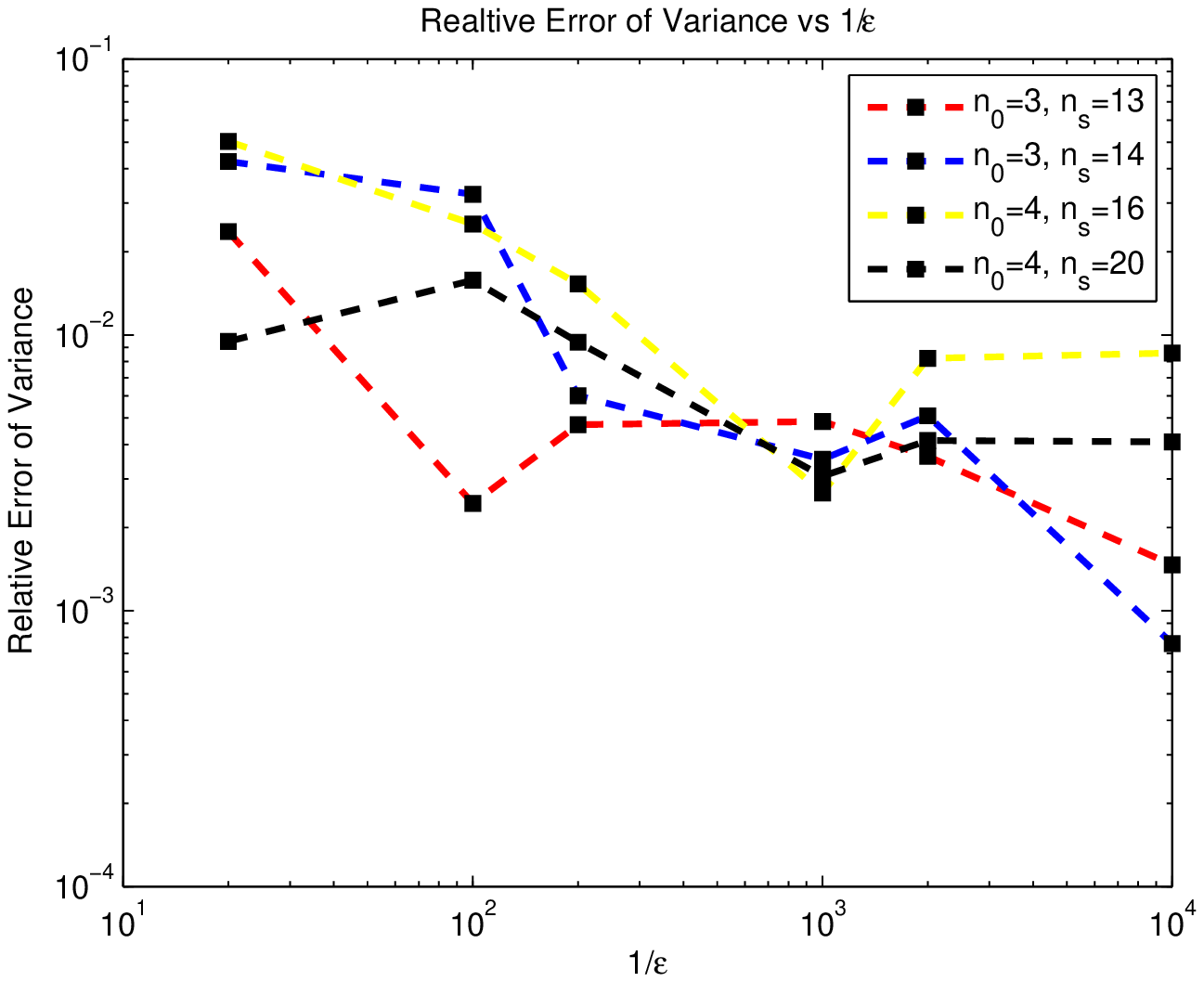}
\label{fig:subfig1}}
\qquad 
\subfloat[][Relative erros of mean]{
\includegraphics[width=0.45\textwidth]{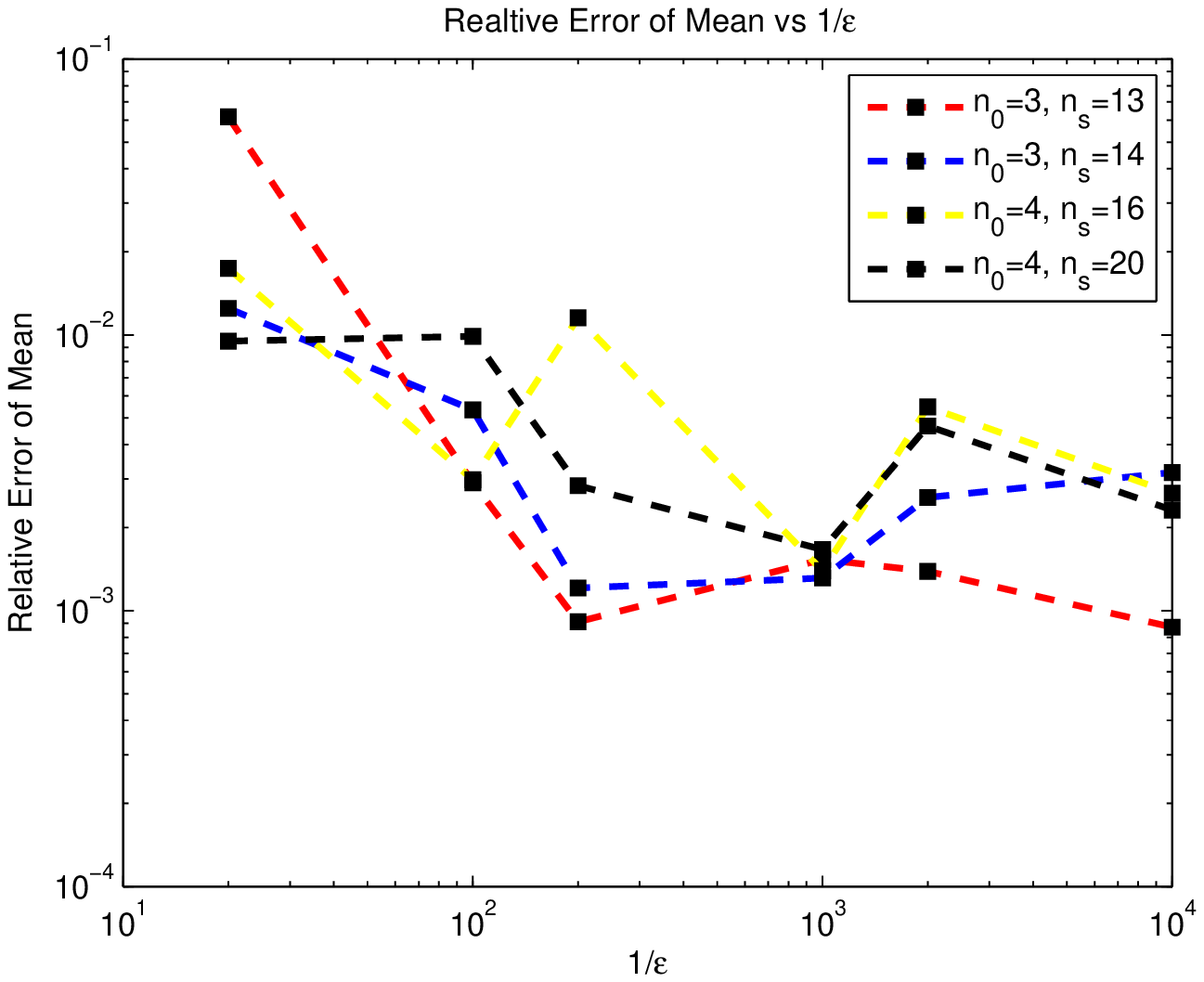}
\label{fig:subfig2}}
\\

\subfloat[][Error values (MSE)]{
\includegraphics[width=0.45\textwidth]{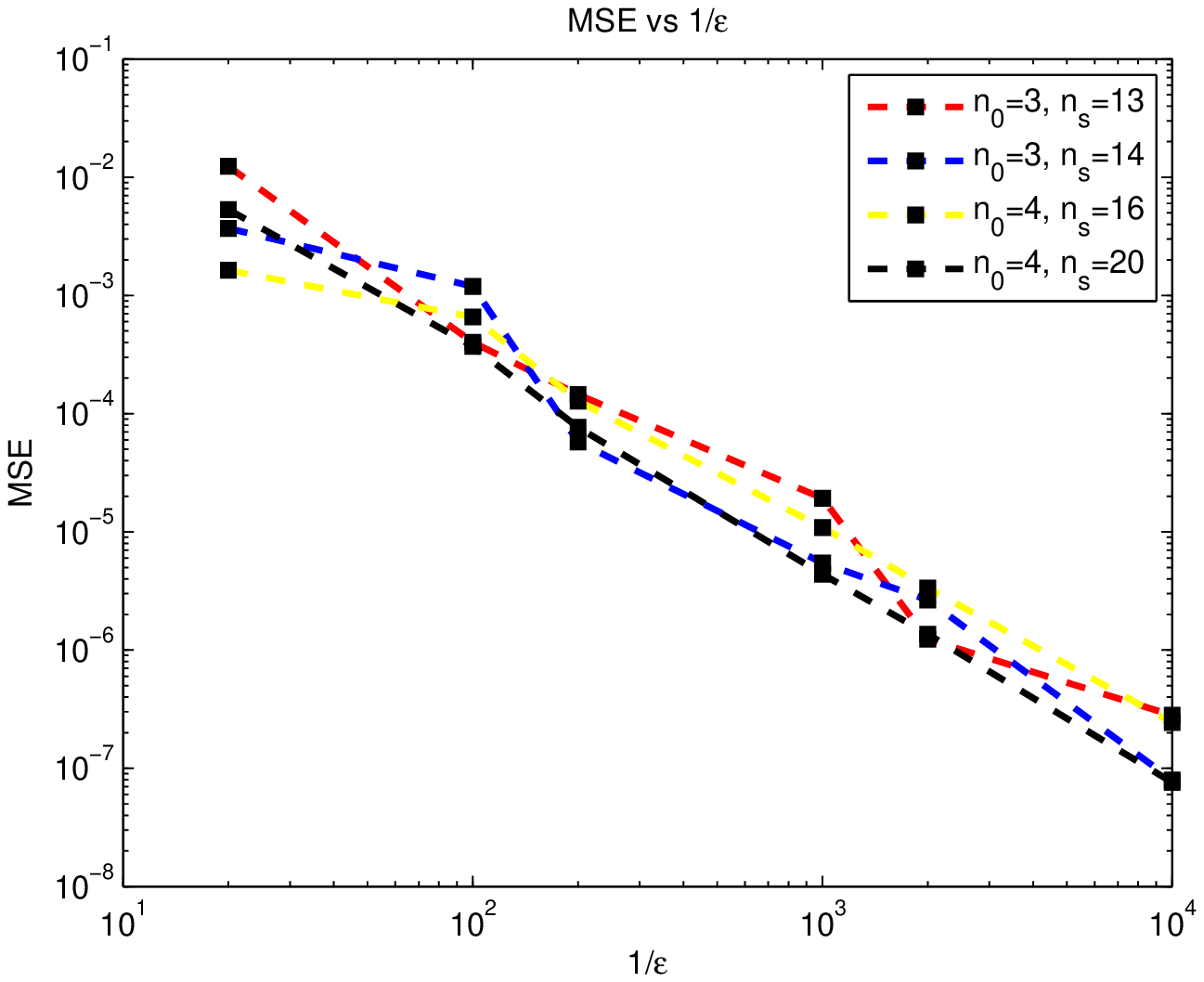}
\label{fig:subfig3}}

\caption{Relative error values of both mean and variance evaluated by Monte Carlo simulation and proposed method, and error (MSE) for all cases.}
\label{fig:globfig}
\end{figure}

The results show that by decreasing $C$, the value of mean and variance converge to its exact value evaluated from Monte Carlo simulation. Moreover, it can be easily seen that in the proposed method more refinements is concentrated in regions where the solution has steepest dependence on random inputs. Indeed, it truly identifies the regions where the solution lacks sufficient regularity and smoothness according to the construction of the adaptive strategy. Consequently, by decreasing the value of $C$, the number of partition should be increased.

There is a fluctuation between the error and the other two parameters, Number of samples $n_s$ and polynomial order $n_0$, which explains one should consider a tradeoff between the two mentioned parameters to optimize CPU time and computational cost with predefined error. In fact, it is not a good idea to continuously increase the polynomial order to reduce the error as it increases the computational costs and time. 

\subsection{Diffusion Problem with Variable Coefficient}
Consider the linear PDE
\begin{equation}
\left\{
	\begin{array}{ll}
\frac{\partial}{\partial x} (a(x,y)\frac{\partial}{\partial x}u(x,y))=1 \hspace{8pt} x\in (0,1)\\
u(0,y)=u(1,y)=0
\end{array}
\right.
\end{equation}

where $y$ is a uniform random variable on  $(-1,1-\varepsilon)$. Here, scalar $\varepsilon$ controls the variability in the diffusion coefficient $a(x,y)$, and thus the solution $u(x,y)$, and is taken to be $\varepsilon=0.02$ in this problem. We consider the following model for $a(x,y)$ as:

\begin{equation}
a(x,y)=1+4y(x^2-x)
\end{equation}

The solution to this problem exhibits rather sharp gradients close to $x=0.5$ and $y=1-\varepsilon$.

\begin{figure}[h!]
\includegraphics[width=4in]{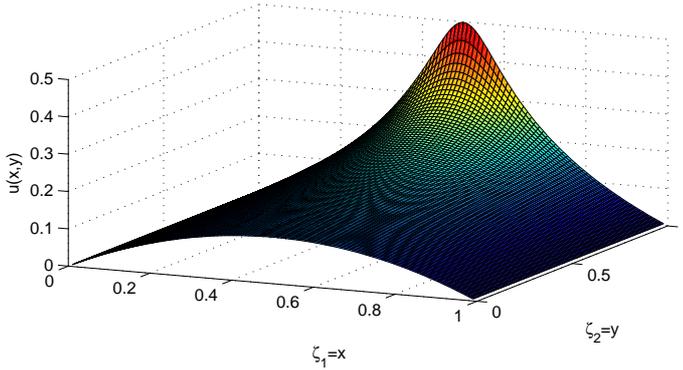}
\label{fig:subfig1}
\caption{Surface response of $u (0.5,y)$)}
\end{figure}

\begin{table}\footnotesize
\caption{Evaluated values for $N_{sb}$, $\mathbb{E}(u(0.5))$, $\sigma^2(u(0.5))$ for different values of  $n_0$, $n_s$, and $C$.}
\medskip
\centering  
\subfloat[][Evaluated values of $N_{sb}$, $\mathbb{E}(u(0.5))$, $\sigma^2(u(0.5))$, and MSE for $n_0=3$, $N_s=5$.]{\begin{tabular}{c c c c c} 
 \hline            
 
 $C$ & $N_{sb}$ & $\mathbb{E}(\rho)$& $\sigma^2(\rho)$ & MSE \\ [0.5ex] 
 \hline                  
 $5\times 10 ^{-2}$ & 2 &  0.19542594 & 0.00538135 & 0.32841073 $\times 10^{-4}$ \\ 
 $1\times 10^{-2}$ & 3 & 0.19651871 & 0.00543813 & 0.01670120 $\times 10^{-4}$\\
 $5\times 10^{-3}$ & 4 & 0.19642843 & 0.00543813 & 0.00039301 $\times 10^{-4}$\\
 $1\times 10^{-3}$ & 7 & 0.19637294 & 0.00543225 & 0.00030281 $\times 10^{-4}$\\
 $5\times 10^{-4}$ & 7 & 0.19637295 & 0.00543225 & 0.00028144 $\times 10^{-4}$\\
 $1\times 10^{-4}$ & 10 &  0.19637290 & 0.00543221 & 0.00026072 $\times 10^{-4}$ \\ [1ex]      
 
 \hline 
 
 \end{tabular}}

\bigskip

\subfloat[][Evaluated values of $N_{sb}$, $\mathbb{E}(u(0.5))$, $\sigma^2(u(0.5))$, and MSE for $n_0=3$, $N_s=6$.]{\begin{tabular}{c c c c c} 
 \hline             
 
 $C$ & $N_{sb}$ & $\mathbb{E}(\rho)$& $\sigma^2(\rho)$ & MSE \\ [0.5ex] 
 \hline                  
 $5\times 10 ^{-2}$ & 2 &  0.19664675 & 0.00561008 & 0.12122560 $\times 10^{-4}$ \\ 
 $1\times 10^{-2}$ & 4 & 0.19642821 & 0.00543813 & 0.00044188 $\times 10^{-4}$\\
 $5\times 10^{-3}$ & 4 & 0.19642843 & 0.00543813 & 0.00039301 $\times 10^{-4}$\\
 $1\times 10^{-3}$ & 5 & 0.19642686 & 0.00543363 & 0.00002203 $\times 10^{-4}$\\
 $5\times 10^{-4}$ & 5 & 0.19642708 & 0.00543364 & 0.00002281 $\times 10^{-4}$\\
 $1\times 10^{-4}$ & 9 &  0.19642435 & 0.00543546 & 0.00001376 $\times 10^{-4}$ \\ [1ex]      
 
 \hline 
 \end{tabular}}

\bigskip

\subfloat[][Evaluated values of $N_{sb}$, $\mathbb{E}(u(0.5))$, $\sigma^2(u(0.5))$, and MSE for $n_0=4$, $N_s=6$.]{\begin{tabular}{c c c c c} 
 \hline             
 
 $C$ & $N_{sb}$ & $\mathbb{E}(\rho)$& $\sigma^2(\rho)$ & MSE \\ [0.5ex] 
 \hline                  
 $5\times 10 ^{-2}$ & 1 &  0.19813695 & 0.00608860 & 0.36032433 $\times 10^{-4}$ \\ 
 $1\times 10^{-2}$ & 2 & 0.19670716 & 0.00557323 & 0.03555089 $\times 10^{-4}$\\
 $5\times 10^{-3}$ & 3 & 0.19643187 & 0.00543439 & 0.00495663 $\times 10^{-4}$\\
 $1\times 10^{-3}$ & 6 & 0.19642224 & 0.00543427 & 0.00021754 $\times 10^{-4}$\\
 $5\times 10^{-4}$ & 8 & 0.19642323 & 0.00543529 & 0.00002544 $\times 10^{-4}$\\
 $1\times 10^{-4}$ & 9 &  0.19642150 & 0.00543427 & 0.00003535 $\times 10^{-4}$ \\ [1ex]      
 
 \hline 
 
 \end{tabular}}
 
 \bigskip
 
 \subfloat[][Evaluated values of $N_{sb}$, $\mathbb{E}(u(0.5))$, $\sigma^2(u(0.5))$, and MSE for $n_0=3$, $N_s=7$.]{\begin{tabular}{c c c c c} 
  \hline\hline             
  
  $C$ & $N_{sb}$ & $\mathbb{E}(\rho)$& $\sigma^2(\rho)$ & MSE \\ [0.5ex] 
  \hline                  
  $5\times 10 ^{-2}$ & 2 &  0.19614737 & 0.00545924 & 0.70143879 $\times 10^{-5}$ \\ 
  $1\times 10^{-2}$ & 3 & 0.19646409 & 0.00545924 & 0.03969327 $\times 10^{-5}$\\
  $5\times 10^{-3}$ & 3 & 0.19646409 & 0.00545924 & 0.03981619 $\times 10^{-5}$\\
  $1\times 10^{-3}$ & 4 & 0.19643156 & 0.00543410 & 0.00020607 $\times 10^{-5}$\\
  $5\times 10^{-4}$ & 5 & 0.19642966 & 0.00543231 & 0.00037291 $\times 10^{-5}$\\
  $1\times 10^{-4}$ & 6 &  0.19642270 & 0.00543378 & 0.00005475 $\times 10^{-5}$ \\ [1ex]      
  
  \hline 
  
  \end{tabular}}
\end{table}

\begin{figure}
\subfloat[][$C=5\times 10^{-2}$]{
\includegraphics[width=0.5\textwidth]{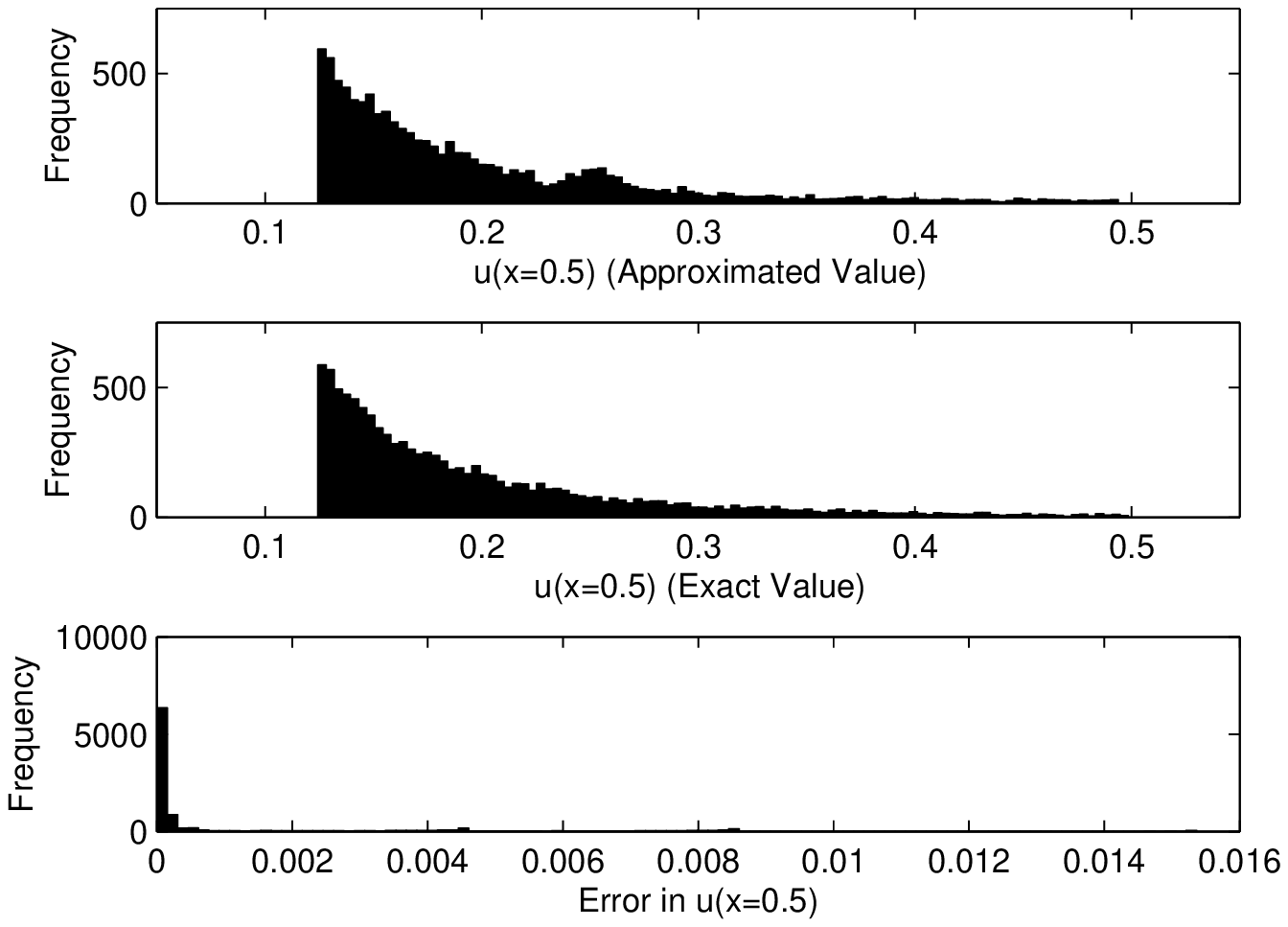}
\label{fig:subfig1}}
\qquad 
\subfloat[][$C=1\times 10^{-2}$]{
\includegraphics[width=0.5\textwidth]{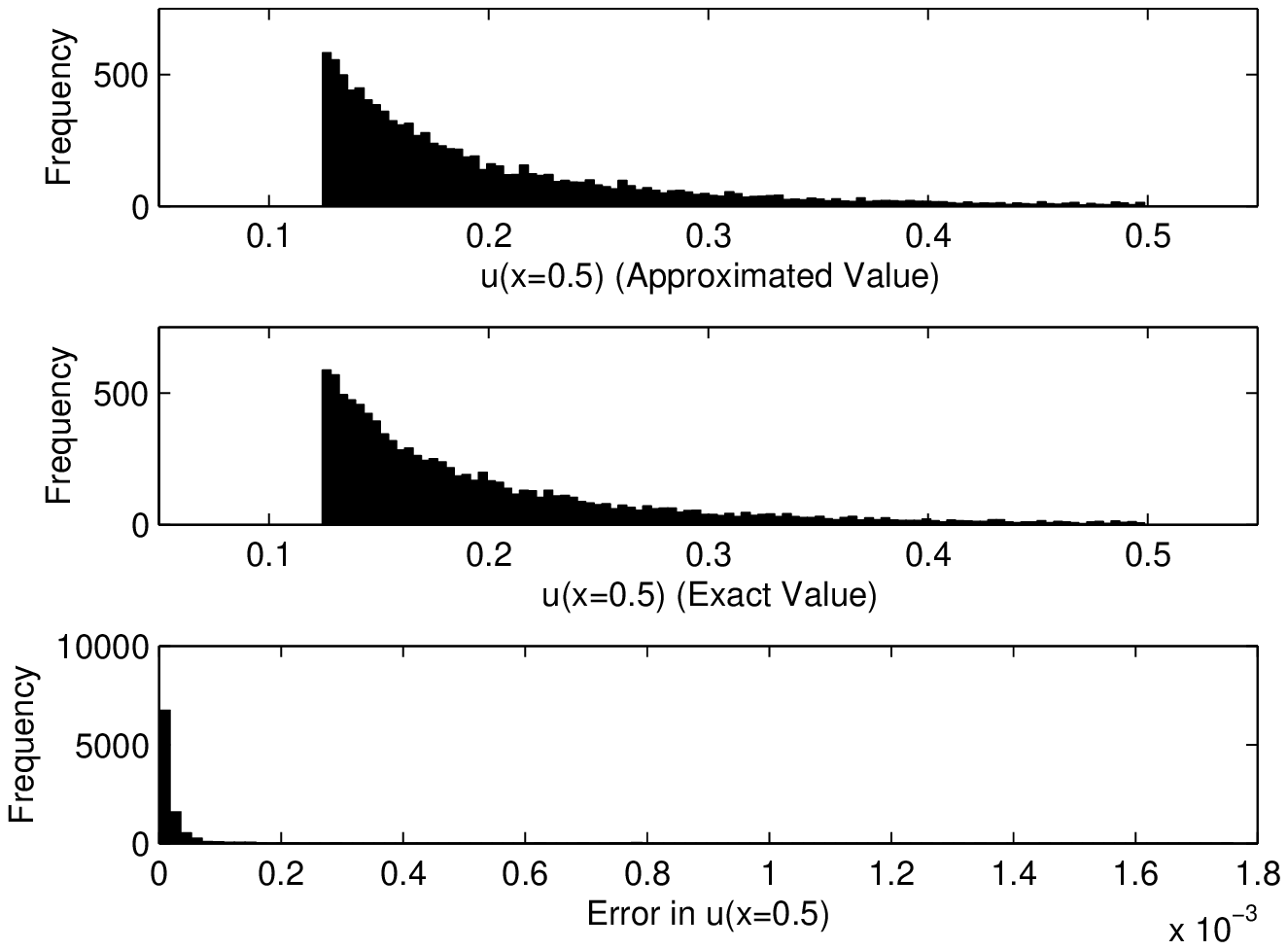}
\label{fig:subfig2}}
\\

\subfloat[][$C=5\times 10^{-3}$]{
\includegraphics[width=0.5\textwidth]{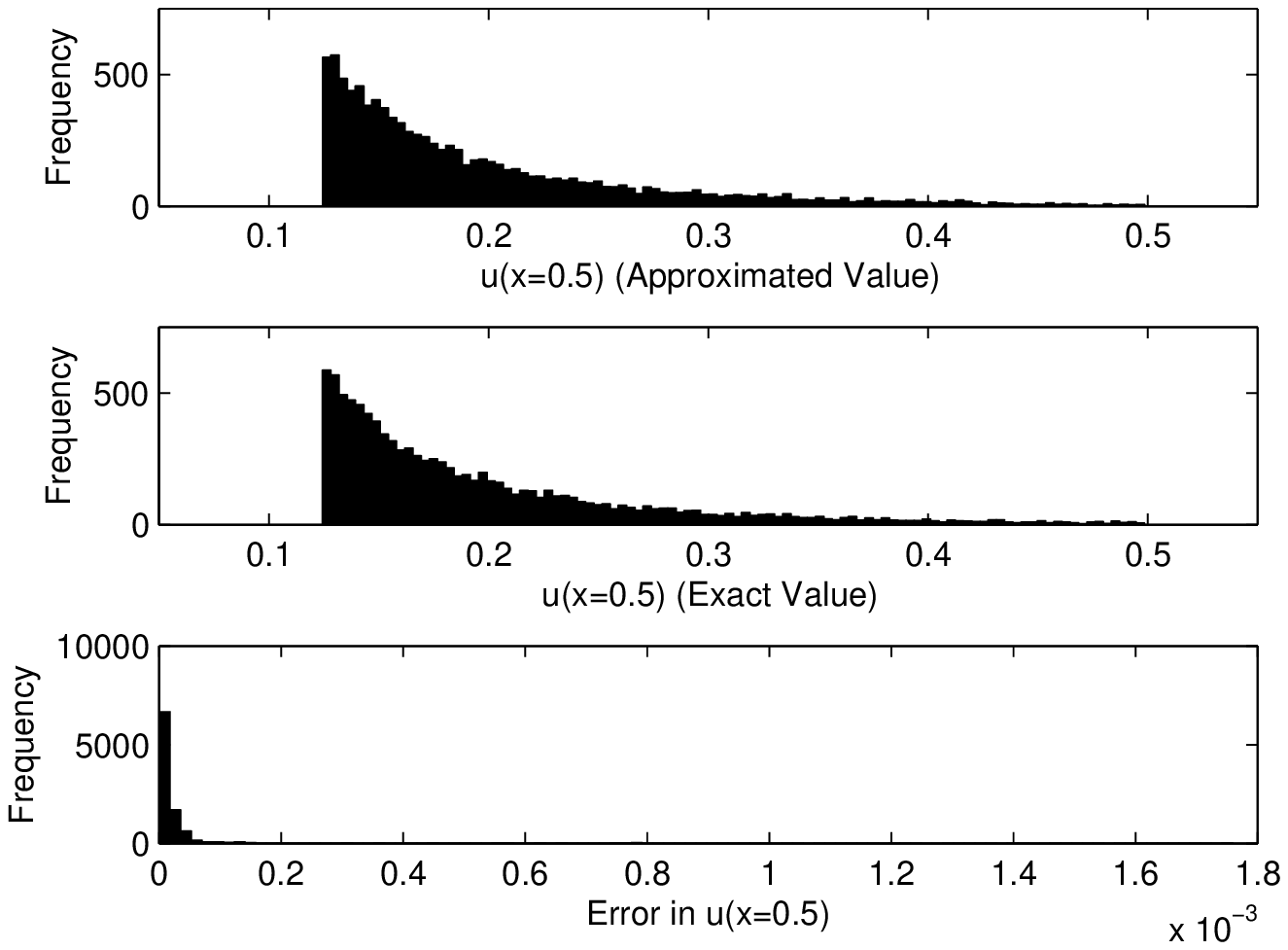}
\label{fig:subfig3}}
\qquad
\subfloat[][$C=1\times 10^{-3}$]{
\includegraphics[width=0.5\textwidth]{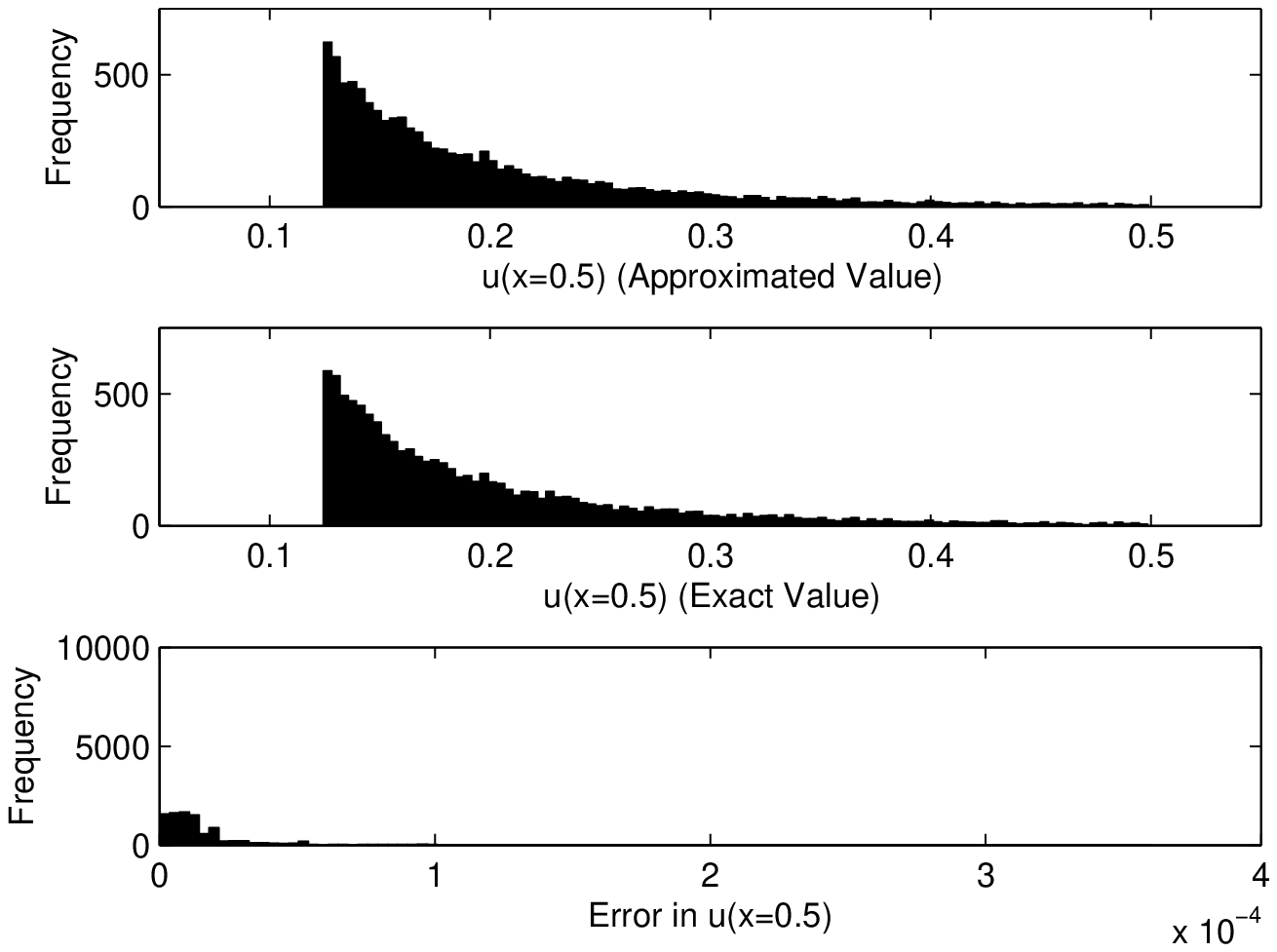}
\label{fig:subfig4}}
\\

\subfloat[][$C=5\times 10^{-4}$]{
\includegraphics[width=0.5\textwidth]{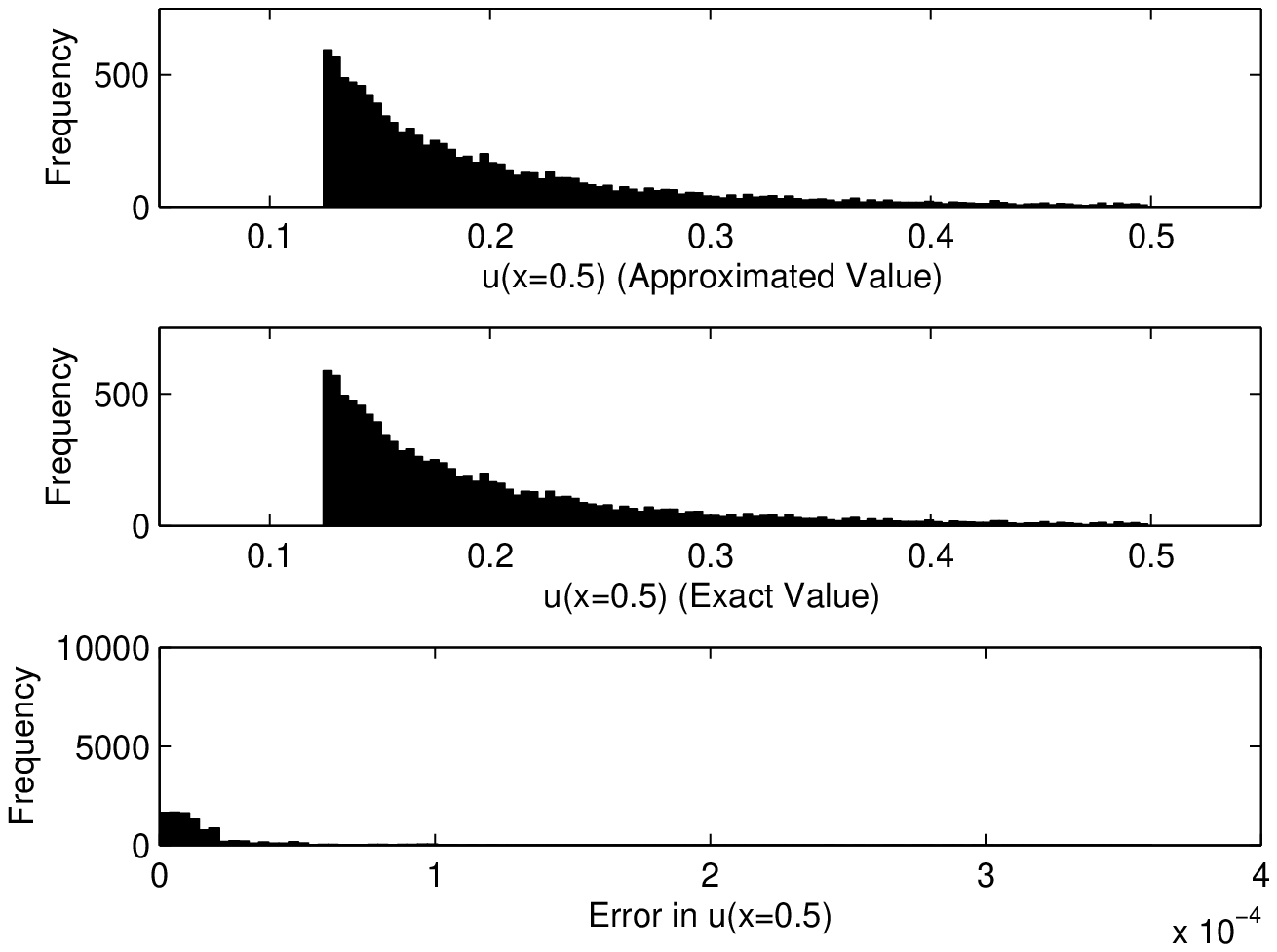}
\label{fig:subfig4}}

\caption{Distribution of function exact values evaluated by Monte Carlo simulation, function approximated evaluated by proposed method, and absolute value of error between them for $n=1, n_0=3, \mbox{ and } n_s=6.$}
\label{fig:globfig}
\end{figure}

\begin{figure}
\subfloat[][$C=5\times 10^{-2}$]{
\includegraphics[width=0.39\textwidth]{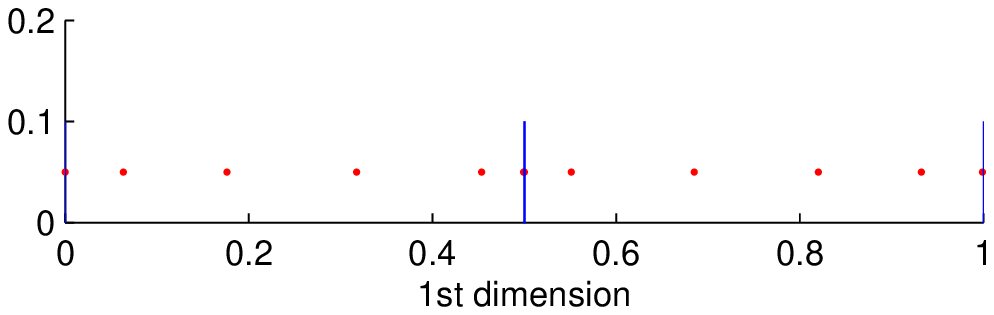}
\label{fig:subfig1}}
\qquad 
\subfloat[][$C=1\times 10^{-2}$]{
\includegraphics[width=0.39\textwidth]{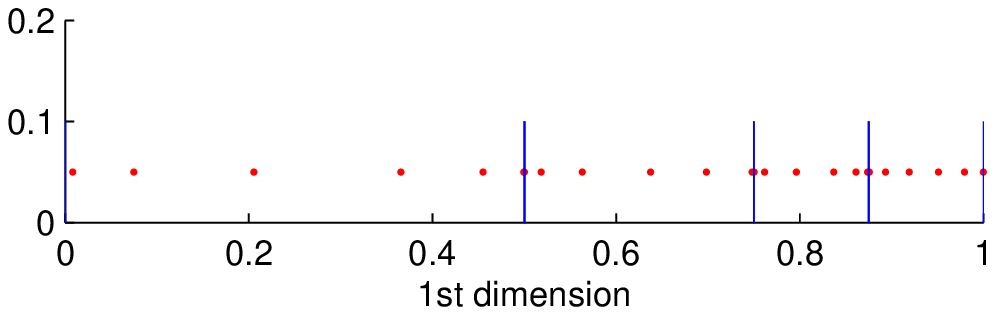}
\label{fig:subfig2}}
\\

\subfloat[][$C=5\times 10^{-3}$]{
\includegraphics[width=0.39\textwidth]{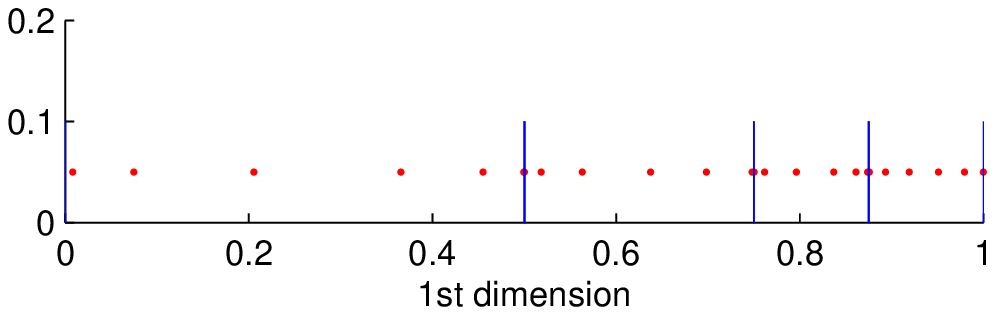}
\label{fig:subfig3}}
\qquad
\subfloat[][$C=1\times 10^{-3}$]{
\includegraphics[width=0.39\textwidth]{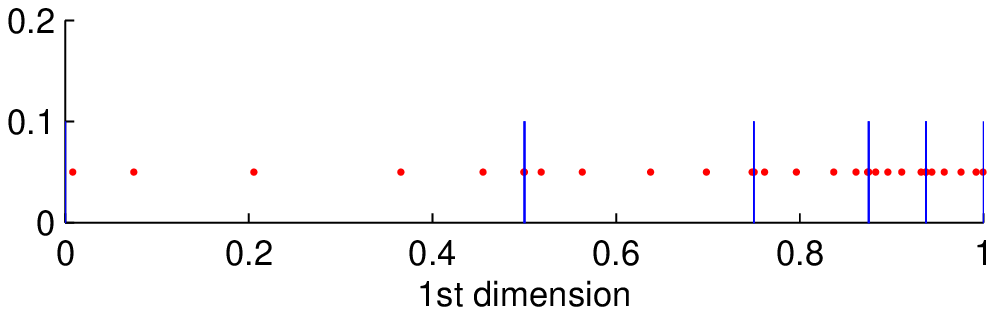}
\label{fig:subfig4}}
\\

\subfloat[][$C=5\times 10^{-4}$]{
\includegraphics[width=0.39\textwidth]{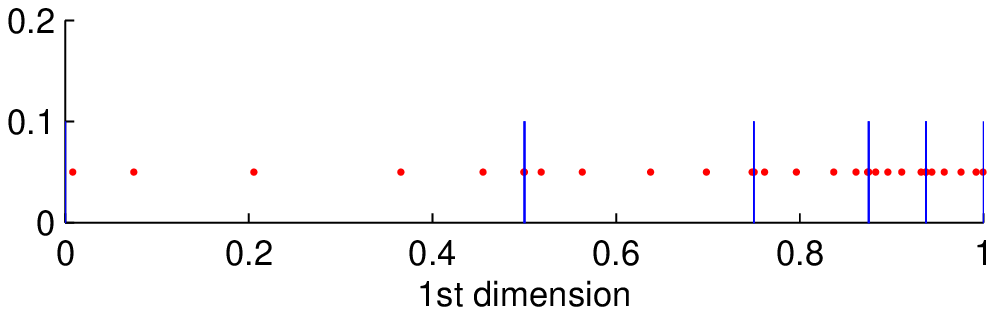}
\label{fig:subfig4}}
\caption{Partition of the random parameter space with sample points utilized for $n=1, n_0=3, \mbox{ and } n_s=6.$}
\label{fig:globfig}
\end{figure}

\begin{figure}
\subfloat[][$C=5\times 10^{-2}$]{
\includegraphics[width=0.5\textwidth]{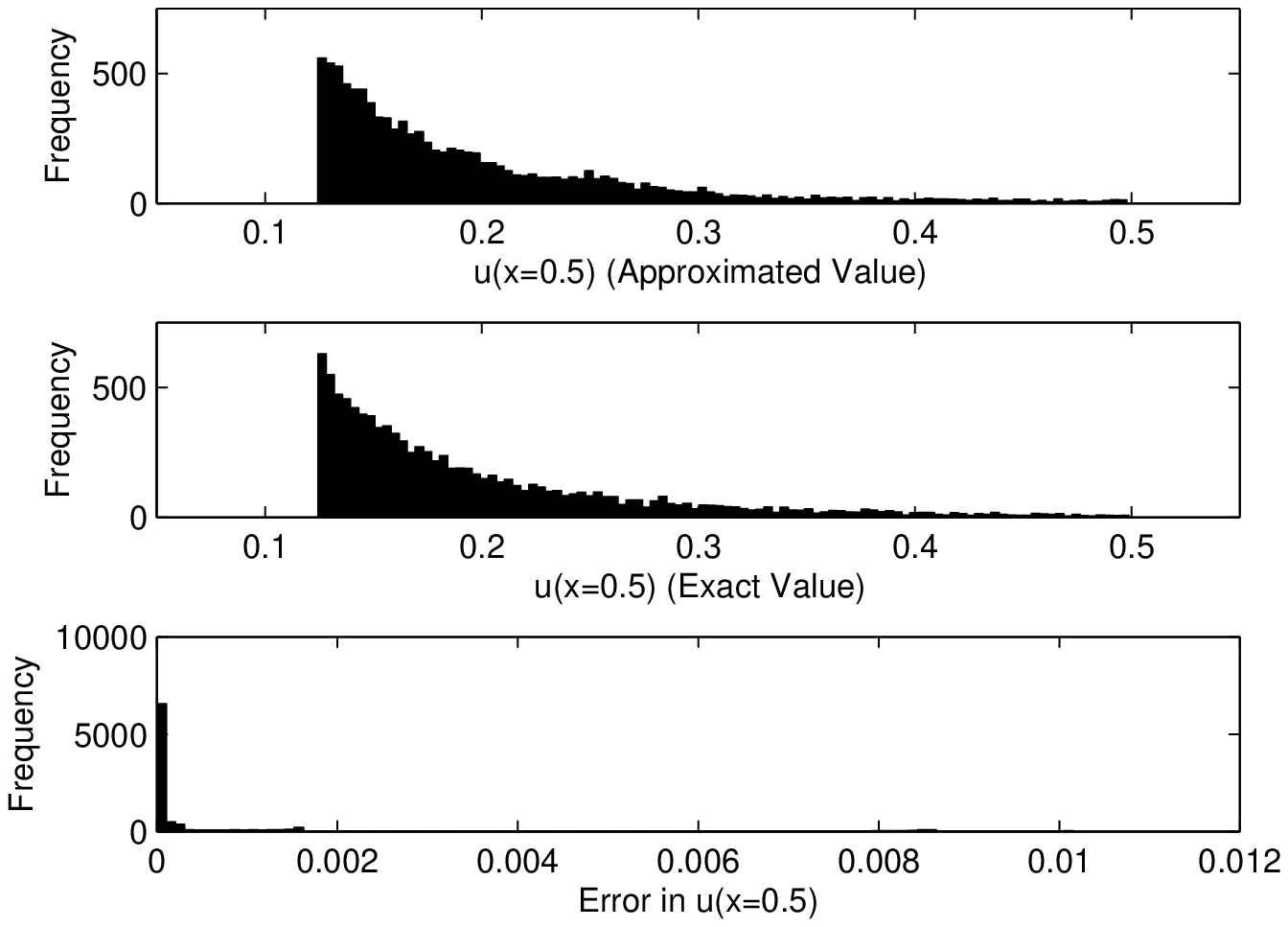}
\label{fig:subfig1}}
\qquad 
\subfloat[][$C=1\times 10^{-2}$]{
\includegraphics[width=0.5\textwidth]{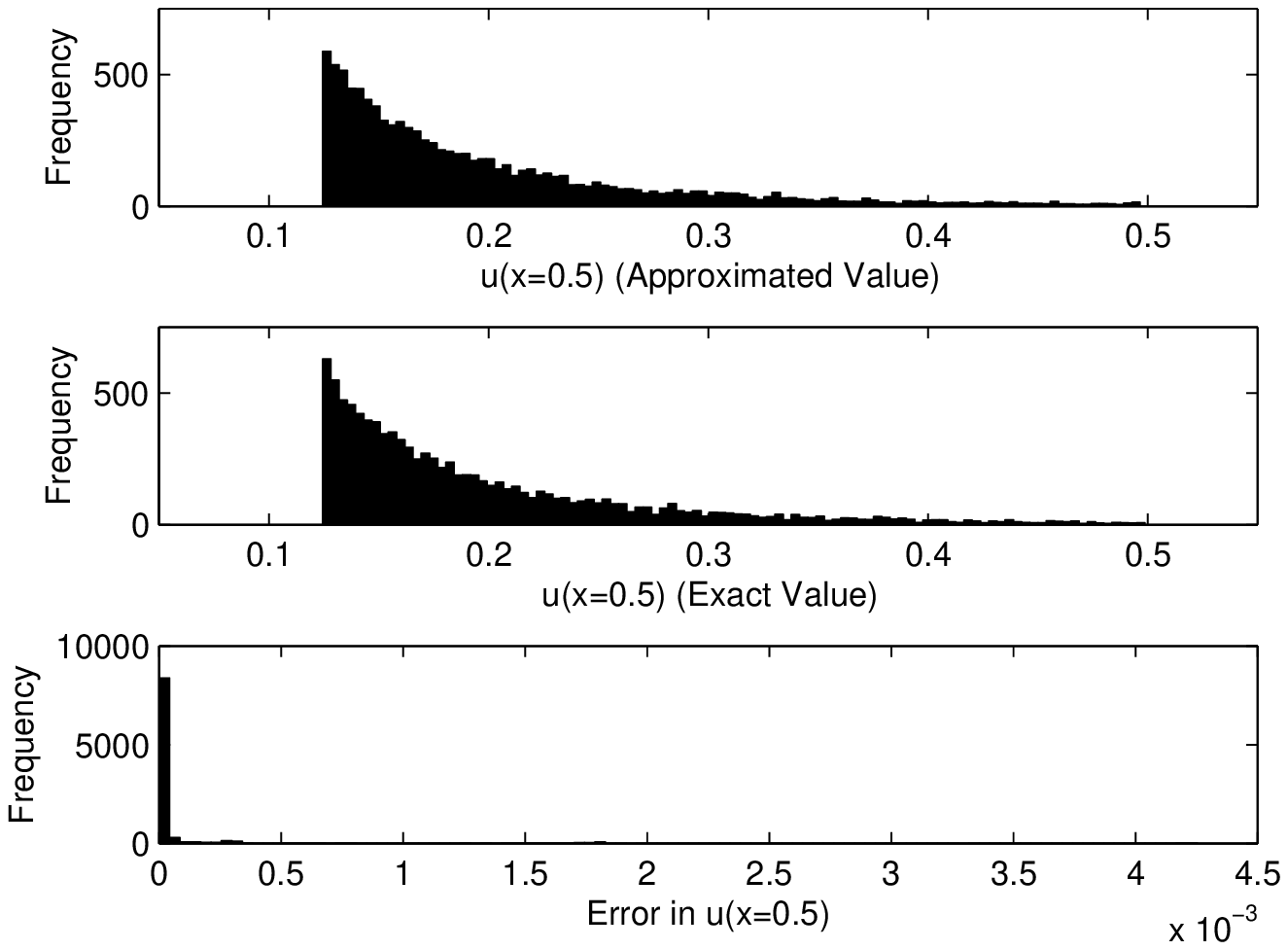}
\label{fig:subfig2}}
\\

\subfloat[][$C=5\times 10^{-3}$]{
\includegraphics[width=0.5\textwidth]{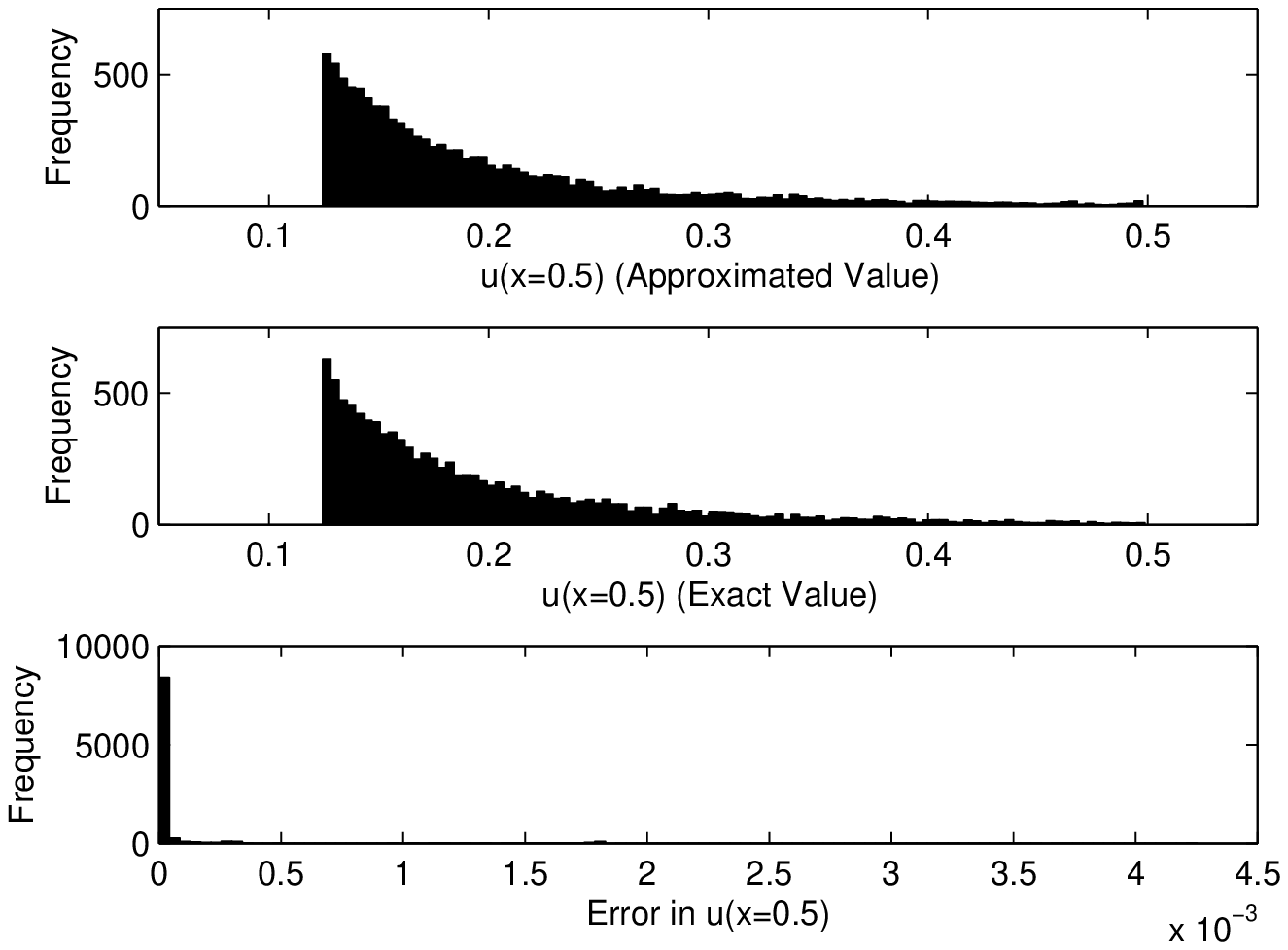}
\label{fig:subfig3}}
\qquad
\subfloat[][$C=1\times 10^{-3}$]{
\includegraphics[width=0.5\textwidth]{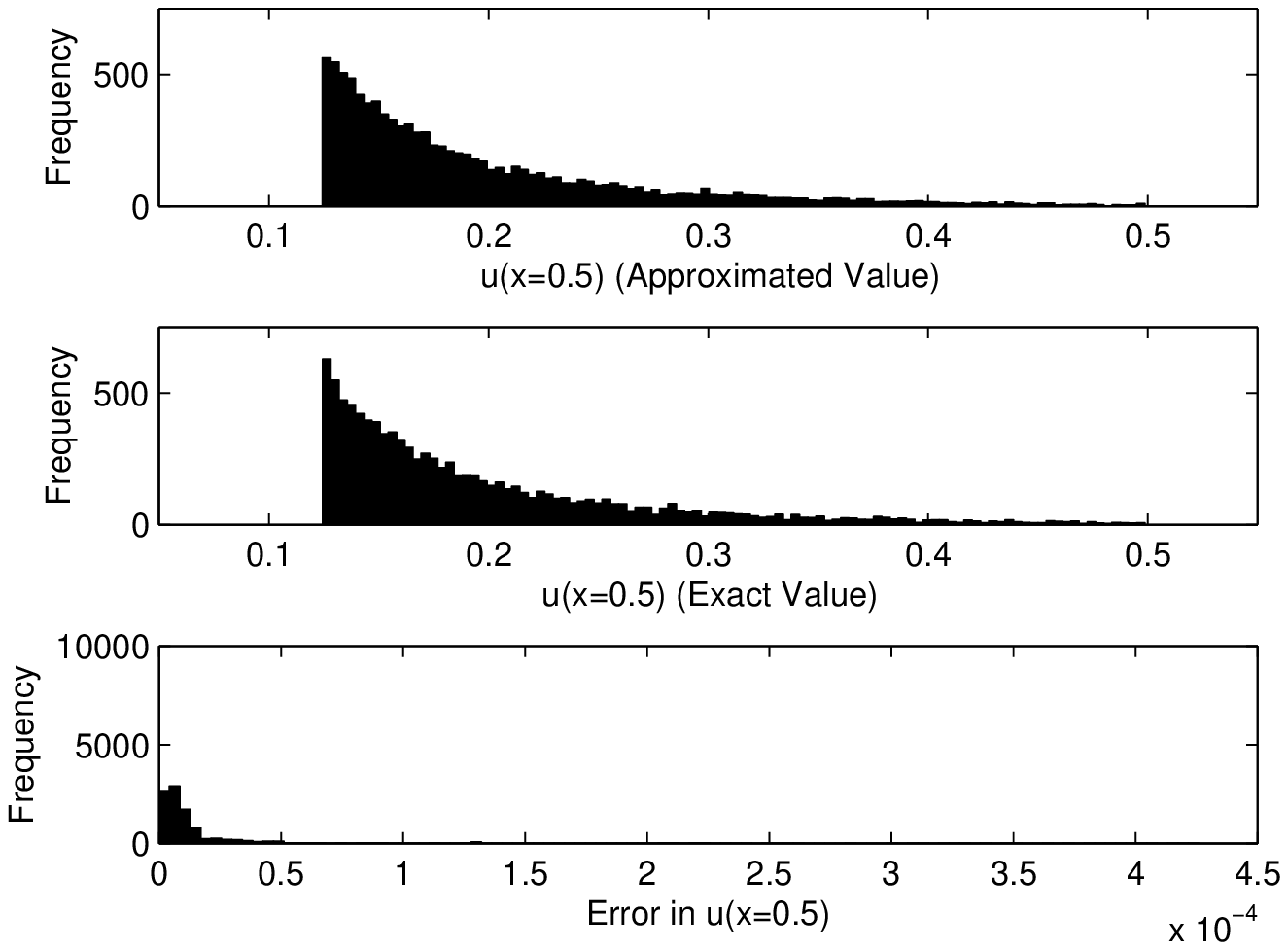}
\label{fig:subfig4}}
\\

\subfloat[][$C=5\times 10^{-4}$]{
\includegraphics[width=0.5\textwidth]{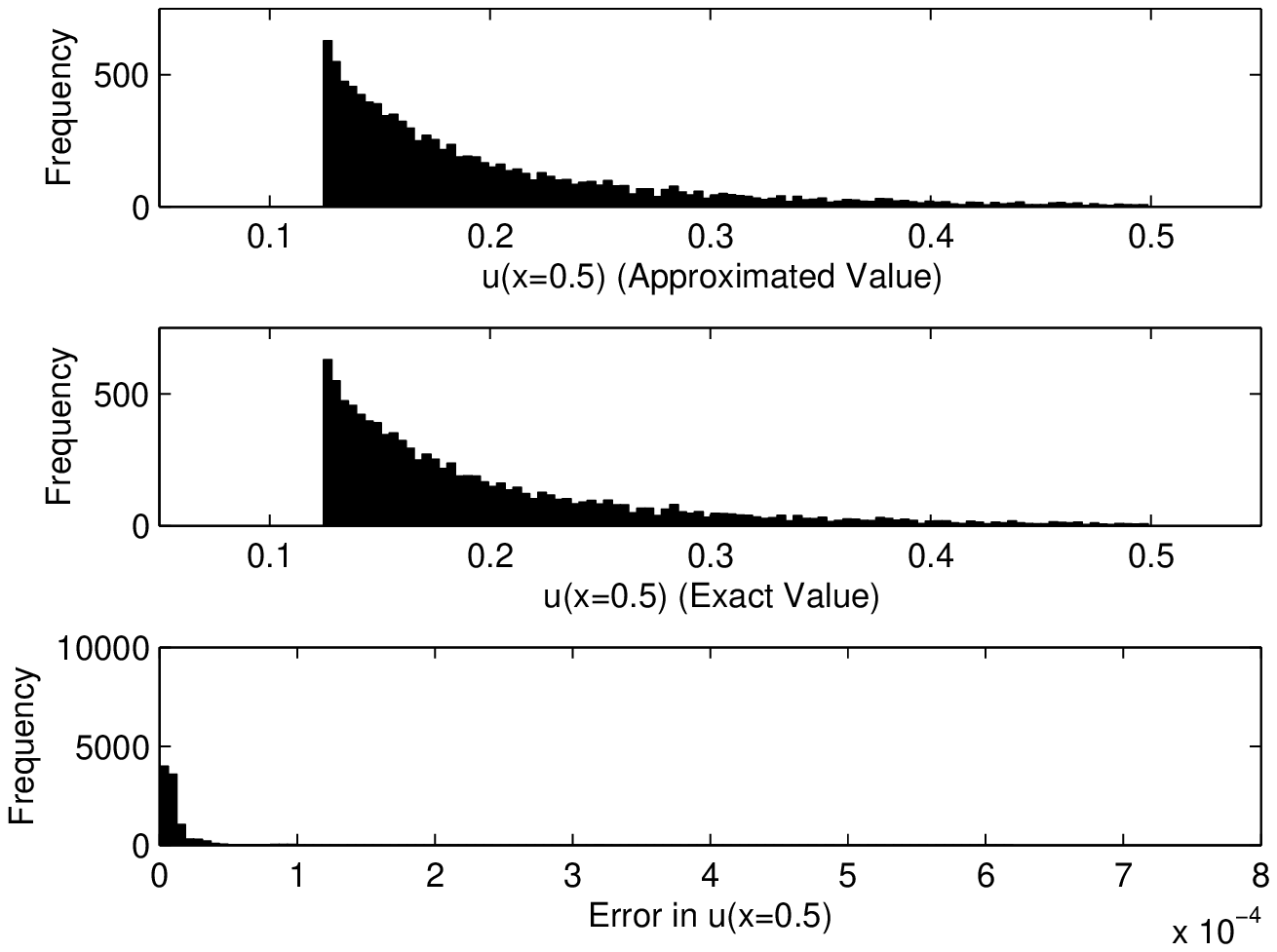}
\label{fig:subfig4}}

\caption{Distribution of function exact values evaluated by Monte Carlo simulation, function approximated evaluated by proposed method, and absolute value of error between them for $n=1, n_0=4, \mbox{ and } n_s=7.$}
\label{fig:globfig}
\end{figure}

\begin{figure}
\subfloat[][$C=5\times 10^{-2}$]{
\includegraphics[width=0.39\textwidth]{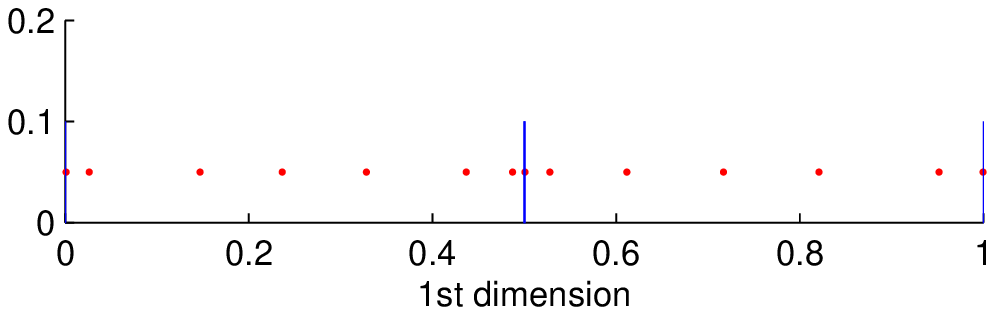}
\label{fig:subfig1}}
\qquad 
\subfloat[][$C=1\times 10^{-2}$]{
\includegraphics[width=0.39\textwidth]{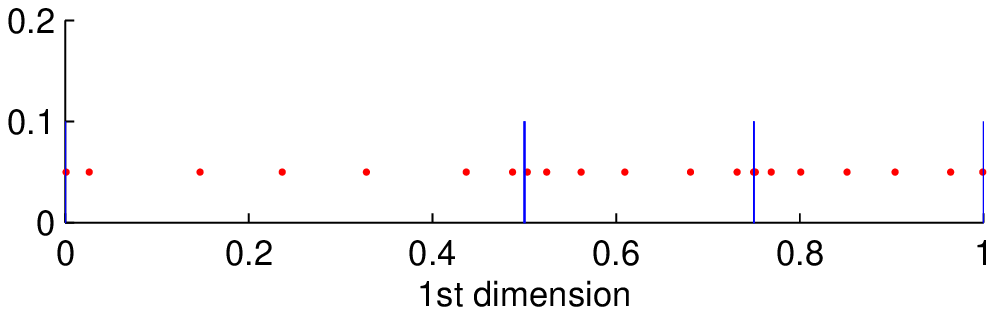}
\label{fig:subfig2}}
\\

\subfloat[][$C=5\times 10^{-3}$]{
\includegraphics[width=0.39\textwidth]{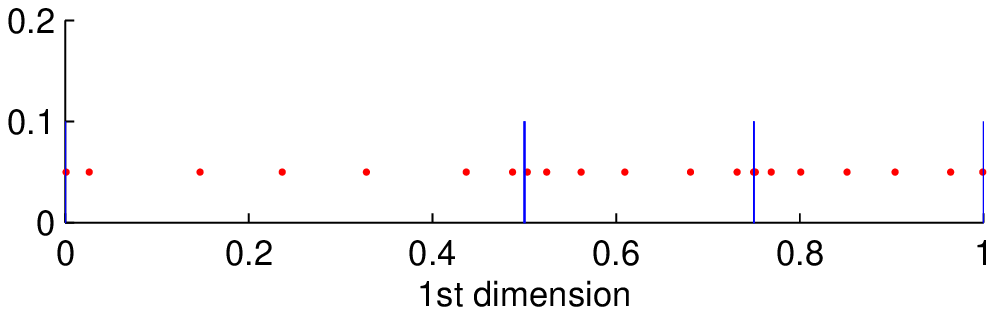}
\label{fig:subfig3}}
\qquad
\subfloat[][$C=1\times 10^{-3}$]{
\includegraphics[width=0.39\textwidth]{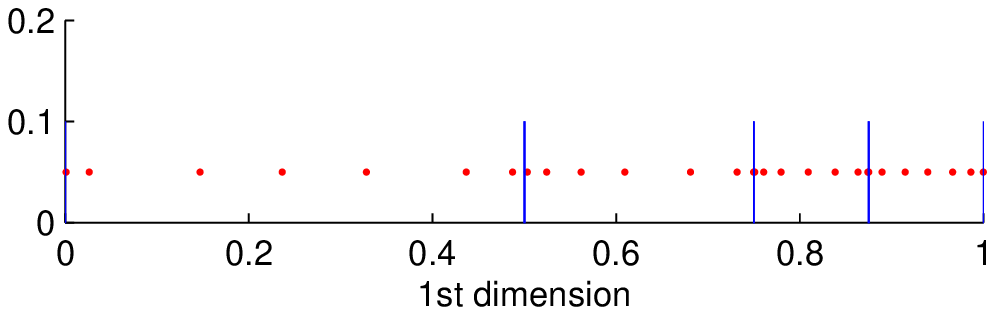}
\label{fig:subfig4}}
\\

\subfloat[][$C=5\times 10^{-4}$]{
\includegraphics[width=0.39\textwidth]{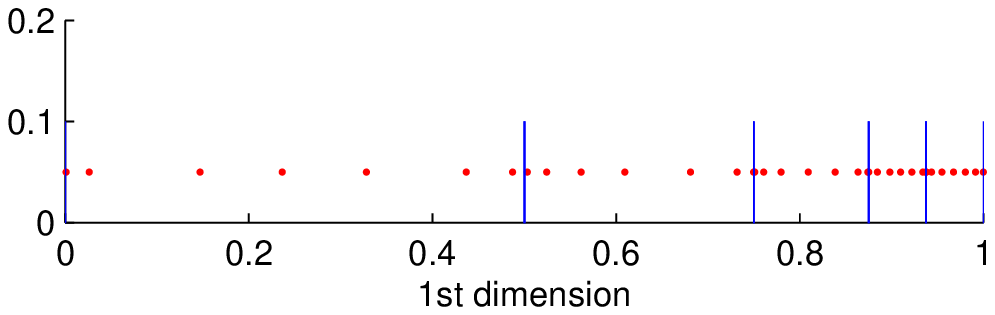}
\label{fig:subfig4}}

\caption{Partition of the random parameter space with sample points utilized for $n=1, n_0=4, \mbox{ and } n_s=7.$}
\label{fig:globfig}
\end{figure}

\begin{figure}
\subfloat[][Relative erros of variance]{
\includegraphics[width=0.45\textwidth]{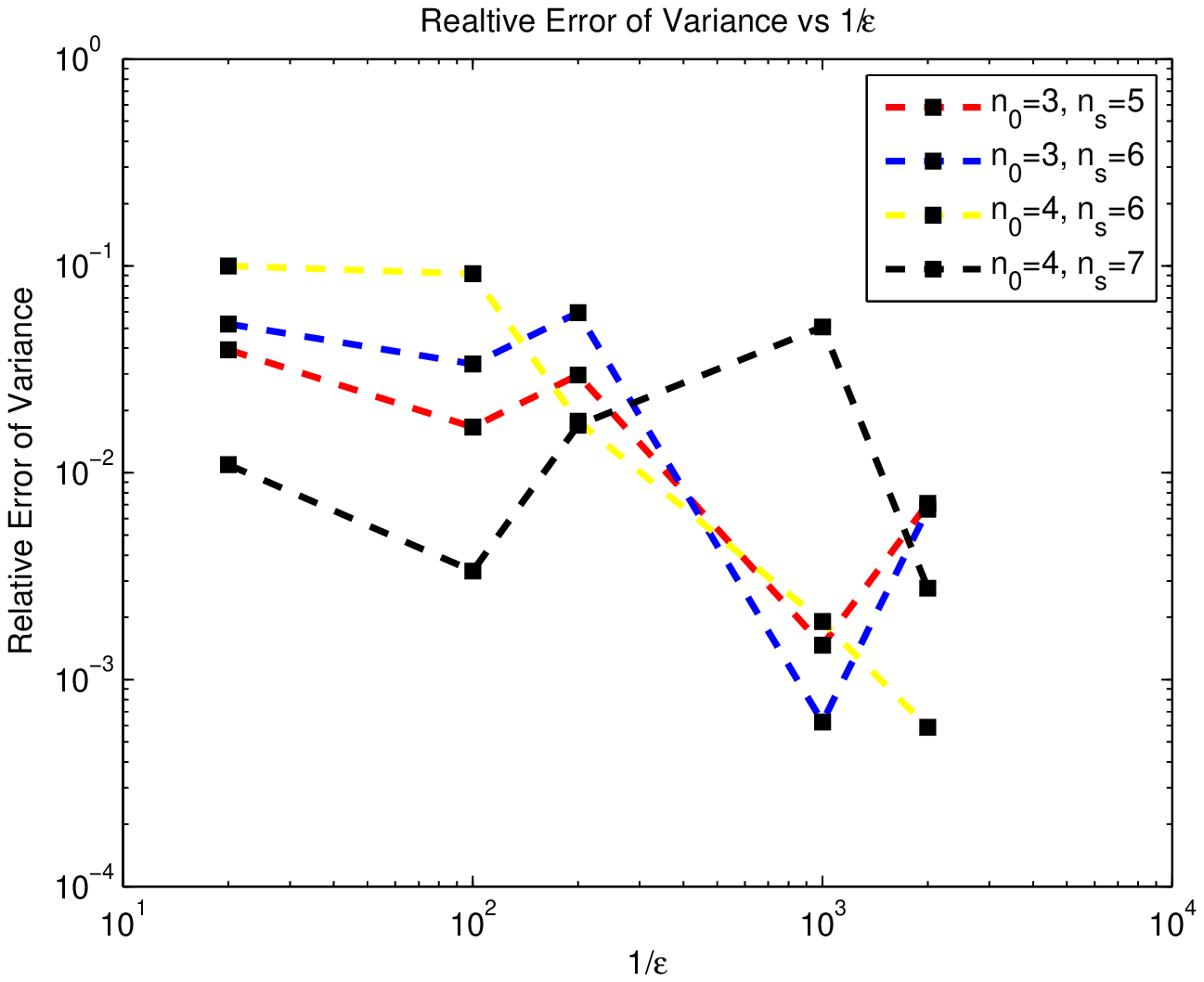}
\label{fig:subfig1}}
\qquad 
\subfloat[][Relative erros of mean]{
\includegraphics[width=0.45\textwidth]{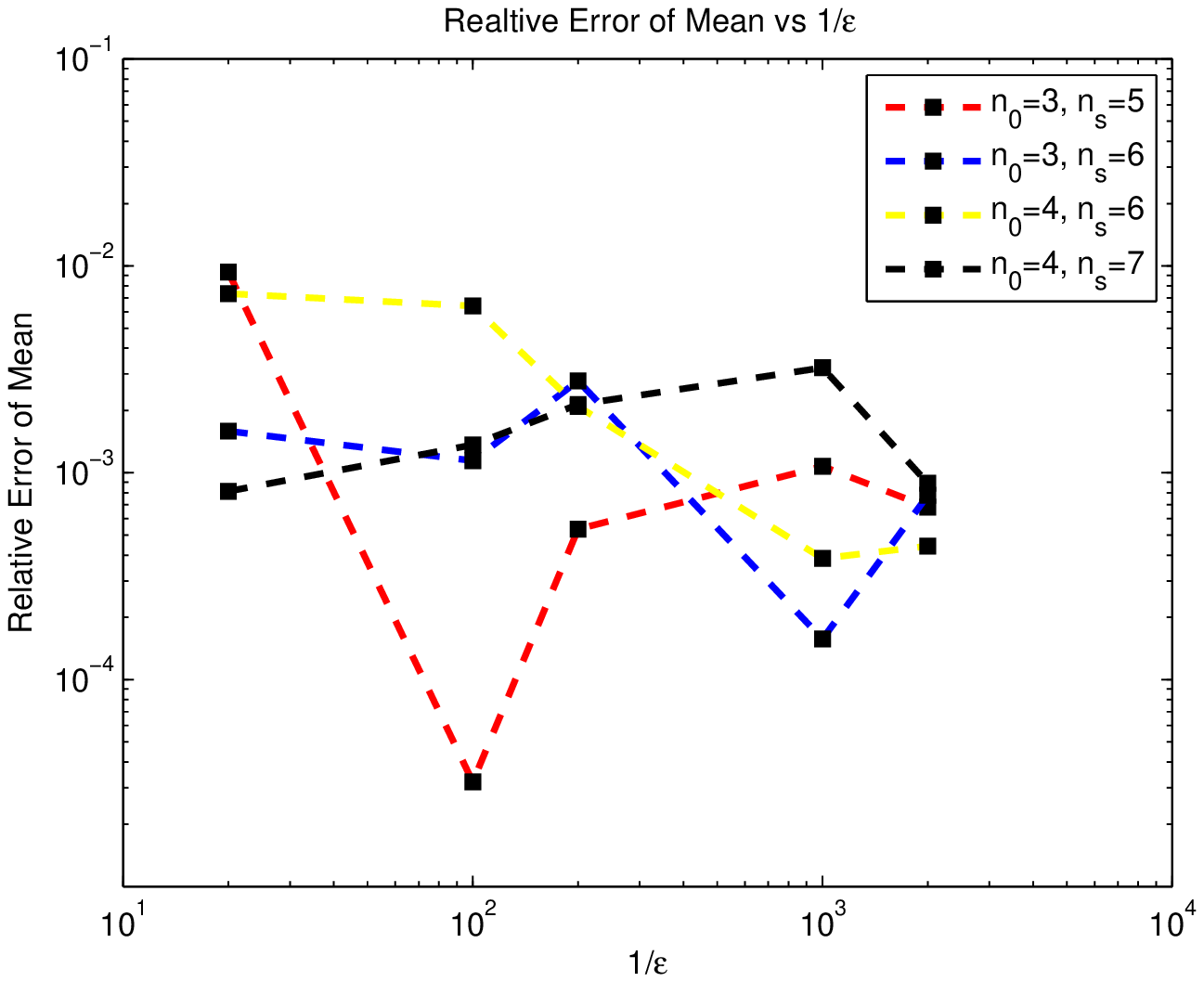}
\label{fig:subfig2}}
\\

\subfloat[][Error values (MSE)]{
\includegraphics[width=0.45\textwidth]{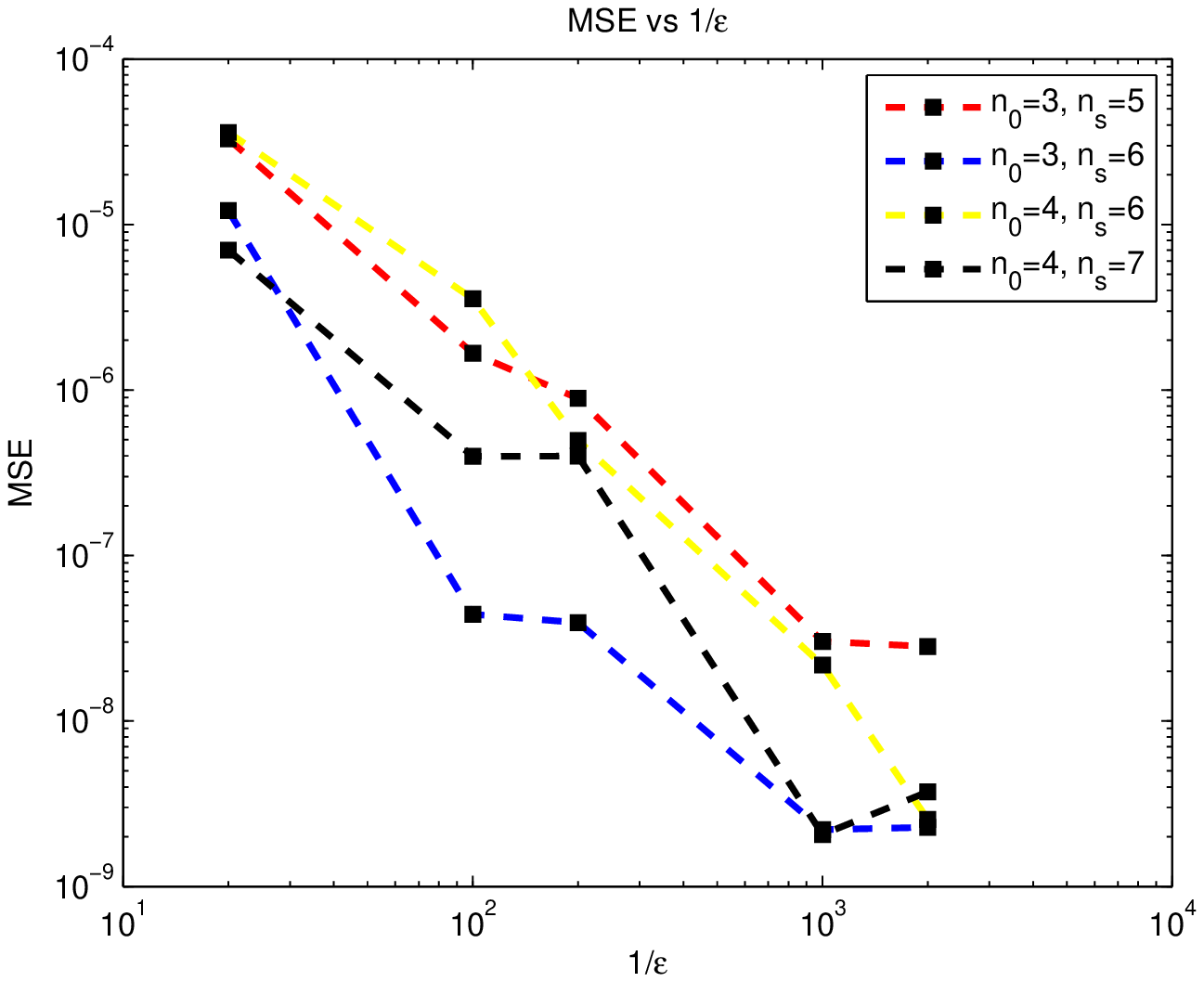}
\label{fig:subfig3}}
\caption{Relative error values of both mean and variance evaluated by Monte Carlo simulation and proposed method, and error (MSE) for all cases.}
\end{figure}

Figures (14-17) show that refinement is concentrated in regions where the solution is highly dependent on random input data, as one should expect according to the construction of the adaptive strategy. Also, note that there are no non-physical realizations for this test case. As $C$ is further decreased the partition becomes increasingly more refined, particularly along the directions of sharp variation with respect to the random data. Here, the sensitivity of the pdf to the selected value of $C$ can be readily observed. A simple Matlab code to solve this PDE is utilized.

It can be seen that the minimum error is belong to the polynomial order $n_0=3$. The number of sample $n_s=5$ confirmed that we should consider a tradeoff between two parameters $n_0$ and $n_s$; however the errors of all cases are approximately in the same order. Moreover, the fluctuation among is a result of the inefficient number of Monte Carlo realizations set to 10,000.

\newpage

\section{Conclusion and Future Works}
In this paper, MRA scheme for the quantification of parametric uncertainties has been developed based on generalizing GPC representations. The proposed method depends on an orthonormal representation of the solution dependency on stochastic input parameters. In fact, the impact of uncertainties is utilizing an orthogonal projection of the model solution on spectral basis constructed from piecewise-continuous polynomial functions (MWB) of a set of random variables representing the variability in the system parameters. Then, unknown coefficients in the expansion are computed employing a compressive sensing methodology based on $\ell_1$-minimization. This method is developed to perform local refinement of the representation of the uncertain parameter space where the system exhibits complex, discontinuous, and steep dependences on the random inputs. Hence, applying this method, one avoids using high order GPC; instead, a series of local low order expansions at controlled resolution levels are employed.  Here, the behavior of the proposed method is analysed in light of computations of a simple function with line singularity, and of simulations of a diffusion equation as well as absorption problem.

The main advantages of the proposed method are both overcoming the numerical instability challenges existing in the original GPC, and decreasing computational cost; specifically at the higher orders of GPC. In fact, utilizing a low order MWB expansion is practical in order to still have robust computations, whereas the convergence process can still be controlled by selecting an appropriate level of resolution. However, the fast increase of the terms in MWB expansion with the resolution level prevents the straightforward application of the MRA scheme to situations involving multiple independent random parameters, and calls for adaptive techniques. Former numerical studies have shown the validity of the proposed adaptive scheme; however, they also highlight the need for an improvement of the methodology to increase the speed of computations, the convergence rate, and also decrease the computational cost. Here, compressive sensing was proposed to fulfill all desire needs. In fact, compressive sensing ro compressive sampling is utilized here as a non-intrusive approach to decrease the computational cost and increase the convergence rate. The statistical results, such as mean and variance, evaluated from both the proposed method and Monte Carlo simulation, are compared in the benchmarks; Their high consistency show the robustness of the proposed method.

Based on the three problems discussed in this paper, and due to the local nature of the representation and utilizing compressive sensing as a powerful tool for recovery of unknown coefficient in MWB expansion, the proposed method seems to be superior than other methods when confronting with constrained problems, such as  discontinuity, presence of sharp gradient, steep dependence of solution on random input parameters, or any other physical constraints.

In the future work, we will focus on extending the proposed method for random input parameters with arbitrary probability density functions. The uniform distribution has specific characteristics which makes us easy to employ compressive sensing as a non-intrusive method to perform adaptive partitioning strategy; however, an arbitrary pdf does not benefit them in general which prevent utilizing previous samples preserved in memory. To overcome this issue, we should modify the proposed method in next studies.

\begin{figure}
\subfloat[][$C=5\times 10^{-2}$]{
\includegraphics[width=0.5\textwidth]{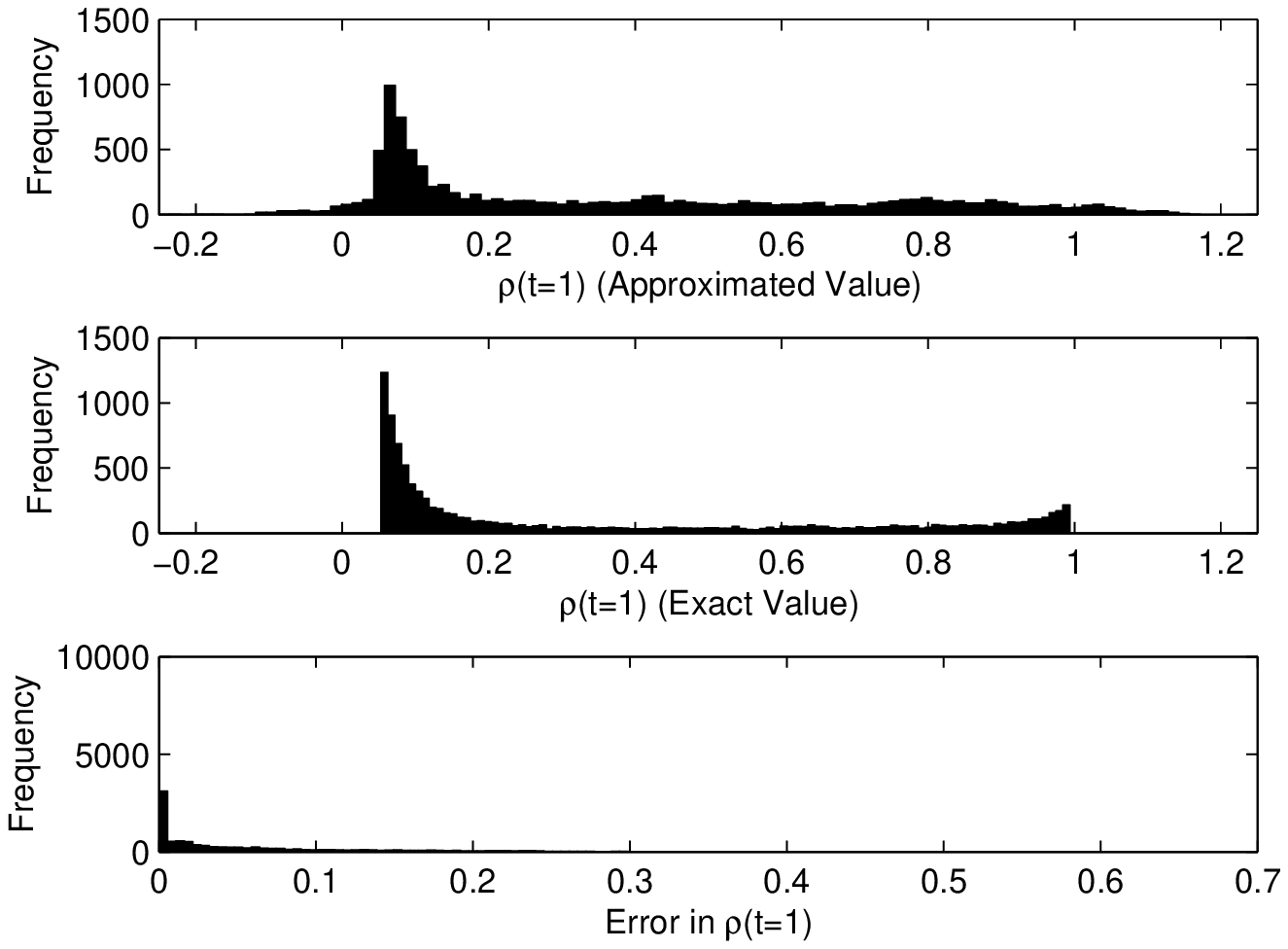}
\label{fig:subfig1}}
\qquad 
\subfloat[][$C=1\times 10^{-2}$]{
\includegraphics[width=0.5\textwidth]{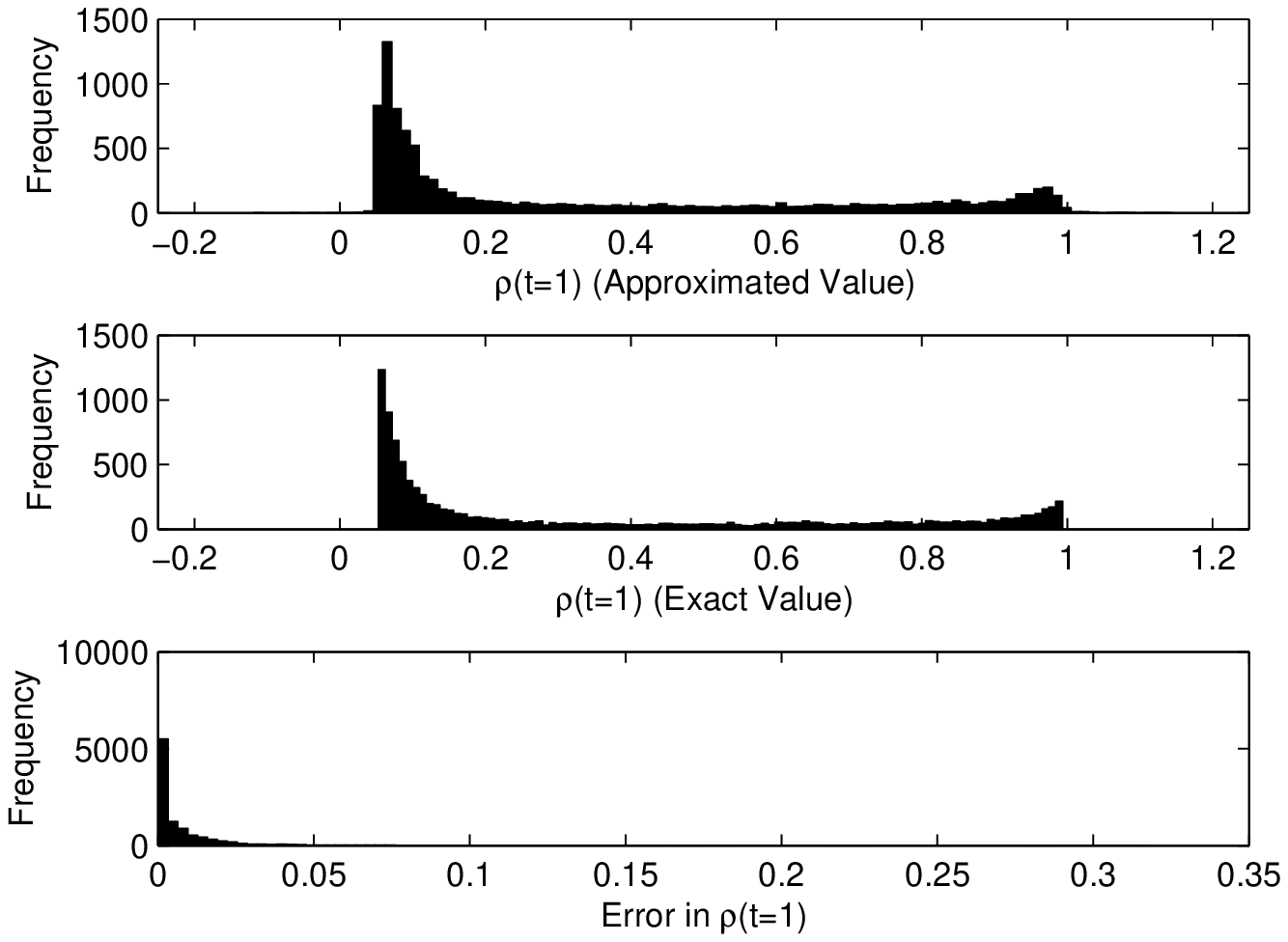}
\label{fig:subfig2}}
\\

\subfloat[][$C=5\times 10^{-3}$]{
\includegraphics[width=0.5\textwidth]{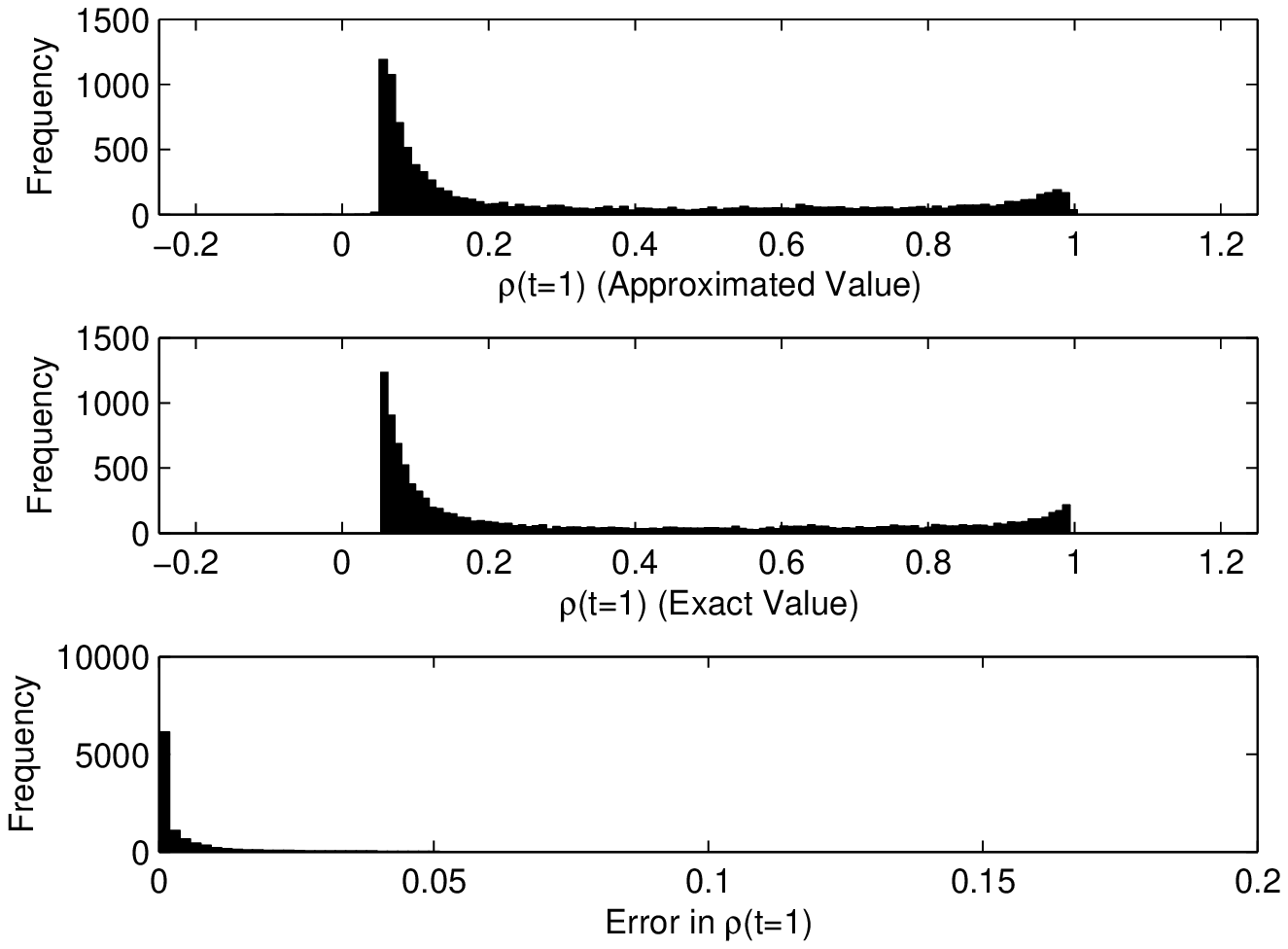}
\label{fig:subfig3}}
\qquad
\subfloat[][$C=1\times 10^{-3}$]{
\includegraphics[width=0.5\textwidth]{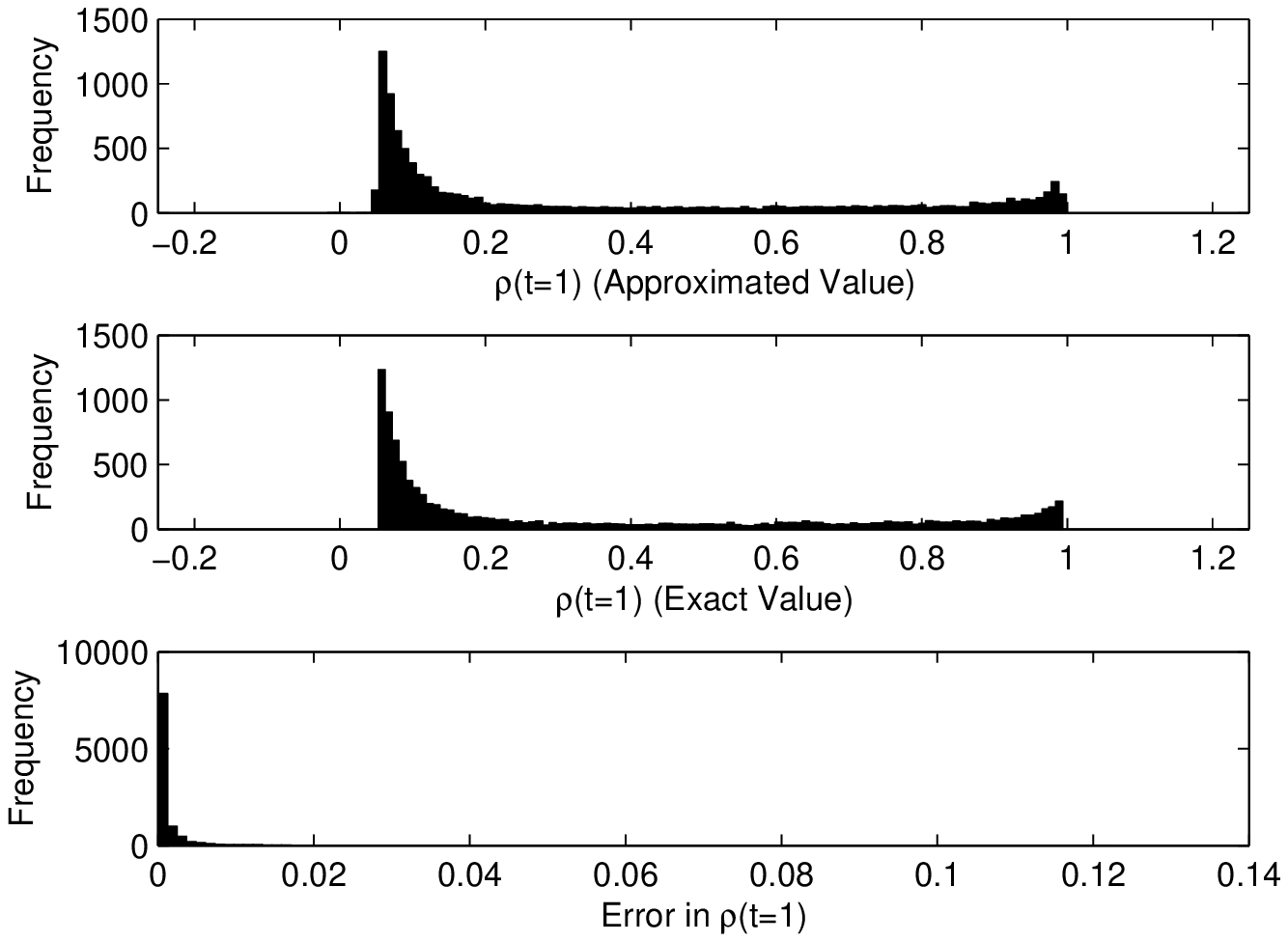}
\label{fig:subfig4}}
\\

\subfloat[][$C=5\times 10^{-4}$]{
\includegraphics[width=0.5\textwidth]{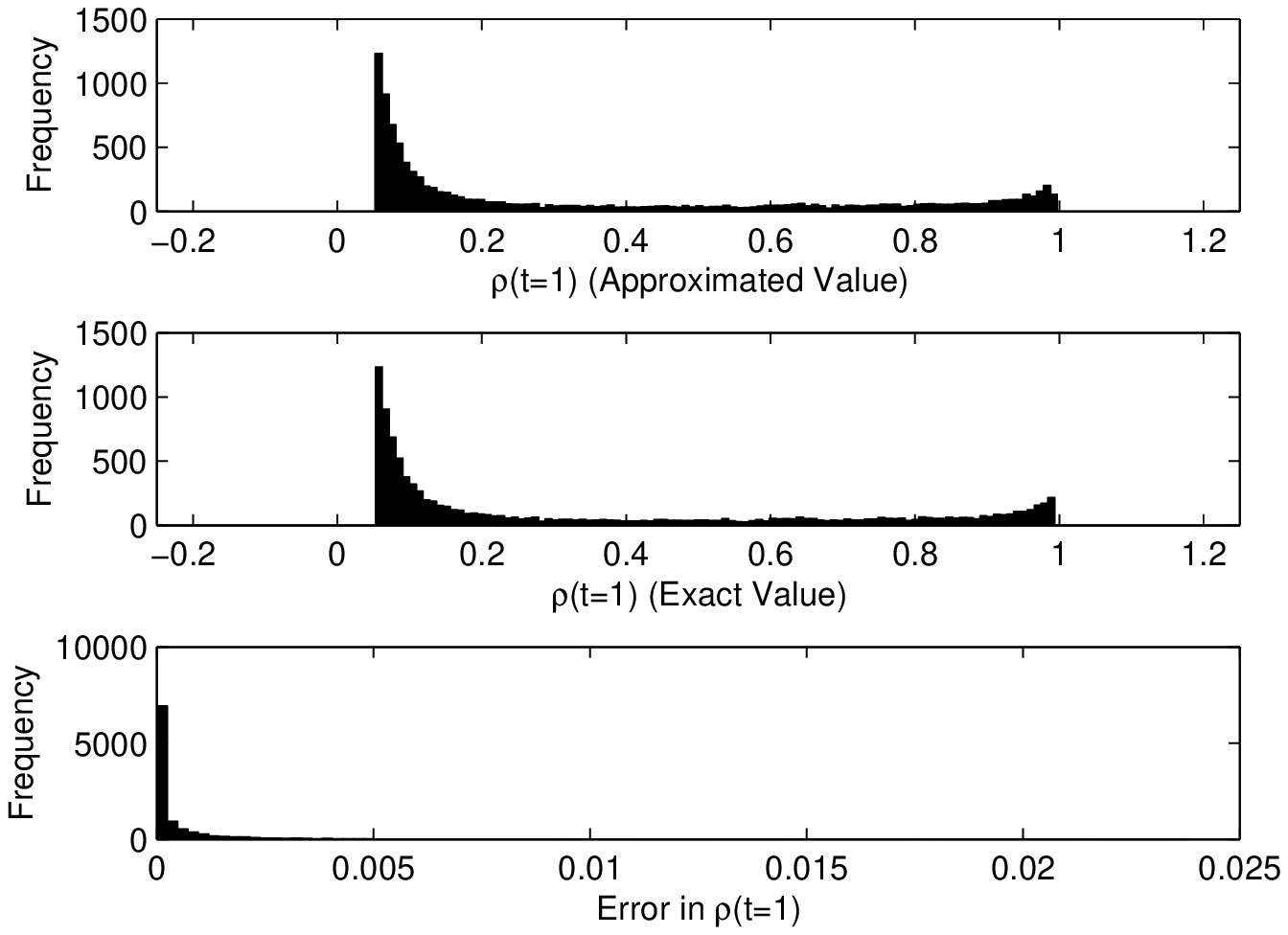}
\label{fig:subfig4}}

\caption{Distribution of function exact values evaluated by Monte Carlo simulation, function approximated evaluated by proposed method, and absolute value of error between them for $n=2, n_0=3, \mbox{ and } n_s=13.$}
\label{fig:globfig}
\end{figure}

\begin{figure}
\subfloat[][$C=5\times 10^{-2}$]{
\includegraphics[width=0.39\textwidth]{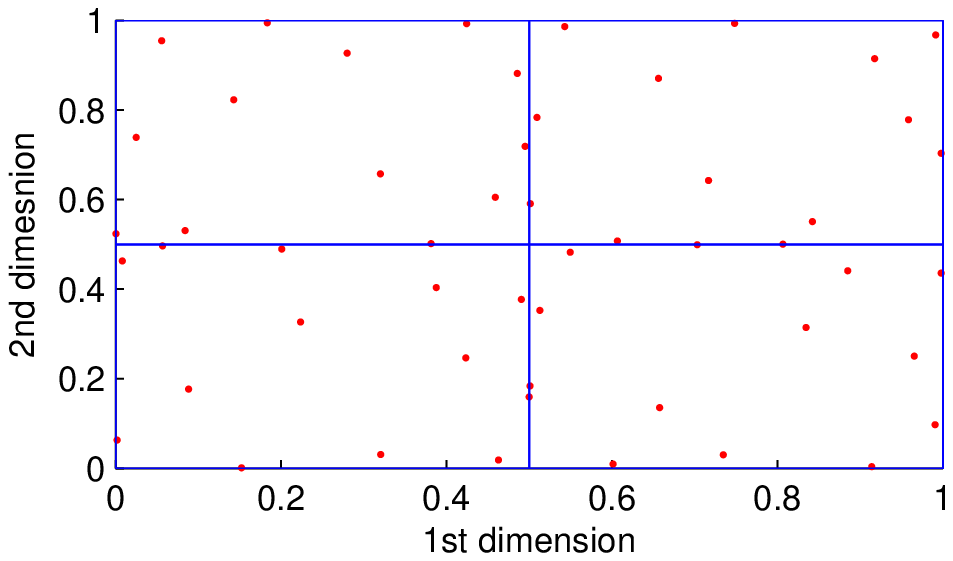}
\label{fig:subfig1}}
\qquad 
\subfloat[][$C=1\times 10^{-2}$]{
\includegraphics[width=0.39\textwidth]{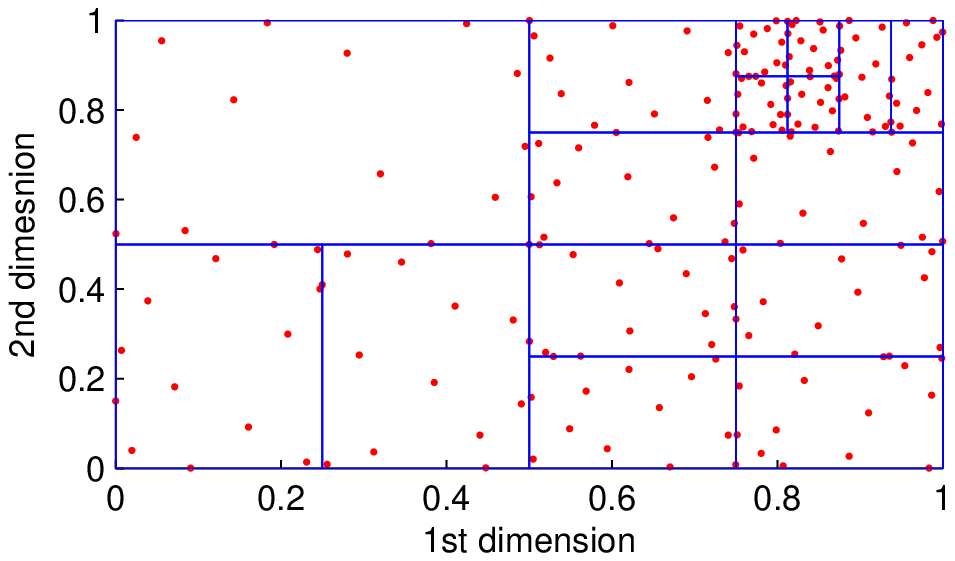}
\label{fig:subfig2}}
\\

\subfloat[][$C=5\times 10^{-3}$]{
\includegraphics[width=0.39\textwidth]{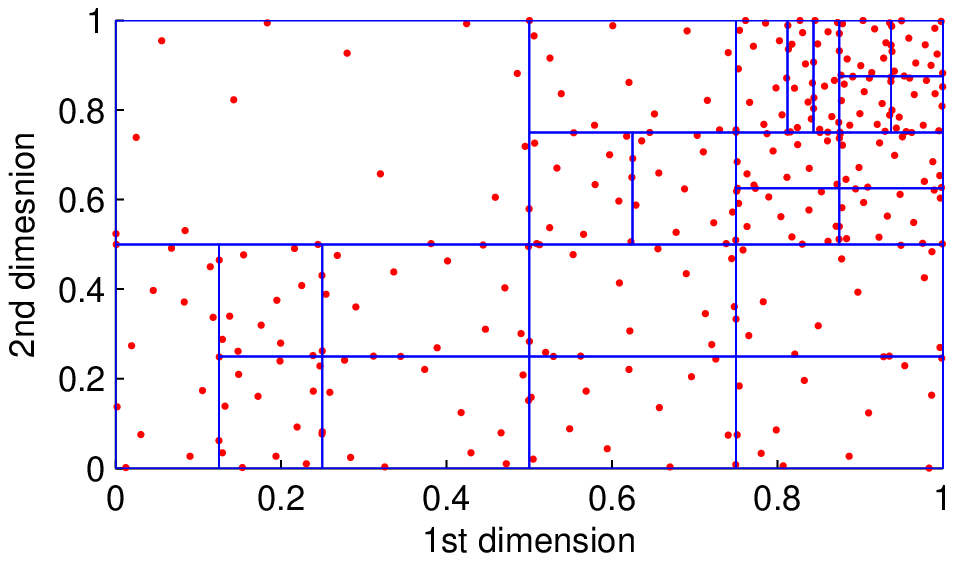}
\label{fig:subfig3}}
\qquad
\subfloat[][$C=1\times 10^{-3}$]{
\includegraphics[width=0.39\textwidth]{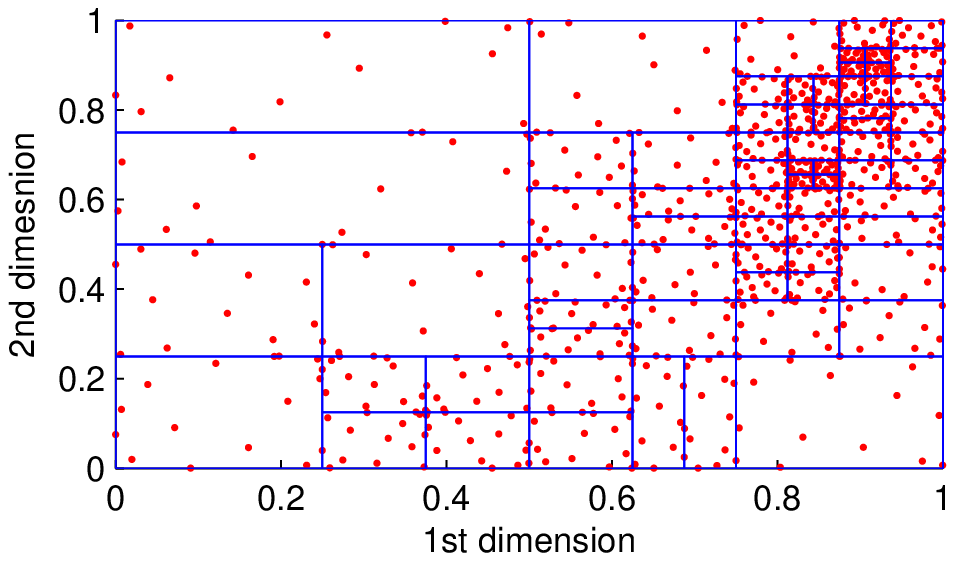}
\label{fig:subfig4}}
\\

\subfloat[][$C=5\times 10^{-4}$]{
\includegraphics[width=0.39\textwidth]{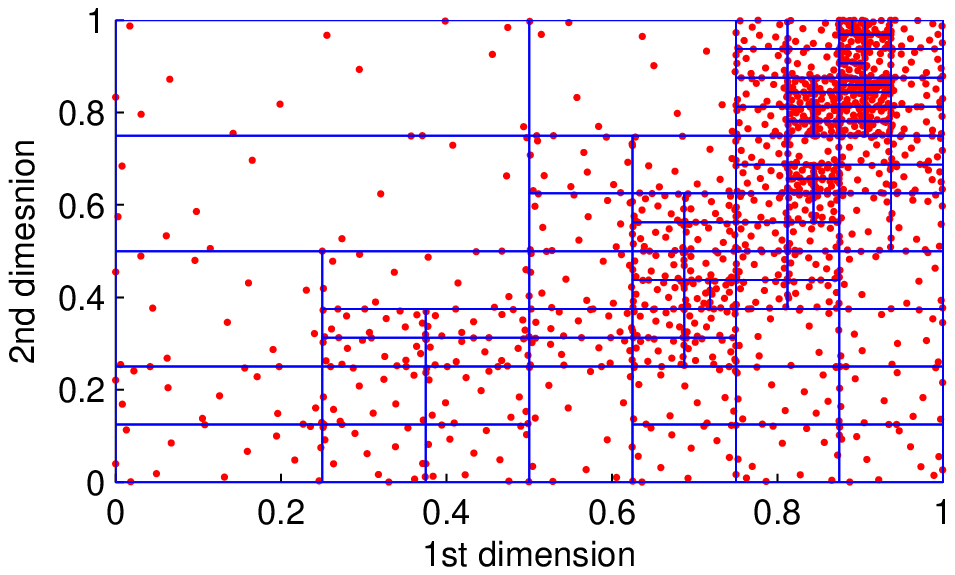}
\label{fig:subfig4}}

\caption{Partition of the random parameter space with sample points utilized for $n=2, n_0=3, \mbox{ and } n_s=13.$}
\label{fig:globfig}
\end{figure}

\begin{figure}
\subfloat[][$C=5\times 10^{-2}$]{
\includegraphics[width=0.5\textwidth]{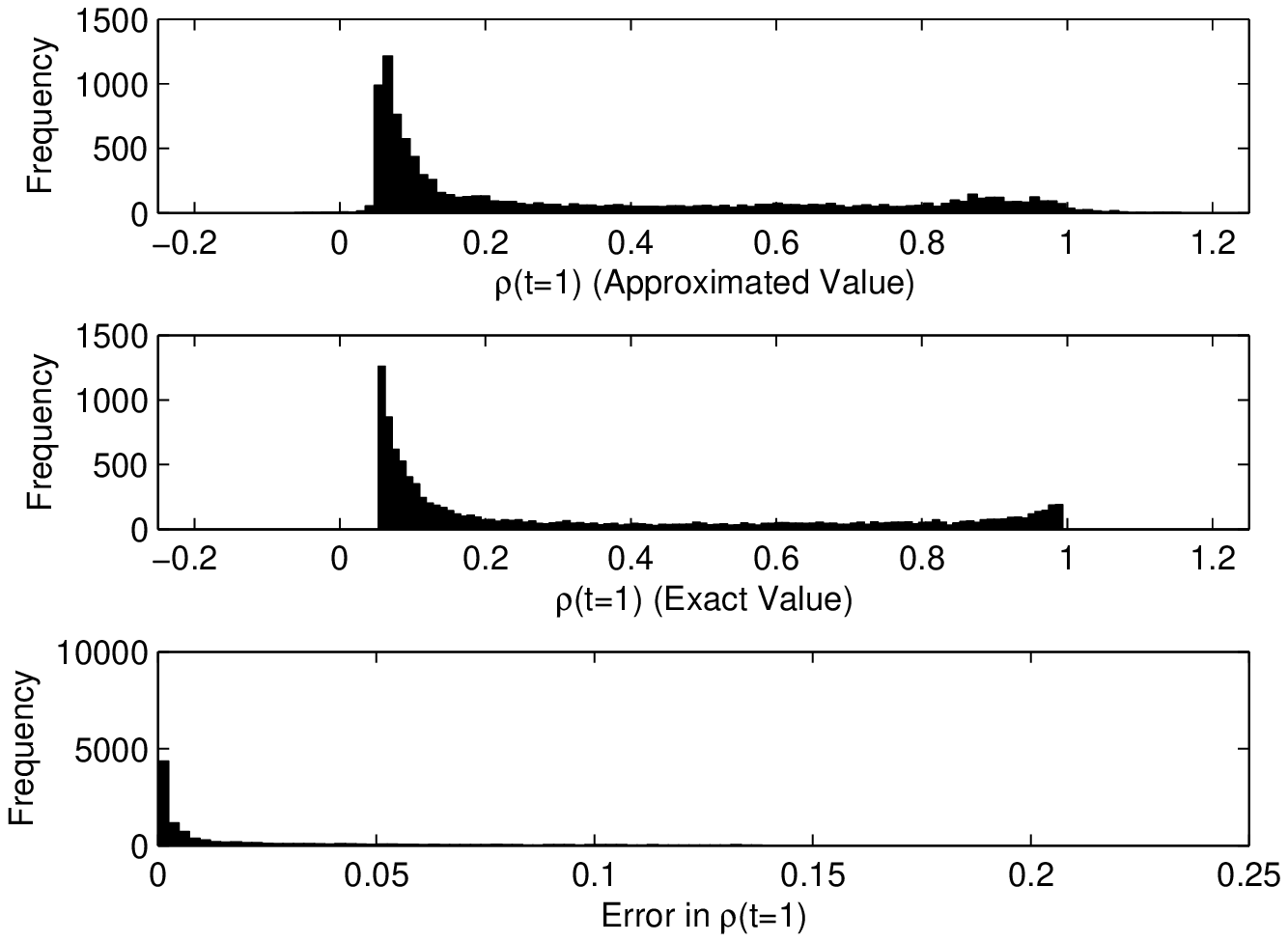}
\label{fig:subfig1}}
\qquad 
\subfloat[][$C=1\times 10^{-2}$]{
\includegraphics[width=0.5\textwidth]{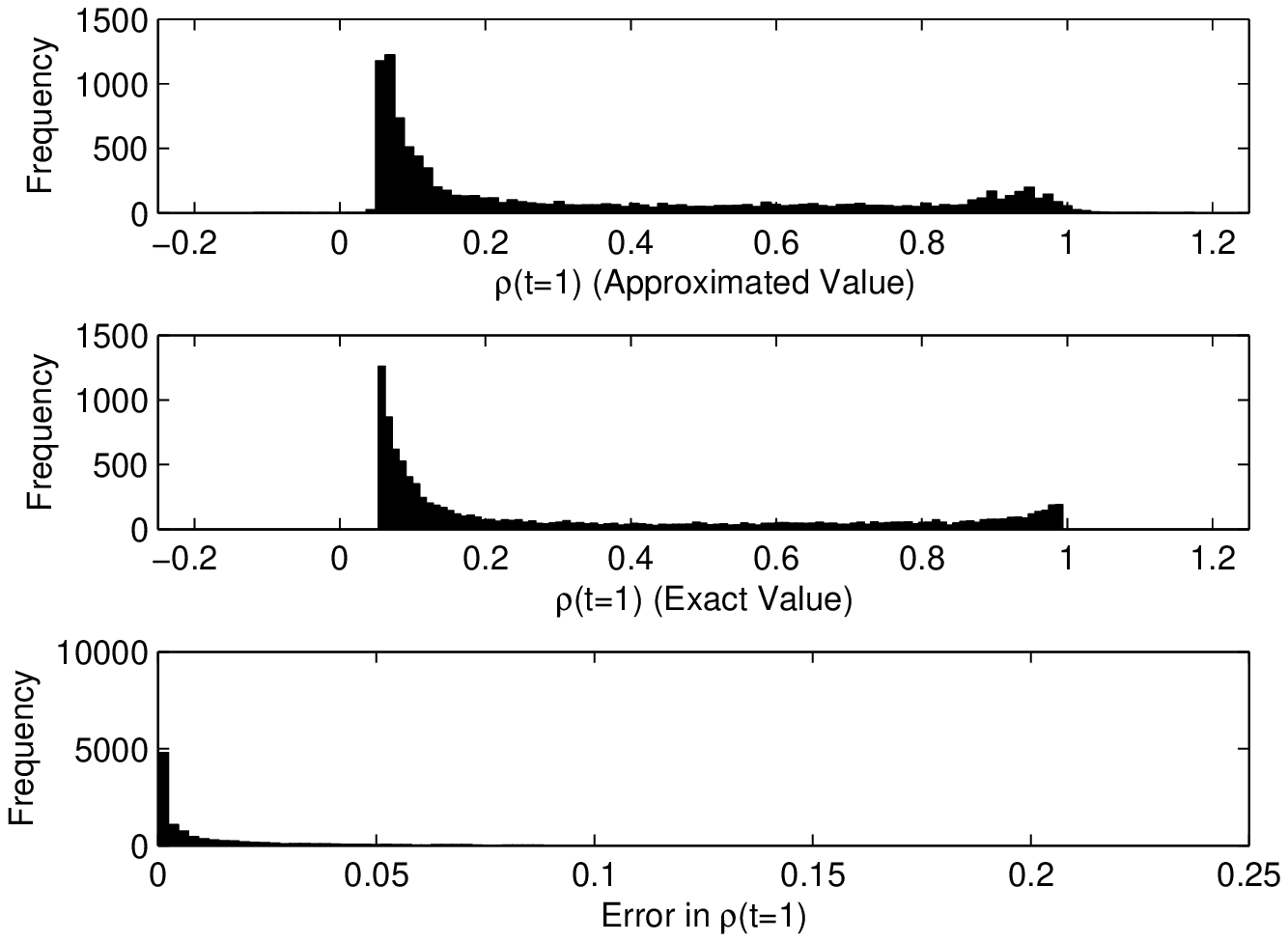}
\label{fig:subfig2}}
\\

\subfloat[][$C=5\times 10^{-3}$]{
\includegraphics[width=0.5\textwidth]{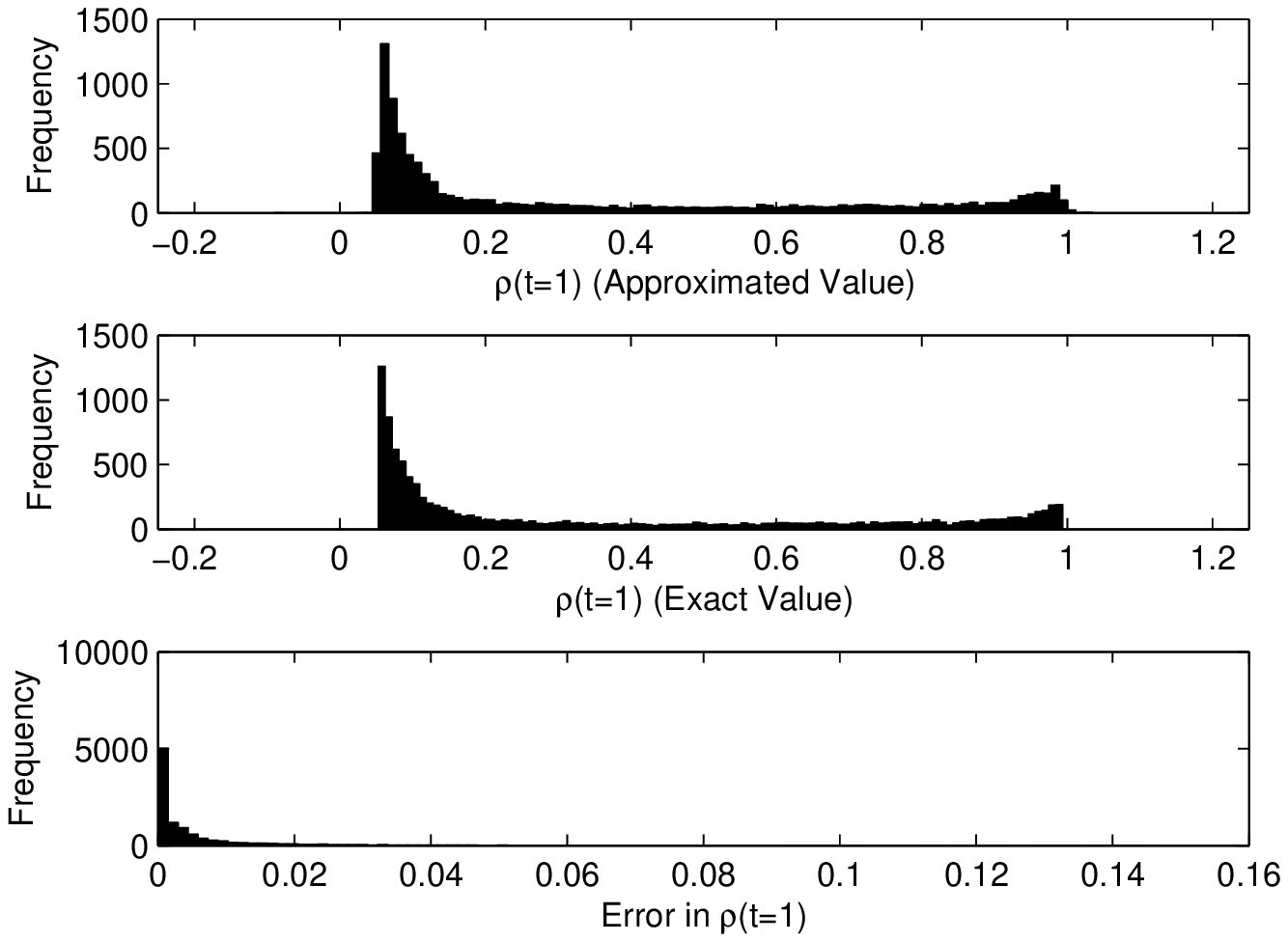}
\label{fig:subfig3}}
\qquad
\subfloat[][$C=1\times 10^{-3}$]{
\includegraphics[width=0.5\textwidth]{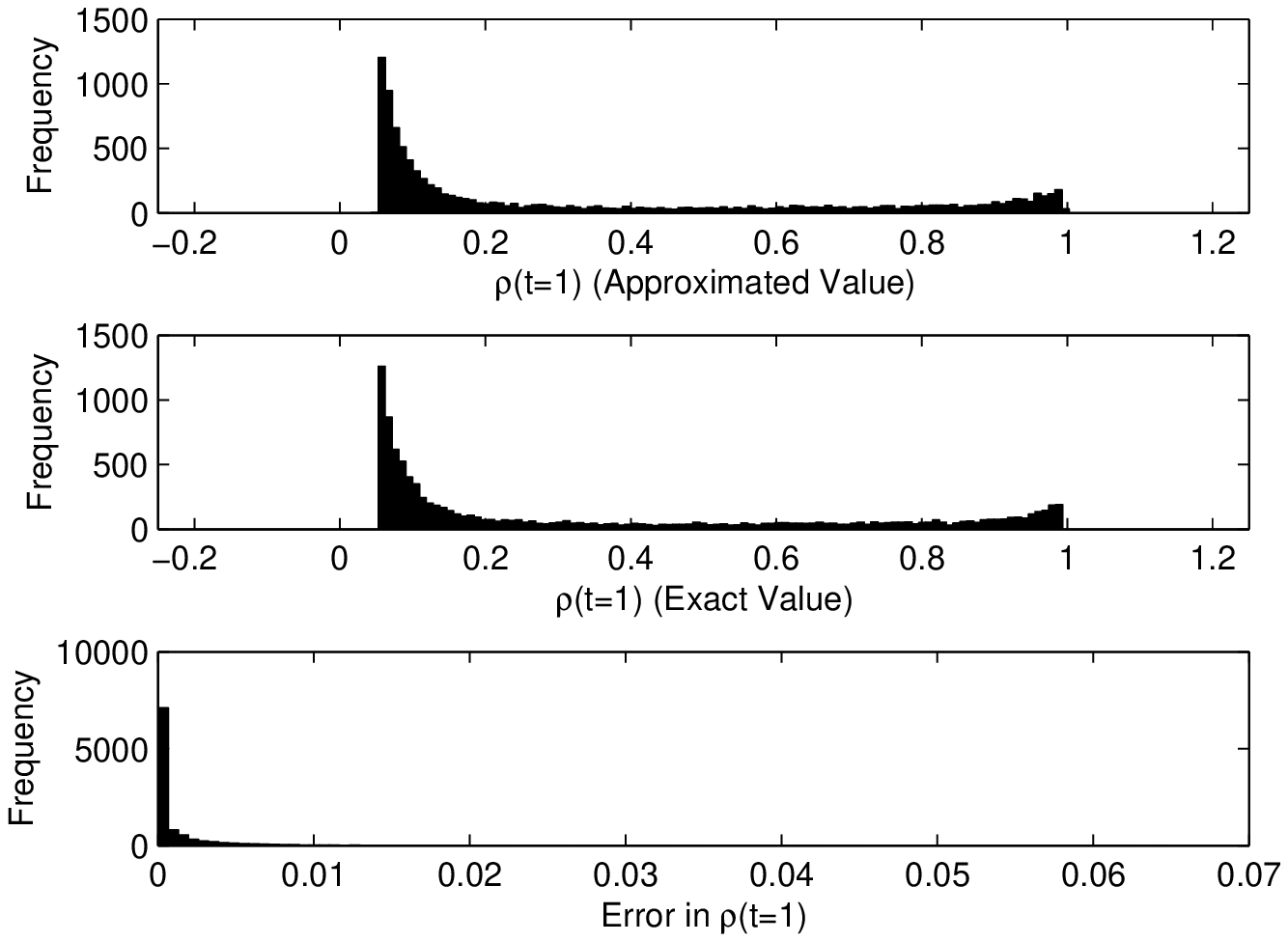}
\label{fig:subfig4}}
\\

\subfloat[][$C=5\times 10^{-4}$]{
\includegraphics[width=0.5\textwidth]{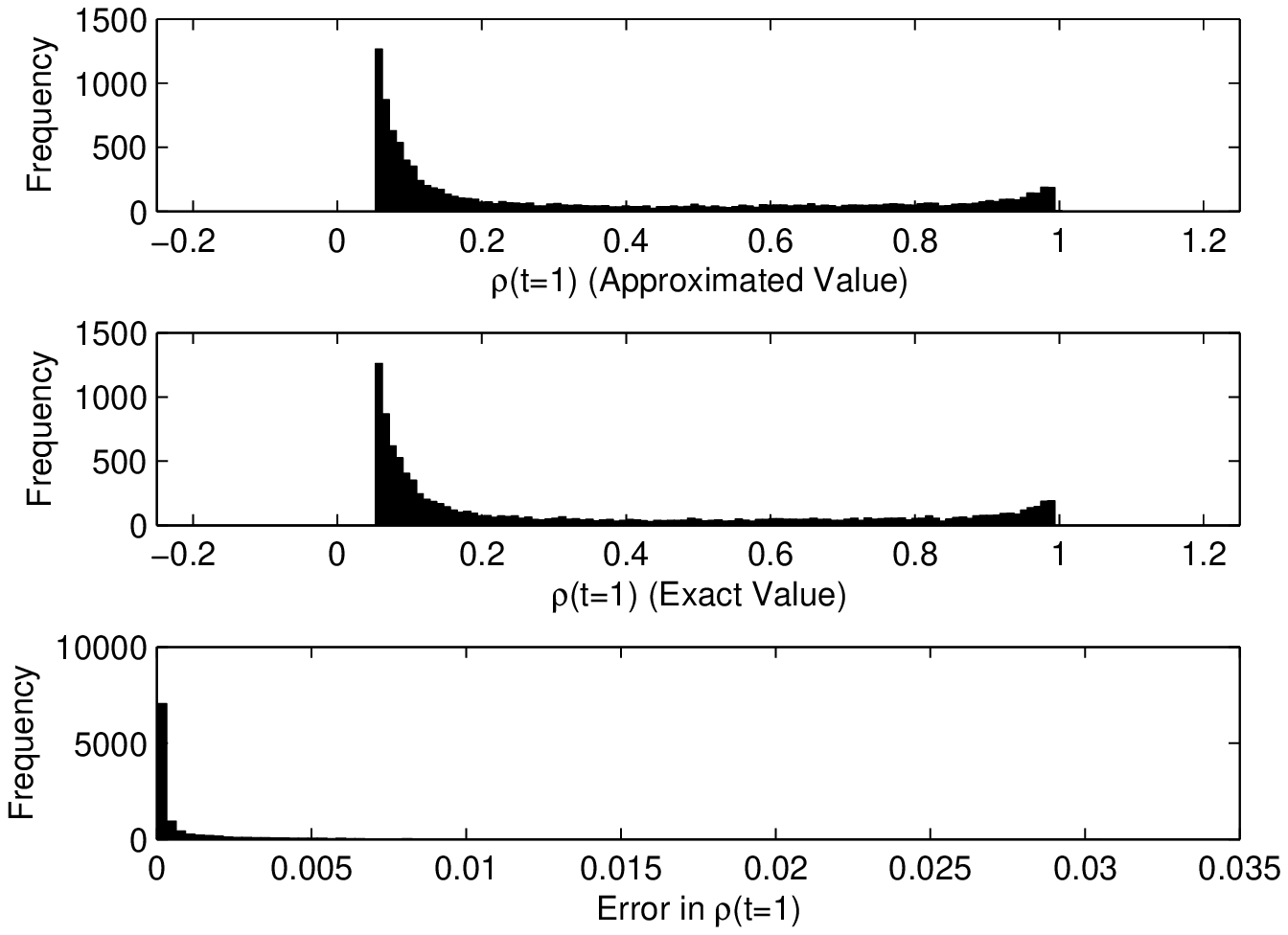}
\label{fig:subfig4}}

\caption{Distribution of function exact values evaluated by Monte Carlo simulation, function approximated evaluated by proposed method, and absolute value of error between them for $n=2, n_0=4, \mbox{ and } n_s=16.$}
\label{fig:globfig}
\end{figure}

\begin{figure}
\subfloat[][$C=5\times 10^{-2}$]{
\includegraphics[width=0.39\textwidth]{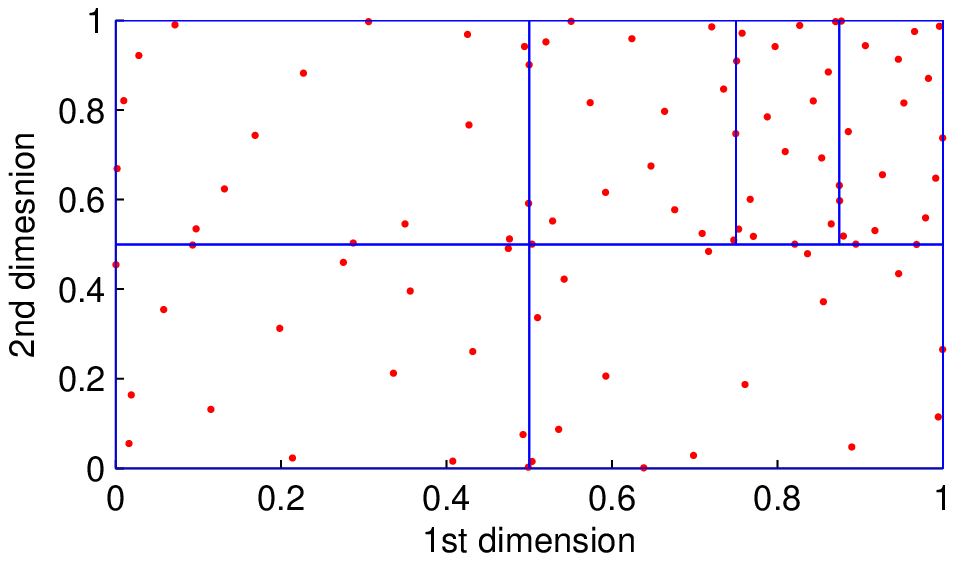}
\label{fig:subfig1}}
\qquad 
\subfloat[][$C=1\times 10^{-2}$]{
\includegraphics[width=0.39\textwidth]{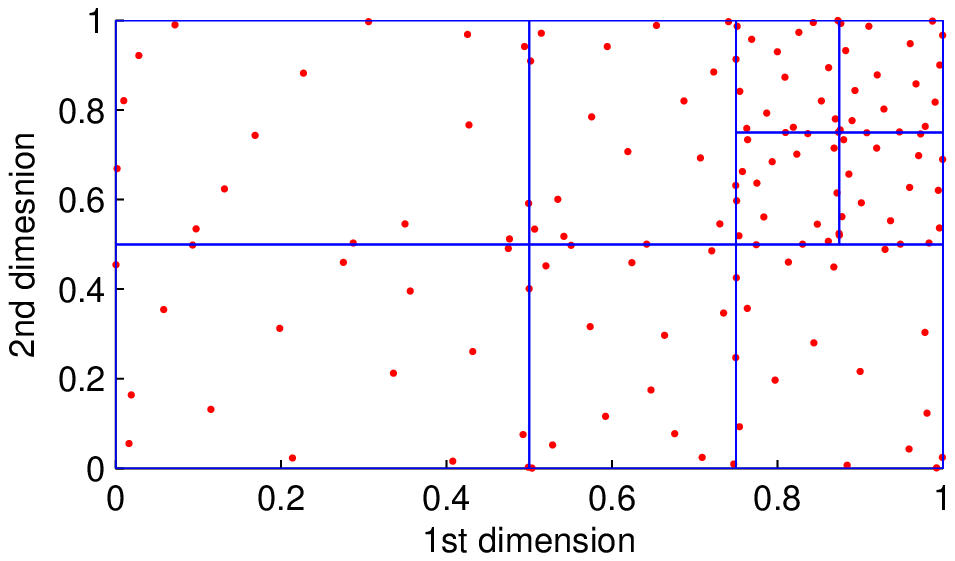}
\label{fig:subfig2}}
\\

\subfloat[][$C=5\times 10^{-3}$]{
\includegraphics[width=0.39\textwidth]{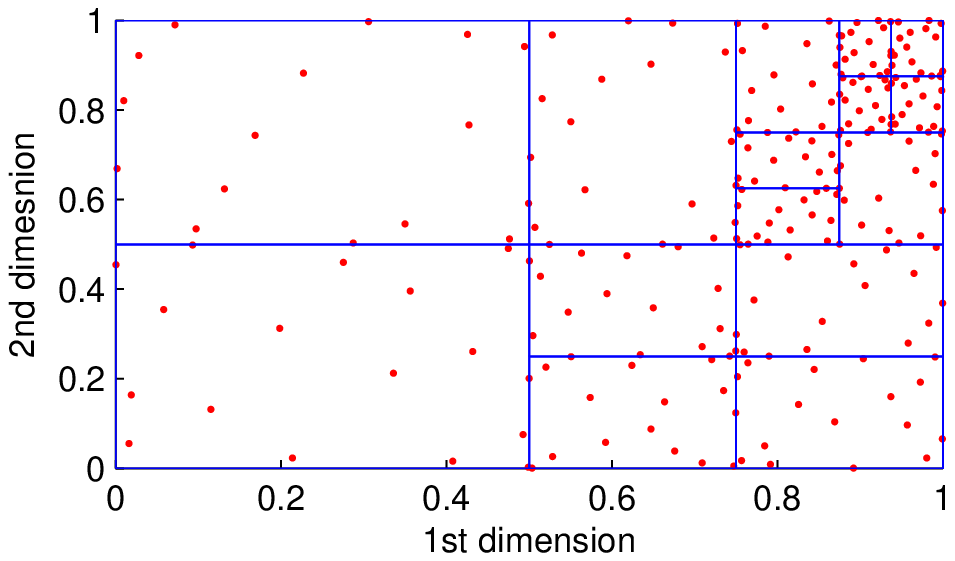}
\label{fig:subfig3}}
\qquad
\subfloat[][$C=1\times 10^{-3}$]{
\includegraphics[width=0.39\textwidth]{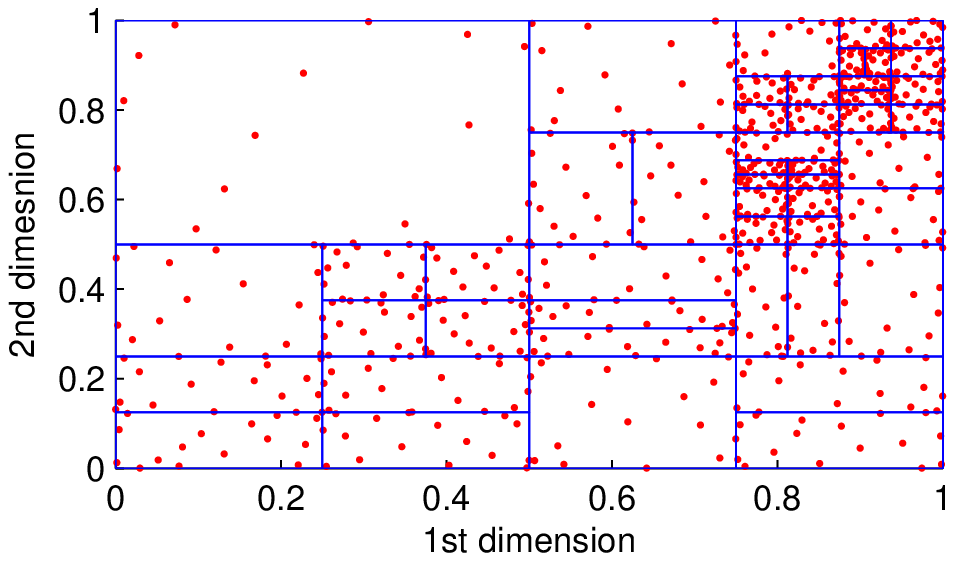}
\label{fig:subfig4}}
\\

\subfloat[][$C=5\times 10^{-4}$]{
\includegraphics[width=0.39\textwidth]{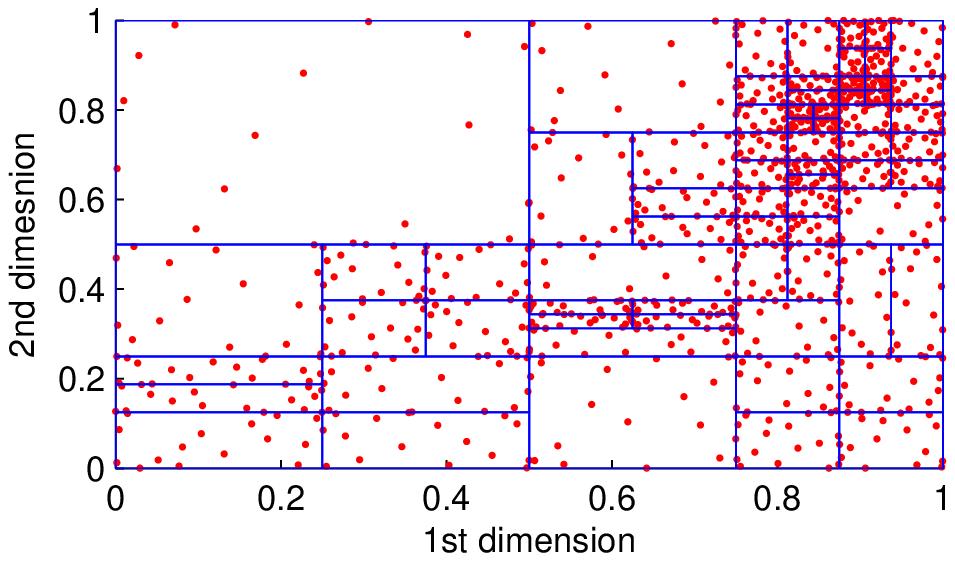}
\label{fig:subfig4}}

\caption{Partition of the random parameter space with sample points utilized for $n=2, n_0=4, \mbox{ and } n_s=16.$}
\label{fig:globfig}
\end{figure}

\begin{figure}
\subfloat[][$C=5\times 10^{-2}$]{
\includegraphics[width=0.5\textwidth]{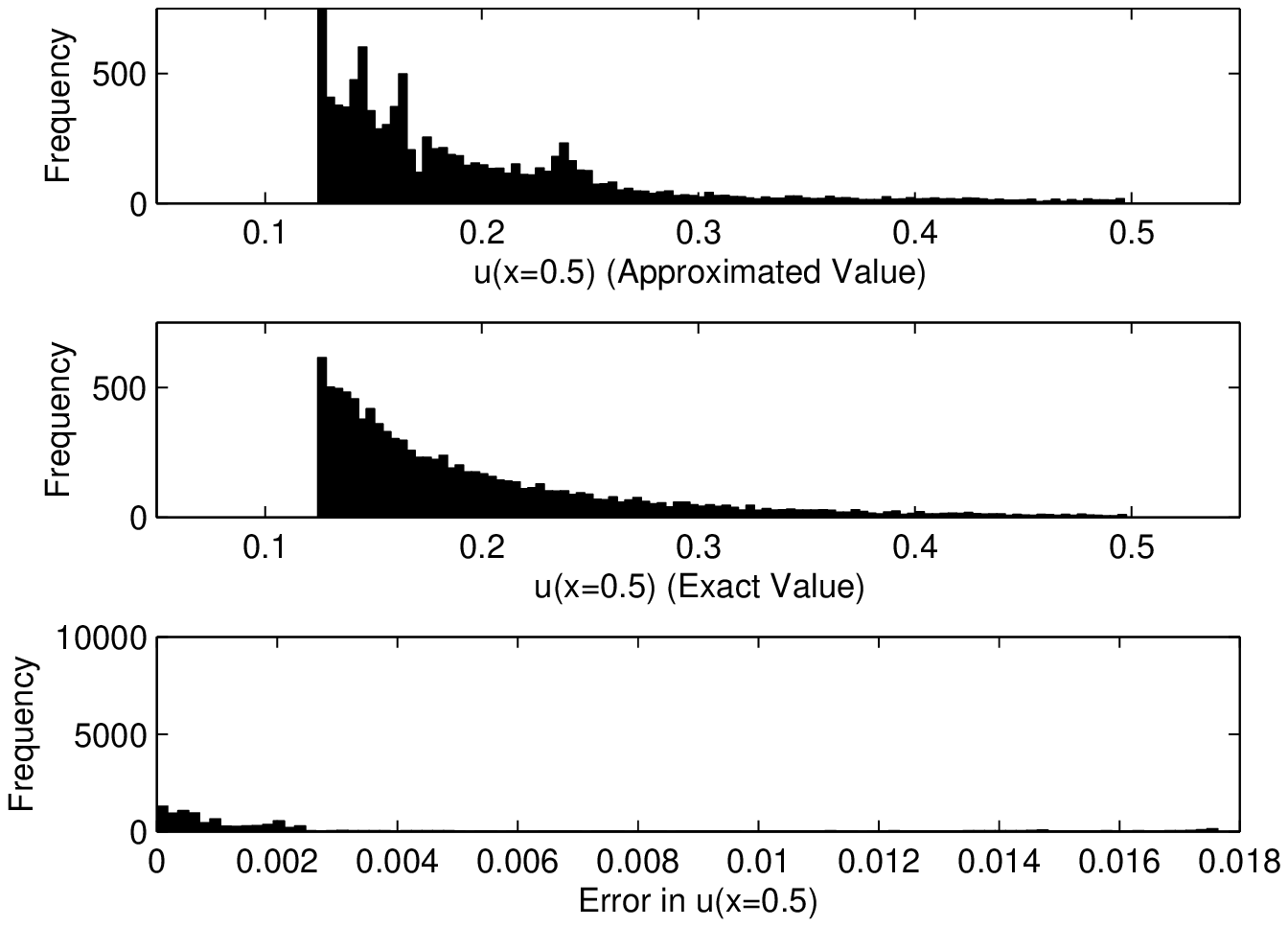}
\label{fig:subfig1}}
\qquad 
\subfloat[][$C=1\times 10^{-2}$]{
\includegraphics[width=0.5\textwidth]{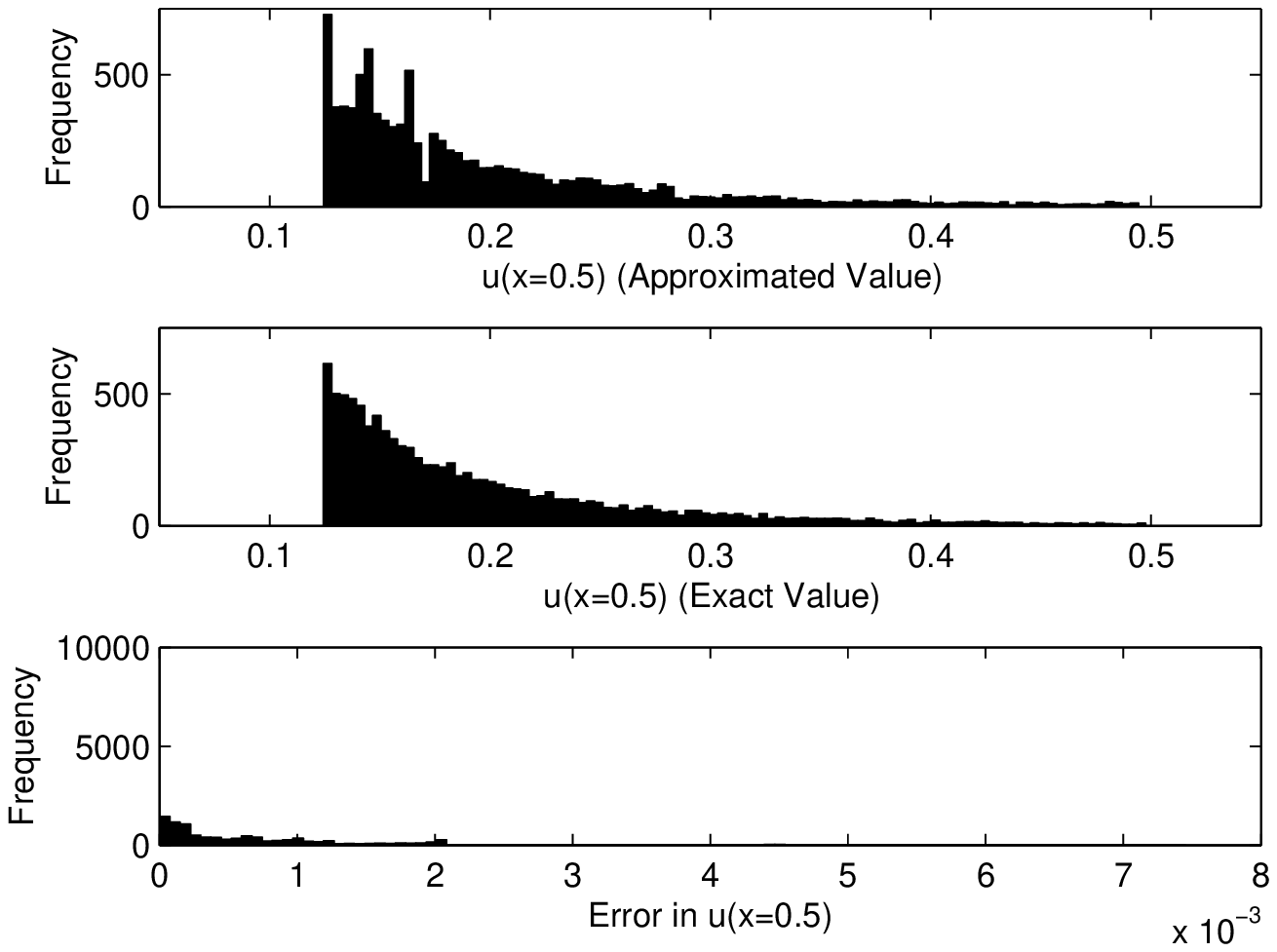}
\label{fig:subfig2}}
\\

\subfloat[][$C=5\times 10^{-3}$]{
\includegraphics[width=0.5\textwidth]{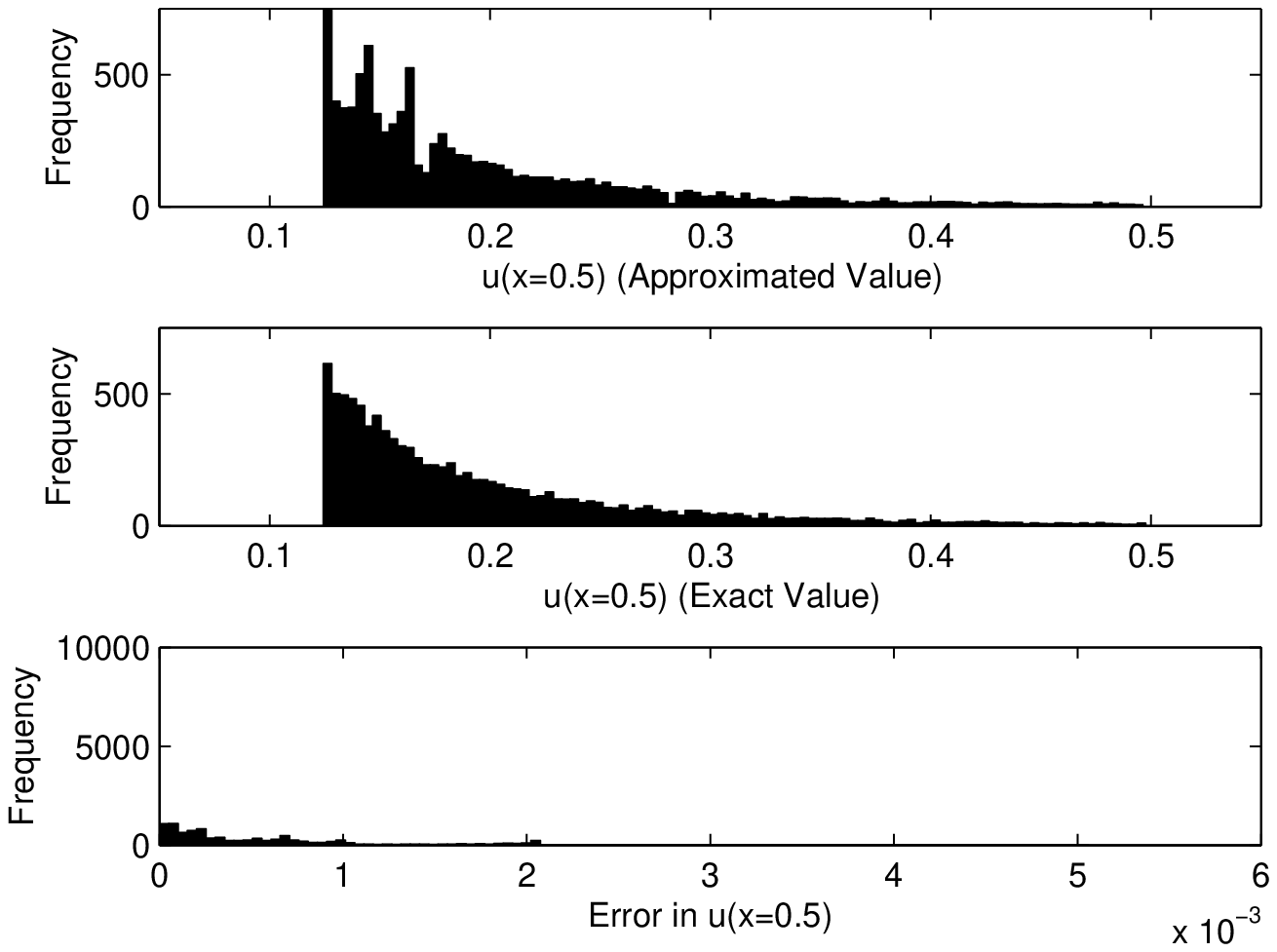}
\label{fig:subfig3}}
\qquad
\subfloat[][$C=1\times 10^{-3}$]{
\includegraphics[width=0.5\textwidth]{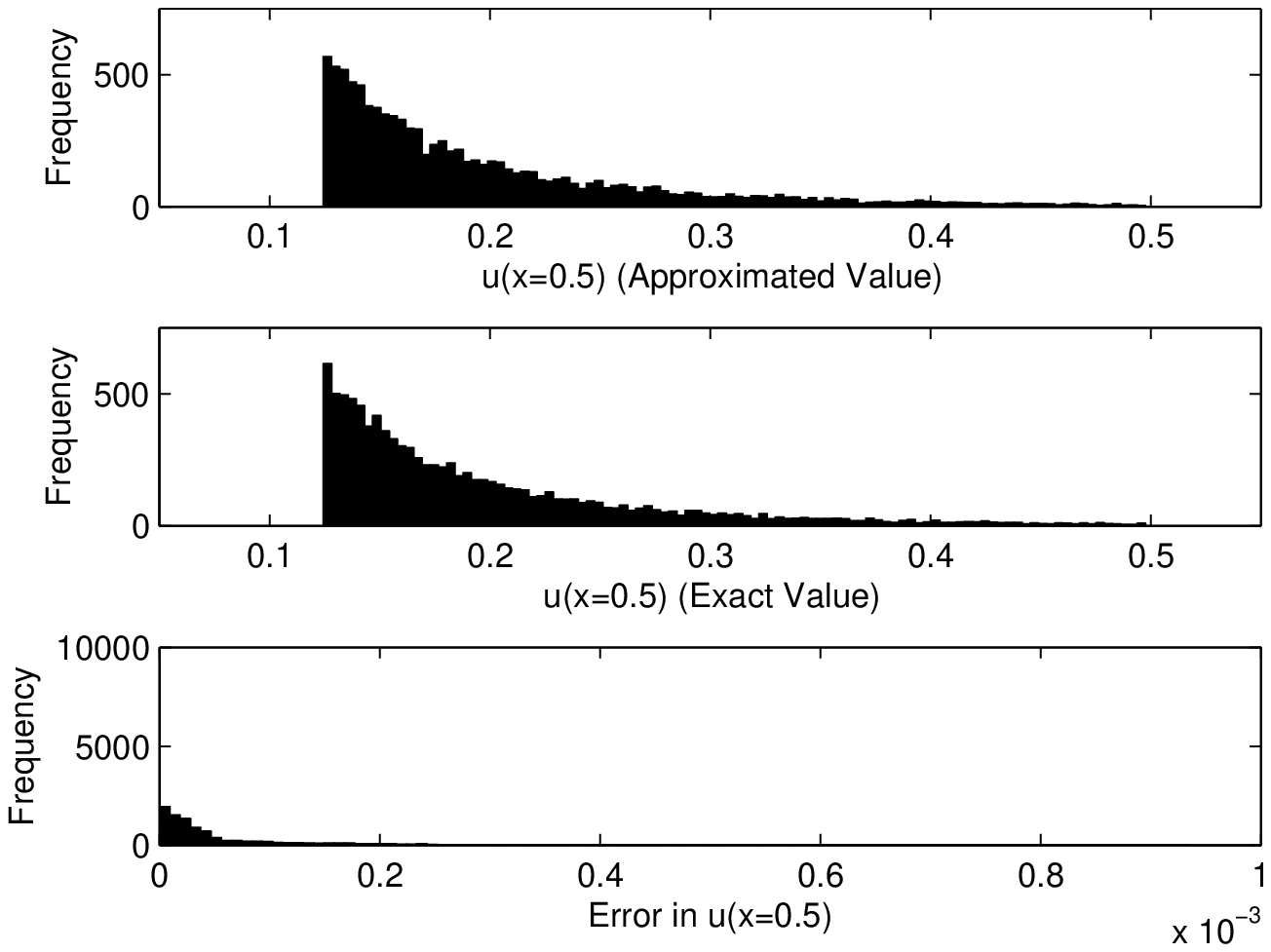}
\label{fig:subfig4}}
\\

\subfloat[][$C=5\times 10^{-4}$]{
\includegraphics[width=0.5\textwidth]{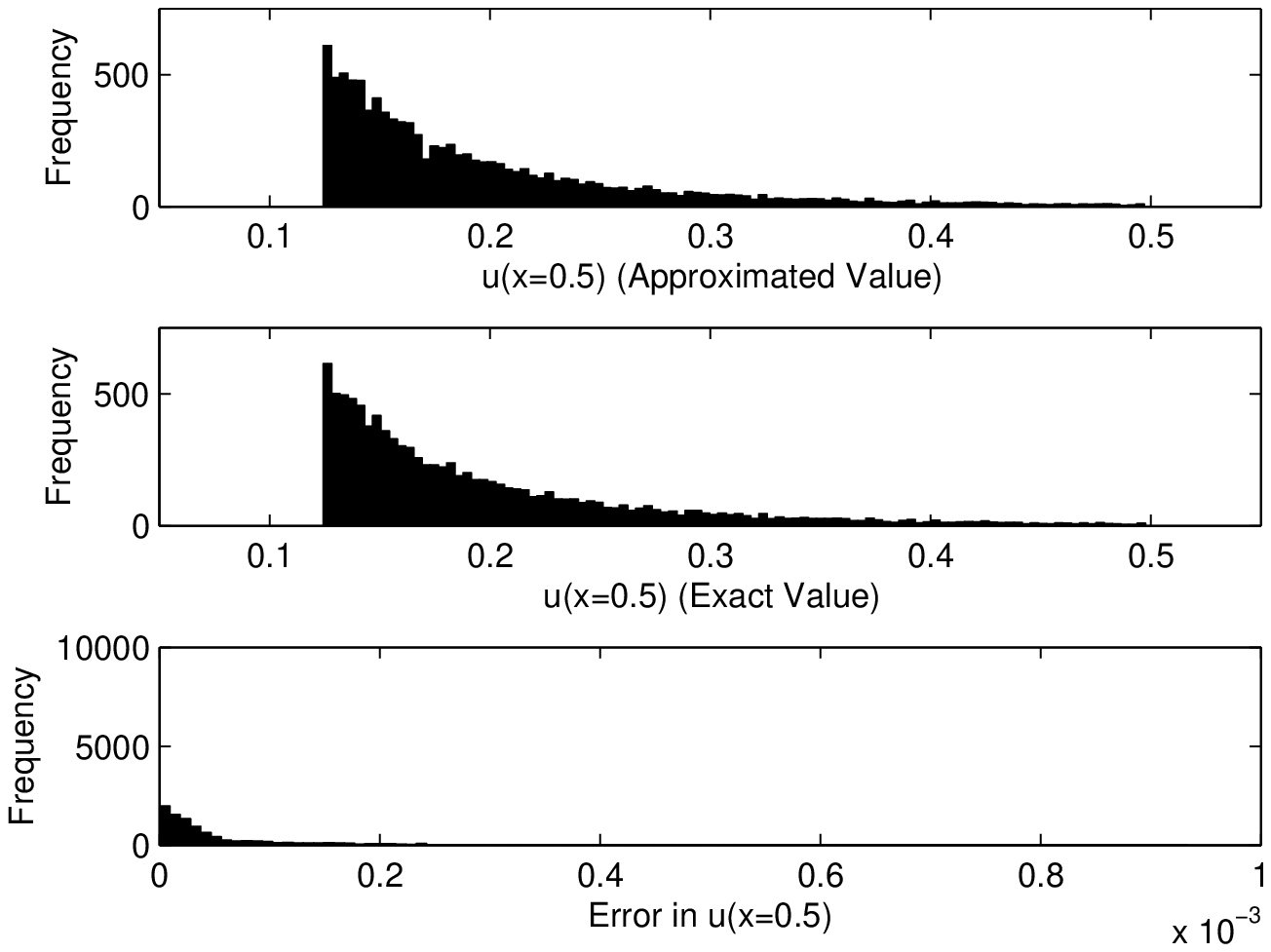}
\label{fig:subfig4}}

\caption{Distribution of function exact values evaluated by Monte Carlo simulation, function approximated evaluated by proposed method, and absolute value of error between them for $n=1, n_0=3, \mbox{ and } n_s=5.$}
\label{fig:globfig}
\end{figure}

\begin{figure}
\subfloat[][$C=5\times 10^{-2}$]{
\includegraphics[width=0.39\textwidth]{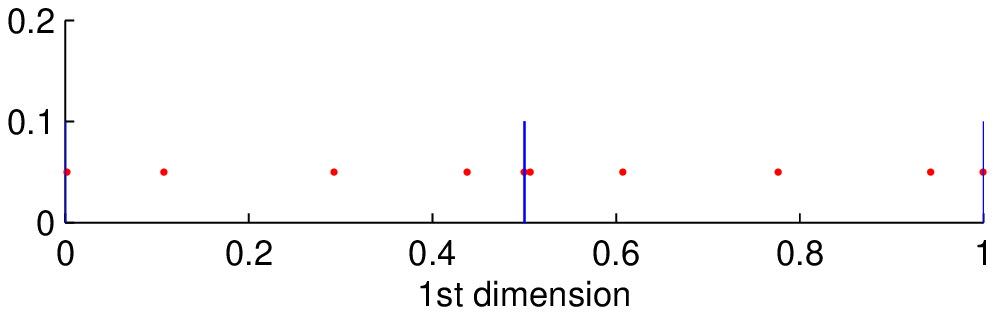}
\label{fig:subfig1}}
\qquad 
\subfloat[][$C=1\times 10^{-2}$]{
\includegraphics[width=0.39\textwidth]{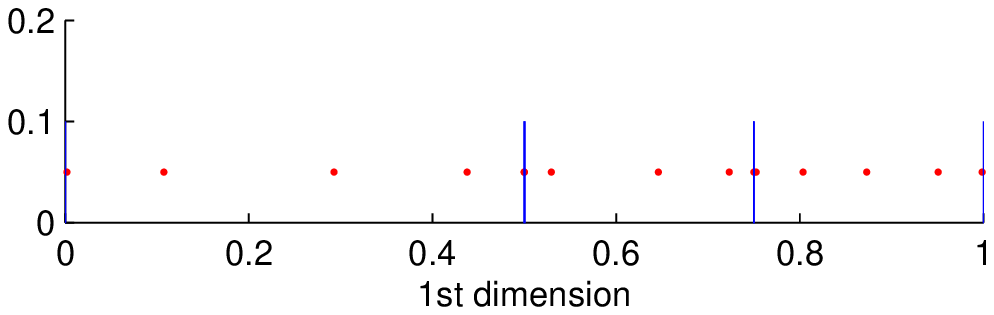}
\label{fig:subfig2}}
\\

\subfloat[][$C=5\times 10^{-3}$]{
\includegraphics[width=0.39\textwidth]{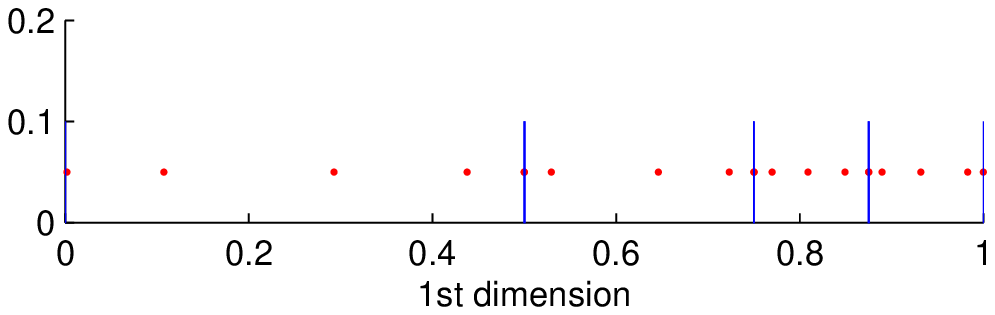}
\label{fig:subfig3}}
\qquad
\subfloat[][$C=1\times 10^{-3}$]{
\includegraphics[width=0.39\textwidth]{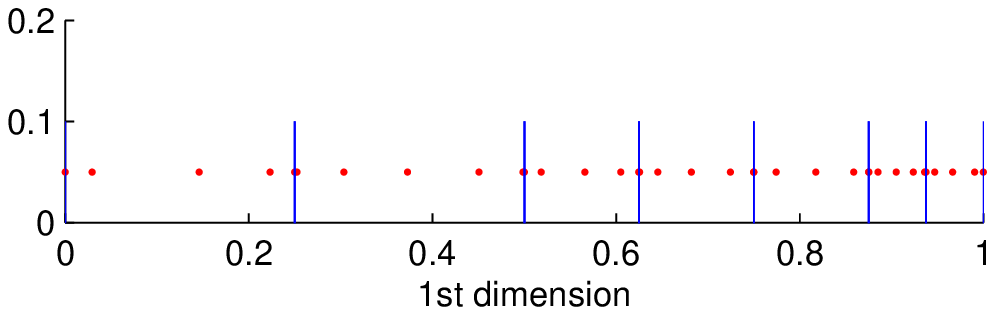}
\label{fig:subfig4}}
\\

\subfloat[][$C=5\times 10^{-4}$]{
\includegraphics[width=0.39\textwidth]{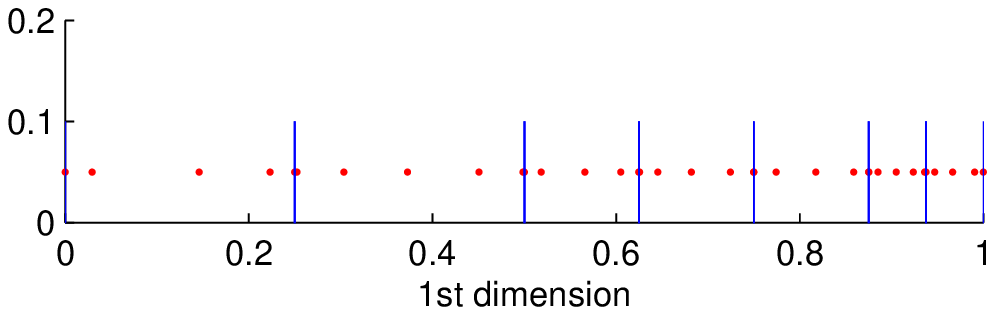}
\label{fig:subfig4}}

\caption{Partition of the random parameter space with sample points utilized for $n=1, n_0=3, \mbox{ and } n_s=5.$}
\label{fig:globfig}
\end{figure}

\begin{figure}
\subfloat[][$C=5\times 10^{-2}$]{
\includegraphics[width=0.39\textwidth]{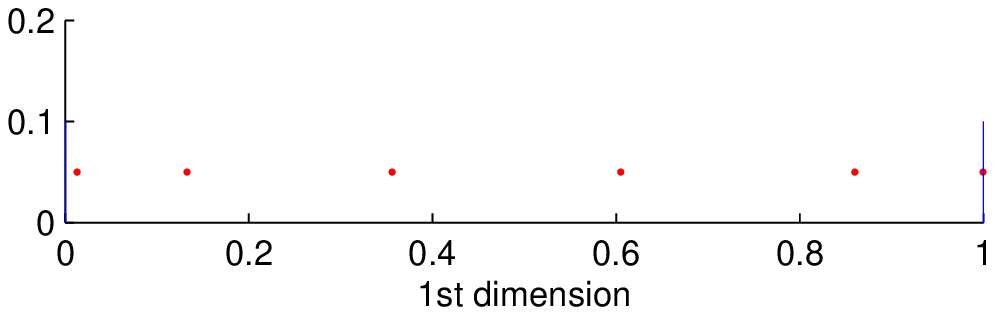}
\label{fig:subfig1}}
\qquad 
\subfloat[][$C=1\times 10^{-2}$]{
\includegraphics[width=0.39\textwidth]{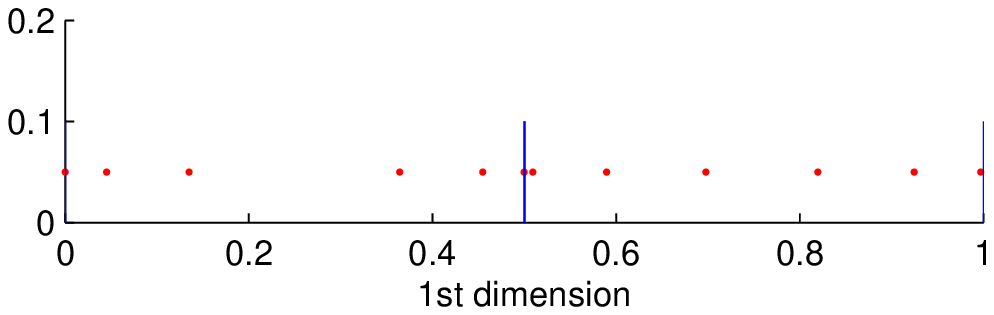}
\label{fig:subfig2}}
\\

\subfloat[][$C=5\times 10^{-3}$]{
\includegraphics[width=0.39\textwidth]{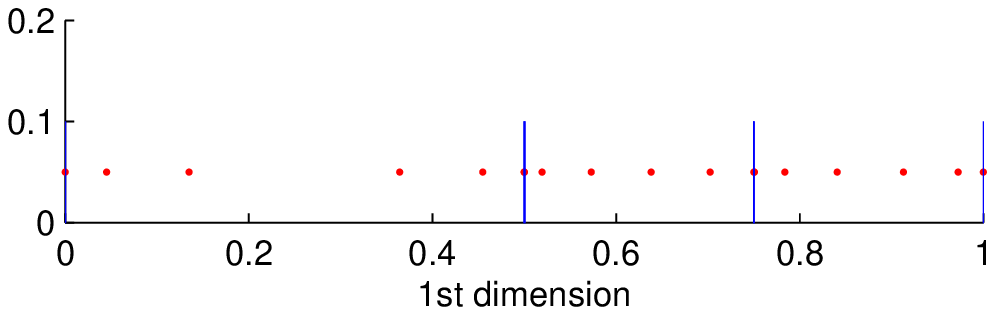}
\label{fig:subfig3}}
\qquad
\subfloat[][$C=1\times 10^{-3}$]{
\includegraphics[width=0.39\textwidth]{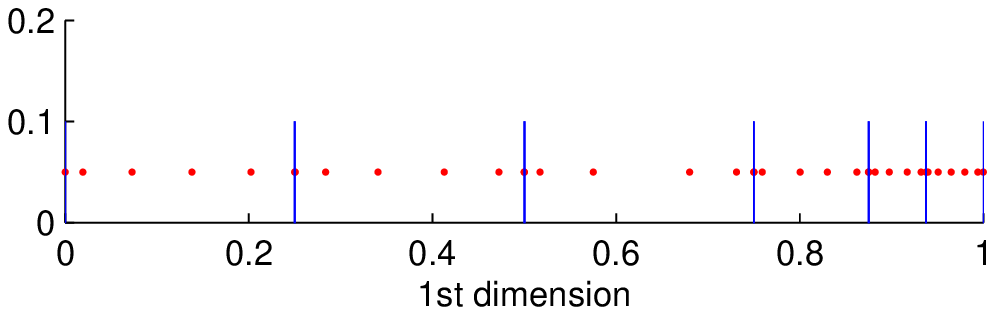}
\label{fig:subfig4}}
\\

\subfloat[][$C=5\times 10^{-4}$]{
\includegraphics[width=0.39\textwidth]{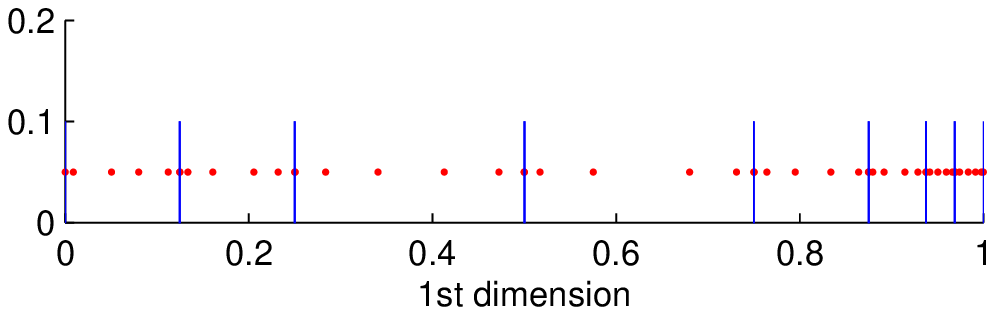}
\label{fig:subfig4}}

\caption{Partition of the random parameter space with sample points utilized for $n=1, n_0=4, \mbox{ and } n_s=6.$}
\label{fig:globfig}
\end{figure}

\begin{figure}
\subfloat[][$C=5\times 10^{-2}$]{
\includegraphics[width=0.5\textwidth]{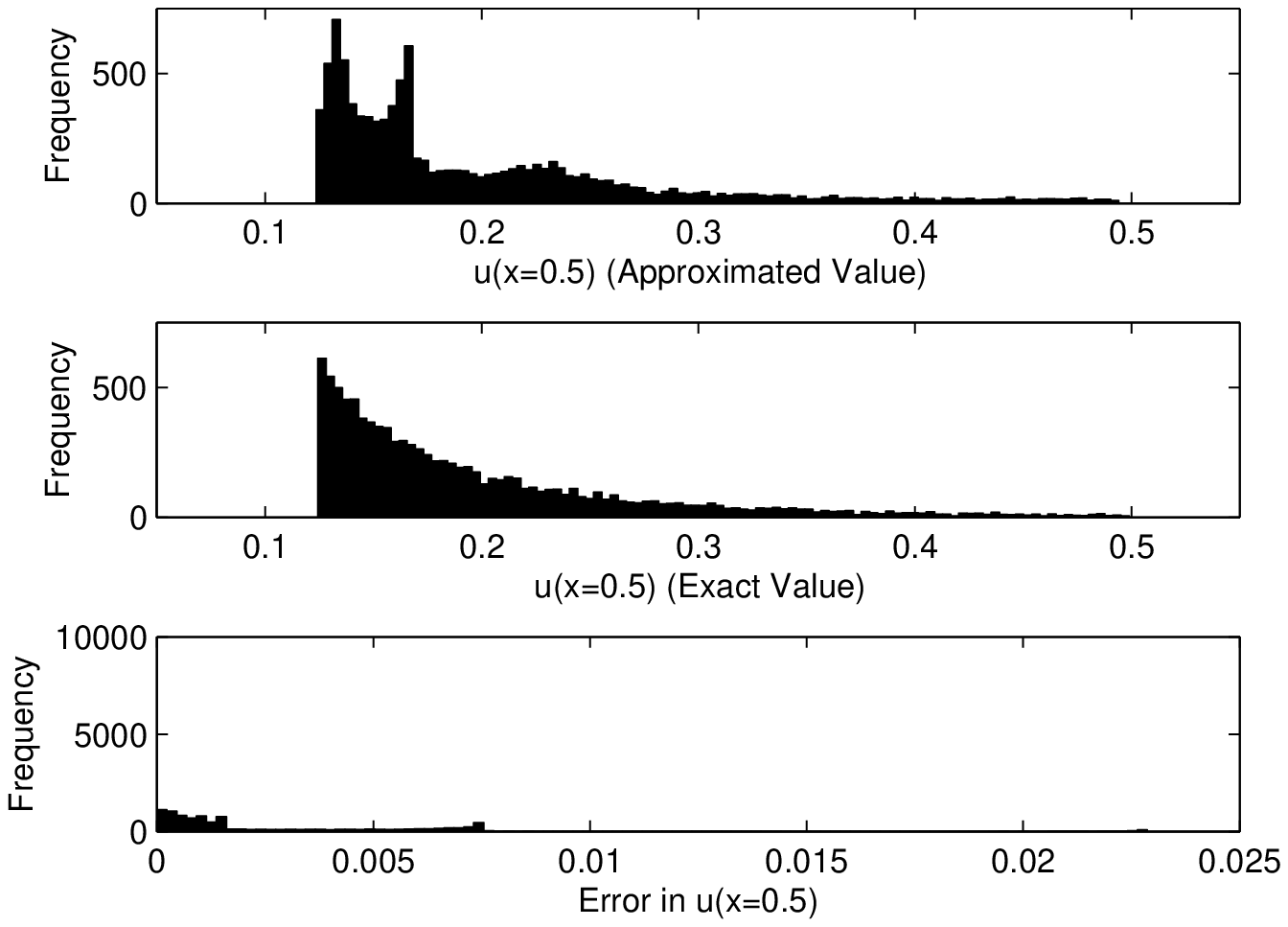}
\label{fig:subfig1}}
\qquad 
\subfloat[][$C=1\times 10^{-2}$]{
\includegraphics[width=0.5\textwidth]{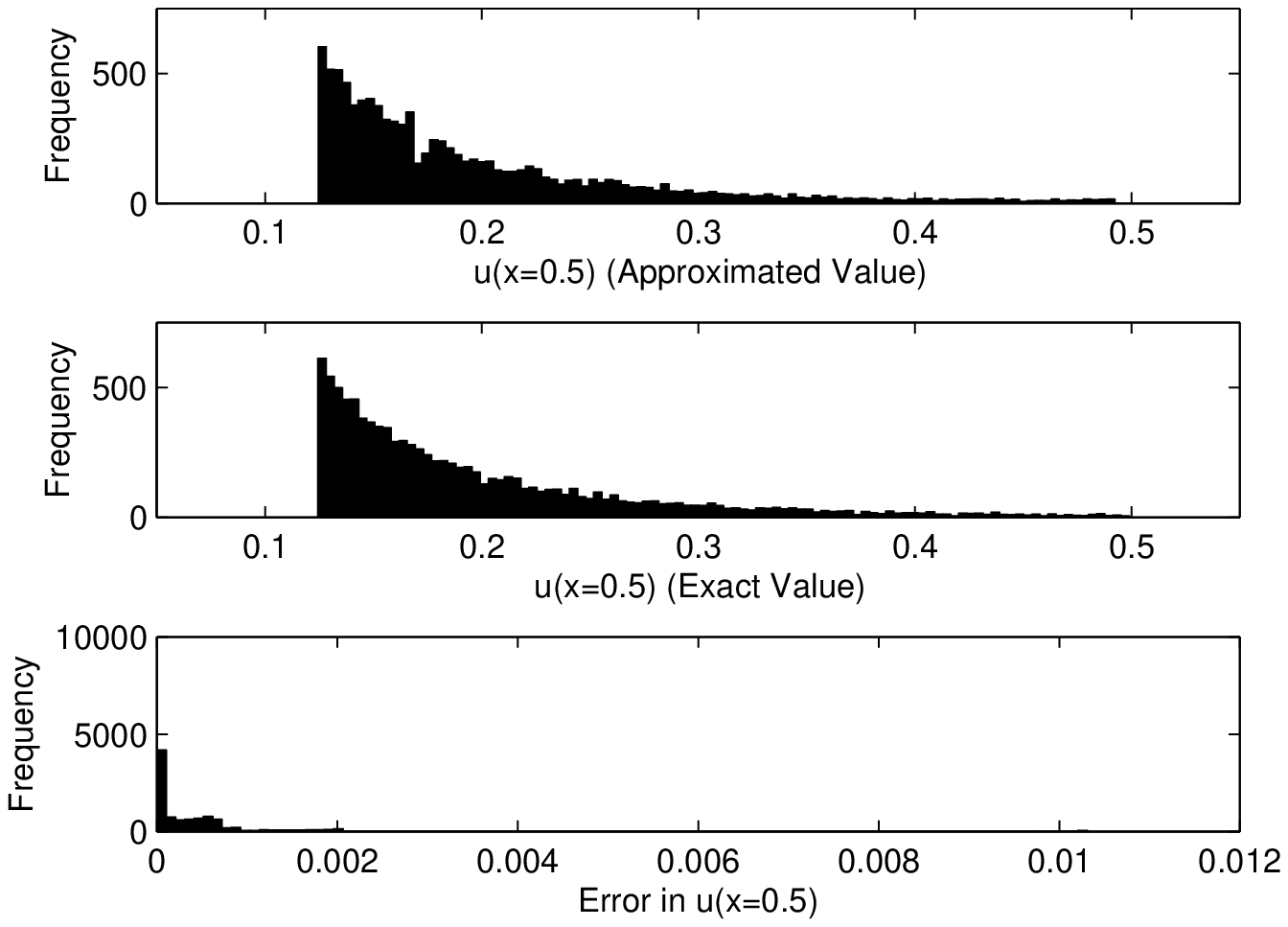}
\label{fig:subfig2}}
\\

\subfloat[][$C=5\times 10^{-3}$]{
\includegraphics[width=0.5\textwidth]{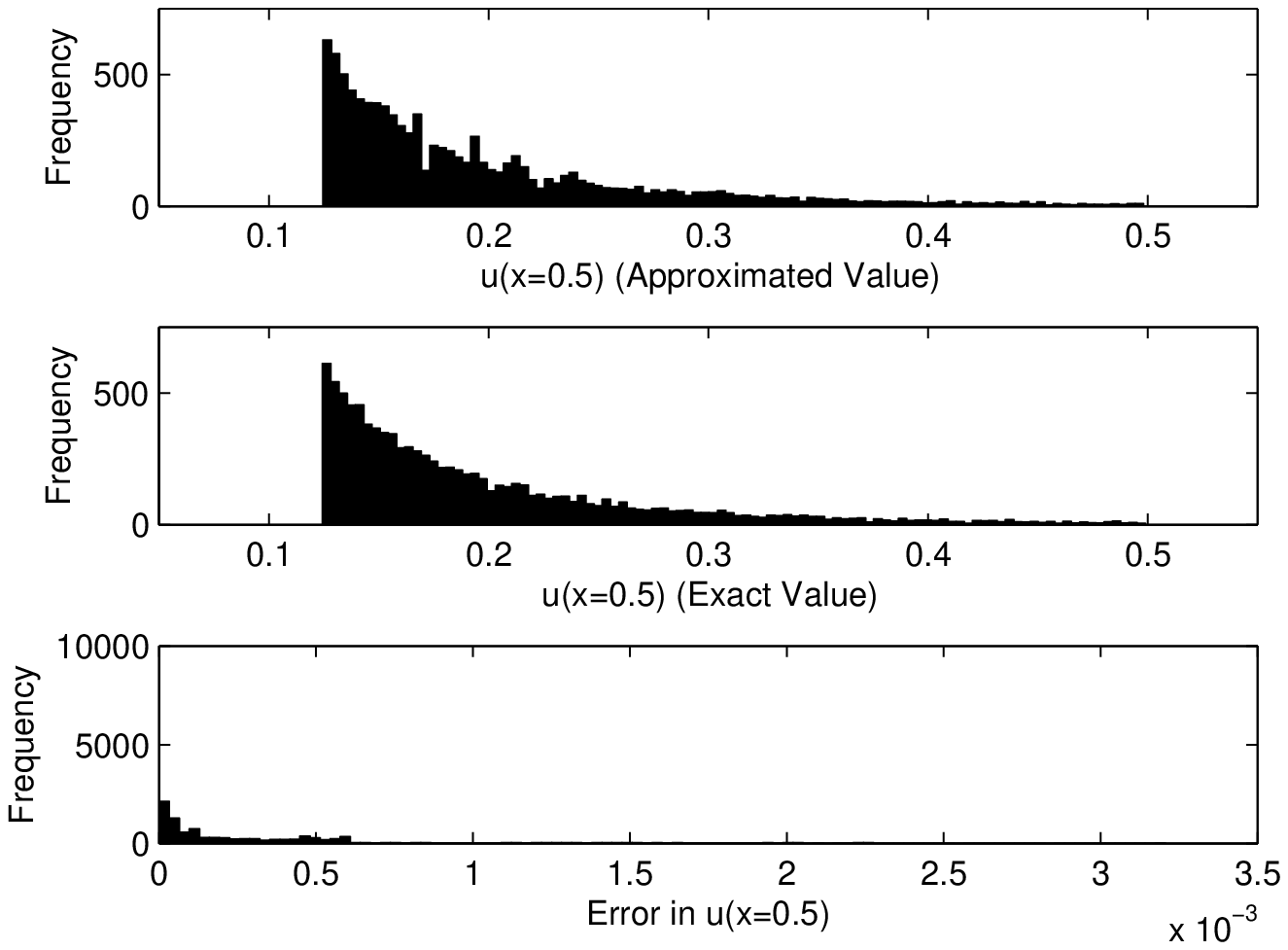}
\label{fig:subfig3}}
\qquad
\subfloat[][$C=1\times 10^{-3}$]{
\includegraphics[width=0.5\textwidth]{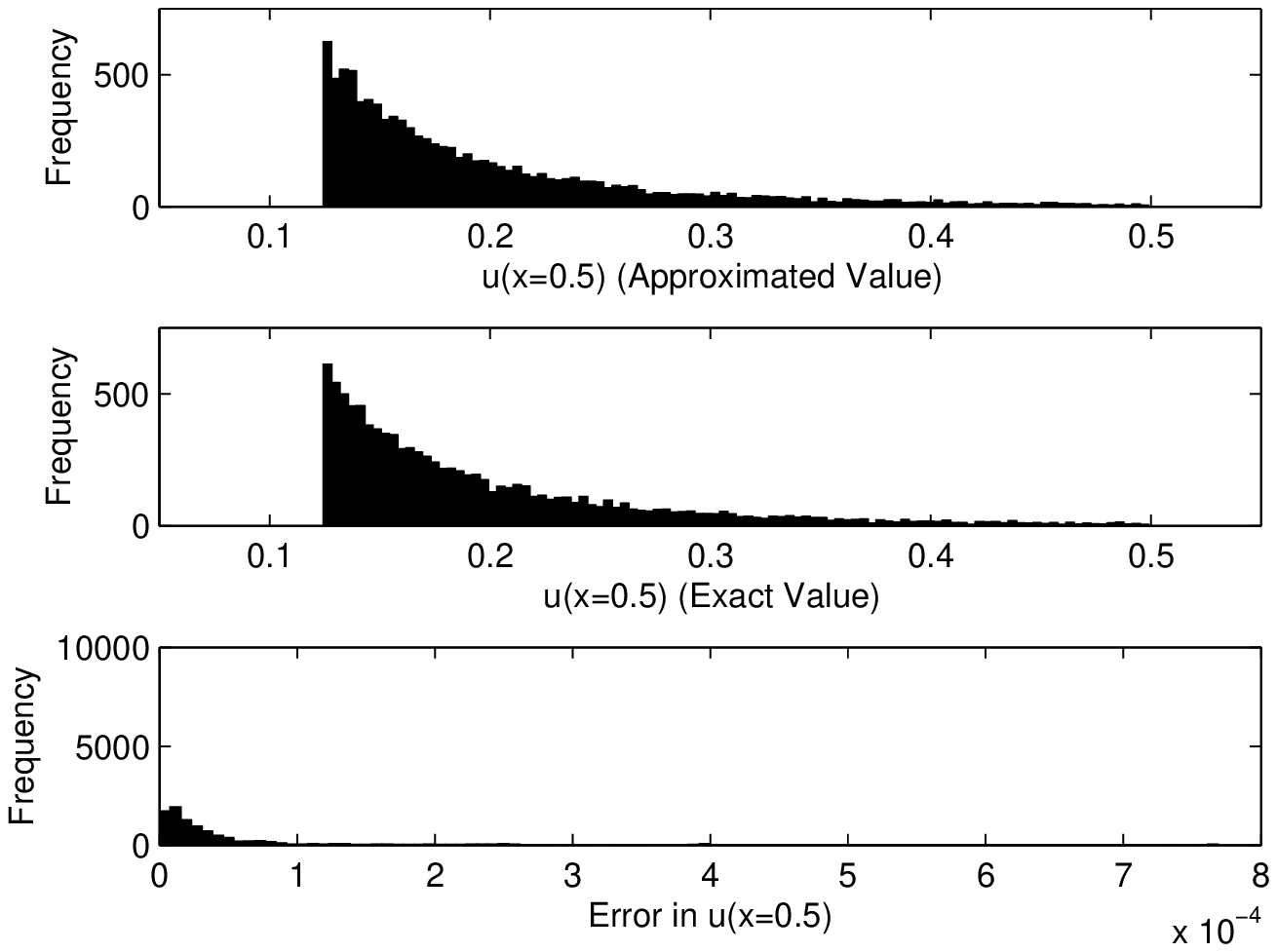}
\label{fig:subfig4}}
\\

\subfloat[][$C=5\times 10^{-4}$]{
\includegraphics[width=0.5\textwidth]{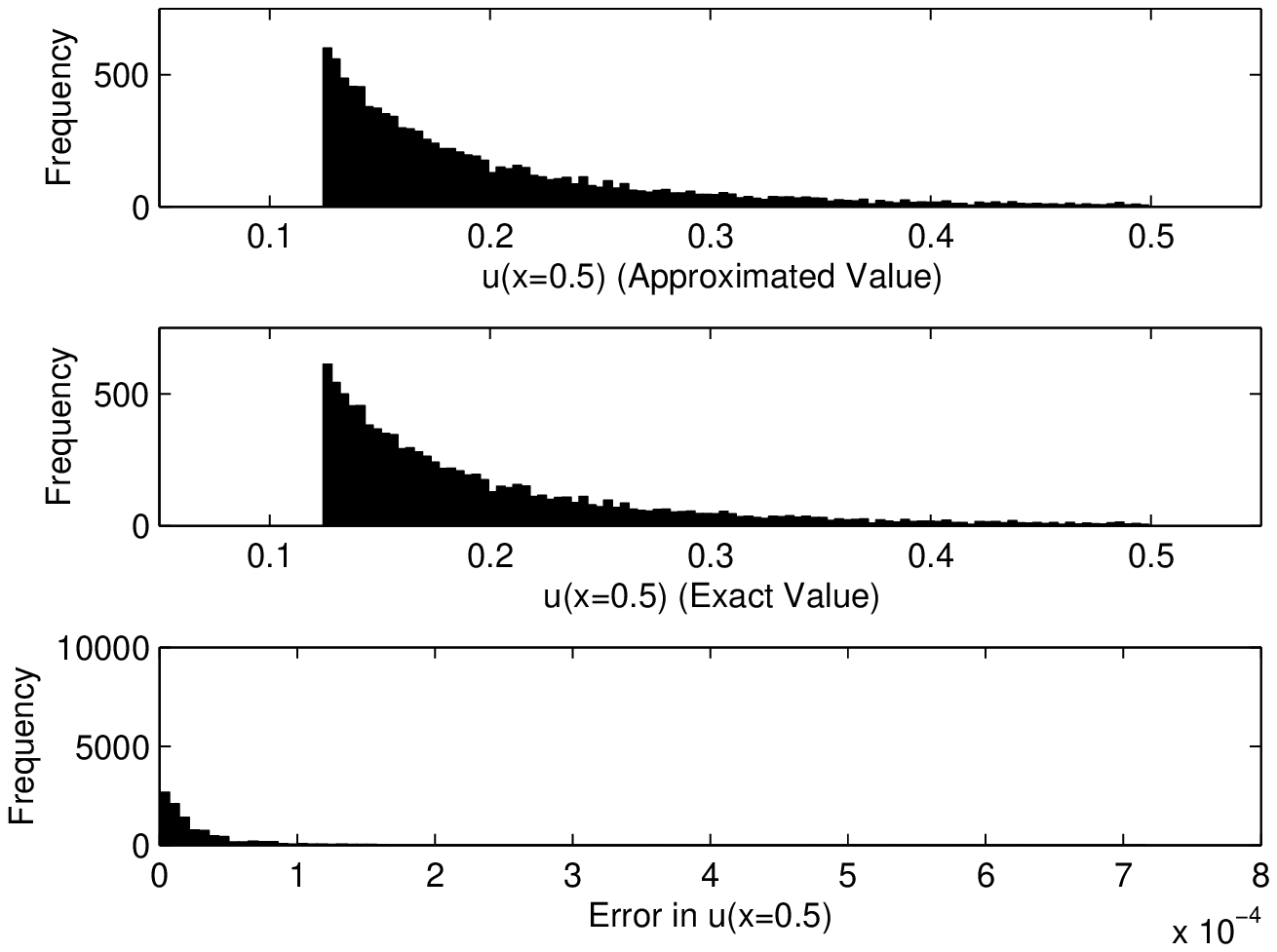}
\label{fig:subfig4}}

\caption{Distribution of function exact values evaluated by Monte Carlo simulation, function approximated evaluated by proposed method, and absolute value of error between them for $n=1, n_0=4, \mbox{ and } n_s=6.$}
\label{fig:globfig}
\end{figure}


\begin{thebibliography}{9}

\bibitem{1}
Ghanem, R. G., and Spanos, P. (1991). Stochastic Finite Elements: A Spectral Approach, Springer-Verlag, New York.

\bibitem{2}
S. Wiener, The homogeneous chaos, Am. J. Math. 60 (1938) 897–936. 

\bibitem {3}
 D. Xiu and G.E. Karniadakis, Modeling Uncertainty in Flow Simulations via Generalized Polynomial Chaos, J. Comput. Phys. 187, 137-167 (2003).

\bibitem{4}
O. Le Ma$\hat{\mbox{i}}$tre, O. Knio, H. Najm, R. Ghanem, A stochastic projection method for fluid flow. I. Basic formulation, J. Comput. Phys. 173 (2001) 481–511.

\bibitem{5}
O. Le Ma$\hat{\mbox{i}}$tre, M. Reagan, H. Najm, R. Ghanem, O. Knio, A stochastic projection method for fluid flow. II. Random process, J. Comput. Phys. 181 (2002) 9–44.

\bibitem{6}
Q.-Y. Chen, D. Gottlieb and J. S. Hesthaven, Uncertainty analysis for the steady-state ﬂows in a dual throat nozzle, J. Comput. Phys., 204 (2005), 387-398.

\bibitem{7}
G. Lin, X. Wan, C.-H. Su and G.E. Karniadakis, Stochastic fluid mechanics, IEEE Computing in Science and Engineering (CiSE), vol. 9, pp. 21-29, 2007. 

\bibitem{8}
O. Le Ma$\hat{\mbox{i}}$tre, O.M. Knio, B.D. Debusschere, H.N. Najm, R.G. Ghanem, A multigrid solver for two-dimensional stochastic diffusion equations, Comput. Methods Appl. Mech. Eng. 192 (2003) 4723–4744.

\bibitem{9}
D. Lucor, C.-H. Su and G.E. Karniadakis, Generalized polynomial chaos and random oscillators, Int. J. Num. Meth. Eng., vol. 60(3), pp. 571-596, 200.

\bibitem{10}
R. Ghanem, S. Dham, Stochastic finite element analysis for multiphase flow in heterogeneous porous media, Transport Porous Media 32 (1998) 239–262.

\bibitem{11}
R. Ghanem, Probabilistic characterization of transport in heterogeneous media, Comput. Methods Appl. Mech. Eng. 158 (1998) 199–220.

\bibitem{12}
A. Chorin, Gaussian fields and random flow, J. Fluid Mech. 63 (1974) 21–32.

\bibitem{13}
Olivier P. Le Ma$\hat{\mbox{i}}$tre, Omar M. Knio, Habib N. Najm and Roger G. Ghanem, Uncertainty Propagation Using Wiener-Haar Expansions, Journal of Computational Physics, (197):1, pp. 28-57, 2004. 

\bibitem{14}
Olivier P. Le Ma$\hat{\mbox{i}}$tre, Habib N. Najm, Roger G. Ghanem and Omar M. Knio, Multi-Resolution Analysis of Wiener-Type Uncertainty Propagation Schemes, Journal of Computational Physics, (197):2, pp. 502-531, 2004.

\bibitem{15}
O.P. Le Maître, H.N. Najm, O.M Knio and R.G. Ghanem, Adaptive decomposition technique for uncertainty quantification, Eurodyn conference , Paris, (2005).

\bibitem{16}
M. T. Reagan, H.N. Najm, B.J. Debusschere, O.P. Le Ma$\hat{\mbox{i}}$tre, O.M. Knio and R.G. Ghanem, Spectral stochastic uncertainty quantification in chemical systems, Combustion Theory and Modelling, (8) , pp.607-632, 2004. 

\bibitem{17}
O.P. Le Ma$\hat{\mbox{i}}$tre, Omar M. Knio. Spectral Methods for Uncertainty Quantification: With Applications to Computational Fluid Dynamics, Scientific Computation, ISBN: 978-90-481-3519-6. DOI: 10.1007/978-90-481-3520-2. Springer Science+Business Media B.V., 2010.

\bibitem{18}
X. Wan and G.E. Karniadakis, Adaptive numerical solutions of stochastic differential equations, Computer Mathematics and its Applications (1994-2005), pp. 561-573, 2006. 

\bibitem{19}
X. Wan and G.E. Karniadakis, An adaptive multi-element generalized polynomial chaos method for stochastic differential equations, J. Comp. Phys., vol. 209(2), pp. 617-642, 2005.

\bibitem{20}
D. L. Donoho. Compressed sensing. IEEE Trans. Info. Theory, 52(4):1289–1306, Apr. 2006.

\bibitem{21}
E. Cand\`{e}s  and M. Wakin. An introduction to compressive sampling. IEEE Signal Processing Magazine, March 2008 21-30.

\bibitem{22}
E. Cand\`{e}s, J. Romberg, and T. Tao, Stable signal recovery from incomplete and inaccurate measurements. Communications on Pure and Applied Mathematics, 59(8):1207–1223, 2006.

\bibitem{23}
E. J. Cand\`{e}s. The restricted isometry property and its implications for compressed sensing. C. R. Math. Acad. Sci. Paris, Serie I, 346:589–592, 2008.

\bibitem{24}
R.H. Cameron, W.T. Martin, The orthogonal development of nonlinear functionals in series of Fourier–Hermite functionals,
Ann. Math. 48 (1947) 385–392.

\bibitem{25}
D. Xiu and G.E. Karniadakis, The Wiener-Askey Polynomial Chaos for stochastic differential equations, SIAM Journal of Scientific Computing, vol 24, no. 2, pp. 619-644, 2002. 

\bibitem{26}
B.K. Alpert, A class of bases in $L_2$ for the sparse representation of integral operators, SIAM J. Math. Anal. 24 (1993) 246–262.

\bibitem{27}
B. Alpert, G. Beylkin, D. Gines, L. Vozovoi, Adaptive solution of partial differential equations in multiwavelet bases, J. Comput. Phys. 182 (2002) 149–190.

\bibitem{28}
A. Keese and H. G. Matthies: Numerical methods and Smolyak quadrature for nonlinear stochastic partial differential equations. submitted to SIAM J.
Sci. Comput.

\bibitem{29}
A. Keese, H. G. Matthies, Numerical Methods and Smolyak Quadrature for Nonlinear Stochastic Partial Diﬀerential Equations, Technical report, Institute of Scientiﬁc Computing, Technical University Braunschweig Brunswick, Germany, 2003.

\bibitem{30}
Andreas Klimke, Ronaldo F. Nunes, Barbara I. Wohlmuth: Fuzzy Arithmetic Based on Dimension-Adaptive Sparse Grids: a Case Study of a Large-Scale Finite Element Model under Uncertain Parameters. International Journal of Uncertainty, Fuzziness and Knowledge-Based Systems 14(5): 561-577 (2006)

\bibitem{31}
Andreas Klimke, Barbara I. Wohlmuth: Constructing Dimension-adaptive Sparse Grid Interpolants Using Parallel Function Evaluations. Parallel Processing Letters 16(4): 407-418 (2006)

\bibitem{32}
Andreas Klimke, Uncertainty Modeling using Fuzzy Arithmetic and Sparse Grids, PhD thesis, Universit\"{a}t Stuttgart, 2006.

\bibitem{33}
H. Rauhut. Compressive Sensing and Structured Random Matrices. In M. Fornasier, editor, Theoretical Foundations and Numerical Methods for Sparse Recovery, volume 9 of Radon Series Comp. Appl. Math., pages 1-92. deGruyter, 2010.

\bibitem{34}
H. Rauhut and R. Ward. Sparse Legendre expansions via $\ell_1$-minimization. J. Approx. Theory, 164(5):517-533, 2012.

\bibitem{35}
H. Rauhut and R. Ward. Sparse recovery for spherical harmonic expansions. In Proc. SampTA, Singapore, 2011.

\bibitem{36}
A.M. Bruckstein, D.L. Donoho, M. Elad, From sparse solutions of systems of equations to sparse modeling of signals and images, SIAM Rev. 51 (1) (2009)
34–81.

\bibitem{37} 
D. L. Donoho and M. Elad. Optimally sparse representation in general (nonorthogonal) dictionaries via $\ell_1$-minimization. Proc. Natl. Acad. Sci., 100:2197–2202, 2003.

\bibitem{38}
D. L. Donoho and P. B. Stark. Uncertainty principles and signal recovery. SIAM J. Appl. Math., 49(3):906–931, June 1989. 

\bibitem{39}
D.L. Donoho, For most large underdetermined systems of linear equations, the minimal $\ell_1$-norm solution is also the sparsest solution, Commun. Pure Appl. Math., 59 (2006), pp. 797–829.

\bibitem{40}
D.L. Donoho, For most large underdetermined systems of linear equations, the minimal $\ell_1$-norm near-solution approximates the sparsest near-solution, Commun. Pure Appl. Math., 59 (2006), pp. 907–934.

\bibitem{41}
E. Cand\`{e}s and T. Tao. Decoding by linear programming. IEEE Trans. Inform. Theory, 51:4203–4215, 2005.

\bibitem{42}
E. Cand\`{e}s. The restricted isometry property and its implications for compressed sensing. C. R. Math. Acad. Sci. Paris, Serie I, 346:589–592, 2008.

\bibitem{43}
E. Cand\`{e}s, J. Romberg, and T. Tao, Stable signal recovery from incomplete and inaccurate measurements. Communications on Pure and Applied Mathematics, 59(8):1207–1223, 2006.

\bibitem{44}
E. Cand\`{e}s, J. Romberg, and T. Tao. Robust uncertainty principles: Exact signal reconstruction from highly incomplete Fourier information. IEEE Trans. Info.
Theory, 52(2):489–509, Feb. 2006.

\bibitem{45}
S. Foucart, A note on guaranteed sparse recovery via $\ell_1$-minimization, Appl. Comput. Harmon. Anal. 29 (1) (2010)
97–103.

\bibitem{46}
S. Foucart, M. Lai, Sparsest solutions of underdetermined linear systems via $\ell_q$ -minimization for for $0 < q \leq1$., Appl. Comput. Harmon. Anal. 26 (3) (2009) 395–407.

\bibitem{47}
G. Szeg\"{o}. Orthogonal Polynomials. American Mathematical Society, Providence, RI, 1975.

\bibitem{48}
Doostan, Alireza and Owhadi, Houman. A non-adapted sparse approximation of PDEs with stochastic inputs. Journal of Computational Physics, 230 (8). pp. 3015-3034. 2011.

\bibitem{49}
A. Makeev, D. Maroudas, I. Kevrekidis, Coarse stability and bifurcation analysis using stochastic simulators: Kinetic Monte Carlo examples, J. Chem. Phys. 116 (2002) 10083–10091.


\end{thebibliography}
\end{document}